\font\smallit=cmti10
\font\smalltt=cmtt10
\newcommand{\rest}[2]{\left[{#1}\right]_{#2}}
\newcommand{\q}{\emptyset}
\newcommand{\n}{\mathscr{N}}
\renewcommand{\o}{\mathscr{O}}
\newcommand{\p}{\mathscr{P}}
\newcommand{\pn}{\mathscr{P}\!\mathscr{N}}
\newcommand{\no}{\mathscr{N}\!\mathscr{O}}
\newcommand{\op}{\mathscr{O}\!\mathscr{P}}
\newcommand{\any}{\mathscr{N}\!\mathscr{O}\!\mathscr{P}}
\newcommand{\nim}{\textsc{Nim }}
\newcommand{\bphi}{\boldsymbol{\Phi}}
\newcommand{\bpi}{\boldsymbol{\Pi}}
\renewcommand\section{\@startsection {section}{1}{\z@}
{-30pt \@plus -1ex \@minus -.2ex}
{2.3ex \@plus.2ex}
{\normalfont\normalsize\bfseries}}
\renewcommand\subsection{\@startsection{subsection}{2}{\z@}
{-3.25ex\@plus -1ex \@minus -.2ex}
{1.5ex \@plus .2ex}
{\normalfont\normalsize\bfseries}}
\renewcommand{\@seccntformat}[1]{\csname the#1\endcsname. }
\newtheorem{theorem}{Theorem}
\newtheorem{lemma}[theorem]{Lemma}
\newtheorem{proposition}[theorem]{Proposition}
\newtheorem{corollary}[theorem]{Corollary}
\newtheorem{question}{Question}
\newtheoremstyle{mythm}%
{3pt}
{3pt}
{}
{}
{\bfseries}
{.}
{.5em}
{}%
\theoremstyle{mythm}
 \newtheorem{defn}[theorem]{Definition}
\numberwithin{theorem}{section}
\begin{document}

\begin{center}
\uppercase{\bf An extension of the normal play convention to $N$-player combinatorial games}
\vskip 20pt
{\bf Mark Spindler}\\
{\smallit The Johns Hopkins Center for Talented Youth, Baltimore, Maryland 21209, USA}\\
{\tt mark.edward.spindler@gmail.com}
\end{center}
\vskip 20pt

\phantom{\centerline{\smallit Received: , Revised: , Accepted: , Published: }}
\vskip 30pt

\centerline{\bf Abstract}

\noindent
We examine short combinatorial games for three or more players under a new play convention in which a player who cannot move on their turn is the unique loser.

We show that many theorems of impartial and partizan two-player games under normal play have natural analogues in this setting. For impartial games with three players, we investigate the possible outcomes of a sum in detail, and determine the outcomes and structure of three-player \textsc{Nim}.

\pagestyle{myheadings} 
\markright{\smalltt \protect\phantom{INTEGERS: 20A (2020)}\hfill}
\thispagestyle{empty} 
\baselineskip=12.875pt 
\vskip 30pt

\section{Introduction}
The vast majority of prior work in combinatorial game theory has focused on two-player games. In this paper, we investigate a novel play convention for $N$-player games. When no move is available, the next player is declared the unique loser of the game, and the other players all win equally. This convention was independently conceived by Salt in \cite{mse}. In one sense, this complicates matters, since more than one player may have a winning strategy. However, as compared to previous work on multiplayer games, this convention leads to results that better parallel those for two-player games under the normal play convention. Our work and objects of study fit into the general framework for combinatorial game theory laid out in \cite{wastlund}.

A summary of much of the prior work done on combinatorial games with three or more players can be found in I.4 of \cite{cgt}. This paper is closely connected with the related work of Propp, Cincotti, and Doles (n\'{e}e Greene).

In \cite{propp}, Propp considers three-player impartial games with a certain play convention: In analogy with two-player normal play, when no move is available, the previous player is declared the unique winner of the game. This yields a recursive characterization of outcome classes $\mathscr N$, $\mathscr O$, and $\mathscr P$ in which the test for $\mathscr O$ contains a proviso reminiscent of the mis\`{e}re play convention for two-player games. 

In \cite{cinc}, Cincotti considers three-player partizan games under a convention which reduces to Propp's in the impartial case. When a player cannot move on their turn, they are eliminated. The other players continue to play from that position. The last player remaining is the unique winner. Cincotti extends this convention to $N$-player games in \cite{ncinc}.

In \cite{doles}, Doles (n\'{e}e Greene) considers three-player partizan games with a convention similar to Propp's. The last player to move wins first place. The player before that wins second place. And the player who cannot move on their turn loses/is ranked third place. Under this convention, an assumption that players always play rationally leads to some player having a winning strategy. 

In this paper, we consider the play convention in which the first player without an available move is the unique loser of the game. In Section~\ref{sec:3imp}, we examine three-player impartial games and give special attention to three-player \textsc{Nim}, closely paralleling the work of Propp in \cite{propp}. We examine how the outcome of a disjunctive sum can vary depending on the outcomes of the summands, discuss reverting games, construct some undetermined games which make all sums undetermined, and derive the finite quotient describing play in three-player \textsc{Nim}. In Section~\ref{sec:nimp}, we generalize much of the content of the previous section to $N$ players. We do not completely characterize the outcomes of a sum or of \textsc{Nim}, but some partial results are obtained, including the outcomes of all \nim positions with two heaps. In Section~\ref{sec:part}, we consider partizan games with $N$ players. We define a preorder based on favorability to a particular player in disjunctive sums, characterize all comparisons with the empty game $0$ (and show that $0$ is not equal to any other partizan game), show that the simplification theorems regarding dominated and reversible options have natural analogues for $N$ players, and examine some particular games related to the two-player integer games. We conclude in Section~\ref{sec:open} with a collection of open questions.

\section{Three-Player Impartial Games}\label{sec:3imp}
\subsection{Preliminaries}
In this subsection, we introduce the games under discussion, the notation and terminology we use for three-player impartial games, and some initial observations.
\subsubsection{Notation}
Unless otherwise specified, all games in this paper are assumed to be short games; they have finitely many distinct subpositions, and admit no infinite runs. 

As is common, an impartial game is usually considered to be the set of its options, $+$ denotes the disjunctive sum of games, $n\cdot G$ denotes the sum of $n$ copies of $G$, $G'$ and $G^*$ typically denote an option of a game $G$, etc. We use $\cong$ to indicate that two games are isomorphic (i.e. the game trees are isomorphic). Additionally, we also make use of structural induction in proofs throughout, often without comment.

Many impartial games in this paper are be built out of nim-heaps. $*n$ denotes a nim-heap of size $n$. Equivalently, $*n\cong\left\{*0,*1,\ldots,*(n-1)\right\}$. As special cases, $0$ denotes the game with no options $\{\}$, and $*$ denotes $*1$ (so that $*\cong\{0\}$). 

We define the \textit{normal} play convention for three-player games to be the one in which the next player to move is the unique loser of the empty game $0$. The other two players both win equally.

\begin{defn}\label{def:3impout}
The (normal-play) \textit{outcome} $o(G)$ of a game $G$ is a subset of $\left\{\mathbf{N},\mathbf{O},\mathbf{P}\right\}$ whose members are determined recursively as follows.
\begin{itemize}
	\item $\mathbf{N}\in o(G)$ exactly when there exists an option $G^*$ with $\mathbf{P}\in o\left(G^*\right)$.
	\item $\mathbf{O}\in o(G)$ exactly when every option $G'$ satisfies $\mathbf{N}\in o(G')$.
	\item $\mathbf{P}\in o(G)$ exactly when every option $G'$ satisfies $\mathbf{O}\in o(G')$.
\end{itemize}
\end{defn}

By an induction argument, the elements of $o(G)$ tell us who has a winning strategy in $G$. 
$\mathbf{N}$ represents the next player to move, $\mathbf{P}$ being the previous player, and $\mathbf{O}$ being the other player. When referring to players in the discussion of a single position, we often use ``Next'', ``Other'', and ``Previous'' to refer to the players. 

For brevity, we use $\q$, $\n$, $\o$, $\p$, $\pn$, $\no$, $\op$, and $\any$ to denote the corresponding subsets of $\{\mathbf N,\mathbf O,\mathbf P\}$. For example, $\no=\left\{\mathbf{N},\mathbf{O}\right\}$. 

Unlike in the two-player case, the outcome of a game need not correspond to the set of winners when all players play perfectly. (This is also noted in \cite{doles} and \cite{propp}.)

\begin{proposition}
In $G\cong\{0,\{*,*+*\}\}$, Next can only guarantee that they win by winning with Previous. However, Next and Other can also collude to ensure that Previous loses.
\end{proposition}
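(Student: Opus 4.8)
The plan is to pin down the outcome of $G$ and of each of its few subpositions from the recursive definition (Definition~\ref{def:3impout}), and then to restate the two informal claims as precise assertions about strategies and coalitions and check them against the finite game tree of $G$.

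First I would compute outcomes by structural induction from the bottom up: $o(0)=\op$ from the base case (the player to move has no move and is the unique loser), then $o(*)=\pn$, then the outcome of $*+*$ (whose only move leads to $*$), then that of the inner position $\{*,*+*\}$, and finally $o(G)$; the point to carry forward is that $\mathbf N\in o(G)$, so Next really has a winning strategy, together with the exact outcomes of the subpositions. In parallel I would write out the whole game tree of $G$, which has only a handful of distinct subpositions, and list every maximal play from $G$ down to $0$, recording in each case whose turn it is when play halts --- that player being the loser --- and hence which two players win. This small finite table carries the actual content.

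For the first claim I read ``Next can guarantee that they win'' as ``Next has a strategy making Next a winner in every resulting play, whatever Other and Previous do,'' and ``by winning with Previous'' as ``along every play consistent with that strategy Previous also wins.'' Two things then have to be verified from the table: (i) the move to $0$ ends the game at once with Other as the forced loser, which is such a strategy, and in every play it produces Previous wins; and (ii) no strategy of Next makes $\{$Next, Other$\}$ the winning pair, because after any first move of Next either the game is already over with Previous among the winners, or Other has a reply leading to a play in which Previous is among the winners (indeed to one in which Next or Other is the loser). Together (i) and (ii) say exactly that the only winning coalition Next can force is $\{$Next, Previous$\}$ --- ``Next can only guarantee that they win by winning with Previous.''

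For the second claim I read ``Next and Other can collude to ensure that Previous loses'' as ``the coalition $\{$Next, Other$\}$ has a pair of strategies, one for each member, so that in every play consistent with both, Previous is the player left unable to move.'' I would exhibit this joint strategy explicitly --- Next's initial move other than to $0$, followed by Other's designated reply --- and trace the continuations, which are forced or at worst finitely many, to confirm each ends with Previous stuck; note that in such a play Next is in fact among the winners, so this colluding line is perfectly reasonable for Next too, and what the first claim denies is only Next's ability to \emph{force} that outcome alone. I expect the genuine difficulty to be conceptual rather than computational, the tree being tiny: it lies in keeping apart a strategy that guarantees a win for Next against arbitrary play by \emph{both} of the other players and a joint strategy of a two-player coalition against the lone remaining player, and in checking that the finite case analysis of continuations is exhaustive. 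I would also add a line recalling that in two-player normal play no such gap between ``Next has a winning strategy'' and ``the identity of the loser is forced'' can occur, which motivates the ``Unlike in the two-player case'' remark just before the proposition.
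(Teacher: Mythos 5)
Your overall plan---compute the outcomes of the handful of subpositions from Definition~\ref{def:3impout}, tabulate every maximal run of $G$ together with the identity of the stuck player, and read both claims off that table---is the same direct game-tree analysis the paper itself uses (the paper just does it verbally: Next must move to $0$ or else Other punishes by moving to $*$; Other can instead move to $*+*$ in the colluding line). The genuine gap is that you never carry out the table you say ``carries the actual content,'' and when one does carry it out for the game as written the verification you promise fails. The maximal runs of $G\cong\{0,\{*,*+*\}\}$ have lengths $1$ (via $0$), $3$ (via $\{*,*+*\}\to *\to 0$) and $4$ (via $\{*,*+*\}\to *+*\to *\to 0$); since the loser after a run of length $k$ is the player who would make move $k+1$, the losers are Other, Next and Other respectively. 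No run has length congruent to $2$ modulo $3$, so Previous is never the stuck player in \emph{any} play; the colluding line you propose to ``trace\,\dots\,to confirm each ends with Previous stuck'' in fact ends with Other stuck. Consistently with this, the bottom-up computation gives $o(*+*)=\no$, $o(\{*,*+*\})=\no$, and hence $\mathbf P\in o(G)$, i.e.\ $o(G)=\pn$ rather than the $\n$ asserted at the end of the paper's proof.

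The fix is to repair the example before writing the proof: the intended game is evidently $G\cong\{0,\{*,*+*+*\}\}$, with one more summand of $*$. There the run through $*+*+*$ has length $5\equiv 2\pmod 3$, so Next and Other can collude (Next to $\{*,*+*+*\}$, Other to $*+*+*$, the rest forced) to strand Previous; Other's alternative reply to $*$ still punishes any first move by Next other than to $0$, so Next's only guaranteed win remains the move to $0$, in which Previous wins alongside Next; and one checks $o(*+*+*)=\op$, $o(\{*,*+*+*\})=\n$, $o(G)=\n$, so your steps (i), (ii) and the collusion claim all go through. With that correction your proposal is complete and, apart from its greater formality about coalitions versus single-player winning strategies (a distinction worth keeping), identical in substance to the paper's argument; as it stands, the computational step you defer would refute rather than confirm the statement you were asked to prove.
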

\begin{proof}
If Next plays perfectly, then they must move to $0$. Otherwise, Other could choose to move to $*$ on their turn. However, if Next does not move to $0$, then Other can choose to move to $*+*$ to force (the original) Previous to lose. Therefore, $o(G)=\n$ despite the fact that both Next and Previous would win if Next were to play perfectly. \end{proof}

\subsubsection{Observations}\label{sssec:obs}

\begin{proposition}$o(G)$ is always a proper subset of $\any$.\end{proposition}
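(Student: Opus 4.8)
The plan is to prove this by structural induction on $G$, showing that at least one of $\mathbf N$, $\mathbf O$, $\mathbf P$ fails to lie in $o(G)$. The natural case split is on whether $G$ has any options at all.

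First I would handle the base case $G \cong 0$, which has no options. Then the first bullet of Definition~\ref{def:3impout} fails vacuously (there is no option at all, let alone one with $\mathbf P$ in its outcome), so $\mathbf N \notin o(0)$, and we are done since $o(0) = \op$.

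For the inductive step, suppose $G$ has at least one option. The key observation is to consider whether $\mathbf N \in o(G)$. If $\mathbf N \notin o(G)$, we are immediately done. If $\mathbf N \in o(G)$, then by the first bullet there is an option $G^*$ with $\mathbf P \in o(G^*)$. I would then argue that $\mathbf P \notin o(G)$: the third bullet requires \emph{every} option $G'$ to satisfy $\mathbf O \in o(G')$, so it suffices to show $\mathbf O \notin o(G^*)$. For that, apply the induction hypothesis to $G^*$: since $o(G^*)$ is a proper subset of $\any$ and already contains $\mathbf P$, and — here is the crucial sub-step — I claim that whenever $\mathbf P \in o(H)$ we also have $\mathbf N \in o(H)$. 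Indeed, if $\mathbf P \in o(H)$ then every option $H'$ has $\mathbf O \in o(H')$; but a short argument (a further small induction, or an appeal to the analogous fact that $\mathbf O \in o(H') \Rightarrow \mathbf N$ or... ) shows each such $H'$ in fact must also put $\mathbf N$ into $o(H)$. More directly: $\mathbf P \in o(H)$ forces $H$ to have an option (else vacuity would instead give $\mathbf N$ excluded), pick any option $H'$; then $\mathbf O \in o(H')$, which by the second bullet means every option of $H'$ has $\mathbf N$, and in particular $H'$ has the form needed... I would clean this up to the clean implication $\mathbf P \in o(H) \Rightarrow \mathbf N \in o(H)$, so $\{\mathbf N, \mathbf P\} \subseteq o(G^*)$, and since $o(G^*) \neq \any$ by induction, $\mathbf O \notin o(G^*)$, hence $\mathbf P \notin o(G)$.

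The main obstacle I anticipate is pinning down the implication $\mathbf P \in o(H) \Rightarrow \mathbf N \in o(H)$ cleanly; this is really a statement about the "downward" structure of outcomes and should itself follow by a short induction (one shows simultaneously the chain of implications relating $\mathbf P$, $\mathbf O$, $\mathbf N$ across one move), but it needs to be stated and proved before the main argument, or folded into a single combined induction. Everything else is a routine unwinding of the three bullets of Definition~\ref{def:3impout}. An alternative, possibly slicker route is to argue directly that $o(G)$ cannot equal $\any$ by assuming $\{\mathbf N,\mathbf O,\mathbf P\} \subseteq o(G)$ and deriving a contradiction: $\mathbf O \in o(G)$ and $\mathbf P \in o(G)$ together force every option $G'$ to have both $\mathbf N \in o(G')$ and $\mathbf O \in o(G')$, while $\mathbf N \in o(G)$ forces some option $G^*$ with $\mathbf P \in o(G^*)$; combining these at $G^*$ (which then has $\mathbf N, \mathbf O, \mathbf P$ all in its outcome) contradicts the induction hypothesis — this avoids isolating the auxiliary implication and is the version I would most likely write up.
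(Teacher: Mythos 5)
The alternative you sketch in your last sentence --- assume $\{\mathbf N,\mathbf O,\mathbf P\}\subseteq o(G)$, use the $\mathbf O$ and $\mathbf P$ clauses to force $\mathbf N,\mathbf O\in o(G')$ for \emph{every} option and the $\mathbf N$ clause to produce an option $G^*$ with $\mathbf P\in o(G^*)$, so that $o(G^*)=\any$ as well, contradicting induction (equivalently, producing an infinite run) --- is correct and is precisely the paper's proof. Since you say that is the version you would write up, the final product would be fine.

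However, the route you spend most of your time on is genuinely broken, not merely in need of cleanup. The auxiliary implication $\mathbf P\in o(H)\Rightarrow\mathbf N\in o(H)$ is false: the paper's own Proposition on the seven outcomes gives $o\left(\{\{*2\}\}\right)=\p$, so $\mathbf P$ can occur without $\mathbf N$. Worse, the overall structure of that argument --- ``if $\mathbf N\in o(G)$ then $\mathbf P\notin o(G)$'' --- attempts to prove something false on its face, since $o(*)=\pn$ contains both $\mathbf N$ and $\mathbf P$. The flaw is that you only ever invoke the hypothesis $\mathbf N\in o(G)$ in that branch; excluding $\mathbf O$ from $o(G^*)$ requires using the assumptions $\mathbf O\in o(G)$ \emph{and} $\mathbf P\in o(G)$ together (each contributes one element to $o(G^*)$), which is exactly what your alternative does. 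So: discard the first two paragraphs entirely and keep only the last argument.
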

\begin{proof}Let $G$ be a game with $o(G)=\any$. Then, by the recursive Definition~\ref{def:3impout}, we can choose $G^{*}$ with $o(G^{*})=\any$. Since games do not admit any infinite runs, no such $G$ can exist.\end{proof}

\begin{proposition}\label{prop:sharps cycle}
$\mathbf{N}\in o(G)$ exactly when $\mathbf{O}\in o\left(\{G\}\right)$, $\mathbf{O}\in o(G)$ exactly when $\mathbf{P}\in o\left(\{G\}\right)$, and $\mathbf{P}\in o(G)$ exactly when $\mathbf{N}\in o\left(\{G\}\right)$.
\end{proposition}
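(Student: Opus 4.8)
The plan is to apply Definition~\ref{def:3impout} directly to the game $\{G\}$, observing that $\{G\}$ has exactly one option, namely $G$ itself. Because there is a single option, the existential quantifier in the rule for membership of $\mathbf{N}$ and the universal quantifiers in the rules for $\mathbf{O}$ and $\mathbf{P}$ all collapse to conditions placed on $G$ alone; no induction is needed.

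Concretely, I would simply read off the three clauses of the definition as instantiated at $\{G\}$. First, $\mathbf{N}\in o(\{G\})$ holds exactly when the unique option $G$ satisfies $\mathbf{P}\in o(G)$, which is the third stated equivalence. Second, $\mathbf{O}\in o(\{G\})$ holds exactly when the unique option $G$ satisfies $\mathbf{N}\in o(G)$, which is the first stated equivalence. Third, $\mathbf{P}\in o(\{G\})$ holds exactly when the unique option $G$ satisfies $\mathbf{O}\in o(G)$, which is the second stated equivalence. Rearranging these three into the order given in the statement completes the argument.

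There is no real obstacle here; the only point requiring care is the bookkeeping of which outcome symbol maps to which, i.e. that prepending one move cyclically shifts the roles Next $\to$ Other $\to$ Previous $\to$ Next. This is forced by the definition (and matches the intuition that after the forced move from $\{G\}$ to $G$, the player who was Next in $\{G\}$ has become Previous in $G$, the player who was Other has become Next, and so on), so I would at most include a one-line remark to that effect.
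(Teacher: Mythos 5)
Your proposal is correct and is exactly the paper's argument: the paper dismisses this as ``immediate from the recursive definitions of Definition~\ref{def:3impout},'' and your write-up simply spells out that instantiation at the single-option game $\{G\}$. The bookkeeping of the cyclic shift $\mathbf{N}\to\mathbf{O}\to\mathbf{P}\to\mathbf{N}$ is handled correctly.
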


\begin{proof}
Immediate from the recursive definitions of Definition~\ref{def:3impout}.
\end{proof}

\begin{proposition}\label{prop:7outs}All seven proper subsets of $\any$ are possible outcomes.\end{proposition}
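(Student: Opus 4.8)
The plan is to exhibit an explicit game realizing each of the seven proper subsets, using a few small ``seed'' games together with Proposition~\ref{prop:sharps cycle} to manufacture the rest. First I would read off three outcomes directly from Definition~\ref{def:3impout}: $o(0)=\op$ (with no options, the tests for $\mathbf{O}$ and $\mathbf{P}$ hold vacuously while the test for $\mathbf{N}$ fails); $o(*)=\pn$ (its sole option $0$ carries $\mathbf{P}$, so $\mathbf{N}$ holds, and carries $\mathbf{O}$, so $\mathbf{P}$ holds, but not $\mathbf{N}$, so $\mathbf{O}$ fails); and $o(*2)=\n$ (the option $0$ carries $\mathbf{P}$ but not $\mathbf{N}$, and the option $*$ does not carry $\mathbf{O}$, so $\mathbf{N}$ holds but $\mathbf{O}$ and $\mathbf{P}$ both fail).

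Next I would invoke Proposition~\ref{prop:sharps cycle}, which says that passing from $G$ to $\{G\}$ applies the cyclic relabeling $\mathbf{N}\mapsto\mathbf{O}\mapsto\mathbf{P}\mapsto\mathbf{N}$ to the outcome set. Thus $o(\{*\})=\no$, $o(\{*2\})=\o$, and $o(\{\{*2\}\})=\p$, which together with the three seeds above realizes all six nonempty proper subsets of $\any$.

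The last and, I expect, only substantive case is the empty outcome $\q$: since Proposition~\ref{prop:sharps cycle} merely permutes a given outcome, it cannot produce $\q$ from a nonempty one, so a fresh construction is needed. Here I would use a game with two options whose outcomes jointly defeat all three membership tests, a natural candidate being $G\cong\{*2,\{*2\}\}$, whose options have outcomes $\n$ and $\o$. Neither option contains $\mathbf{P}$, so $\mathbf{N}\notin o(G)$; the option of outcome $\o$ omits $\mathbf{N}$, so $\mathbf{O}\notin o(G)$; and the option of outcome $\n$ omits $\mathbf{O}$, so $\mathbf{P}\notin o(G)$. Hence $o(G)=\q$, and the list is complete. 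The only real checking is the three seed computations and this final one against Definition~\ref{def:3impout}; everything else is bookkeeping with Proposition~\ref{prop:sharps cycle}.
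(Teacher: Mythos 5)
Your proposal is correct and uses exactly the same seven witness games as the paper ($0$, $*$, $\{*\}$, $*2$, $\{*2\}$, $\{\{*2\}\}$, and $\{*2,\{*2\}\}$); the paper simply lists them as "readily verified," whereas you spell out the verification via Definition~\ref{def:3impout} and the cycling in Proposition~\ref{prop:sharps cycle}. All of your outcome computations check out.
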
\label{prop:impouts}
\begin{proof}These examples are readily verified: $o(0)=\op$, $o(*)=\pn$, $o\left(\{*\}\right)=\no$, $o(*2)=\n$, $o\left(\{*2\}\right)=\o$, $o\left(\{\{*2\}\}\right)=\p$, and $o\left(\{*2,\{*2\}\}\right)=\q$.\end{proof}

\begin{defn}\label{def:undet}
A game $G$ is said to be \textit{undetermined} if $o(G)=\q$.  
\end{defn}

\begin{proposition}\label{prop:Q dooms}
If $G$ has an undetermined option $G^{*}$ and has no option $G'$ with $\mathbf{P}\in o(G')$, then $G$ is undetermined.
\end{proposition}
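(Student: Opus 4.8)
The plan is to verify directly from Definition~\ref{def:3impout} that none of $\mathbf{N}$, $\mathbf{O}$, $\mathbf{P}$ belongs to $o(G)$, which is exactly the assertion that $o(G)=\q$. The three clauses of the definition split naturally according to their quantifier structure: the $\mathbf{N}$-clause is existential over the options of $G$, while the $\mathbf{O}$- and $\mathbf{P}$-clauses are universal, and the two hypotheses of the proposition are tailored to defeat these respectively. So I would begin with the $\mathbf{N}$-clause: since $\mathbf{N}\in o(G)$ would require an option $G'$ with $\mathbf{P}\in o(G')$, and the hypothesis is precisely that no such option exists, we conclude $\mathbf{N}\notin o(G)$ immediately.

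Next I would dispatch the $\mathbf{O}$- and $\mathbf{P}$-clauses together using the undetermined option $G^{*}$. By Definition~\ref{def:undet}, $o(G^{*})=\q$, so in particular $\mathbf{N}\notin o(G^{*})$ and $\mathbf{O}\notin o(G^{*})$. The clause for $\mathbf{O}\in o(G)$ demands $\mathbf{N}\in o(G')$ for \emph{every} option $G'$, which fails already at $G'=G^{*}$; hence $\mathbf{O}\notin o(G)$. Likewise the clause for $\mathbf{P}\in o(G)$ demands $\mathbf{O}\in o(G')$ for every option, which again fails at $G^{*}$; hence $\mathbf{P}\notin o(G)$. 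Combining the three exclusions gives $o(G)=\q$, as required.

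I do not expect a genuine obstacle here: the statement is a bookkeeping consequence of the recursive definition, and no structural induction is needed because every appeal is to the outcomes of options, which are taken as given rather than re-derived. The only point demanding a little care is keeping straight which hypothesis does which job — the ``no $\mathbf{P}$-option'' hypothesis is what kills the existential $\mathbf{N}$-clause, while the ``undetermined option'' hypothesis is what kills each of the universal $\mathbf{O}$- and $\mathbf{P}$-clauses — but once that bookkeeping is fixed the proof is essentially a one-line unwinding of Definition~\ref{def:3impout}.
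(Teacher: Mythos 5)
Your proposal is correct and is essentially the paper's own proof: the undetermined option $G^{*}$ defeats the universal clauses for $\mathbf{O}$ and $\mathbf{P}$, and the absence of an option with $\mathbf{P}$ in its outcome defeats the existential clause for $\mathbf{N}$. The paper states this in one sentence; your version merely unwinds the same bookkeeping explicitly.
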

\begin{proof}
The existence of $G^{*}$ means that $\mathbf{O},\mathbf{P}\notin o(G)$, and the other condition forces $\mathbf{N}\notin o(G)$.
\end{proof}

\subsection{General Results}\label{ssec:3imp}
In this subsection, we investigate the outcomes of sums of games, and derive some equations and inequations for games.

\subsubsection{Outcomes of Sums}
In the two-player theory of impartial games under normal play, if we put aside the more-precise Sprague\textendash Grundy Theory, there are two main facts worth noting about the outcomes of sums as compared to the outcomes of the summands: 1. the outcome of $G+G$, and 2. the possible outcomes of $G+H$\textemdash if $o(H)=\p$ then $o(G+H)=o(G)$, but if $o(G)=o(H)=\n$ then $o(G+H)$ can be either $\p$ or $\n$. We examine three-player analogues of these results.

\begin{theorem}[Next Generation]\label{thm:ngenerates}
If $\mathbf{N}\notin o(H)$, then $o(G+H)\subseteq o(G)$.
\end{theorem}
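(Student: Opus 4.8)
The plan is to prove the contrapositive-flavored statement directly by structural induction on $G+H$, using the recursive Definition~\ref{def:3impout}. That is, I want to show that each of $\mathbf{N},\mathbf{O},\mathbf{P}$, when it lies in $o(G+H)$, must also lie in $o(G)$, under the standing hypothesis $\mathbf{N}\notin o(H)$ (which by the definition means every option $H'$ of $H$ has $\mathbf{N}\in o(H')$). The options of $G+H$ are of the form $G'+H$ or $G+H'$; the key observation is that moving in the $H$-component lands in a position $G+H'$ which, by the hypothesis and a sub-induction, should again have $\mathbf{N}\notin o(G+H')$ — so Next can never usefully move in $H$.

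The three cases I would carry out, in this order. First, $\mathbf{P}\in o(G+H)$: then every option of $G+H$ has $\mathbf{O}$ in its outcome. In particular every $G'+H$ has $\mathbf{O}\in o(G'+H)$; I'd like to conclude $\mathbf{O}\in o(G')$ for every $G'$, hence $\mathbf{P}\in o(G)$ — but this requires knowing $\mathbf{N}\notin o(H)$ still lets me apply the inductive hypothesis to $G'+H$, which it does. I also need to handle options $G+H'$: since $\mathbf{N}\in o(H')$, I should show $\mathbf{N}\in o(G+H')$ so that the constraint "$\mathbf{O}\in o(G+H')$" is not actually forced — wait, it IS forced by $\mathbf{P}\in o(G+H)$. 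So the real content is that $\mathbf{N}\in o(H')$ together with $\mathbf{O}\in o(G+H')$ is consistent, and the induction on the $G'+H$ options does the work. Second, $\mathbf{O}\in o(G+H)$: every option has $\mathbf{N}$ in its outcome; applying the inductive hypothesis to each $G'+H$ gives $\mathbf{N}\in o(G')$ (using the hypothesis on $H$ again), so $\mathbf{O}\in o(G)$ — but I must separately verify the options $G+H'$ genuinely have $\mathbf{N}\in o(G+H')$, which should follow because $\mathbf{N}\in o(H')$ means $H'$ has an option $H''$ with $\mathbf{P}\in o(H'')$, giving $G+H$ — no: I need a companion lemma. Third, $\mathbf{N}\in o(G+H)$: there is an option with $\mathbf{P}$ in its outcome; it cannot be a $G+H'$ option (I must rule this out), so it is some $G'+H$ with $\mathbf{P}\in o(G'+H)$, and induction gives $\mathbf{P}\in o(G')$, hence $\mathbf{N}\in o(G)$.

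The main obstacle, which I would isolate as an auxiliary claim proved by the same induction, is exactly the propagation statement: \emph{if $\mathbf{N}\notin o(H)$ then $\mathbf{N}\notin o(G+H)$ as well, and moreover $\mathbf{N}\in o(H')\Rightarrow \mathbf{N}\in o(G+H')$ is the wrong direction} — rather, what I really need is that the "other-component" options $G+H'$ never create a $\mathbf{P}$-outcome and never destroy an $\mathbf{O}$- or $\mathbf{N}$-obligation in a way that breaks the argument. Concretely I expect to need: (i) $\mathbf{N}\notin o(H)\Rightarrow\mathbf{P}\notin o(G+H)$ is false in general, so instead (ii) $\mathbf{N}\notin o(H)\Rightarrow$ for every option $H'$ of $H$, $\mathbf{N}\in o(G+H')$ — this is the clean fact, proved by induction using Proposition~\ref{prop:sharps cycle}-style bookkeeping: $\mathbf{N}\in o(H')$ gives an $H''$ with $\mathbf{P}\in o(H'')$, and then... hmm, $G+H''$ is an option of $G+H'$, so if $\mathbf{P}\in o(G+H'')$ we'd get $\mathbf{N}\in o(G+H')$, and $\mathbf{P}\in o(G+H'')$ should follow because $\mathbf{N}\notin o(H'')$? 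That's not automatic. So the genuinely delicate point is finding the right strengthened induction hypothesis that simultaneously controls $\mathbf{N}\notin o(H)$-positions and their sums; I anticipate the correct formulation is the conjunction "$o(G+H)\subseteq o(G)$ \textbf{and} ($\mathbf{N}\notin o(H)\Rightarrow \mathbf{N}\in o(G+H'')$ for all options $H''$ of $H$)", the two halves feeding each other in the induction. Getting that bundling right is the crux; the rest is routine case-chasing through Definition~\ref{def:3impout}.
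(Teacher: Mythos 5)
There is a genuine gap, concentrated in your $\mathbf{N}$ case, and it is compounded by a misreading of the hypothesis. First the misreading: by Definition~\ref{def:3impout}, $\mathbf{N}\notin o(H)$ means that \emph{no} option $H'$ has $\mathbf{P}\in o(H')$ — not, as you write, that every option has $\mathbf{N}\in o(H')$ (that would be $\mathbf{O}\in o(H)$). This error propagates through your closing paragraph. Second, your $\mathbf{P}$ and $\mathbf{O}$ cases are actually fine and need none of the hand-wringing about the options $G+H'$: if $\mathbf{P}\in o(G+H)$ then in particular every option of the form $G'+H$ has $\mathbf{O}$ in its outcome, the inductive hypothesis gives $\mathbf{O}\in o(G')$ for every $G'$, and $\mathbf{P}\in o(G)$ follows; the $G+H'$ options are simply not used. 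The real problem is the $\mathbf{N}$ case. You assert that a winning move for Next cannot be of the form $G+H'$, but this is false and cannot be ruled out: for instance, there exist $A$ with $o(A)=\n$ and $B$ with $o(B)=\no$ and $o(A+B)=\p$ (see Table~\ref{tbl:allsums}), so with $H\cong\{A\}$ (which has $\mathbf{N}\notin o(H)$ since $\mathbf{P}\notin o(A)$) and $G\cong B$, Next's winning move in $G+H$ is precisely to $G+A$. Your proposed patch, the auxiliary claim that $\mathbf{N}\notin o(H)$ forces $\mathbf{N}\in o(G+H')$ for every option $H'$, is also false: take $G\cong 0$ and $H\cong\{\{*2\}\}$, so $o(H)=\p$ but $o(0+\{*2\})=\o\not\ni\mathbf{N}$.

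The repair does not require any strengthened induction hypothesis. If the winning move is to $G+H'$ with $\mathbf{P}\in o(G+H')$, note that $\mathbf{P}\notin o(H')$ by the hypothesis on $H$; applying the inductive hypothesis to the structurally smaller pair with the summands swapped, the contrapositive of ``$\mathbf{N}\notin o(G)$ implies $o(H'+G)\subseteq o(H')$'' yields $\mathbf{N}\in o(G)$ directly. Alternatively — and this is what the paper does — prove the contrapositive inclusion throughout, i.e.\ show $X\notin o(G)\Rightarrow X\notin o(G+H)$ for each $X\in\{\mathbf{N},\mathbf{O},\mathbf{P}\}$: then each case is a one-line exhibition of a single option of $G+H$ lacking the required player (for $\mathbf{P}$ and $\mathbf{O}$), and the $\mathbf{N}$ case becomes the pleasant one, since $\mathbf{N}\notin o(G)$ and $\mathbf{N}\notin o(H)$ together kill $\mathbf{P}$ in every option $G'+H$ and $G+H'$ by induction. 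The quantifiers in Definition~\ref{def:3impout} line up with the contrapositive, which is why that direction avoids the trap you fell into.
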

In other words, adding on a component to a game cannot introduce a winning strategy for a new player unless Next has a winning strategy in the new component. Since $o(0)=\op$, this is similar to the two-player result that adding a $\p$-position to a game does not change its outcome.
\begin{proof}

Let $G,H$ be games, and suppose $\mathbf{N}\notin o(H)$. 

If $\mathbf{P}\notin o(G)$, then we can choose an option $G^{*}$ with $\mathbf{O}\notin o\left(G^{*}\right)$. But then $\mathbf{O}\notin o\left(G^{*}+H\right)$ by inductive hypothesis, so that $\mathbf{P}\notin o(G+H)$.

If $\mathbf{O}\notin o(G)$, then we can choose an option $G^{*}$ with $\mathbf{N}\notin o\left(G^{*}\right)$. But then $\mathbf{N}\notin o\left(G^{*}+H\right)$ by induction, so that $\mathbf{O}\notin o(G+H)$.

If $\mathbf{N}\notin o(G)$, then $\mathbf{P}\notin o\left(G'\right)$ for all options $G'$. Similarly, $\mathbf{P}\notin o\left(H'\right)$ for all options $H'$. Therefore, by induction, we have both $\mathbf{P}\notin o\left(G'+H\right)$ and $\mathbf{P}\notin o\left(G+H'\right)$ for arbitrary options. As such, $\mathbf{N}\notin o(G+H)$ by definition.
\end{proof}

\begin{corollary}\label{cor:sumundet}If $G$ and $H$ are undetermined, then $G+H$ is undetermined.\end{corollary}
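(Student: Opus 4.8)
The plan is to deduce the corollary directly from Theorem~\ref{thm:ngenerates}. Since $G$ is undetermined, $o(G)=\q$; in particular $\mathbf{N}\notin o(G)$. Likewise $\mathbf{N}\notin o(H)$ since $o(H)=\q$ as well. So both summands satisfy the hypothesis of the ``Next Generation'' theorem, and we may apply it in whichever direction is convenient.

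Applying Theorem~\ref{thm:ngenerates} with the component $H$ (which has $\mathbf{N}\notin o(H)$) gives $o(G+H)\subseteq o(G)=\q$, hence $o(G+H)=\q$. That is already the whole argument — one invocation of the theorem suffices, because $\q$ has no proper subsets other than itself, so being contained in $\q$ forces equality. (One could equally well apply the theorem with the roles of $G$ and $H$ swapped, or note the symmetry of $+$, but this is not needed.)

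I do not anticipate any real obstacle here: the corollary is an immediate specialization of the preceding theorem, and the only thing to check is that ``undetermined'' unpacks to a statement that includes $\mathbf{N}\notin o(\cdot)$, which is immediate from Definition~\ref{def:undet}. The proof will be essentially one sentence.

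\begin{proof}
Since $o(H)=\q$, we have $\mathbf{N}\notin o(H)$, so Theorem~\ref{thm:ngenerates} gives $o(G+H)\subseteq o(G)=\q$. Hence $o(G+H)=\q$, i.e. $G+H$ is undetermined.
\end{proof}
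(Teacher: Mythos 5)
Your proof is correct and is essentially identical to the paper's: both apply \hyperref[thm:ngenerates]{Next Generation} using $\mathbf{N}\notin o(H)$ to get $o(G+H)\subseteq o(G)=\q$. Nothing further to note.
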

\begin{proof}By \hyperref[thm:ngenerates]{Next Generation}, $\mathbf N\notin o(H)$ yields $o(G+H)\subseteq o(G)=\q$.\end{proof}

\hyperref[thm:ngenerates]{Next Generation} has an important consequence for sums of two games.
\begin{theorem}[Other Procreation]\label{thm:pgeneration}
If $\mathbf O\notin o(G),o(H)$, then $\mathbf P\notin o(G+H)$.
\end{theorem}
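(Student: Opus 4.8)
The plan is to prove the contrapositive-flavored statement directly by structural induction on $G+H$, mirroring the case analysis used in the proof of \hyperref[thm:ngenerates]{Next Generation}. Assume $\mathbf O\notin o(G)$ and $\mathbf O\notin o(H)$; we want $\mathbf P\notin o(G+H)$, i.e. that some option of $G+H$ fails to have $\mathbf O$ in its outcome. By symmetry of the disjunctive sum, it suffices to find a good move in the $G$-component. Since $\mathbf O\notin o(G)$, the recursive definition gives an option $G^*$ with $\mathbf N\notin o(G^*)$.

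The key step is then to argue that $\mathbf O\notin o(G^*+H)$, which by definition of $o$ requires producing an option of $G^*+H$ whose outcome omits $\mathbf N$. Here I expect to split into cases according to whether $\mathbf N$ already fails in the $G^*$-component or we must use the $H$-component. If $G^*$ itself has an option $G^{**}$ with $\mathbf P\notin o(G^{**})$—in fact since $\mathbf N\notin o(G^*)$, \emph{every} option $G^{**}$ of $G^*$ satisfies $\mathbf P\notin o(G^{**})$—then by \hyperref[thm:ngenerates]{Next Generation} applied appropriately (using $\mathbf N\notin o(H)$? no—$H$ need not omit $\mathbf N$), this needs care. The cleaner route is: $\mathbf N\notin o(G^*)$ means $\mathbf O\in o(\{G^*\})$ is false in the shifted sense of Proposition~\ref{prop:sharps cycle}; more usefully, $\mathbf N\notin o(G^*)$ together with $\mathbf O\notin o(H)$ should let us invoke a dual of \hyperref[thm:ngenerates]{Next Generation}. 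Concretely, I would first establish the auxiliary implication ``if $\mathbf O\notin o(G)$ and $\mathbf N\notin o(H)$ then $\mathbf O\notin o(G+H)$'' (this follows by the same three-case induction as \hyperref[thm:ngenerates]{Next Generation}, or is literally the $\mathbf O$-line of that proof), and then apply it with the roles arranged so that the component known to omit $\mathbf N$ does the work.

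So the structured argument is: (1) from $\mathbf O\notin o(G)$ pick $G^*$ with $\mathbf N\notin o(G^*)$; (2) since $\mathbf N\notin o(G^*)$, for every option $H'$ of $H$ we have $\mathbf P\notin o(H')$ (because $\mathbf O\notin o(H)$ gives, via an option $H^*$ with $\mathbf N\notin o(H^*)$, control of the $H$-side—actually we need every option, so instead use that $\mathbf N\notin o(G^*)$ forces all its options to omit $\mathbf P$), hence by \hyperref[thm:ngenerates]{Next Generation} every $G^*+H'$ and every $G^{**}+H$ omits $\mathbf P$; (3) therefore $\mathbf N\notin o(G^*+H)$; (4) hence $\mathbf P\notin o(G+H)$. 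I will need to double-check that in step (2) the hypothesis $\mathbf O\notin o(H)$ is genuinely used rather than accidentally needing $\mathbf N\notin o(H)$; the asymmetry between the $G$ and $H$ roles is the main obstacle, and resolving it cleanly is where I expect to spend the most effort—likely by noting that $\mathbf O\notin o(H)$ yields an option $H^*$ with $\mathbf N\notin o(H^*)$, moving there, and recursing, so the induction is on $G+H$ with the first move chosen in whichever component currently lacks $\mathbf O$ in its outcome.
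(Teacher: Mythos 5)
There is a genuine gap, and it is concentrated in your steps (2)--(4). Your plan descends only \emph{one} move, to $G^*+H$, and tries to show $\mathbf N\notin o(G^*+H)$. This fails for three compounding reasons. First, to get ``every $G^{**}+H$ omits $\mathbf P$'' from \hyperref[thm:ngenerates]{Next Generation} you would need $\mathbf N\notin o(H)$, which is not a hypothesis (you notice this yourself mid-proof but never resolve it); and ``every $G^*+H'$ omits $\mathbf P$'' would need $\mathbf P\notin o(H')$ for \emph{all} options $H'$ of $H$, whereas $\mathbf O\notin o(H)$ only supplies \emph{one} good option $H^*$. Second, the intermediate claim is simply false: take $G\cong H\cong *2$, so $o(G)=o(H)=\n$ and the hypothesis holds; the only option of $*2$ omitting $\mathbf N$ is $G^*\cong 0$, and then $o(G^*+H)=o(*2)=\n$, so $\mathbf N\in o(G^*+H)$. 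Third, even if the claim held, it would be off by one in the recursion of Definition~\ref{def:3impout}: an option of $G+H$ whose outcome omits $\mathbf N$ certifies $\mathbf O\notin o(G+H)$, not $\mathbf P\notin o(G+H)$. To rule out $\mathbf P$ you must exhibit an option omitting $\mathbf O$, which in turn needs an option \emph{two} moves down omitting $\mathbf N$.

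The paper's proof is exactly the two-move descent you gesture at in your last sentence but never assemble: pick $G^*$ with $\mathbf N\notin o(G^*)$ and $H^*$ with $\mathbf N\notin o(H^*)$ (both exist since $\mathbf O\notin o(G),o(H)$); apply \hyperref[thm:ngenerates]{Next Generation} to the pair $(G^*,H^*)$, using $\mathbf N\notin o(H^*)$, to get $o(G^*+H^*)\subseteq o(G^*)$ and hence $\mathbf N\notin o(G^*+H^*)$; then unwind: $G^*+H^*$ is an option of $G+H^*$ omitting $\mathbf N$, so $\mathbf O\notin o(G+H^*)$, and $G+H^*$ is an option of $G+H$ omitting $\mathbf O$, so $\mathbf P\notin o(G+H)$. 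No fresh induction and no ``dual'' of \hyperref[thm:ngenerates]{Next Generation} is needed; the single application to $G^*+H^*$ does all the work.
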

In other words, a sum of two games cannot produce a winning strategy for Previous unless Other has a winning strategy in one of the summands. 
\begin{proof}
Since $\mathbf O\notin o(G),o(H)$, we can choose options $G^{*}$ and $H^{*}$ such that $\mathbf N\notin o\left(G^{*}\right),o\left(H^{*}\right)$. By \hyperref[thm:ngenerates]{Next Generation}, $\mathbf N\notin o\left(G^{*}+H^{*}\right)$, so that $\mathbf O\notin o\left(G+H^{*}\right)$ and $\mathbf P\notin o(G+H)$.
\end{proof}

With \hyperref[thm:pgeneration]{Other Procreation} in hand, we can strengthen Corollary~\ref{cor:sumundet}.
\begin{corollary}\label{cor:psbuynothing}
If $o(G),o(H)\subseteq\p$, then $G+H$ is undetermined.
\end{corollary}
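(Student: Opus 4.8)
The plan is to show that $o(G), o(H) \subseteq \p$ forces all three of $\mathbf{N}, \mathbf{O}, \mathbf{P}$ out of $o(G+H)$, using the two results just proved. Note first that $o(G) \subseteq \p$ means $\mathbf{N}, \mathbf{O} \notin o(G)$, and likewise $\mathbf{N}, \mathbf{O} \notin o(H)$; in particular both summands are either equal to $\p$ or undetermined, but the uniform hypothesis $\mathbf{N},\mathbf{O}\notin o(\cdot)$ is what I will actually use.

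First, since $\mathbf{N} \notin o(H)$, \hyperref[thm:ngenerates]{Next Generation} gives $o(G+H) \subseteq o(G) \subseteq \p$, so immediately $\mathbf{N}, \mathbf{O} \notin o(G+H)$. It remains only to rule out $\mathbf{P} \in o(G+H)$. But $\mathbf{O} \notin o(G)$ and $\mathbf{O} \notin o(H)$, so \hyperref[thm:pgeneration]{Other Procreation} applies directly and yields $\mathbf{P} \notin o(G+H)$. Combining these, $o(G+H) = \q$, i.e.\ $G+H$ is undetermined.

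I do not anticipate a genuine obstacle here: the statement is essentially the conjunction of the two preceding theorems, applied once each. The only thing to be careful about is making sure the hypotheses line up — specifically that $o(G), o(H) \subseteq \p$ does indeed entail both ``$\mathbf{N} \notin o(H)$'' (needed for Next Generation) and ``$\mathbf{O} \notin o(G), o(H)$'' (needed for Other Procreation) — which is immediate from $\p = \{\mathbf{P}\}$. No induction of its own is required, since both cited results already carry the inductive weight. This corollary strengthens Corollary~\ref{cor:sumundet} because $\q \subseteq \p$, so undetermined summands are a special case of summands with outcome contained in $\p$.
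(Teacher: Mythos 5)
Your proof is correct and follows exactly the paper's argument: one application of \hyperref[thm:ngenerates]{Next Generation} to get $o(G+H)\subseteq o(G)\subseteq\p$, and one application of \hyperref[thm:pgeneration]{Other Procreation} to exclude $\mathbf{P}$. The hypothesis-checking you spell out is implicit in the paper but entirely accurate.
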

\begin{proof}By \hyperref[thm:ngenerates]{Next Generation}, $o(G+H)\subseteq o(G)\subseteq \p$. But by \hyperref[thm:pgeneration]{Other Procreation}, $\mathbf P\notin o(G+H)$. Therefore, $o(G+H)=\q$.
\end{proof}
\begin{corollary}\label{cor:qgetsnop}
If $G$ is undetermined, then $\mathbf{P}\notin o(G+H)$ for any $H$.
\end{corollary}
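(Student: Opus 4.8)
The plan is to reduce the statement to \hyperref[thm:ngenerates]{Next Generation} and \hyperref[thm:pgeneration]{Other Procreation} by a short case analysis on $o(H)$. The point to exploit is that $G$ being undetermined means $o(G)=\q$, so $\mathbf N\notin o(G)$ \emph{and} $\mathbf O\notin o(G)$; these are exactly the hypotheses those two theorems require of a summand. Recall also that $o(H)$ is always a proper subset of $\any$, so at least one of $\mathbf N$, $\mathbf O$, $\mathbf P$ fails to lie in $o(H)$. I would split into three cases according to which element is missing.

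If $\mathbf N\notin o(H)$, then \hyperref[thm:ngenerates]{Next Generation} gives $o(G+H)\subseteq o(G)=\q$, so $\mathbf P\notin o(G+H)$; this case alone already recovers Corollary~\ref{cor:sumundet}. If $\mathbf O\notin o(H)$, then since also $\mathbf O\notin o(G)$, \hyperref[thm:pgeneration]{Other Procreation} immediately gives $\mathbf P\notin o(G+H)$. If $\mathbf P\notin o(H)$, then using $\mathbf N\notin o(G)$ together with the commuted form of \hyperref[thm:ngenerates]{Next Generation} (legitimate since $+$ is commutative up to isomorphism) we get $o(G+H)\subseteq o(H)$, and $\mathbf P\notin o(H)$ then forces $\mathbf P\notin o(G+H)$.

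I do not expect any genuine obstacle here. The only things needing a moment of care are verifying that the three cases are exhaustive --- which is precisely the assertion that $o(H)\neq\any$ --- and checking that \hyperref[thm:ngenerates]{Next Generation} may be applied with the two summands interchanged. The one route I would avoid is re-deriving the corollary from scratch by structural induction on $G+H$: it can be made to work, but it is strictly longer than the case split, because the hypotheses of the two prior theorems already cover every possible shape of $o(H)$ once $G$ is known to be undetermined.
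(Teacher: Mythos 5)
Your proof is correct and is essentially the paper's own argument in contrapositive form: the paper assumes a counterexample, uses the same two applications of \hyperref[thm:ngenerates]{Next Generation} to force $\mathbf N,\mathbf P\in o(H)$, invokes $o(H)\ne\any$ to get $\mathbf O\notin o(H)$, and closes with \hyperref[thm:pgeneration]{Other Procreation}, exactly matching your three cases. Your exhaustiveness check and the commuted use of \hyperref[thm:ngenerates]{Next Generation} are both legitimate, so there is nothing to fix.
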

\begin{proof}
Suppose $(G,H)$ is a counterexample. By \hyperref[thm:ngenerates]{Next Generation}, $\mathbf{N}\in o(H)$ as otherwise $o(G+H)\subseteq o(G)=\q$. Also, since $\mathbf{N}\notin o(G)$, we have $o(G+H)\subseteq o(H)$ so that $\mathbf{P}\in o(H)$ as well. Since $o(H)\ne\any$, we have $\mathbf{O}\notin o(H)$, and the contradiction follows immediately from \hyperref[thm:pgeneration]{Other Procreation}.
\end{proof}

With two players, $G+H$ can have any normal-play outcome not disallowed by the two-player version of \hyperref[thm:ngenerates]{Next Generation} (e.g. if $o(G)=o(H)=\n$ then $o(G+H)$ can be either $\p$ or $\n$). This result has a three-player analogue.

\begin{theorem}\label{thm:allpossible}
All outcomes of a sum of two games not immediately disallowed by \hyperref[thm:ngenerates]{Next Generation} or \hyperref[thm:pgeneration]{Other Procreation} are possible.
\end{theorem}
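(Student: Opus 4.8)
The plan is to prove this by a finite case analysis. By commutativity of disjunctive sum it suffices to consider unordered pairs of achievable outcomes $\{o(G),o(H)\}$, of which there are $\binom{7}{2}+7=28$ by Proposition~\ref{prop:7outs}. For a fixed pair $(a,b)=(o(G),o(H))$, combining \hyperref[thm:ngenerates]{Next Generation} (applied in both orders), \hyperref[thm:pgeneration]{Other Procreation}, and the properness of outcomes, the collection of targets $S$ that must be realized is exactly the set of proper subsets $S\subsetneq\any$ such that $S\subseteq a$ whenever $\mathbf N\notin b$, $S\subseteq b$ whenever $\mathbf N\notin a$, and $\mathbf P\notin S$ whenever $\mathbf O\notin a$ and $\mathbf O\notin b$. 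Reading this off: every proper subset is an open target only when $\mathbf N\in a$ and $\mathbf N\in b$, i.e.\ when $a,b\in\{\n,\no,\pn\}$; and when moreover $a,b\in\{\n,\pn\}$ the value $\mathbf P$ remains forbidden. So the task splits into a few ``wide-open'' pairs, where seven (or four) targets need examples, and many constrained pairs, where only a handful of small targets survive.

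For the constructions I would draw from a small library: the games $0,\,*,\,\{*\},\,*2,\,\{*2\},\,\{\{*2\}\},\,\{*2,\{*2\}\}$ of Proposition~\ref{prop:7outs} realizing all seven single-position outcomes, the outcome-rotation operator $G\mapsto\{G\}$ of Proposition~\ref{prop:sharps cycle} (which fabricates a game of any prescribed outcome and cyclically shifts known examples), small disjunctive sums of nim-heaps (whose outcomes are routine to compute from Definition~\ref{def:3impout} and are already quite varied), and a few purpose-built positions. Every candidate sum's outcome is then a mechanical check against Definition~\ref{def:3impout}. Several families of sub-cases dispatch quickly: when $b=\op$, taking the second summand to be $0$ realizes $S=a$, and \hyperref[thm:ngenerates]{Next Generation} guarantees the ``upper'' part of each constrained target set is never over-achieved, so only ``shrinking'' constructions are needed there; and summing on an undetermined game together with Proposition~\ref{prop:Q dooms} pushes outcomes down toward $\q$.

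The crux will be the wide-open pairs — for instance $(\no,\no)$, $(\no,\pn)$, and $(\no,\n)$ — and within them the ``$\mathbf P$'' targets, since one must produce a sum in which Previous has a winning strategy even though neither summand hands a strategy to Previous, nor to Other in a useful way; note that the naive guess $\{*\}+\{*\}$ has outcome $\pn$ rather than $\p$, so genuinely new games are required. I would attack these with the heuristic that $\mathbf P\in o(G+H)$ forces every option of $G+H$ to have outcome containing $\mathbf O$, which suggests assembling $G$ and $H$ from components whose own outcomes lie in $\{\op,\no,\p\}$; I expect a single well-chosen family — roughly, iterated $\{\cdot\}$'s over sums such as $*2+*2$ and $*+*2$, combined with $0$, $*$, and one fixed undetermined game — to cover all the remaining pairs. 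Once such a family is fixed, the proof is finished by listing, for each of the $28$ pairs and each allowed target, an explicit pair $(G,H)$ and verifying its outcome by Definition~\ref{def:3impout}.
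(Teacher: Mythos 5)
Your approach is the same as the paper's: the theorem is proved by a finite case analysis, exhibiting for each pair of summand outcomes and each target set not excluded by \hyperref[thm:ngenerates]{Next Generation} or \hyperref[thm:pgeneration]{Other Procreation} an explicit pair of games whose sum has that outcome (the paper simply points to Table~\ref{tbl:allsums} in the Appendix, which lists such a witness for every case). Your reduction of the problem is correct and matches the paper's Table~\ref{tbl:addition}: all proper subsets are admissible targets exactly when $\mathbf N\in o(G)\cap o(H)$, with $\mathbf P$ additionally excluded when neither outcome contains $\mathbf O$; and your identification of the hard cases (e.g.\ realizing $\p$ from $(\no,\no)$, where indeed $\{*\}+\{*\}$ gives $\pn$ and the paper instead uses $\{*\}+\{\{\{*\},0\}\}$) is accurate. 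The one substantive shortfall is that the witnesses themselves are the entire content of this proof, and you stop at describing the library from which they would be drawn rather than producing and verifying them; until that list is actually written down and checked (as in the paper's Table~\ref{tbl:allsums}), the argument is a correct plan rather than a complete proof. There is no conceptual gap --- nothing in your outline would fail --- but the finite verification must be carried out.
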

\begin{proof}
For a list of representative examples, see Table~\ref{tbl:allsums} in the \hyperref[sec:appendix]{Appendix}. For instance, $o(*2)=\n$, $o(*)=\pn$, and $o(*2+*)=\no$. 
\end{proof}
The possible outcomes of sums are summarized in Table~\ref{tbl:addition}, where entries indicate that any \emph{subset} (other than $\any$) is possible. For example, if $o(G)=\o$ and $o(H)=\pn$, then $o(G+H)$ may be any of $\q,\n,\p,\pn$.

\begin{table}[h!]
\centering
\resizebox{\linewidth}{!}{%
\begin{tabular}{r|lllllll}
$+$   & $\q$ & $\n$ & $\o$ & $\p$ & $\no$ & $\op$ & $\pn$  \\ \hline
$\q$  & $\q$ & $\n$ & $\q$ & $\q$ & $\no$ & $\q$ & $\n$ \\
$\n$  & $\n$ & $\no$ & $\n$ & $\n$ & $\any$ & $\n$ & $\no$ \\
$\o$  & $\q$ & $\n$ & $\o$ & $\q$ & $\no$ & $\o$ & $\pn$ \\
$\p$  & $\q$ & $\n$ & $\q$ & $\q$ & $\no$ & $\p$ & $\n$ \\
$\no$ & $\no$ & $\any$ & $\no$ & $\no$ & $\any$ & $\no$ & $\any$ \\
$\op$ & $\q$ & $\n$ & $\o$ & $\p$ & $\no$ & $\op$ & $\pn$ \\
$\pn$ & $\n$ & $\no$ & $\pn$ & $\n$ & $\any$ & $\pn$ & $\no$ 
\end{tabular}
}
\caption{The three-player pairwise addition table}
\label{tbl:addition}
\end{table}

As in \cite{propp}, we also examine sums of a game with itself. For example, if $o(G)=\q$ or $o(G)=\p$, then $o(G+G)=\q$ by Corollary~\ref{cor:psbuynothing}. And if $o(G)=\o$, then $o(G+G)=\o$ or $o(G+G)=\q$,  by \hyperref[thm:ngenerates]{Next Generation}. In fact, there are no new obstructions for $o(G+G)$.

\begin{theorem}
$G+G$ can have any outcome not disallowed by \hyperref[thm:ngenerates]{Next Generation} and \hyperref[thm:pgeneration]{Other Procreation}.\end{theorem}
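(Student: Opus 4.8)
The plan is to work out, for each pair of outcomes $(o(G), o(H))$ consistent with the addition table above but \emph{a priori} harder to realize by a single game played against itself, an explicit witness game $G$, and then verify the value of $o(G+G)$ directly. Since \hyperref[thm:ngenerates]{Next Generation} and \hyperref[thm:pgeneration]{Other Procreation} are the only obstructions, the target set of achievable outcomes for $G+G$ is: $\q$ (from $o(G)\in\{\q,\p\}$ or more), $\o$ (from $o(G)=\o$), $\n$ and $\no$ (from $o(G)=\n$ or $\pn$), $\pn$ (from $o(G)=\pn$ or $\op$), $\op$ (from $o(G)=\op$), and $\any$ (from $o(G)=\no$). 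The cases where $o(G)$ already determines $o(G+G)$ uniquely — namely $\q,\p \rightsquigarrow \q$ and $\op\rightsquigarrow\op$ with $G=0$, and $\o\rightsquigarrow\o$ via an easy argument — I would dispatch first and quickly. The substantive work is exhibiting games $G$ with $o(G)=\n$ but $o(G+G)=\no$ (rather than $\n$); with $o(G)=\pn$ but $o(G+G)$ equal to each of $\n$, $\no$, $\pn$ as the table permits; and, most importantly, with $o(G)=\no$ but $o(G+G)=\any$.

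First I would revisit the examples already in the paper: $o(*2)=\n$ and $o(*2+*2)=o(2\cdot *2)$ can be computed from the \textsc{Nim} analysis (or directly), and $o(*)=\pn$ with $o(*+*)=\no$ is immediate since $*+*\cong\{*,*+*\}$ — wait, rather $*+*\cong\{0+*\} = \{*\}$, giving $o(*+*)=\no$ by Proposition~\ref{prop:impouts}. So $\pn\rightsquigarrow\no$ is free. For $\pn\rightsquigarrow\n$ and $\pn\rightsquigarrow\pn$, and for the $\n\rightsquigarrow\no$ case, I expect small games built from nim-heaps and one or two levels of bracketing (e.g. games of the form $\{0,*k\}$, $\{*j,*k\}$, or $\{*,\{*2\}\}$-style mixtures) to suffice; each verification is a finite recursive computation using Definition~\ref{def:3impout} together with the addition table for the cross-terms. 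For $\q\rightsquigarrow\q$ with nontrivial $G$, Corollary~\ref{cor:psbuynothing} already tells us every undetermined $G$ works, and the paper has exhibited undetermined games.

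The main obstacle will be the case $o(G)=\no$, $o(G+G)=\any$: I need a game that is "ambiguous for Next and Other" but whose self-sum is good for \emph{all three} players. The natural candidate is to take $G=\{*2\}+$something, or better, to build $G$ so that $G+G$ contains options moving to positions with each of $\mathbf N$, $\mathbf O$, $\mathbf P$ in their outcomes in the right pattern; concretely I would try $G\cong\{*2, *\}$ or a slightly larger game and compute $o(G+G)$ by expanding its options as $G'+G$, using Table~\ref{tbl:addition} to read off the cross-terms, then checking that a $\mathbf P$-option, an all-$\mathbf N$ family, and an all-$\mathbf O$ family all appear. If the smallest natural candidates fail, the fallback is to adjoin to a known $\no$-game a harmless summand or an extra option (a "reverting"-style move, in the terminology the paper uses) that injects exactly the missing outcome into $G+G$ without disturbing $o(G)$; Proposition~\ref{prop:sharps cycle} (the shift $G\mapsto\{G\}$ cycling $\mathbf N\to\mathbf O\to\mathbf P$) is a useful tool for manufacturing a game with a prescribed outcome to serve as such an option.

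Finally I would assemble the witnesses into a table parallel to Table~\ref{tbl:allsums}, one row per target outcome of $G+G$, listing $G$, $o(G)$, and $o(G+G)$, and remark that the entries not appearing are exactly those excluded by \hyperref[thm:ngenerates]{Next Generation} and \hyperref[thm:pgeneration]{Other Procreation}, which completes the proof.
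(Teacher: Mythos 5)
Your overall strategy---exhibit an explicit witness $G$ for each admissible pair $\left(o(G),o(G+G)\right)$ and verify by direct computation---is exactly the paper's approach: its proof is a pointer to Table~\ref{tbl:doublesums}, which lists one witness for each of the $23$ admissible pairs. However, your case analysis contains errors that would derail the execution. Most seriously, you declare the ``main obstacle'' to be finding $G$ with $o(G)=\no$ and $o(G+G)=\any$; no such $G$ exists, since the outcome of any game is a \emph{proper} subset of $\any$ (the first proposition of the Observations subsection). The entry $\any$ in Table~\ref{tbl:addition} means ``any proper subset is possible,'' so what the $\no$ row actually demands is seven separate witnesses, one for each of $\q,\n,\o,\p,\no,\op,\pn$---including $\p$, $\op$, and $\pn$, which your plan never addresses. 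You would spend your effort chasing an impossible target while omitting the cases that genuinely need witnesses.

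You also misidentify which pairs are admissible. You claim $o(G+G)=\pn$ is reachable from $o(G)=\pn$ or from $o(G)=\op$; both are excluded. \hyperref[thm:pgeneration]{Other Procreation} forces $\mathbf P\notin o(G+G)$ whenever $\mathbf O\notin o(G)$, ruling out the first, and \hyperref[thm:ngenerates]{Next Generation} gives $o(G+G)\subseteq\op$ when $o(G)=\op$, ruling out the second; the only source for $o(G+G)=\pn$ is $o(G)=\no$. Conversely, you treat $\o\rightsquigarrow\o$ and $\op\rightsquigarrow\op$ as ``determined uniquely,'' but they are not: $o(G)=\o$ permits $o(G+G)\in\{\q,\o\}$ and $o(G)=\op$ permits $o(G+G)\in\{\q,\o,\p,\op\}$, and every one of these occurs and needs a witness (see Tables~\ref{tbl:doubling} and \ref{tbl:doublesums}). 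Finally, organizing the concluding table ``one row per target outcome of $G+G$'' collapses the $23$ required pairs to at most seven rows, whereas the theorem requires realizing every admissible pair. The individual verifications you do carry out ($*2$ for $\n\rightsquigarrow\o$, $*$ for $\pn\rightsquigarrow\no$, $0$ for $\op\rightsquigarrow\op$, and any undetermined $G$ for $\q\rightsquigarrow\q$ via Corollary~\ref{cor:psbuynothing}) are correct and agree with the paper's table, but as written the proposal both pursues an impossible case and leaves most of the necessary cases unproved.
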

\begin{proof}
For examples of all $23$ cases, see Table~\ref{tbl:doublesums} in the \hyperref[sec:appendix]{Appendix}. For instance, $o(*2)=\n$ and $o(*2+*2)=\o$.
\end{proof}

The possible outcomes of the sum of a game with itself are summarized in Table~\ref{tbl:doubling}.

\begin{table}[h!]
\centering
\begin{tabular}{r|l}
$o(G)$   & $o(G+G)$ \\ \hline
$\q$  & $=\q$ \\
$\n$  & $\subseteq\no$ \\
$\o$  & $\subseteq\o$ \\
$\p$  & $=\q$ \\
$\no$ & $\subseteq\any$ \\
$\op$ & $\subseteq\op$ \\
$\pn$ & $\subseteq\no$ 
\end{tabular}
\caption{The three-player doubling table}
\label{tbl:doubling}
\end{table}

In the impartial two-player setting, the Tweedledum\textendash Tweedledee strategy yields $o(G+G)=\p$. A similar result holds for three players.
\begin{proposition}\label{prop:tripleno}
For all games $G$, $\mathbf N\notin o(G+G+G)$.
\end{proposition}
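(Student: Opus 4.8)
The statement to prove is that $\mathbf N \notin o(G+G+G)$ for all games $G$; equivalently, Next never has a winning strategy in the sum of three copies of a game. The plan is to argue by contradiction combined with structural induction: I would suppose $\mathbf N \in o(G+G+G)$, which by Definition~\ref{def:3impout} means there is an option of $G+G+G$ — necessarily of the form $G^*+G+G$ for some option $G^*$ of $G$ — with $\mathbf P \in o(G^*+G+G)$. The goal is to derive a contradiction by tracking what happens when we apply a symmetry/pairing argument to the remaining two copies of $G$.

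The key observation is that $\mathbf P \in o(G^*+G+G)$ means \emph{every} option of $G^*+G+G$ has outcome containing $\mathbf O$. In particular, the option $G^*+G^*+G$ (move in the second copy, matching the first move) must satisfy $\mathbf O \in o(G^*+G^*+G)$. By Proposition~\ref{prop:sharps cycle}-style reasoning, $\mathbf O \in o(H)$ means every option of $H$ has outcome containing $\mathbf N$; applying this to $G^*+G^*+G$ with the move $G^*+G^*+G^*$ (matching again in the third copy) gives $\mathbf N \in o(G^*+G^*+G^*) = o(3\cdot G^*)$. But $G^*$ is a strictly smaller game, so by the inductive hypothesis $\mathbf N \notin o(3 \cdot G^*)$ — contradiction. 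I would want to double-check the direction of each implication against Definition~\ref{def:3impout}: $\mathbf N \in o(G)$ iff some option has $\mathbf P$; $\mathbf O \in o(G)$ iff all options have $\mathbf N$; $\mathbf P \in o(G)$ iff all options have $\mathbf O$. The chain $\mathbf P(G^*+G+G) \Rightarrow \mathbf O(G^*+G^*+G) \Rightarrow \mathbf N(G^*+G^*+G^*) \Rightarrow$ contradiction uses each of these once and respects the cyclic structure $\mathbf N \to \mathbf P \to \mathbf O \to \mathbf N$ of Proposition~\ref{prop:sharps cycle}.

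The main obstacle I anticipate is bookkeeping the case where $G$ has \emph{no} options at all, i.e. $G = 0$: then $G+G+G = 0$ and $o(0) = \op$, which indeed does not contain $\mathbf N$, so the base case holds vacuously (there is no option $G^*$ to begin the argument). More generally, the induction is on the game $G$ (or on the total position $3\cdot G$) under the well-foundedness guaranteed by shortness, and the only subtlety is ensuring that each "matching move" $G^* + G \to G^* + G^*$ is genuinely a legal option of the sum — which it is, since moving in any summand is always available. One should also confirm that the symmetry of $G+G+G$ (and of $G^*+G^*+G$) is being used only to \emph{select} a convenient option inside a universally-quantified condition, never to conclude something about all options from one, so the logic stays valid. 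I expect the whole argument to be three or four lines once the implications are lined up correctly; the delicate part is purely getting the quantifier directions right.
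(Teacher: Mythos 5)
Your proof is correct and is essentially the paper's argument: the paper disposes of this in one line by saying the other two players mirror Next's moves in the other two components, and your chain $\mathbf P\in o(G^*+G+G)\Rightarrow\mathbf O\in o(G^*+G^*+G)\Rightarrow\mathbf N\in o(3\cdot G^*)$, contradicting the inductive hypothesis, is precisely that mirroring strategy unrolled one round and made rigorous against Definition~\ref{def:3impout}. All the steps check out, including the base case $G\cong 0$ and the use of symmetry only to normalize which component is moved in.
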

\begin{proof}
The other two players can guarantee Next's loss by mirroring their moves in the other two components.
\end{proof}

Combined with \hyperref[thm:ngenerates]{Next Generation} and \hyperref[thm:pgeneration]{Other Procreation}, this theorem reduces the possibilities for $o(G+G+G)$ to those listed in Table~\ref{tbl:trebling}. 

\begin{table}[h!]
\centering
\begin{tabular}{r|l}
$o(G)$   & $o(G+G+G)$ \\ \hline
$\q$  & $=\q$ \\
$\n$  & $\subseteq\op$\\
$\o$  & $\subseteq\o$ \\
$\p$  & $=\q$ \\
$\no$ & $\subseteq\op$ \\
$\op$ & $\subseteq\op$ \\
$\pn$ & $\subseteq\op$ 
\end{tabular}
\caption{The three-player trebling table}
\label{tbl:trebling}
\end{table}

\begin{proposition}\label{prop:trebling}
All cases in Table~\ref{tbl:trebling} are possible, except perhaps for $o(G)=\n$, where it is not known whether $\mathbf P\in o(G+G+G)$ occurs.
\end{proposition}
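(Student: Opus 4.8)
The plan is to exhibit, for each row of Table~\ref{tbl:trebling} (other than the unresolved $o(G)=\n$ case), a witnessing game $G$ whose trebling $G+G+G$ realizes the target outcome, and to verify each by direct computation using \hyperref[thm:ngenerates]{Next Generation}, \hyperref[thm:pgeneration]{Other Procreation}, and Proposition~\ref{prop:tripleno}. The cases where the table entry is ``$=\q$'' (namely $o(G)=\q$ and $o(G)=\p$) require nothing new: they follow from Corollary~\ref{cor:psbuynothing} together with Corollary~\ref{cor:sumundet}, since a sum containing an undetermined (or $\p$-outcome) summand is undetermined. Similarly, the ``$\subseteq$'' entries already constrain the possible outcomes, so what must be shown is that each admissible outcome in those rows is actually attained.

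First I would collect the small examples from Proposition~\ref{prop:7outs} and Table~\ref{tbl:addition}, since several trebling outcomes can be read off by combining known pairwise sums: e.g. starting from $o(*2)=\n$ and the entry for $\n+\n$, one gets candidate values of $o(*2+*2)$, and then adding a third copy and consulting the table again. For the row $o(G)=\o$, \hyperref[thm:ngenerates]{Next Generation} forces $o(G+G+G)\subseteq\o$, so it suffices to produce one $G$ with $o(G)=\o$ and $o(3\cdot G)=\o$ and one with $o(3\cdot G)=\q$; the game $\{*2\}$ (with outcome $\o$) and its variants should serve, possibly after taking a disjunctive sum with a carefully chosen undetermined or $\p$-type game to kill the $\mathbf O$. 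For the rows $o(G)=\op$, $\no$, and $\pn$, where the target is any subset of $\op$, I would build $G$ as a small combination of nim-heaps and singleton-wrapped games (the $\{\cdot\}$ operation, whose effect on outcomes is given by Proposition~\ref{prop:sharps cycle}) so that the three-fold mirroring strategy of Proposition~\ref{prop:tripleno} removes $\mathbf N$ while the remaining options can be tuned to include or exclude $\mathbf O$ and $\mathbf P$ independently. As in the preceding theorems, the cleanest presentation is to tabulate the witnesses: I would point to a table in the Appendix (analogous to Table~\ref{tbl:doublesums}) listing, for each case, a game $G$, its outcome, and the outcome of $G+G+G$, with the verifications being routine structural inductions.

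The main obstacle is precisely the $o(G)=\n$ case that the statement flags as open: when $o(G)=\n$, Proposition~\ref{prop:tripleno} gives $\mathbf N\notin o(3\cdot G)$ and the other two propositions give $o(3\cdot G)\subseteq\op$, but it is not clear whether $\mathbf P$ can ever be forced into $o(3\cdot G)$ — intuitively, making Previous win three coordinated copies at once seems to require an interaction that the mirroring argument works against, yet no proof of impossibility is available either. So the honest scope of the proposition is: all other cases are settled by explicit examples, and this one case is left as the stated exception. I would therefore not attempt to resolve it here, but simply remark (as the statement does) that whether $\mathbf P\in o(G+G+G)$ is achievable for some $G$ with $o(G)=\n$ remains unknown, and move the reader to the Appendix table for the positive constructions.
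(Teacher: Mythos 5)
Your approach matches the paper's: its proof is simply a pointer to a table of explicit witnesses in the Appendix (Table~\ref{tbl:treblesums}), one game per realizable case, with the $o(G)=\n$, $\mathbf P\in o(3\cdot G)$ question left open exactly as you describe. One small caution: your parenthetical claim that any sum containing an undetermined (or $\p$-outcome) summand is undetermined is false in general (the $\q+\n$ entry of Table~\ref{tbl:addition} can be $\n$); what settles the $\q$ and $\p$ rows is Corollary~\ref{cor:psbuynothing} applied to summands \emph{all} of whose outcomes lie in $\p$, as in the discussion preceding Table~\ref{tbl:trebling}.
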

\begin{proof}
See Table~\ref{tbl:treblesums} in the \hyperref[sec:appendix]{Appendix} for example games covering the known cases. For instance, $o(*)=\pn$, but $o(*+*+*)=\op$.
\end{proof} 

\subsubsection{Equality of Impartial Games}

As in the two-player case, we are interested in when games are similar enough to be replaced in disjunctive sums. As is standard, we define $=$ by the fundamental equivalence (see \cite{cgt}).
\begin{defn} (Impartial Equality)
We say $G$ \textit{equals} $H$ and write $G=H$ if $o(G+X)=o(H+X)$ for all impartial games $X$.
\end{defn}

We first examine reversible moves, as they are key to understanding equality classes of two-player games, both in normal and mis\`{e}re play. Our presentation is based on the similar Definition~V.1.3 in \cite{cgt}.

\begin{defn}
Let $G,H$ be games. We say that $H$ is \textit{revertible to} $G$ if both of the following hold.
\begin{itemize}
\item For every option $G'$ of $G$, $H$ has a corresponding option $\widehat{G'}$ equal to $G'$.
\item For every option $H'$ of $H$ not equal to some $G'$, $H'$ has a second option ${{H'}^{**}}$ equal to $G$.
\end{itemize} 
\end{defn}

In two-player settings, if $H$ is revertible to $G$ (with a slight modification in mis\`{e}re play), then  $H=G$. With three-players under normal play, only one conclusion holds in general (see Proposition~\ref{prop:threestar}).

\begin{theorem}\label{thm:reverting}
If $H$ is revertible to $G$, then $o(H+X)\subseteq o(G+X)$ for all $X$. 
\end{theorem}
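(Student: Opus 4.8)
The statement to prove is: if $H$ is revertible to $G$, then $o(H+X)\subseteq o(G+X)$ for all $X$. The plan is to proceed by structural induction on the pair $(H+X, G+X)$, showing the three containments $\mathbf{N}\in o(H+X)\Rightarrow\mathbf{N}\in o(G+X)$, $\mathbf{O}\in o(H+X)\Rightarrow\mathbf{O}\in o(G+X)$, and $\mathbf{P}\in o(H+X)\Rightarrow\mathbf{P}\in o(G+X)$. Note that if $X$ is a game, then any option of $H+X$ is either of the form $H'+X$ for an option $H'$ of $H$, or of the form $H+X'$ for an option $X'$ of $X$; and the moves in the $X$-component are ``shared'' between $H+X$ and $G+X$, so those should cause no trouble. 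The work is entirely in controlling the moves in the $H$-component using the two clauses of revertibility.

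First I would handle $\mathbf{N}$. Suppose $\mathbf{N}\in o(H+X)$, witnessed by an option with $\mathbf{P}$ in its outcome. If the witnessing move is in $X$, i.e. $\mathbf{P}\in o(H+X')$, then by Theorem~\ref{thm:ngenerates}-style reasoning isn't quite what we want; instead I'd observe that $H$ is still revertible to $G$ against $X'$, so by induction $\mathbf{P}\in o(H+X')\Rightarrow\mathbf{P}\in o(G+X')$, giving $\mathbf{N}\in o(G+X)$. If instead the move is in $H$, so $\mathbf{P}\in o(H'+X)$ for some option $H'$ of $H$: here I split on whether $H'$ equals some option $G'$ of $G$. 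If $H'=G'$, then $\mathbf P\in o(H'+X)=o(G'+X)$ (using the definition of equality), so $\mathbf N\in o(G+X)$. If $H'$ is not equal to any option of $G$, then by the second clause of revertibility, $H'$ has a second option $H'^{**}$ with $H'^{**}=G$. Now I want to reach a position with $\mathbf P$ in its outcome that is an option of $G+X$. The key point: from $H'+X$, the fact that $\mathbf{P}\in o(H'+X)$ means (unwinding Definition~\ref{def:3impout}) every option of $H'+X$ has $\mathbf{O}$ in its outcome — in particular $H'^{**}+X$ has $\mathbf{O}$ in its outcome, i.e. $\mathbf{O}\in o(G+X)$ since $H'^{**}=G$. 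But $\mathbf{O}\in o(G+X)$ is not the same as $\mathbf N\in o(G+X)$; I need to pass back one more level. This suggests the cleanest route is actually to prove the three implications simultaneously and feed the $\mathbf{O}$-case output of the reversible-move analysis into the appropriate induction hypothesis. The precise bookkeeping — matching "$\mathbf{P}\in o(H'+X)$ forces $\mathbf{O}\in o(H'^{**}+X)=o(G+X)$, then ascend" with the other two cases — is the part that needs care.

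For $\mathbf{O}$: suppose $\mathbf{O}\in o(H+X)$, so every option of $H+X$ has $\mathbf{N}$ in its outcome. I want every option of $G+X$ to have $\mathbf{N}$ in its outcome. Options $G+X'$: these are dominated by $H+X'$ via the induction hypothesis applied to $H$ revertible to $G$ against $X'$ — but wait, I'd need $\mathbf N\in o(H+X')\Rightarrow\mathbf N\in o(G+X')$, which is exactly the inductive statement, and $\mathbf N\in o(H+X')$ because $H+X'$ is an option of $H+X$. Good. Options $G'+X$: by the first clause of revertibility, $H$ has an option $\widehat{G'}=G'$, and $\widehat{G'}+X$ is an option of $H+X$, so $\mathbf N\in o(\widehat{G'}+X)=o(G'+X)$. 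Hence $\mathbf{O}\in o(G+X)$. The $\mathbf{P}$-case is analogous to the $\mathbf{O}$-case, working one level deeper (every option of $G+X$ must have $\mathbf{O}$ in its outcome), reusing the same two mechanisms plus the induction hypothesis in its $\mathbf O$-implication form.

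The main obstacle is the $\mathbf{N}$-case with a reversing move: getting from "$\mathbf{P}\in o(H'+X)$ and $H'^{**}=G$" to "$\mathbf{N}\in o(G+X)$" requires descending into $H'+X$ to learn $\mathbf O\in o(G+X)$ and then re-ascending, and one must verify this re-ascent lands correctly and that the induction is well-founded (the positions invoked are genuine subpositions of $H+X$). I expect the argument to be slightly delicate about which of the three implications is being used where, but no computation is involved; it is purely a matter of threading Definition~\ref{def:3impout} through the two revertibility clauses. Everything else — the moves living in $X$ — is routine via the induction hypothesis.
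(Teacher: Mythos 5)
Your overall architecture matches the paper's proof: three implications handled separately, induction on $X$ for moves in the $X$-component, and the first clause of revertibility handling the options $\widehat{G'}+X$ in the $\mathbf{O}$- and $\mathbf{P}$-cases. Those parts are fine. But there is a genuine gap in the one subcase that actually uses the second clause of revertibility, and it comes from a misreading of the definition. A ``second option'' ${H'}^{**}$ of $H'$ is an option \emph{of an option} of $H'$ (two moves deep), not a sibling option of $H'$; this is what the double star denotes, and it is confirmed by the $N$-player version, which requires an $(N-1)^{\text{st}}$ option. Consequently ${H'}^{**}+X$ is not an option of $H'+X$, and your step ``every option of $H'+X$ has $\mathbf{O}$ in its outcome, in particular ${H'}^{**}+X$'' is not available. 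With the correct reading the count comes out exactly right and no repair is needed: $\mathbf{P}\in o(H'+X)$ forces $\mathbf{O}\in o(Y)$ for every option $Y$ of $H'+X$, hence in particular for the intermediate position ${H'}^{*}+X$ on the path to ${H'}^{**}$; then $\mathbf{O}\in o({H'}^{*}+X)$ forces $\mathbf{N}\in o(Z)$ for every option $Z$, in particular $\mathbf{N}\in o({H'}^{**}+X)=o(G+X)$, which is precisely the desired conclusion. Strategically: after Next's reversing move the other two players each move once in the $H$-component, returning the turn to Next in a position equal to $G+X$.

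The workaround you sketch for the off-by-one you created --- landing at $\mathbf{O}\in o(G+X)$ and then ``re-ascending'' or ``feeding the $\mathbf{O}$-case output into the appropriate induction hypothesis'' --- does not work and cannot be made to work. Definition~\ref{def:3impout} propagates information only downward (the outcome of a position is determined by the outcomes of its options), so there is no ascent from a fact about $o(G+X)$ to a different fact about $o(G+X)$; and the induction hypothesis applies only to genuine subpositions, whereas $G+X$ is not a subposition of $H+X$. So as written the $\mathbf{N}$-case with a reversing move is not proved. Once you replace the sibling-option reading with the option-of-an-option reading, your argument collapses to the paper's proof of Theorem~\ref{thm:reverting} essentially verbatim.
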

\begin{proof}

Suppose $\mathbf{N}\in o(H+X)$. If the winning move has the form $H+X^{*}$, then $\mathbf{P}\in o(H+X^{*})$ so that $\mathbf{P}\in o(G+X^{*})$ by induction. Hence, the same move would win in $G+X$, and $\mathbf{N}\in o(G+X)$. Now suppose the winning move in $H+X$ is to something of the form $\widehat{G^*}+X$ with $\widehat{G^*}$ equal to an option ${G^*}$ of $G$. Then $\mathbf{P}\in o(\widehat{G^*}+X)=o(G^*+X)$, so that $\mathbf{N}\in o(G+X)$. Finally, if the winning move is to an option $H^*$ not equal to an option of $G$, then the other two players can move to yield some ${H}^{***}+X$ where ${H}^{***}=G$. Note that $\mathbf{N}\in o({H}^{***}+X)$ as we started with a winning move in $H+X$. From ${H}^{***}=G$ it follows that $\mathbf{N}\in o(G+X)$, as desired.

Suppose $\mathbf{O}\in o(H+X)$. Then $\mathbf{N}\in o(H+X')$ for all options $X'$. By induction, $\mathbf{N}\in o(G+X')$ for all $X'$ as well. Since $\mathbf{O}\in o(H+X)$, we also have $\mathbf{N}\in o(\widehat{G'}+X)=o(G'+X)$ for all options $G'$ as well. We have shown $\mathbf{N}\in o\left((G+X)'\right)$ for all options, so $\mathbf{O}\in o(G+X)$. 

The argument for $\mathbf{P}\in o(H+X)$ is exactly analogous to that for $\mathbf{O}$.
\end{proof}

The phrasing of Corollary~\ref{cor:replacement} is from the Replacement Lemma~V.1.2 in \cite{cgt}.
\begin{corollary}\label{cor:replacement} 
Suppose that $G\cong \left\{G_i':\,i\in I \right\}$, $H\cong \left\{H_i':\,i\in I \right\}$, and $G_i'=H_i'$ for all $i$. Then $G=H$.
\end{corollary}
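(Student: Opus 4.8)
The plan is to derive Corollary~\ref{cor:replacement} from Theorem~\ref{thm:reverting} by showing that each of $G,H$ is revertible to the other. Fix $G\cong\{G_i':i\in I\}$ and $H\cong\{H_i':i\in I\}$ with $G_i'=H_i'$ for all $i$. To see that $H$ is revertible to $G$, check the two bullet points of the definition. For the first, given an option $G_i'$ of $G$, the corresponding option $H_i'$ of $H$ satisfies $H_i'=G_i'$ by hypothesis, so we may take $\widehat{G_i'}=H_i'$. For the second, we must handle options $H'$ of $H$ \emph{not} equal to some $G'$: but every option of $H$ is of the form $H_i'$ for some $i$, and $H_i'=G_i'$ is an option of $G$ equal to an option of $G$; hence there are no such $H'$, and the second condition is vacuously true. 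So $H$ is revertible to $G$, and by Theorem~\ref{thm:reverting}, $o(H+X)\subseteq o(G+X)$ for all $X$.

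By the perfect symmetry of the hypotheses in $G$ and $H$ (the relation $G_i'=H_i'$ is symmetric, since $=$ is visibly symmetric from its definition via the fundamental equivalence), the identical argument shows $G$ is revertible to $H$, giving $o(G+X)\subseteq o(H+X)$ for all $X$. Combining the two inclusions yields $o(G+X)=o(H+X)$ for every impartial game $X$, which is exactly the definition of $G=H$.

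I expect this proof to be essentially immediate; there is no real obstacle, since the "revertible to" definition was tailored precisely so that a component-wise replacement of options makes the non-matching-option clause vacuous. The only point requiring a moment's care is confirming that the second bullet of the revertibility definition is vacuously satisfied — i.e. that under the isomorphism-indexed pairing, every option of $H$ is literally equal (not merely isomorphic) to an option of $G$, which is immediate from the hypothesis $G_i'=H_i'$. One should also note in passing that $=$ is symmetric and reflexive directly from its definition, so invoking symmetry to get the reverse inclusion is legitimate.
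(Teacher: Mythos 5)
Your proof is correct and follows the same route as the paper: verify that $H$ is revertible to $G$ (with the second clause of revertibility holding vacuously), apply Theorem~\ref{thm:reverting} for one inclusion, and invoke symmetry for the other. You simply spell out the revertibility check in more detail than the paper does.
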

\begin{proof}
By symmetry, it suffices to show that $o(H+X)\subseteq o(G+X)$ for all $X$. But that follows immediately from Theorem~\ref{thm:reverting} since $H$ is revertible to $G$. 
\end{proof}

Unfortunately, the reverse inclusion for Theorem~\ref{thm:reverting} does not always hold in this three-player setting. For instance, it is not true that $3\cdot*=0$, even though in the two-player setting, $2\cdot*=0$ holds in the both the partizan normal-play and impartial mis\`{e}re-play contexts.

\begin{proposition}\label{prop:threestar}
$3\cdot*\ne0$.
\end{proposition}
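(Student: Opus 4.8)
The plan is to produce a single impartial game $X$ with $o(3\cdot*+X)\neq o(X)$; since $o(X)=o(0+X)$, this is exactly what ``$3\cdot*\neq 0$'' asserts. A structural remark narrows the search first. The options of $3\cdot*$ are all $\cong 2\cdot*$, and computing $o(*)=\pn$, then $o(2\cdot*)=\no$, then $o(3\cdot*)=\op$ shows $\mathbf N\notin o(3\cdot*)$; so \hyperref[thm:ngenerates]{Next Generation} gives $o(3\cdot*+X)\subseteq o(X)$ for \emph{every} $X$. A witness must therefore make this inclusion \emph{strict} — adjoining $3\cdot*$ has to destroy some player's winning strategy. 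This already rules out the obvious candidates: a direct check gives $o(3\cdot*+X)=o(X)$ for each of the four games $X$ of rank at most $2$ (namely $0,*,\{*\},*2$), and since for a rank-minimal counterexample $X$ the only new option-outcome appearing in $o(3\cdot*+X)$ relative to $o(X)$ is $o(2\cdot*+X)$, a short case analysis (using \hyperref[thm:ngenerates]{Next Generation} and \hyperref[thm:pgeneration]{Other Procreation} to constrain $o(2\cdot*+X)$) extends this to all $X$ of rank $3$. So the witness has rank at least $4$.

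The witness I would take is $X=\{\{*,\{*\}\}\}$, verified by a bottom-up recursion that is a pure application of Definition~\ref{def:3impout} at each step. One records $o(\{*\})=\no$ and the values $o(k\cdot*+\{*\})$ for $k=0,1,2,3$, namely $\no,\op,\pn,\no$; then, for $Y=\{*,\{*\}\}$, that $o(Y)=\no$ and $o(k\cdot*+Y)=\no,\pn,\n,\no$; and finally, for $X=\{Y\}$, that $o(X)=\op$ and $o(k\cdot*+X)=\op,\n,\o,\p$. In particular $o(3\cdot*+X)=\p\neq\op=o(X)$, which proves the proposition; note that here $3\cdot*$ has destroyed Other's winning strategy, even though $\mathbf N\notin o(3\cdot*)$.

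The genuinely hard part is locating $X$, not verifying it: $3\cdot*$ agrees with $0$ on every Nim-pile test I tried and on every game of rank at most $3$, so one is forced up to a rank-$4$, ``tempo-sensitive'' game in which the three spare $*$-moves let two of the players re-synchronize the parity so as to trap the player who would otherwise win in $X$. Once $X$ is fixed, the remaining work is just the (somewhat lengthy but entirely mechanical) propagation of $o$-values through the games $k\cdot*+Z$ with $Z$ a subposition of $X$ and $k\le 3$.
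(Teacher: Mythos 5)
Your proof is correct and follows the same strategy as the paper's: exhibit a single distinguishing game $X$ and verify $o(3\cdot*+X)\ne o(X)$ by direct computation. The paper uses the witness $X\cong\{\{0,*+*\}\}$ (with $o(X)=\no$ but $o(3\cdot*+X)=\n$) while you use $X\cong\{\{*,\{*\}\}\}$ (with $o(X)=\op$ but $o(3\cdot*+X)=\p$); in both cases adjoining $3\cdot*$ destroys Other's winning strategy, and your outcome tables check out.
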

\begin{proof}
It is useful to give the players consistent names throughout, even though which one is next to play varies; for this purpose, we call the first three players to move ``Nicky'', ``Olly'', and ``Pat''.

Define $X\cong\{\{0,*+*\}\}$. In $0+X$, Olly can win by making the move to $0$. But in the sum $3\cdot*+X$, Nicky and Pat can stop Olly from winning. First, Nicky moves in the $3\cdot*$ component to $*+*$. Then, no matter which move Olly makes, it's possible for Pat to move to a position isomorphic to $\{3\cdot*,*,\{0,*+*\}\}$, from which Nicky can move to $3\cdot*$. The lines of play are illustrated in the game graph below.
\end{proof}

\begin{center}
\begin{tikzpicture}
\begin{scope}[very thick,decoration={
    markings,
    mark=at position 0.4 with {\arrow{>}}}
    ] 
\draw [-] (0.,3.) -- (0.,0.) node [below] {$\op$}; 
\draw [-] (0.,4.) -- (0.,3.) node [right] {$\pn$};
\draw [-] (-1.,4.) -- (0.,3.);
\draw [-] (-1.,4.) -- (-1.,1.) node [below left] {$\pn$};
\draw [-] (0.,3.) -- (-2.,2.) node [below left] {$\no$};
\draw[postaction={decorate}] [-] (-2.,2.) -- (-1.,1.);
\draw[postaction={decorate}] [-] (-1.,1.) -- (0.,0.);
\draw[postaction={decorate}] [-] (-3.,3.) -- (-2.,2.);
\draw[postaction={decorate}] [-] (-1.,4.) -- (-3.,3.) node [below left] {$\op$};
\draw [-] (-4.,4.) -- (-3.,3.);
\draw[postaction={decorate}] [-] (-2.,5.) -- (-1.,4.) node [right] {$\n$};
\draw [-] (-2.,5.) -- (-4.,4.) node [below left] {$\pn$};
\draw [-] (-2.,5.) -- (-2.,2.);
\draw[postaction={decorate}] [-] (-1.,5.) -- (-1.,4.);
\draw [-] (-1.,5.) -- (0.,4.) node [above right] {$o(X)=\no$};
\draw [-] (-5.,5.) -- (-4.,4.);
\draw [-] (-3.,6.) -- (-3.,3.);
\draw [-] (-3.,6.) -- (-2.,5.) node [right] {$\no$};
\draw [-] (-3.,6.) -- (-5.,5.) node [above left] {$\no$};
\draw[postaction={decorate}] [-] (-2.,6.) -- (-2.,5.);
\draw[postaction={decorate}] [-] (-2.,6.) -- (-1.,5.) node [above right] {$\o$};
\draw [-] (-3.,7.) -- (-3.,6.) node [above left] {$\pn$};
\draw[postaction={decorate}] [-] (-3.,7.) node [above] {$o(3\cdot*+X)=\n$} -- (-2.,6.) node [above right] {$\p$};
\end{scope}
\foreach \x in {0,1,2,3,4,5}
{\draw[fill] (-\x,\x) circle [radius=2pt];}
\foreach \x in {0,2,3}
{\draw[fill] (-\x,\x+3) circle [radius=2pt];}
\foreach \x in {0,1,2,3}
{\draw[fill] (-\x,\x+3) circle [radius=2pt];
\draw[fill] (-\x,\x+4) circle [radius=2pt];}
\end{tikzpicture}
\end{center}

\begin{defn}\label{def:absorb}A game $G$ is said to be \textit{absorbing} if $G+H$ is undetermined for any game $H$.
\end{defn}

\begin{proposition}\label{prop:absorbingunique}
If $G$ and $H$ are absorbing games, then $G=H$.
\end{proposition}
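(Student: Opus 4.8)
The plan is simply to unwind the two definitions involved; no real work is needed. By Definition~\ref{def:absorb}, a game $G$ is absorbing exactly when $o(G+X)=\q$ for every game $X$. Hence, if both $G$ and $H$ are absorbing, then for every impartial game $X$ we have
\[
o(G+X)=\q=o(H+X).
\]
Since, by the fundamental equivalence, $G=H$ means precisely that $o(G+X)=o(H+X)$ for all impartial games $X$, we conclude $G=H$.

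The only point worth checking is that the universal quantifier in Definition~\ref{def:absorb} and the one in the definition of impartial equality range over the same class of games (namely impartial games), which is the case throughout this section. So there is no genuine obstacle; the statement is an immediate consequence of the definitions, and in particular it shows that the absorbing games (if any exist) constitute a single equality class.
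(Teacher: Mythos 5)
Your proof is correct and is essentially identical to the paper's: both simply observe that $o(G+X)=\q=o(H+X)$ for all $X$ and invoke the definition of impartial equality. Nothing further is needed.
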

\begin{proof}Note that $o(G+X)=\q=o(H+X)$ for all $X$.\end{proof}
Therefore, there is at most one equality class of absorbing games. 
\begin{proposition}\label{prop:blackholeabsorbing}

If $G$ is an absorbing game, then $G+H=G$ for any $H$.
\end{proposition}
\begin{proof}Note that $o\left((G+H)+X\right)=o\left(G+(H+X)\right)=\q=o(G+X)$.\end{proof}

Proposition~\ref{prop:blackholeabsorbing} justifies the name ``absorbing''.

\begin{theorem}[Absorbing Game Construction]\label{thm:blackholegen}
Suppose that $G$ has at least one option, and all options $G'$ are undetermined ($o(G')=\q$). Then $G$ is an absorbing game.
\end{theorem}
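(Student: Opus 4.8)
The plan is to show directly, essentially without induction, that $o(G+H)=\q$ for every game $H$; the statement turns out to be an easy consequence of Proposition~\ref{prop:Q dooms} and Corollary~\ref{cor:qgetsnop}. The first step is to record the key preliminary fact that $G$ is itself undetermined: $G$ has an undetermined option (any one of its options, since by hypothesis it has at least one) and has no option whose outcome contains $\mathbf{P}$ (all of its options have outcome $\q$), so $o(G)=\q$ by Proposition~\ref{prop:Q dooms}. In particular, Corollary~\ref{cor:qgetsnop} then gives $\mathbf{P}\notin o(G+H)$ for every $H$, which disposes of one of the three outcome symbols for free.

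It remains to rule out $\mathbf{N}$ and $\mathbf{O}$ from $o(G+H)$, since $o(G+H)\subseteq\any$ and excluding all three symbols gives $o(G+H)=\q$. For $\mathbf{N}$: every option of $G+H$ has one of the two forms $G'+H$ or $G+H'$, where $G'$ is an option of $G$ and $H'$ is an option of $H$. In the first case $G'$ is undetermined and in the second case $G$ is undetermined, so Corollary~\ref{cor:qgetsnop} applies in both cases and yields $\mathbf{P}\notin o(G'+H)$ and $\mathbf{P}\notin o(G+H')$; thus no option of $G+H$ has $\mathbf{P}$ in its outcome, and hence $\mathbf{N}\notin o(G+H)$ by Definition~\ref{def:3impout}. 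For $\mathbf{O}$: it suffices to exhibit a single option of $G+H$ whose outcome omits $\mathbf{N}$. If $H$ has an option $H'$, then $G+H'$ is such an option, because the argument just given (applied with $H'$ in place of $H$) shows $\mathbf{N}\notin o(G+H')$. If instead $H$ has no options, then $G+H\cong G$, which is undetermined, so again an option of $G+H$ (indeed $G+H$ itself, viewed suitably; more concretely, any option $G'$ of $G$) has outcome $\q$ and in particular omits $\mathbf{N}$. Either way some option of $G+H$ omits $\mathbf{N}$, whence $\mathbf{O}\notin o(G+H)$ by Definition~\ref{def:3impout}. Combining the three exclusions gives $o(G+H)=\q$ for all $H$, i.e. $G$ is absorbing.

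I do not anticipate a genuine obstacle. The only clause not handed to us directly by Corollary~\ref{cor:qgetsnop} is the $\mathbf{O}$-clause, and that is exactly where the full strength of the hypothesis gets used: it is not enough that $G$ has \emph{one} undetermined option; one needs \emph{all} options of $G$ undetermined so that every option $G'+H$ of $G+H$ also lacks $\mathbf{P}$, which is what lets the reduction "$\mathbf{O}\notin o(G+H)$ because $\mathbf{N}\notin o(G+H')$" go through. The only care required is the harmless case split according to whether $H$ has a legal move. If a uniform treatment were preferred, the whole statement also follows from a routine structural induction on $H$ with the three outcome symbols handled in parallel, but the direct argument above seems cleanest.
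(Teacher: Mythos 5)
Your proof is correct. It rests on the same two ingredients as the paper's proof---Corollary~\ref{cor:qgetsnop} applied to the undetermined games $G$ and $G'$, together with the recursive clauses of Definition~\ref{def:3impout}---but it is organized differently: the paper proves $o(G+X)=\q$ by structural induction on $X$, using the inductive hypothesis to get $o(G+X')=\q$ for every option $X'$ and then closing with Proposition~\ref{prop:Q dooms}, whereas you avoid induction by observing that the argument ``no option of $G+K$ has $\mathbf{P}$ in its outcome, hence $\mathbf{N}\notin o(G+K)$'' is uniform in the second summand $K$ and can simply be applied a second time with $K=H'$ to settle the $\mathbf{O}$-clause. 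In effect you notice that the full inductive conclusion $o(G+H')=\q$ is stronger than necessary: only $\mathbf{N}\notin o(G+H')$ is needed to exclude $\mathbf{O}$. Your separate treatment of the degenerate case $H\cong 0$ (where the hypothesis that $G$ has at least one option is exactly what prevents $\mathbf{O}\in o(G+H)$ from holding vacuously) mirrors the paper's case split on $X\cong 0$; the parenthetical ``$G+H$ itself, viewed suitably'' is a slight misstatement since $G+H$ is not an option of itself, but you immediately correct it to ``any option $G'$ of $G$,'' which is what is needed. Both arguments are of comparable length; yours makes the logical dependencies somewhat more transparent, while the paper's induction delivers the stronger intermediate fact that every $G+X'$ is itself undetermined.
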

\begin{proof}

Let $G$ be as in the statement. Since $o(G')=\q$ for all options $G'$, $\mathbf{P}\notin o(G'+X)$ by Corollary~\ref{cor:qgetsnop}. By induction, $o(G+X')=\q$ for all $X'$. If $X\ncong0$, then $o(G+X)=\q$ by Proposition~\ref{prop:Q dooms} since $o\left(G+X^*\right)=\q$ for some option $X^*$. If $X\cong0$, then since $G$ has at least one option $G^*$ with $o\left(G^*\right)=\q$, $o(G+X)=o(G)=\q$ by Proposition~\ref{prop:Q dooms}.
\end{proof}

\begin{corollary}An absorbing game exists.\end{corollary}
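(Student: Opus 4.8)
The plan is to exhibit an explicit absorbing game by applying the Absorbing Game Construction (Theorem~\ref{thm:blackholegen}) to a game all of whose options are undetermined. So the task reduces to producing a single undetermined game, after which we simply form the game whose unique option is that game (or, to be safe about the ``at least one option'' hypothesis, the game with that one option).

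First I would recall that Proposition~\ref{prop:7outs} already supplies an undetermined game, namely $U\cong\{*2,\{*2\}\}$ with $o(U)=\q$. Then I would set $G\cong\{U\}$, i.e.\ $G\cong\{\{*2,\{*2\}\}\}$. This $G$ has exactly one option, $U$, and that option is undetermined, so the hypotheses of Theorem~\ref{thm:blackholegen} are met verbatim. Hence $G$ is absorbing, which is exactly the existence claim.

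If one prefers not to lean on Proposition~\ref{prop:7outs}, the alternative is to verify $o(U)=\q$ directly from Definition~\ref{def:3impout}: compute $o(*2)=\n$ and $o(\{*2\})=\o$ (using Proposition~\ref{prop:sharps cycle}), then check that $U$ has an option with outcome $\n$ (namely $*2$), blocking $\mathbf O$ and $\mathbf P$, and an option with outcome $\o$ (namely $\{*2\}$, which is not $\mathbf P$), blocking $\mathbf N$; so $o(U)=\q$.

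There is essentially no obstacle here: the corollary is an immediate instantiation of Theorem~\ref{thm:blackholegen} once any undetermined game is in hand, and such a game has already been exhibited in Proposition~\ref{prop:7outs}. The only thing to be careful about is the side condition that the absorbing game have at least one option — which is why $G$ is taken to be $\{U\}$ rather than something degenerate — but this is automatic for the construction as written.
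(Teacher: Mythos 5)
Your proposal is correct and matches the paper's proof exactly: both invoke Proposition~\ref{prop:7outs} to obtain the undetermined game $\left\{*2,\{*2\}\right\}$ and then apply the Absorbing Game Construction to $\left\{\left\{*2,\{*2\}\right\}\right\}$. The extra verification of $o(U)=\q$ is unnecessary but harmless.
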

\begin{proof}By Proposition~\ref{prop:7outs}, $o\left(\left\{*2,\{*2\}\right\}\right)=\q$. Therefore, by the \hyperref[thm:blackholegen]{Absorbing Game Construction}, $\left\{\left\{*2,\{*2\}\right\}\right\}$ is an absorbing game.\end{proof}

The \nim position $3\cdot*3$ is also absorbing (see Lemma~\ref{lem:333}). By Proposition~\ref{prop:absorbingunique}, all such games are equal.

\begin{proposition}\label{prop:3undetstrong}If $G,H,J$ are undetermined, then all options of $G+H+J$ are undetermined.
\end{proposition}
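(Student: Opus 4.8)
Note first that $G+H+J$ is itself undetermined by two applications of Corollary~\ref{cor:sumundet}, so the real content of the proposition is the stronger assertion about its \emph{options}. The plan is to deduce this from Corollary~\ref{cor:sumundet} and Corollary~\ref{cor:qgetsnop} alone, with no fresh induction. The key step is to establish the following auxiliary fact: \emph{if $H$ and $J$ are undetermined and $A$ is an arbitrary game, then $o(A+H+J)\subseteq\o$.} To prove it, first note that $H+J$ is undetermined by Corollary~\ref{cor:sumundet}, so Corollary~\ref{cor:qgetsnop} applied to $(H+J)+A$ gives $\mathbf{P}\notin o(A+H+J)$. For $\mathbf{N}\notin o(A+H+J)$, one checks via Definition~\ref{def:3impout} that no option of $A+H+J$ has $\mathbf{P}$ in its outcome: such an option has one of the three forms $A'+H+J$, $A+H'+J$, $A+H+J'$, and in each case at least one of $H+J$, $J$, $H$ remains undetermined, so, grouping that undetermined summand with the rest, Corollary~\ref{cor:qgetsnop} forbids $\mathbf{P}$ once more. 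Hence $\mathbf{N},\mathbf{P}\notin o(A+H+J)$, that is, $o(A+H+J)\subseteq\o$.

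Granting this auxiliary fact, the proposition follows quickly. Since the hypotheses on $G$, $H$, $J$ are symmetric and $+$ is associative and commutative, it suffices to show that an option of the form $G'+H+J$ is undetermined, where $G'$ is an option of $G$. Applying the auxiliary fact with $A:=G'$ already gives $\mathbf{N},\mathbf{P}\notin o(G'+H+J)$, so only $\mathbf{O}\notin o(G'+H+J)$ remains. By Definition~\ref{def:3impout} it is enough to produce a single option of $G'+H+J$ whose outcome omits $\mathbf{N}$. For this I use that an undetermined game has no option isomorphic to $0$: if $0$ were an option of $G$, then $\mathbf{P}\in o(0)=\op$ would force $\mathbf{N}\in o(G)$, contradicting $o(G)=\q$. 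Hence $G'\not\cong 0$, so $G'$ has some option $G''$, and $G''+H+J$ is then an option of $G'+H+J$ to which the auxiliary fact applies (with $A:=G''$), giving $\mathbf{N}\notin o(G''+H+J)$. Therefore $\mathbf{O}\notin o(G'+H+J)$ and $o(G'+H+J)=\q$, as desired.

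I do not anticipate a genuine obstacle; the difficulty is purely bookkeeping. The two points to watch are: (i) when passing to an option one must keep \emph{two} of the three summands undetermined, so that Corollary~\ref{cor:qgetsnop} still applies after regrouping the undetermined part with everything else; and (ii) the small remark that an option of an undetermined game is never $0$, which is exactly what permits descending one more level to eliminate $\mathbf{O}$. If preferred, the auxiliary fact can be split off and stated as its own lemma preceding the proposition.
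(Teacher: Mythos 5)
Your proof is correct. It follows the same basic skeleton as the paper's argument --- rule out $\mathbf{P}$, then $\mathbf{N}$, then $\mathbf{O}$, by applying Corollary~\ref{cor:qgetsnop} to options at successive depths, each of which retains two undetermined summands --- but the packaging differs in two worthwhile ways. First, you isolate the slightly stronger auxiliary statement that $o(A+H+J)\subseteq\o$ for \emph{arbitrary} $A$ whenever $H$ and $J$ are undetermined, which turns the paper's proof by contradiction into a direct argument and does the $\mathbf{N}$- and $\mathbf{P}$-exclusions once and for all. Second, to rule out $\mathbf{O}$ you descend in the $G'$ component (keeping $H$ and $J$ as the undetermined summands) and justify the existence of the needed option $G''$ by the elementary remark that an undetermined game has no option isomorphic to $0$; the paper instead descends in the $J^*$ component and invokes \hyperref[thm:ngenerates]{Next Generation} to conclude $\mathbf{N}\in o(J^*)$, which is what guarantees $J^*$ has an option there. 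Both routes are valid; yours avoids Next Generation entirely in this proposition and the auxiliary lemma would be reusable elsewhere, while the paper's version stays closer to the machinery it has already been exercising.
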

\begin{proof}
Let $G,H,J$ be undetermined. Suppose, for sake of contradiction, that $G+H+J$ has an option that is not undetermined. Without loss of generality, suppose $J^*$ is an option of $J$ satisfying $o\left(G+H+J^*\right)\ne\q$. Since $G$ is undetermined, Corollary~\ref{cor:qgetsnop} yields $\mathbf{P}\notin o\left(G+(H+J^*)\right)$. Therefore, either $\mathbf{N}\in o\left(G+H+J^*\right)$ or $\mathbf{O}\in o\left(G+H+J^*\right)$.

Suppose $\mathbf{N}\in o\left(G+H+J^*\right)$. Then either $\mathbf{P}\in o\left(G+H+J^{**}\right)$ for some $J^{**}$ or (without loss of generality) $\mathbf{P}\in o\left(G+H^{*}+J^{*}\right)$ for some $H^*$. But both are impossible by Corollary~\ref{cor:qgetsnop}.

Now suppose $\mathbf{O}\in o\left(G+H+J^*\right)$. Since $G,H$ are undetermined, Corollary~\ref{cor:psbuynothing} yields that $G+H$ is undetermined. By \hyperref[thm:ngenerates]{Next Generation}, $\mathbf{N}\in o\left(J^*\right)$. Hence, we can choose a winning move $J^{**}$. Then $\mathbf{N}\in o\left(G+H+J^{**}\right)$. By the argument from the previous paragraph, Corollary~\ref{cor:qgetsnop} yields a contradiction in this case, too.
\end{proof}

\begin{corollary}\label{cor:3undetabs}If $G$ is undetermined, then $3\cdot G$ is an absorbing game.\end{corollary}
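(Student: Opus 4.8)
The plan is to deduce Corollary~\ref{cor:3undetabs} directly from Proposition~\ref{prop:3undetstrong} together with the \hyperref[thm:blackholegen]{Absorbing Game Construction} (Theorem~\ref{thm:blackholegen}). Suppose $G$ is undetermined. I would first check that $3\cdot G$ has at least one option: since $o(G)=\q$ and $o(0)=\op\ne\q$, $G$ is not the empty game, so $G$ has an option, and hence so does $3\cdot G$. Then, applying Proposition~\ref{prop:3undetstrong} with the three undetermined games taken to be the three copies of $G$, every option of $G+G+G=3\cdot G$ is undetermined. Thus $3\cdot G$ satisfies the hypotheses of the \hyperref[thm:blackholegen]{Absorbing Game Construction}, and is therefore absorbing.

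The only subtlety is purely bookkeeping: an ``option'' of $3\cdot G$ is a position obtained by making a legal move in exactly one of the three summands, i.e. it has the form $G'+G+G$, $G+G'+G$, or $G+G+G'$ for some option $G'$ of $G$, and all of these are literally the options of $G+H+J$ with $G=H=J$. So Proposition~\ref{prop:3undetstrong} applies verbatim. I do not anticipate a genuine obstacle here; the substantive work was already done in Proposition~\ref{prop:3undetstrong}, and this corollary is just the observation that ``all options undetermined'' plus ``at least one option'' is exactly the trigger for Theorem~\ref{thm:blackholegen}.

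If one wanted to avoid even invoking non-emptiness of $G$ separately, one could note that the Absorbing Game Construction's own hypothesis already requires at least one option, so the remark about $G\ncong 0$ is the single extra line the proof needs. Concretely, the write-up I would give is: ``By Proposition~\ref{prop:3undetstrong}, every option of $3\cdot G$ is undetermined. Since $o(G)=\q\ne\op=o(0)$, $G\ncong 0$, so $G$ — and hence $3\cdot G$ — has at least one option. By the \hyperref[thm:blackholegen]{Absorbing Game Construction}, $3\cdot G$ is an absorbing game.''
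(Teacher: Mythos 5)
Your proof is correct and follows exactly the paper's route: the paper's own proof of this corollary is the one-liner ``Combine Proposition~\ref{prop:3undetstrong} with the \hyperref[thm:blackholegen]{Absorbing Game Construction}.'' Your explicit check that $G\ncong 0$ (via $o(G)=\q\ne\op=o(0)$), so that $3\cdot G$ has at least one option as required by Theorem~\ref{thm:blackholegen}, is a small detail the paper leaves implicit but is a worthwhile addition.
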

\begin{proof}Combine Proposition~\ref{prop:3undetstrong} with the  \hyperref[thm:blackholegen]{Absorbing Game Construction}.\end{proof}

Note that we can find many equal absorbing games with Proposition~\ref{prop:blackholeabsorbing} and Corollary~\ref{cor:3undetabs}. And by using Corollary~\ref{cor:replacement}, we can build many games equal to a given one that has an absorbing subposition.

\subsection{Three-Player Nim}\label{ssec:3nim}

\nim is a classic ruleset in Combinatorial Game Theory that can serve as a demonstrative case study. In this subsection, we calculate the outcomes of all \nim positions, as well as the quotient corresponding to playing only \textsc{Nim}. When playing \nim with three players under normal play, there are only finitely many different classes of positions to be concerned with, though more than in the play convention considered in \cite{propp}.

In the language of \cite{universes}, the positions of three-player \nim form a three-player analogue of a ``universe''. But it is not ``parental'' as all \nim positions are impartial. And it is not ``dense'' even in an impartial sense, as no \nim position has outcome $\p$ (see Theorem~\ref{thm:allnimoutcomes}).

\subsubsection{Outcomes of Nim Positions}
We classify the outcomes of \nim positions. We start by noting the outcomes of $40$ particular positions, and then use a few lemmas to justify that the apparent patterns extend to all positions. This approach is similar to applications of the Octal Periodicity Theorem (see Thm. IV.2.7 of \cite{cgt}). 

\begin{lemma}\label{lem:40nim} 
The outcomes of 40 \nim positions of the form $k\cdot*+G$ 
 are recorded in Table~\ref{tbl:40nim}.
\begin{table}[h!]
\centering
\begin{tabular}{r|ccccc}
$G\backslash k$ & $0$ & $1$ & $2$ & $3$\\
\hline
$0$ & $\op$ & $\pn$ & $\no$ & $\op$\\
$*2$ & $\n$ & $\no$ & $\pn$ & $\n$\\
$*3$ & $\n$ & $\no$ & $\n$ & $\n$\\
$*4$ & $\n$ & $\no$ & $\n$ & $\n$\\
$*2+*2$ & $\o$ & $\n$ & $\no$ & $\o$\\
$*2+*3$ & $\q$ & $\n$ & $\no$ & $\q$\\
$*2+*4$ & $\q$ & $\n$ & $\no$ & $\q$\\
$*2+*2+*2$ & $\q$ & $\q$ & $\q$ & $\q$\\
$*2+*2+*3$ & $\q$ & $\q$ & $\q$ & $\q$\\
$*2+*2+*4$ & $\q$ & $\q$ & $\q$ & $\q$\\
\end{tabular}
\caption{The three-player outcomes of $40$ \nim positions}
\label{tbl:40nim}
\end{table}
\end{lemma}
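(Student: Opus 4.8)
The plan is to verify these $40$ outcomes by direct computation, organized so that the work is finite and the dependencies are clear. The key observation is that every position of the form $k\cdot* + G$ appearing in the table, together with all of its subpositions under play, is itself of the form $j\cdot* + H$ where $H$ is a sum of nim-heaps of size at most $4$. So I would first fix a finite ``closed'' collection $\mathcal{S}$ of \nim positions that contains all $40$ table entries and is closed under taking options, then compute $o$ for every member of $\mathcal{S}$ by structural induction, working from the simplest positions upward.

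Concretely, I would proceed in layers. The base cases are small: $o(0)=\op$, $o(*)=\pn$, $o(*2)=\n$ from Proposition~\ref{prop:7outs} (or the recursive Definition~\ref{def:3impout} applied directly), and a handful more like $o(*3)$, $o(*4)$, $o(*2+*2)$, which follow quickly once the smaller ones are known. For the heavier entries I would lean on the results already established so as to avoid grinding every branch: Corollary~\ref{cor:psbuynothing} immediately gives that $*2+*2+*2$, $*2+*2+*3$, $*2+*2+*4$ are undetermined once I know each of $*2$, $*3$, $*4$ has outcome $\n$ (so the relevant two-summand sums have outcome $\subseteq\no\subseteq\p^{c}$... more precisely I use that $o(*2+*2)=\o$ and then Next Generation plus a short argument, or better, note $*2+*2+*2 = 3\cdot*2$ and apply Corollary~\ref{cor:3undetabs} after showing $*2$ is undetermined — wait, $*2$ is not undetermined, so instead I use Corollary~\ref{cor:psbuynothing} on the pair $(*2+*2,\,*2)$ after computing $o(*2+*2)=\o\subseteq\p^{c}$, hmm — the cleanest route is: $o(*2+*2)=\o$, and $*2+*2+*2$ has every option of the form $*+*2+*2$ or $*1\text{-reduct}$, which I compute directly to be in $\{\n,\no,\pn\}$, none containing $\mathbf{P}$, and none $\mathbf{O}$-free-with-an-$\mathbf{O}$-free-partner, forcing $\q$). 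For the $*2+*3$ and $*2+*4$ rows with $k=0,3$, I would show directly that every option has outcome containing $\mathbf{N}$ but that no option has outcome containing $\mathbf{P}$ and that the $\mathbf{O}$-test fails, yielding $\q$; for $k=1,2$ the extra $*$'s supply a move to a $\mathbf{P}$-ish position giving $\n$ or $\no$.

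The periodicity-in-$k$ phenomenon — that the $k=3$ column agrees with the $k=0$ column, and more generally that adding $3\cdot*$ to any of these positions returns the same outcome — is worth isolating as the structural heart of the computation, since it both shortens the work and previews the later lemmas that extend the table to all \nim positions. I would prove the needed instances of it by hand here rather than invoke a general theorem: each claim $o\bigl((k+3)\cdot* + G\bigr)=o\bigl(k\cdot* + G\bigr)$ for $G$ in the table follows from Proposition~\ref{prop:tripleno} (to kill $\mathbf{N}$ when appropriate) together with a direct check that the three extra single moves in the $3\cdot*$ part are ``answerable'', i.e. reversible in the sense of the revertibility results of Theorem~\ref{thm:reverting}; but in a finite table it is simplest to just compute both sides.

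The main obstacle I anticipate is purely bookkeeping: ensuring the collection $\mathcal{S}$ really is closed under options and that the induction has no gaps, because a sum like $*2+*2+*4$ has options such as $*+*2+*4$, $*2+*2+*3$, $*2+*2+*2$, $*2+*2+*$, etc., and one must track all of them simultaneously; an option of $k\cdot*+G$ can either decrement one of the $*$'s (landing in $(k-1)\cdot* + G$, already in the table or derivable from it) or play inside $G$ (landing in $k\cdot*$ plus a smaller heap-sum). So I would present the proof as a single table of computed outcomes for $\mathcal{S}$ — larger than the $40$-entry display but finite — with each entry justified by the recursive definition applied to entries already computed, and remark that Corollaries~\ref{cor:psbuynothing} and~\ref{cor:3undetabs} and Proposition~\ref{prop:tripleno} shortcut the undetermined and the $k$-periodic cases. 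No single step is deep; the risk is an arithmetic slip in one of the branchings, so I would double-check the $\q$ entries against Corollary~\ref{cor:qgetsnop} (no option may contribute $\mathbf{P}$) as an internal consistency test.
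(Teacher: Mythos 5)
Your proposal is correct and is essentially the paper's approach: the paper's entire proof reads ``These outcomes can be calculated manually or with a computer program,'' i.e.\ a direct finite verification over the (options-closed) set of relevant positions, which is exactly the computation you organize. Your attempted shortcuts via Corollary~\ref{cor:psbuynothing} and via ``adding $3\cdot*$ preserves outcomes'' do not actually apply (the hypotheses fail, and $3\cdot*\ne 0$ by Proposition~\ref{prop:threestar}), but you correctly retreat to direct computation in each case, so nothing is lost.
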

\begin{proof}These outcomes can be calculated manually or with a computer program.\end{proof}

\begin{defn}
A 
game $G$ is said to be \textit{3-periodic} if the infinite sequence of outcomes $\left(o(G),o(*+G),o(2\cdot*+G),\dots\right)$ has period $3$.\end{defn}

\begin{lemma}[Nim Periodicity]\label{lem:3periodicity}
Let $G$ be a game. Suppose that all options $G'$ are 3-periodic and that $o\left(G\right)=o\left(3\cdot*+G\right)$. Then $G$ is 3-periodic as well.
\end{lemma}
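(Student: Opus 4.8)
The goal is to show that if every option $G'$ of $G$ is 3-periodic and $o(G)=o(3\cdot*+G)$, then the whole sequence $\bigl(o(k\cdot*+G)\bigr)_{k\ge 0}$ has period $3$. The natural approach is induction on $k$: I will show that $o\bigl((k+3)\cdot*+G\bigr)=o(k\cdot*+G)$ for every $k\ge 0$, with the base case $k=0$ being exactly the hypothesis $o(G)=o(3\cdot*+G)$. For the inductive step I need to compare the outcome sets of $(k+3)\cdot*+G$ and $k\cdot*+G$ membership-by-membership for each of $\mathbf N,\mathbf O,\mathbf P$, unwinding Definition~\ref{def:3impout}. The options of $n\cdot*+G$ are of two kinds: moving in one of the $*$-components, yielding $(n-1)\cdot*+G$; or moving in $G$, yielding $n\cdot*+G'$ for an option $G'$ of $G$. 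So the recursive test for, say, $\mathbf N\in o\bigl((k+3)\cdot*+G\bigr)$ asks whether $\mathbf P\in o\bigl((k+2)\cdot*+G\bigr)$ or $\mathbf P\in o\bigl((k+3)\cdot*+G'\bigr)$ for some option $G'$.

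\textbf{Key steps.} First, since each $G'$ is 3-periodic, $o\bigl((k+3)\cdot*+G'\bigr)=o\bigl(k\cdot*+G'\bigr)$ for all $k\ge 0$ (using $k+3\ge 3$, which is within the periodic range — for small $k$ one may need that 3-periodicity means the full sequence from index $0$ has period $3$, so the shift by $3$ is valid everywhere). Second, I would like to say $o\bigl((k+2)\cdot*+G\bigr)=o\bigl((k-1)\cdot*+G\bigr)$ by the induction hypothesis, and similarly for the $(k+1)$-shift — but this requires $k\ge 1$, so the case $k=0$ (i.e. proving $o(4\cdot*+G)=o(*+G)$, $o(5\cdot*+G)=o(2\cdot*+G)$) needs separate care and the induction should probably be set up to prove the three statements $o\bigl((k+3)\cdot*+G\bigr)=o(k\cdot*+G)$ together, or via strong induction starting from the established base $k=0$. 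Concretely, once $o(j\cdot*+G)=o\bigl((j+3)\cdot*+G\bigr)$ is known for all $j<k$ (and for $j=0$), the recursive clauses for $(k+3)\cdot*+G$ reduce, term by term, to the recursive clauses for $k\cdot*+G$: the ``move in a $*$-heap'' branch becomes $o\bigl((k+2)\cdot*+G\bigr)=o\bigl((k-1)\cdot*+G\bigr)$ and the ``move in $G$'' branch becomes $o\bigl((k+3)\cdot*+G'\bigr)=o\bigl(k\cdot*+G'\bigr)$, so the two outcome sets agree. I would write this out once for $\mathbf N$ and note that the $\mathbf O$ and $\mathbf P$ clauses follow identically (or invoke Proposition~\ref{prop:sharps cycle} to cycle between them, observing that $\{n\cdot*+G\}$ relates to $(n{+}?)\cdot*+\{G\}$ — though the cleanest route is probably just to repeat the short argument for each of the three colors).

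\textbf{Main obstacle.} The delicate point is the bookkeeping at small $k$: the induction hypothesis ``$o\bigl((j+3)\cdot*+G\bigr)=o(j\cdot*+G)$ for $j<k$'' does not by itself give $o\bigl((k+2)\cdot*+G\bigr)=o\bigl((k-1)\cdot*+G\bigr)$ when $k=0$, since $k-1<0$ is meaningless. So the proof must either (a) handle $k=0,1,2$ as base cases — but $k=1,2$ are not hypotheses! — or, better, (b) prove by induction on $k$ the statement ``$o\bigl((k+3)\cdot*+G\bigr)=o(k\cdot*+G)$'' where the inductive step for $k\ge 1$ uses the case $k-1$ for the $*$-heap branch, and the base case $k=0$ is the given hypothesis; then one checks that for $k=1,2$ the $*$-heap branch reference $(k+2)\cdot*+G$ versus $(k-1)\cdot*+G$ is handled by the already-proven cases $k-1=0,1$. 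This chaining works because each inductive step only reaches back by one in the heap count, so starting from the single base case $k=0$ suffices. I expect this indexing argument, together with verifying that 3-periodicity of the options is applied at legitimate indices, to be the only real content; everything else is a mechanical unfolding of Definition~\ref{def:3impout}.
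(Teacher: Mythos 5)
Your proposal is correct and is essentially the paper's own argument: the paper likewise inducts on the heap count, matching the option $(k-1)\cdot*+G$ with $(k-4)\cdot*+G$ via the induction hypothesis and the options $k\cdot*+G'$ with $(k-3)\cdot*+G'$ via the $3$-periodicity of the options $G'$, then concluding from the fact that the outcome is determined by the outcomes of the options. The indexing concern you flag resolves exactly as you describe in your option (b) — each step reaches back only one heap, so the single base case $o(G)=o(3\cdot*+G)$ suffices.
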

\begin{proof}
Suppose $k\ge4$. Then the options of $k\cdot*+G$ are $\left(k-1\right)\cdot*+G$ and those of the form $k\cdot*+G'$ for options $G'$. These have the same outcomes as $\left(k-4\right)\cdot*+G$ (by induction on $k$) and $\left(k-3\right)\cdot*+G'$ (since $G'$ is 3-periodic), respectively. But those are exactly all of the options of $\left(k-3\right)\cdot*+G$. Since $k\cdot*+G$ has the same outcomes of options, $o\left(k\cdot*+G\right)=o\left(\left(k-3\right)\cdot*+G\right)$, as desired.
\end{proof}

\begin{lemma}[Nim Stability]\label{lem:3stability}
Let $G$ be a game. Suppose that for all subpositions $H$ of $G$ (including $G$ itself) $o\left(H+*4\right)=o\left(H+*3\right)$. Then $o\left(G+*m\right)=o\left(G+*3\right)$ for all $m\ge3$.
\end{lemma}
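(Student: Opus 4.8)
The plan is to prove the statement by induction on the structure of $G$, with an inner induction on $m\ge 3$. The base case $m=3$ is trivial, and the hypothesis directly supplies $o(G+*4)=o(G+*3)$, so the content is really in the step from $*m$ to $*(m+1)$ for $m\ge 4$. The guiding idea is the same as in \hyperref[lem:3periodicity]{Nim Periodicity}: I want to show that $G+*(m+1)$ and $G+*4$ have ``the same options up to outcome'', and then conclude the two positions have the same outcome by Definition~\ref{def:3impout}, since outcomes depend only on the multiset of outcomes of options.

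First I would enumerate the options of $G+*(m+1)$: they are (i) $G'+*(m+1)$ for options $G'$ of $G$, and (ii) $G+*j$ for $0\le j\le m$. For the type-(i) options, each $G'$ is a subposition of $G$, so the inductive hypothesis of the structural induction applies to $G'$, giving $o(G'+*(m+1))=o(G'+*3)$ (using $m+1\ge 4\ge 3$); and by the same token $o(G'+*4)=o(G'+*3)$, so $o(G'+*(m+1))=o(G'+*4)$. For the type-(ii) options: when $j\le 3$ the option $G+*j$ is literally also an option of $G+*4$; when $4\le j\le m$, the inner inductive hypothesis (on the value of $m$, applied to the smaller value $j$) gives $o(G+*j)=o(G+*3)$, which is the outcome of the option $G+*3$ of $G+*4$. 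Conversely every option of $G+*4$ is accounted for among these. Hence the multiset of outcomes of options of $G+*(m+1)$ and of $G+*4$ coincide as sets (every outcome appearing on one side appears on the other), so $o(G+*(m+1))=o(G+*4)=o(G+*3)$, completing the induction.

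One technical point to be careful about: Definition~\ref{def:3impout} determines $o(G)$ from the set of outcome-sets of the options, so what I actually need is that $\{\,o(\text{option of }G+*(m+1))\,\}=\{\,o(\text{option of }G+*4)\,\}$ as \emph{sets} of subsets of $\{\mathbf N,\mathbf O,\mathbf P\}$ — multiplicities do not matter — which is exactly what the enumeration above delivers. I expect the main obstacle, such as it is, to be purely bookkeeping: keeping the two nested inductions straight (structural induction on $G$ on the outside, induction on $m$ on the inside) and making sure the inductive hypotheses are invoked at legitimately smaller instances — $G'$ is a strict subposition of $G$ so the outer hypothesis applies to it for \emph{all} relevant heap sizes, while for $G$ itself one may only invoke the inner hypothesis at heap sizes strictly between $3$ and $m+1$. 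No genuinely new combinatorial idea beyond the option-matching trick of \hyperref[lem:3periodicity]{Nim Periodicity} should be needed.
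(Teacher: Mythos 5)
Your proposal is correct and follows essentially the same argument as the paper: a structural induction on $G$ with an inner induction on $m$, matching options of $G+*(m+1)$ against a reference position and using the fact that the outcome depends only on the set of outcomes of the options. The only (immaterial) difference is that you compare $G+*(m+1)$ directly with $G+*4$, whereas the paper compares it with $G+*m$.
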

\begin{proof}
Let $m\ge4$. Via induction on $m$, it suffices to verify that $o\left(G+*\left(m+1\right)\right)=o\left(G+*m\right)$.

The options of $G+*\left(m+1\right)$ are $G+*m$, those of the form $G+*\ell$ for $\ell<m$ (all of which are options of $G+*m$), and those of the form $G'+*\left(m+1\right)$. 

By induction on $m$, $o\left(G+*m\right)=o\left(G+*\left(m-1\right)\right)$. And by induction on $G$, $o\left(G'+*\left(m+1\right)\right)=o\left(G'+*m\right)$. Trivially, $o\left(G+*\ell\right)=o\left(G+*\ell\right)$. In all cases, an option of $G+*\left(m+1\right)$ has the same outcome as an option of $G+*m$.

Conversely, the options of $G+*m$ are those of the form $G+*\ell$ for $\ell<m$, and those of the form $G'+*m$. All of which have outcome equal to that of an option of $G+*\left(m+1\right)$. 

As outcomes of options are the same, $o\left(G+*(m+1)\right)=o(G+*m)$. 
\end{proof}

\begin{lemma}\label{lem:nimtable2} 
The patterns of outcomes for the \nim positions of the form $k\cdot*+G$ in Table~\ref{tbl:smallnim} are accurate.\end{lemma}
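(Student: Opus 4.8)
The plan is to prove the lemma by structural induction on \nim positions, using \hyperref[lem:3periodicity]{Nim Periodicity} and \hyperref[lem:3stability]{Nim Stability} to collapse the infinitely many positions $k\cdot*+G$ down to the finitely many whose outcomes are recorded in Lemma~\ref{lem:40nim}. Write a generic \nim position as $k\cdot*+G$, where $G=*a_1+\dots+*a_j$ is a (possibly empty) sum of heaps of size at least $2$. The inductive hypothesis is that every proper subposition of $k\cdot*+G$ — that is, every \nim position obtained by decreasing $k$ or by decreasing some $a_i$ — already realizes the pattern claimed in Table~\ref{tbl:smallnim}; in particular every option of $k\cdot*+G$ is 3-periodic, and every subposition $H$ with at least one heap of size $\ge3$ satisfies $o(H+*4)=o(H+*3)$.

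First I would eliminate large heaps. By the inductive hypothesis, $o(H+*4)=o(H+*3)$ for every subposition $H$ of $G$; when $H$ is small enough to occur in Table~\ref{tbl:40nim} this equality is literally the matching of its $*3$-row with its $*4$-row, and otherwise it is part of the inductive hypothesis. \hyperref[lem:3stability]{Nim Stability} then replaces the largest heap of $G$ by $*3$ whenever that heap has size $\ge3$, and iterating lets us assume that every heap of $G$ has size $2$ or $3$. Next I would eliminate the count $k$. For each of the now finitely many possibilities for $G$, the hypothesis $o(G)=o(3\cdot*+G)$ of \hyperref[lem:3periodicity]{Nim Periodicity} holds: when $G$ is a row of Table~\ref{tbl:40nim} it is the visible equality of that row's $k=0$ and $k=3$ entries, and for the remaining $G$ (such as $*3+*3$, $*3+*3+*3$, or any $G$ with three or more heaps of size $\ge2$) it follows by applying the recursion of Definition~\ref{def:3impout} to option outcomes that, by the inductive hypothesis, all lie in Table~\ref{tbl:40nim} or among rows already settled. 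Since the options of $G$ are 3-periodic by induction, \hyperref[lem:3periodicity]{Nim Periodicity} shows $k\cdot*+G$ is 3-periodic, so we may take $k\in\{0,1,2\}$. What survives all of this is a finite list of positions — $k\cdot*$, $k\cdot*+*2$, $k\cdot*+*3$, and their sums with at most three further heaps of sizes in $\{2,3\}$, each with $k\le2$ — whose outcomes are exactly those asserted by Table~\ref{tbl:smallnim}, obtained from Table~\ref{tbl:40nim} (directly, or by running Definition~\ref{def:3impout} up from its entries). In particular the claim that three or more heaps of size $\ge2$ always gives $\q$ also follows from the \hyperref[thm:blackholegen]{Absorbing Game Construction} once the base positions $*2+*2+*2$, $*2+*2+*3$, $*3+*3+*3$, and so on are in hand, so no genuinely new pattern can appear.

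The main obstacle is organizational rather than conceptual. One must fix an induction order — say, by the number of heaps of size $\ge3$, then by total heap size, then by $k$ — under which the hypotheses of \hyperref[lem:3stability]{Nim Stability} and \hyperref[lem:3periodicity]{Nim Periodicity} are always available at the moment they are invoked; and one must check that every position arising as an option of a tabulated position is again tabulated, or reduces to one via the two lemmas. This is a finite but somewhat tedious verification against Table~\ref{tbl:40nim}. The point that makes it go through cleanly is that Table~\ref{tbl:40nim} was engineered for exactly this purpose: its $k=3$ column supplies the periodicity hypotheses and its $*4$-rows supply the stability hypotheses, so that all the infinitary content of the lemma is discharged by those two structural results.
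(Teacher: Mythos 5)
Your proposal is correct and follows essentially the same route as the paper: the published proof is exactly to apply \hyperref[lem:3periodicity]{Nim Periodicity} to the upper seven rows of Table~\ref{tbl:40nim} (whose matching $k=0$ and $k=3$ columns supply the hypothesis $o(G)=o(3\cdot*+G)$) and then \hyperref[lem:3stability]{Nim Stability} anchored at $0$ and at $*2$ (whose matching $*3$ and $*4$ rows supply the hypothesis $o(H+*4)=o(H+*3)$), which is the periodicity-in-$k$, stability-in-$m$ mechanism you describe. The only divergence is that you set up the induction to cover \emph{all} \nim positions, including those with two or more heaps of size at least $3$; that generality is unnecessary for this lemma (the five forms of Table~\ref{tbl:smallnim} are closed under taking options, so the induction never leaves them) and the extra base cases such as $*3+*3$ are not in Table~\ref{tbl:40nim}\textemdash the paper defers them to Lemmas~\ref{lem:33}--\ref{lem:nimbig}, where they are handled via Proposition~\ref{prop:Q dooms} rather than by periodicity and stability alone.
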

\begin{proof}
Apply \hyperref[lem:3periodicity]{Nim Periodicity} to the upper $7$ rows of Table~\ref{tbl:40nim}, and then apply \hyperref[lem:3stability]{Nim Stability} to $0$, followed by $*2$.\end{proof}
\begin{table}[h!]
\centering
\begin{tabular}{r|ccc}
$G\backslash k\pmod3$ & $\equiv0$ & $\equiv1$ & $\equiv2$\\
\hline
$0$ & $\op$ & $\pn$ & $\no$\\
$*2$ & $\n$ & $\no$ & $\pn$\\
$*m$ & $\n$ & $\no$ & $\n$\\
$*2+*2$ & $\o$ & $\n$ & $\no$\\
$*2+*m$ & $\q$ & $\n$ & $\no$
\end{tabular}
\caption{The outcomes of some \nim positions of the form $k\cdot*+G$ ($m\ge3$)}
\label{tbl:smallnim}
\end{table}

\begin{lemma}\label{lem:33}
For all $k\ge0$, and $m_2\ge m_1\ge3$, $o\left(k\cdot*+*m_1+*m_2\right)=\q$.\end{lemma}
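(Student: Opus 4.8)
The plan is to prove Lemma~\ref{lem:33} by a double induction: an outer structural induction on the multiset of heap sizes (or equivalently on $m_1 + m_2$ together with $k$), and an appeal to the preceding machinery---in particular \hyperref[thm:blackholegen]{Absorbing Game Construction}, Corollary~\ref{cor:3undetabs}, and Proposition~\ref{prop:Q dooms}. The key structural observation is that $k\cdot*+*m_1+*m_2$ has three types of options: (a) $(k-1)\cdot*+*m_1+*m_2$ when $k\ge1$; (b) $k\cdot*+*\ell+*m_2$ for $\ell<m_1$; and (c) $k\cdot*+*m_1+*\ell$ for $\ell<m_2$. I want to show every such option is undetermined, and then invoke Proposition~\ref{prop:Q dooms} (since no option has $\mathbf P$ in its outcome, because each option is itself undetermined, so $\mathbf P\notin o(G')$); this forces $o(G)=\q$ provided $G$ actually has an option, which it does since $m_2\ge3>0$.

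First I would handle the base-type reductions using results already in hand. Options of type (a) are covered directly by the induction on $k$. For options of type (b) and (c), the subtlety is that $\ell$ may drop below $3$: if $\ell\ge3$ the option is again of the form covered by the inductive hypothesis (smaller heap sum), but if $\ell\in\{0,1,2\}$ it is no longer of that shape. So I would split into cases on how many of the two ``large'' heaps survive. The cleanest route: observe that $*3$ (and more generally $*m$ for $m\ge3$) is undetermined---indeed $o(*m)=\q$ is not literally true, but what I really need is that $*m_1+*m_2$ with $m_1,m_2\ge3$ is handled, and that a single surviving large heap added to nim-junk is undetermined. Concretely, I would first establish the $k=0$ two-heap case $o(*m_1+*m_2)=\q$ for $m_1,m_2\ge3$ directly (its options are $*\ell+*m_2$ and $*m_1+*\ell$; when both indices are $\ge3$ we recurse, and when one drops low we reduce to a one-large-heap position), and then note that $*m$ alone for $m\ge3$ combined with extra $*$'s and at most one more large heap can be controlled via Corollary~\ref{cor:3undetabs} applied to an undetermined game, or more simply via \hyperref[thm:blackholegen]{Absorbing Game Construction} once we know the relevant options are undetermined.

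Actually the smoothest organization, which I would adopt, is: prove simultaneously by induction on $k+m_1+m_2$ that $o(k\cdot*+*m_1+*m_2)=\q$ whenever $m_1,m_2\ge3$, \emph{and} that a position $k\cdot*+*m$ with $m\ge3$ is ``sufficiently undetermined'' in the sense needed---but in fact Table~\ref{tbl:smallnim} already tells us $o(k\cdot*+*m)\in\{\n,\no\}$ for $m\ge3$, so $\mathbf P\notin o(k\cdot*+*m)$, which is exactly what Proposition~\ref{prop:Q dooms} needs from such options. So the argument becomes: given $G=k\cdot*+*m_1+*m_2$ with $m_1,m_2\ge3$, every option either (i) has the same form with a strictly smaller parameter sum and is undetermined by induction, or (ii) has the form $k'\cdot*+*m'$ with $m'\ge3$, hence by Lemma~\ref{lem:nimtable2}/Table~\ref{tbl:smallnim} has outcome $\n$ or $\no$, so $\mathbf P\notin o$ of it, or (iii) has the form $k'\cdot*+*\ell_1+*\ell_2$ with some $\ell_i<3$, which reduces to case (ii) or to a position of at most one large heap plus nim-junk whose outcome (again by the small tables, or by induction) excludes $\mathbf P$. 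In all cases no option has $\mathbf P$ in its outcome and at least one option is undetermined, so Proposition~\ref{prop:Q dooms} gives $o(G)=\q$.

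The main obstacle I anticipate is the careful bookkeeping in case (iii): when one or both of the large heaps is reduced to a small heap ($0$, $*$, or $*2$), the resulting position is $k'\cdot*+*\ell_1+*\ell_2$ with $\ell_i\le2$, which is really a position with at most one genuinely ``large'' heap (namely none, or one of $m_1,m_2$ if only one was reduced). I need the fact that such positions never have outcome $\mathbf P$---this should follow from the explicit small tables (Table~\ref{tbl:40nim} and Table~\ref{tbl:smallnim}) for the zero- and one-large-heap cases, but I should check that \emph{every} position of the form ``sum of nim-heaps with at most one heap of size $\ge3$'' has $\mathbf P\notin o$, possibly needing a small separate lemma or an appeal to the pattern already recorded. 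Once that is pinned down, the rest is a routine verification that the hypotheses of Proposition~\ref{prop:Q dooms} are met.
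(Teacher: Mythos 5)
Your proposal is correct and follows essentially the same route as the paper: exhibit an undetermined option (from the induction on $k$ when $k\ge1$, and from the $*2+*m$ entry of Table~\ref{tbl:smallnim} when $k=0$), check via Lemma~\ref{lem:nimtable2} and induction that no option has $\mathbf P$ in its outcome, and apply Proposition~\ref{prop:Q dooms}. One small caution: Proposition~\ref{prop:Q dooms} requires an \emph{undetermined} option, not merely \emph{an} option as your first paragraph briefly suggests, so in the base case $k=0$ you should explicitly cite that $o\left(*2+*m_2\right)=\q$ from Table~\ref{tbl:smallnim} rather than only that its outcome excludes $\mathbf P$.
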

\begin{proof}
Set $G\cong k\cdot*+*m_1+*m_2$.
If $k=0$, then $G$ has the undetermined option $*2+*m_2$ by Lemma~\ref{lem:nimtable2}.
If $k\ge1$, then $G$ has the undetermined option $(k-1)\cdot*+*m_1+*m_2$ by induction on $k$. 
By Proposition~\ref{prop:Q dooms}, it suffices to show that no option $G'$ has $\mathbf{P}\in o(G')$.
By induction and Lemma~\ref{lem:nimtable2}, the only subpositions with that property are of the form $j\cdot*+*2$ or $j\cdot*$, neither of which is possible for an option.
\end{proof}

\begin{lemma}\label{lem:22n} 
For all $k\ge 0$ and $n\ge 2$, $o(k\cdot*+*2+*2+*n)=\q$.\end{lemma}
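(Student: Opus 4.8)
The plan is to mimic the structure of the proof of Lemma~\ref{lem:33}: show that $G\cong k\cdot*+*2+*2+*n$ has an undetermined option, and then verify that no option $G'$ satisfies $\mathbf{P}\in o(G')$, so that Proposition~\ref{prop:Q dooms} forces $o(G)=\q$. For the first part, if $k\ge 1$ then $(k-1)\cdot*+*2+*2+*n$ is an option and is undetermined by induction on $k$; if $k=0$, then moving the $*n$ heap down to $*2$ gives the option $*2+*2+*2$, which is undetermined by Lemma~\ref{lem:40nim} (the entry for $*2+*2+*2$ with $k=0$), or alternatively one can cite Lemma~\ref{lem:33} after moving $*n$ down to $*3$ when $n\ge 3$ — though the $n=2$ base case genuinely needs the $*2+*2+*2$ value from Table~\ref{tbl:40nim}, so I would phrase it via that entry. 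Actually it is cleanest to note that $*2+*2+*n$ is itself undetermined for every $n\ge 2$ by induction on $n$ (base case $n=2$ from Table~\ref{tbl:40nim}, inductive step: $*2+*2+*n$ with $n\ge 3$ has the undetermined option $*2+*2+*(n-1)$, and its other options are covered below), which then immediately gives the undetermined option $k\cdot*+*2+*2+*(n-1)$ or $(k-1)\cdot*+*2+*2+*n$ in the general case.

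The real content, as in Lemma~\ref{lem:33}, is the second bullet: showing no option has $\mathbf{P}$ in its outcome. The options of $k\cdot*+*2+*2+*n$ are: $(k-1)\cdot*+*2+*2+*n$; $k\cdot*+*j+*2+*n$ with $j\in\{0,1\}$, i.e. $k\cdot*+*2+*n$ or $k\cdot*+*+*2+*n$ (the latter being $(k+1)\cdot*+*2+*n$); and $k\cdot*+*2+*2+*\ell$ with $\ell<n$. By induction (on $k$, on $n$, and using the already-established lemmas) every such option is either again of the forbidden shape $j\cdot*+*2+*2+*m$ — hence undetermined, so certainly $\mathbf{P}\notin o$ — or is of the form $j\cdot*+*2+*m$ with $m\ge 2$, which by Lemma~\ref{lem:nimtable2} (Table~\ref{tbl:smallnim}, rows $*2+*2$ and $*2+*m$, noting the $*2+*2$ case when $m=2$) has outcome among $\{\q,\n,\no,\o\}$, none of which contains $\mathbf{P}$. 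The case $\ell\le 1$ in the third family reduces to $k\cdot*+*2+*2$ or $(k+1)\cdot*+*2+*2$, again handled by Table~\ref{tbl:smallnim}. So in every case $\mathbf{P}\notin o(G')$.

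The main obstacle is bookkeeping rather than mathematical depth: one must be careful that the induction is well-founded — it should be a simultaneous induction over the multiset of heap sizes (or over $k$ for fixed heap configuration, then over $n$), and one must make sure that when a heap is reduced the resulting position really does fall under one of Lemmas~\ref{lem:nimtable2}, \ref{lem:33}, or the present Lemma applied to a strictly smaller instance. The one subtlety to flag explicitly is that $*2+*2+*2$ and $*2+*2+*n$ for $n\ge 3$ are not covered by Lemma~\ref{lem:33} (which requires two heaps of size $\ge 3$), so the base of the recursion must rest on the explicit computation in Table~\ref{tbl:40nim}; everything else then propagates by the Proposition~\ref{prop:Q dooms} argument exactly as in Lemma~\ref{lem:33}.
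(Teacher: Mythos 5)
Your proof is correct, but it is not the paper's proof. The paper disposes of Lemma~\ref{lem:22n} in one line by applying \hyperref[lem:3periodicity]{Nim Periodicity} to the bottom three rows of Table~\ref{tbl:40nim} (extending in the $k$ direction) and then \hyperref[lem:3stability]{Nim Stability} to $*2+*2$ (extending in the $n$ direction); the only real content is the twelve computed outcomes in those rows. You instead run the Proposition~\ref{prop:Q dooms} induction from Lemma~\ref{lem:33} directly: exhibit an undetermined option and check that no option's outcome contains $\mathbf P$. Your case analysis of the options is complete and the outcomes you quote are right: every option is either of the shape $j\cdot*+*2+*2+*m$ with $m\ge2$, handled by the (well-founded) induction, or of the shape $j\cdot*+*2+*m$ with $m\ge2$ or $j\cdot*+*2+*2$, whose outcomes by Table~\ref{tbl:smallnim} lie in $\{\q,\n,\no,\o\}$ and so never contain $\mathbf P$. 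You also correctly flag the one genuine subtlety: $*2+*2+*2$ has no undetermined option (its two options have outcomes $\o$ and $\n$), so Proposition~\ref{prop:Q dooms} cannot be applied to it and your recursion must bottom out in the explicit value $o(*2+*2+*2)=\q$ from Table~\ref{tbl:40nim}. As for what each approach buys: the paper's is shorter and uniform with the proof of Lemma~\ref{lem:nimtable2}, at the cost of relying on more table entries and on the hypotheses of the two meta-lemmas being silently verified; yours needs only a single entry from the bottom three rows of Table~\ref{tbl:40nim} and makes the combinatorial reason for the pattern visible, at the cost of the option bookkeeping you describe.
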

\begin{proof}
Apply \hyperref[lem:3periodicity]{Nim Periodicity} to the lower $3$ rows of Table~\ref{tbl:40nim}, and then apply \hyperref[lem:3stability]{Nim Stability} to $*2+*2$.\end{proof}

\begin{lemma}\label{lem:233}
For all $k\ge0$, and $m_2\ge m_1\ge3$, $o\left(k\cdot*+*2+*m_1+*m_2\right)=\q$.\end{lemma}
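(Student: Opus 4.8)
The plan is to show that every game of the form $G\cong k\cdot*+*2+*m_1+*m_2$ (with $k\ge0$ and $m_2\ge m_1\ge3$) is undetermined, and the natural tool is Proposition~\ref{prop:Q dooms}: it suffices to exhibit one undetermined option of $G$ and to verify that no option $G'$ of $G$ has $\mathbf P\in o(G')$. This mirrors exactly the strategy already used for Lemma~\ref{lem:33}, so I would structure the proof the same way.

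\emph{Producing an undetermined option.} If $k\ge1$, then $(k-1)\cdot*+*2+*m_1+*m_2$ is an option of $G$, and it is undetermined by induction on $k$. If $k=0$, then $G\cong *2+*m_1+*m_2$; here I would move in the first $*m_1$-heap down to a nim-heap of size $2$, obtaining the option $*2+*2+*m_2$, which is undetermined by Lemma~\ref{lem:22n} (with $k=0$, $n=m_2\ge3\ge2$). Either way $G$ has an undetermined option, so $\mathbf O,\mathbf P\notin o(G)$ and it only remains to rule out $\mathbf N\in o(G)$.

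\emph{Ruling out $\mathbf P$ in options.} By Proposition~\ref{prop:Q dooms}, $G$ is undetermined as soon as no option $G'$ satisfies $\mathbf P\in o(G')$. The options of $G$ are: (i) $(k-1)\cdot*+*2+*m_1+*m_2$ when $k\ge1$; (ii) positions obtained by shrinking the distinguished $*2$-heap, i.e. $k\cdot*+*m_1+*m_2$ or $k\cdot*+*+*m_1+*m_2=(k+1)\cdot*+*m_1+*m_2$; and (iii) positions obtained by shrinking one of $*m_1,*m_2$ to some $*\ell$. For type (i), the option is undetermined by induction, so certainly $\mathbf P\notin o$. For type (ii), these are covered by Lemma~\ref{lem:33}, which gives outcome $\q$. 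For type (iii), shrinking $*m_2$ (say) to $*\ell$ with $\ell\ge3$ gives $k\cdot*+*2+*m_1+*\ell$, again of the form handled by induction (or by Lemma~\ref{lem:33} reshuffled) and undetermined; shrinking to $*\ell$ with $\ell\in\{0,1\}$ gives a position of the form $j\cdot*+*2+*m_1$ (absorbing the $*$ or $*0$), which is undetermined by Lemma~\ref{lem:nimtable2} (the $*2+*m$ row); and shrinking to $*2$ gives $k\cdot*+*2+*2+*m_1$, undetermined by Lemma~\ref{lem:22n}. In all cases the option has outcome $\q$, so in particular $\mathbf P$ is never in the outcome of an option. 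Hence Proposition~\ref{prop:Q dooms} applies and $o(G)=\q$.

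The only real care needed is the bookkeeping of exactly which heaps can be shrunk to what, and matching each resulting position to the correct prior lemma (Lemma~\ref{lem:nimtable2}, Lemma~\ref{lem:33}, Lemma~\ref{lem:22n}) or to the inductive hypothesis; the main obstacle is making sure the induction is set up on a quantity (say $k+m_1+m_2$) that genuinely decreases in case (iii) when we shrink $*m_2$ to an $\ell\ge3$ that could still exceed $m_1$ — there one should re-sort the two heaps so the hypothesis $m_2\ge m_1\ge3$ is preserved, which is harmless since the sum is symmetric in those two components. No position among the options ever lands in the $*m$ row or the empty row $j\cdot*$ in a way that produces $\mathbf P$, so the proviso of Proposition~\ref{prop:Q dooms} is met throughout.
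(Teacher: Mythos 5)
Your overall strategy is the same as the paper's: apply Proposition~\ref{prop:Q dooms} by exhibiting one undetermined option and checking that no option has $\mathbf P$ in its outcome. (The paper gets the undetermined option more uniformly, by deleting the $*2$ heap entirely to reach $k\cdot*+*m_1+*m_2$, which is undetermined by Lemma~\ref{lem:33} for every $k$; this avoids your case split on $k=0$ versus $k\ge1$, but your choices work too.)

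There is, however, one false assertion in your verification step. You claim that shrinking $*m_1$ or $*m_2$ to $*\ell$ with $\ell\in\{0,1\}$ yields a position of the form $j\cdot*+*2+*m$ that ``is undetermined by Lemma~\ref{lem:nimtable2}.'' That row of Table~\ref{tbl:smallnim} gives outcome $\q$ only when $j\equiv0\pmod3$; for $j\equiv1$ the outcome is $\n$ and for $j\equiv2$ it is $\no$. Concretely, $*+*2+*3+*3$ has the option $*+*2+*3$, whose outcome is $\n$, not $\q$, so your blanket statement ``In all cases the option has outcome $\q$'' is wrong. The proof survives because Proposition~\ref{prop:Q dooms} only requires that $\mathbf P$ not appear in the outcome of any option, and $\q$, $\n$, and $\no$ all exclude $\mathbf P$ --- which is exactly how the paper phrases this step (the only subpositions with $\mathbf P$ in their outcome are of the form $j\cdot*$ or $j\cdot*+*2$, and no option of $k\cdot*+*2+*m_1+*m_2$ has that form). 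Replace the claim that every option is undetermined with the weaker, correct claim that no option's outcome contains $\mathbf P$, and your argument is complete.
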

\begin{proof}
By Lemma~\ref{lem:33}, the option $k\cdot*+*m_1+*m_2$ is undetermined. The outcomes of options $G'$ are listed by Lemmas \ref{lem:nimtable2} through \ref{lem:22n}. And the only subpositions where Previous has a winning strategy are of the form $j\cdot*+*2$ or $j\cdot*$. By Proposition~\ref{prop:Q dooms}, $k\cdot*+*2+*m_1+*m_2$ is undetermined.
\end{proof}

\begin{lemma}\label{lem:333}
For all $m_1,m_2,m_3\ge 3$, $*m_1+*m_2+*m_3$ is an absorbing game.

\end{lemma}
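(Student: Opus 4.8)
The plan is to invoke the \hyperref[thm:blackholegen]{Absorbing Game Construction} (Theorem~\ref{thm:blackholegen}): since each $m_i\ge 3>0$, the game $G\cong *m_1+*m_2+*m_3$ certainly has at least one option, so it suffices to show that every option of $G$ is undetermined. I would organize this as a structural induction on $G$, equivalently an induction on $m_1+m_2+m_3$, with the inductive hypothesis that $*m_1'+*m_2'+*m_3'$ is absorbing whenever $m_1',m_2',m_3'\ge 3$ and this position is a proper subposition of $G$. (The base case $m_1=m_2=m_3=3$ needs no special treatment, since the $\ell\ge3$ case below simply does not arise there.)

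Every option of $G$ is obtained by shrinking a single heap, and by the commutativity of $+$ I may relabel so that the option has the form $*m_1+*m_2+*\ell$ with $0\le\ell<m_3$ and $m_1\le m_2$. I would then split on $\ell$. If $\ell\ge 3$, the option is again a position of the stated form and is a proper subposition of $G$, so it is absorbing by the inductive hypothesis; in particular it is undetermined, because any absorbing game $H$ satisfies $o(H)=o(H+0)=\q$. If $\ell=2$, the option is $*2+*m_1+*m_2$, which is undetermined by Lemma~\ref{lem:233} with $k=0$. If $\ell\in\{0,1\}$, the option is $\ell\cdot*+*m_1+*m_2$, which is undetermined by Lemma~\ref{lem:33}. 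Hence all options of $G$ are undetermined, and the \hyperref[thm:blackholegen]{Absorbing Game Construction} yields that $G$ is absorbing.

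The substance here is just bookkeeping layered on top of Lemmas~\ref{lem:33} and \ref{lem:233}. The one point that warrants care is the case $\ell\ge 3$: the option is a smaller instance of the very statement being proved, so the argument must be phrased as an induction on heap sizes rather than a direct appeal to the earlier lemmas, and there one uses the elementary remark that ``absorbing'' implies ``undetermined''. Beyond stating the three-heap symmetry carefully enough that shrinking any one of the heaps is covered, I do not anticipate any genuine obstacle.
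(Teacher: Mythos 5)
Your proof is correct and follows essentially the same route as the paper: both apply the \hyperref[thm:blackholegen]{Absorbing Game Construction} and check that every option is undetermined, using induction when all heaps stay of size at least $3$, Lemma~\ref{lem:233} when a heap drops to $*2$, and Lemma~\ref{lem:33} when a heap drops to $*$ or vanishes. Your explicit remark that ``absorbing implies undetermined'' (via $o(H)=o(H+0)$) is a small clarification the paper leaves implicit, but the argument is the same.
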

\begin{proof}
By the \hyperref[thm:blackholegen]{Absorbing Game Construction} (Theorem~\ref{thm:blackholegen}), it suffices to show that all options have outcome $\q$. If all heaps of an option have size at least $3$, then the outcome is $\q$ by induction. If an option has reduced a heap to $*2$, then the outcome is $\q$ by Lemma~\ref{lem:233}. If the move reduces a heap to $*$ or eliminates a heap, then the outcome is $\q$ by Lemma~\ref{lem:33}.
\end{proof}

\begin{lemma}\label{lem:2222}
For all $n_2\ge n_1\ge2$, $*2+*2+*n_1+*n_2$ is an absorbing game.
\end{lemma}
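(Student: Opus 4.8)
The plan is to invoke the Absorbing Game Construction (Theorem~\ref{thm:blackholegen}), exactly as in the proof of Lemma~\ref{lem:333}: it suffices to show that every option of $G\cong *2+*2+*n_1+*n_2$ (with $n_2\ge n_1\ge 2$) is undetermined. Since $G$ visibly has at least one option, this will give that $G$ is absorbing. So the work is a case analysis on what a single move can do to one of the four heaps.

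First I would set up the induction: we argue by induction on $n_1+n_2$ (the sizes $*2,*2$ being fixed), with all the earlier Nim lemmas available. A move either plays in one of the two size-$2$ heaps or in one of the heaps $*n_1,*n_2$. If the move is in a size-$2$ heap, reducing it to $*$ or deleting it, the resulting position is $k\cdot*+*2+*n_1+*n_2$ with $k\in\{0,1\}$, which is undetermined by Lemma~\ref{lem:233} (for $n_1\ge 3$) or, if $n_1=2$, is of the form $k\cdot*+*2+*2+*n_2$, undetermined by Lemma~\ref{lem:22n}. If instead the move is in the heap $*n_1$ (or symmetrically $*n_2$): reducing it to a heap of size $\ge 3$ lands in $*2+*2+*n_1'+*n_2$ with a strictly smaller heap-sum, undetermined by the inductive hypothesis; reducing it to $*2$ lands in $*2+*2+*2+*n_2$ (possibly after reordering), undetermined by Lemma~\ref{lem:22n}; and reducing it to $*$ or deleting it lands in $*2+*2+*n_2$ or $*+*2+*2+*n_2$, again covered by Lemma~\ref{lem:22n} (with $k=0$ or $k=1$). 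In every case the option is undetermined, so the Absorbing Game Construction applies.

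I do not expect a real obstacle here — the statement is a direct corollary of the preceding lemmas, and the only care needed is bookkeeping: making sure the cases where a heap drops to $*2$ or below are always routed to Lemma~\ref{lem:22n} (which handles $k\cdot*+*2+*2+*n$) rather than to lemmas that require all remaining heaps to have size $\ge 3$, and making sure the ``two generic heaps both $\ge 3$'' sub-case is the one that actually uses the induction. One small subtlety to flag explicitly: when $n_1=2$ the position $G$ is already $*2+*2+*2+*n_2$, so the statement overlaps Lemma~\ref{lem:22n}'s conclusion in spirit but not in form (that lemma asserts a single outcome $\q$, not absorption); the argument above still goes through uniformly, since every option of $*2+*2+*2+*n_2$ is again of a shape handled by Lemma~\ref{lem:22n} or the induction.
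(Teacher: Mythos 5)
Your proposal is correct and follows essentially the same route as the paper: invoke the Absorbing Game Construction and check that every option is undetermined, using induction for options that remain of the form $*2+*2+*n_1'+*n_2$ and routing the options where a heap drops to size $1$ or $0$ (or to $*2$) through Lemma~\ref{lem:22n} or Lemma~\ref{lem:233} as appropriate. The paper's proof organizes the cases by counting heaps of size at least $2$ in the option rather than by which heap was moved in, but the substance is identical.
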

\begin{proof}
By the \hyperref[thm:blackholegen]{Absorbing Game Construction}, it suffices to show that all options have outcome $\q$. If an option has four heaps of size at least $2$ then the outcome is $\q$ by induction. Otherwise, the outcome is $\q$ by Lemma~\ref{lem:22n} or \ref{lem:233}, as applicable.
\end{proof}

\begin{lemma}\label{lem:nimbig}
Any \nim position that has at least three heaps of size at least $3$, or at least four heaps of size at least $2$ is an absorbing game.
\end{lemma}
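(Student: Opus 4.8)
The plan is to reduce the statement to the two absorbing-game lemmas already in hand, Lemma~\ref{lem:333} and Lemma~\ref{lem:2222}, using nothing beyond the associativity of $+$ and the definition of ``absorbing'' (Definition~\ref{def:absorb}). The one elementary observation that does all the work is this: if $Q$ is an absorbing game and $P\cong Q+R$ for some game $R$, then $P$ is absorbing, since for every game $H$ we have $P+H\cong Q+(R+H)$, and the right-hand side is undetermined because $Q$ is absorbing. So the whole task is to locate, inside any \nim position satisfying the hypothesis, a sub-collection of heaps whose disjunctive sum is one of the known absorbing games.

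First I would treat the case of at least three heaps of size at least $3$. Choosing three such heaps $*m_1,*m_2,*m_3$ with each $m_i\ge3$, and letting $R$ be the disjunctive sum of the remaining heaps, we have $P\cong(*m_1+*m_2+*m_3)+R$, and $*m_1+*m_2+*m_3$ is absorbing by Lemma~\ref{lem:333}. By the observation above, $P$ is absorbing.

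Next I would treat the remaining case: $P$ has at least four heaps of size at least $2$ but does not have three heaps of size at least $3$. Then at most two of those four heaps have size $\ge3$, so at least two of them are equal to $*2$. Taking those two copies of $*2$ together with two further heaps $*n_1,*n_2$ of size $\ge2$ from among the four, and letting $R$ be the sum of the rest, we get $P\cong(*2+*2+*n_1+*n_2)+R$, where $*2+*2+*n_1+*n_2$ is absorbing by Lemma~\ref{lem:2222}; hence $P$ is absorbing. Since these two cases exhaust the hypothesis, the lemma follows.

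There is no genuine obstacle here: the substance of the lemma is entirely carried by Lemmas~\ref{lem:333} and~\ref{lem:2222}, and the only points requiring any care are the bookkeeping in the case split (checking that whenever the ``four heaps of size $\ge2$'' clause applies but the ``three heaps of size $\ge3$'' clause does not, one can indeed extract two heaps equal to $*2$) and the trivial remark that adjoining extra summands to an absorbing game leaves it absorbing.
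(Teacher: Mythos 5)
Your proof is correct and follows essentially the same route as the paper: locate an absorbing summand via Lemma~\ref{lem:333} or Lemma~\ref{lem:2222} and observe that adjoining further summands preserves the absorbing property (your ``elementary observation'' is exactly Proposition~\ref{prop:blackholeabsorbing}, which the paper cites instead of re-deriving). Your explicit case split confirming that two heaps equal to $*2$ can always be extracted in the second case is a detail the paper leaves implicit, but the argument is the same.
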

\begin{proof}
By Lemmas~\ref{lem:333} and \ref{lem:2222}, any such position has an absorbing game as a summand. By Proposition~\ref{prop:blackholeabsorbing}, the entire position is an absorbing game.
\end{proof}

\begin{theorem}\label{thm:allnimoutcomes}
The outcome of a \nim  position is determined by the numbers of heaps of size $1$, size $2$, and size at least $3$. Specifically, the outcome is $\q$ unless the position has the form $k\cdot*+*n$ or $k\cdot*+*2+*n$, in which case the outcome is listed in Table~\ref{tbl:smallnim}.
\end{theorem}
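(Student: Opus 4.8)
The plan is a routine but careful exhaustive case analysis that feeds each shape of Nim position into one of the lemmas already proved. Normalize a position by discarding empty heaps, and let $a$, $b$, $c$ denote the numbers of heaps of size $1$, of size $2$, and of size at least $3$, respectively; so the position is the sum of $a$ copies of $*$, $b$ copies of $*2$, and heaps $*m_1,\dots,*m_c$ with each $m_i\ge 3$. What must be shown is that the outcome depends only on $(a,b,c)$ and not on the particular sizes $m_i$, that it is $\q$ unless $(b,c)$ is one of $(0,0)$, $(1,0)$, $(0,1)$, $(2,0)$, $(1,1)$, and that in those five families it is the entry recorded in Table~\ref{tbl:smallnim}.

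First I would clear away the ``large'' positions. The number of heaps of size at least $2$ is $b+c$, so if $c\ge 3$ or $b+c\ge 4$ then Lemma~\ref{lem:nimbig} shows the position is an absorbing game, hence undetermined. This leaves exactly the nine pairs $(b,c)$ with $c\le 2$ and $b+c\le 3$.

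Then I would dispatch those nine cases in turn, writing the position as $k\cdot*+\cdots$ with $k=a$. For $c=2$: $b=0$ is exactly Lemma~\ref{lem:33} and $b=1$ is Lemma~\ref{lem:233}, both yielding $\q$. For $c=1$: $b=0$ and $b=1$ give the rows $*m$ and $*2+*m$ of Table~\ref{tbl:smallnim} via Lemma~\ref{lem:nimtable2}, while $b=2$ gives a position $k\cdot*+*2+*2+*m$ with $m\ge 3$, which is $\q$ by Lemma~\ref{lem:22n}. For $c=0$: $b\in\{0,1,2\}$ give the rows $0$, $*2$, $*2+*2$ of Table~\ref{tbl:smallnim} via Lemma~\ref{lem:nimtable2}, and $b=3$ gives $k\cdot*+*2+*2+*2$, which is $\q$ by Lemma~\ref{lem:22n} with $n=2$. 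In each surviving case the outcome supplied by Lemma~\ref{lem:nimtable2} depends only on $k\bmod 3$ and not on the specific heap sizes, so both assertions of the theorem follow.

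I do not expect a genuine obstacle here: every lemma needed is already proved, and the only point requiring care is that the case split is actually exhaustive — in particular that the two boundary cases $b=3,\,c=0$ and $b=2,\,c=1$, which fall outside the ``at least four heaps of size at least $2$'' clause of Lemma~\ref{lem:nimbig}, are instead absorbed by Lemma~\ref{lem:22n}.
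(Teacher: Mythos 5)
Your proposal is correct and takes essentially the same route as the paper, whose proof simply cites Lemma~\ref{lem:40nim} and Lemmas~\ref{lem:nimtable2} through~\ref{lem:nimbig}; you have merely made explicit the case split on the pair $(b,c)$ that the paper leaves implicit, and your nine cases are exhaustive and each is matched to the correct lemma (including the two boundary cases $b+c=3$ handled by Lemma~\ref{lem:22n}).
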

\begin{proof}
This follows immediately from Lemma~\ref{lem:40nim} and Lemmas \ref{lem:nimtable2} 
 through \ref{lem:nimbig}.
\end{proof}

\subsubsection{Nim Quotient}
In direct analogy to mis\`{e}re quotients (as covered in V.4 of \cite{cgt}), we define an equivalence relation on the set of \nim positions which is coarser than ``$=$''. While the two-player mis\`{e}re quotient of \nim is infinite, the three-player normal play quotient of \nim has size $16$. 

\begin{defn}
If $G$ and $H$ are \nim positions, then we write $G\equiv H$ if $o(G+X)=o(H+X)$ for all \nim positions $X$.
\end{defn}

\begin{proposition}\label{prop:addingpreservesquotient}
If $G,H,J$ are \nim positions and $G\equiv H$, then $G+J\equiv H+J$.
\end{proposition}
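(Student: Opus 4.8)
The statement is the standard fact that the relation $\equiv$, defined via indistinguishability under addition of $\textsc{Nim}$ positions, is a congruence for the addition of $\textsc{Nim}$ positions. The plan is to unwind the definition of $\equiv$ and use associativity and commutativity of the disjunctive sum. Suppose $G\equiv H$ and let $J$ be an arbitrary $\textsc{Nim}$ position; I want to show $G+J\equiv H+J$, that is, $o\bigl((G+J)+X\bigr)=o\bigl((H+J)+X\bigr)$ for every $\textsc{Nim}$ position $X$. The key observation is that $J+X$ is itself a $\textsc{Nim}$ position (a disjunctive sum of nim-heaps is a nim-heap configuration), so it is a legal choice of the ``test game'' in the definition of $G\equiv H$.

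The main step, then, is the following rewriting. Fix a $\textsc{Nim}$ position $X$. Using commutativity and associativity of $+$, we have $(G+J)+X\cong G+(J+X)$ and likewise $(H+J)+X\cong H+(J+X)$ (and isomorphic games have the same outcome, since $o$ depends only on the game tree). Setting $X'\cong J+X$, which is again a $\textsc{Nim}$ position, the hypothesis $G\equiv H$ gives $o(G+X')=o(H+X')$. Chaining these equalities yields $o\bigl((G+J)+X\bigr)=o\bigl((H+J)+X\bigr)$. Since $X$ was arbitrary, $G+J\equiv H+J$ by definition.

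There is essentially no obstacle here: the only things being used are that $+$ on games is associative and commutative up to isomorphism, that $o$ is invariant under isomorphism (noted in the ``Notation'' subsection), that the sum of two $\textsc{Nim}$ positions is a $\textsc{Nim}$ position, and the definition of $\equiv$ itself. If anything deserves a sentence of care, it is the remark that the class of $\textsc{Nim}$ positions is closed under $+$, so that $J+X$ is an admissible witness; this is what makes the argument go through with the restricted quantifier ``for all $\textsc{Nim}$ positions'' rather than requiring all impartial games. The proof is a two- or three-line formal verification once this is observed.
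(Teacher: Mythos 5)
Your proof is correct and follows exactly the same route as the paper's: rewrite $(G+J)+X$ as $G+(J+X)$, observe that $J+X$ is again a \textsc{Nim} position since \textsc{Nim} positions are closed under addition, and apply the hypothesis $G\equiv H$ with test game $J+X$. No differences worth noting.
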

\begin{proof}
Since \nim   positions are closed under addition, $J+X$ is a \nim  position whenever $X$ is, so that $o\left((G+J)+X\right)=o\left(G+(J+X)\right)=o\left(H+(J+X)\right)=o\left((H+J)+X\right)$, where the middle equation comes from $G\equiv H$.
\end{proof}

\begin{proposition}\label{prop:quotientsemigroup}
If $G,H,J,K$ are \nim positions and both $G\equiv H$ and $J\equiv K$ hold, then $G+J\equiv H+K$.
\end{proposition}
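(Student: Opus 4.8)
This is a routine consequence of Proposition~\ref{prop:addingpreservesquotient} together with the fact that $\equiv$ is transitive and that $+$ is commutative on \nim positions. The plan is to interpolate $H+J$ between $G+J$ and $H+K$.

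First I would record that $\equiv$ is transitive: if $o(A+X)=o(B+X)$ and $o(B+X)=o(C+X)$ for all \nim positions $X$, then $o(A+X)=o(C+X)$ for all such $X$, so $A\equiv C$. (This is immediate from transitivity of equality of outcome sets.) Next, applying Proposition~\ref{prop:addingpreservesquotient} to $G\equiv H$ with the summand $J$ gives $G+J\equiv H+J$. Applying Proposition~\ref{prop:addingpreservesquotient} again, this time to $J\equiv K$ with the summand $H$, gives $J+H\equiv K+H$; since disjunctive sum is commutative, this is the same as $H+J\equiv H+K$. Chaining these two relations through the transitivity just noted yields $G+J\equiv H+K$, as desired.

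There is no real obstacle here: the content is entirely carried by Proposition~\ref{prop:addingpreservesquotient} (whose proof in turn rests on the closure of \nim positions under addition), and the only things added are the trivial observations that $\equiv$ is an equivalence relation and that $+$ is commutative. If one wished, the whole argument could be compressed into the single displayed chain
\[
o\bigl((G+J)+X\bigr)=o\bigl(G+(J+X)\bigr)=o\bigl(H+(J+X)\bigr)=o\bigl(J+(H+X)\bigr)=o\bigl(K+(H+X)\bigr)=o\bigl((H+K)+X\bigr),
\]
valid for every \nim position $X$, where the second equality uses $G\equiv H$ and the fourth uses $J\equiv K$ (noting that $J+X$ and $H+X$ are again \nim positions).
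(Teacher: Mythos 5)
Your proposal is correct and follows essentially the same route as the paper: two applications of Proposition~\ref{prop:addingpreservesquotient}, interpolating $H+J$, and then invoking commutativity of the sum and transitivity of $\equiv$. Nothing further is needed.
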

\begin{proof}
By two applications of Proposition~\ref{prop:addingpreservesquotient}, $G+J\equiv H+J\cong J+H\equiv K+H\cong H+K$. 
\end{proof}

For convenience, let $\mathcal{Q}$ be the set of equivalence classes of \nim positions under $\equiv$. Note that since $0$ is a \nim position, Proposition~\ref{prop:quotientsemigroup} gives $\mathcal{Q}$ the structure of a commutative monoid. As with the convention for mis\`{e}re quotients, let $\bphi(G)$ denote the equivalence class of a \nim position $G$. We follow convention to write elements of $\mathcal{Q}$ with lowercase letters and the operation multiplicatively. For example, if $x=\bphi(G)$ and $y=\bphi(H)$, then $xy=\bphi(G+H)$; we also write $1=\bphi(0)$.

\begin{proposition}\label{prop:quotientoutcome}
For a \nim position $G$, $\bphi(G)$ determines $o(G)$.
\end{proposition}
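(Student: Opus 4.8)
The plan is to unpack the definition of $\bphi(G)$ and chase the consequences. Recall that $\bphi(G) = \bphi(H)$ means $G \equiv H$, i.e.\ $o(G+X) = o(H+X)$ for all \nim positions $X$. Since $0$ is itself a \nim position, we may specialize to $X \cong 0$, which immediately yields $o(G) = o(G+0) = o(H+0) = o(H)$. That is essentially the whole argument.

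More carefully: I would state that we must show $\bphi(G) = \bphi(H)$ implies $o(G) = o(H)$, since that is exactly what it means for $\bphi(G)$ to determine $o(G)$. So suppose $\bphi(G) = \bphi(H)$. By definition of the equivalence relation $\equiv$ (and the fact that $\bphi$ is its quotient map), we have $o(G+X) = o(H+X)$ for every \nim position $X$. Taking $X \cong 0$ — which is a \nim position, as noted just before Proposition~\ref{prop:quotientsemigroup} — gives $o(G) = o(H)$, as desired.

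I do not anticipate any real obstacle here; the only thing to be careful about is making explicit that $0$ qualifies as a \nim position (so that it is a legitimate choice of test position $X$), and that the equivalence relation $\equiv$ is precisely the kernel of $\bphi$, so that $\bphi(G)=\bphi(H) \iff G \equiv H$. Both facts are already recorded in the surrounding text, so the proof is a one-line specialization. If anything, I would phrase it to emphasize that this is the $N$-player analogue of the standard fact that the mis\`ere quotient map refines the outcome function.
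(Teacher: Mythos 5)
Your argument is correct and is exactly the paper's proof: specialize the defining condition of $\equiv$ to the \textsc{Nim} position $X \cong 0$ to get $o(G) = o(G+0) = o(H+0) = o(H)$. No differences worth noting.
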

\begin{proof}
Since $0$ is a \nim position, note that $G\equiv H$ implies $o(G)=o(G+0)=o(H+0)=o(H)$.
\end{proof}

Since the restriction of $o$ to the \nim positions factors through $\mathcal{Q}$, we adopt notation from V.6 of \cite{cgt} and use $\bpi$ to denote the corresponding map on $\mathcal{Q}$. For example, $\bpi(1)=\bpi\left(\bphi(0)\right)=\op$.

\begin{defn}
The \textit{Nim quotient} is the structure $(\mathcal{Q},\cdot,\bpi)$.
\end{defn}

For convenience, for the remainder of this subsection, define $a=\bphi(*)$, $b=\bphi(*2)$, and $c=\bphi(*3)$. 

\begin{theorem}\label{thm:nimquotient}
The Nim quotient is given by the commutative monoid with presentation $(\mathcal{Q},\cdot)\cong\left\langle a,b,c\mid a^3=1,b^4=b^3=b^2c=bc^2=c^2=c^3=ac^2=a^2c^2 \right\rangle$ (which has order 16) 
and the values of $\bpi$ given in Table~\ref{tbl:bpi}.\end{theorem}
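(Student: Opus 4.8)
The strategy is to establish the monoid presentation and the values of $\bpi$ by combining the structural classification of \nim{} outcomes (Theorem~\ref{thm:allnimoutcomes}) with explicit verification of the defining relations and a careful count showing no further collapsing occurs. First I would use Theorem~\ref{thm:allnimoutcomes} to argue that $\bphi(G)$ depends only on the triple $(k, j, \ell)$ recording the number of heaps of size $1$, size $2$, and size at least $3$ in $G$: indeed, that theorem shows $o(G+X)$ is determined by the triple associated to $G+X$, which in turn is determined by the triples of $G$ and $X$. Moreover $o$ depends on $k$ only modulo $3$ (by \hyperref[lem:3periodicity]{Nim Periodicity} applied as in Lemma~\ref{lem:nimtable2}), on $j$ only up to the cutoff $j \ge 2$ forcing absorption outside the listed families (Lemmas~\ref{lem:22n}, \ref{lem:2222}), and on $\ell$ only up to the cutoff $\ell \ge 3$ (Lemmas~\ref{lem:333}, \ref{lem:nimbig}). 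Since $a = \bphi(*)$, $b = \bphi(*2)$, $c = \bphi(*3)$ and $*m = *3$-equivalent for $m \ge 3$ would need checking, the monoid is generated by $a, b, c$ with $a^k b^j c^\ell$ depending only on $k \bmod 3$ together with the ``absorbing'' collapse whenever $(j,\ell)$ is large enough; this already bounds $|\mathcal{Q}|$.

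Next I would verify that the stated relations hold. The relation $a^3 = 1$ is \hyperref[prop:tripleno]{}; more precisely, $3\cdot * = 0$ fails (Proposition~\ref{prop:threestar}), but $\bphi(3\cdot*) = \bphi(0)$ must be checked directly via Table~\ref{tbl:smallnim} and Theorem~\ref{thm:allnimoutcomes} — adding three $*$'s to any \nim{} position shifts $k$ by $3$, hence preserves all outcomes, so $a^3 = 1$ in $\mathcal{Q}$. For the long chain of equalities $b^4 = b^3 = b^2c = bc^2 = c^2 = c^3 = ac^2 = a^2c^2$, the key observation is that each of these corresponds to an absorbing \nim{} position (via Lemmas~\ref{lem:333}, \ref{lem:2222}, \ref{lem:nimbig}: e.g. $b^2 c = \bphi(*2+*2+*3)$ has four heaps of size $\ge 2$, and $c^2 = \bphi(*3+*3)$... wait — $*3+*3$ has only two heaps of size $\ge 3$, so I must instead check via Lemma~\ref{lem:33} that $o(k\cdot* + *3 + *3) = \q$ for all $k$, i.e. $*3+*3$ is absorbing by the \hyperref[thm:blackholegen]{Absorbing Game Construction} since all its options fall under Lemma~\ref{lem:nimtable2} patterns). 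By Proposition~\ref{prop:absorbingunique}, all absorbing \nim{} positions are equal, hence $\equiv$, so all these products collapse to a single element, call it $z$ (the zero of the monoid), with $\bpi(z) = \q$ by Proposition~\ref{prop:quotientoutcome} and Theorem~\ref{thm:allnimoutcomes}.

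Then I would enumerate the non-absorbing classes: by Theorem~\ref{thm:allnimoutcomes}, the only positions with outcome $\ne \q$ are $k\cdot* + *n$ and $k\cdot* + *2 + *n$ (with the various small cases $*2$, $*2+*2$ from Table~\ref{tbl:smallnim}), so the non-$z$ elements are $a^i$ for $i \in \{0,1,2\}$; $a^i b$ for $i \in \{0,1,2\}$; $a^i c = a^i \bphi(*m)$ for $i \in \{0,1,2\}$ (noting $*m \equiv *3$ for all $m \ge 3$, which needs Lemma~\ref{lem:3stability}-type reasoning); $a^i b^2 = a^i\bphi(*2+*2)$ for $i \in \{0,1,2\}$; and $a^i bc = a^i\bphi(*2+*n)$ for $i \in \{0,1,2\}$. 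That is $5 \cdot 3 = 15$ non-zero elements plus $z$, giving $|\mathcal{Q}| = 16$, matching the claim; I should double-check that these $15$ are genuinely distinct by exhibiting, for each pair, a test position $X$ separating them — the outcome rows of Table~\ref{tbl:smallnim} together with multiplication by suitable $a^i b^j$ do this. Finally, the values of $\bpi$ on each class are read directly off Table~\ref{tbl:smallnim} (e.g. $\bpi(a^i) = o(i\cdot*)$, $\bpi(a^i b) = o(i\cdot* + *2)$, etc.) and recorded in Table~\ref{tbl:bpi}.

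\textbf{Main obstacle.} The delicate part is confirming that the presentation is \emph{exactly} right — that no extra relations hold among the $15$ candidate non-zero elements and that the listed relations suffice to reduce every word to one of the $16$ normal forms. Showing there are no extra relations requires, for each of the $\binom{15}{2}$ pairs, producing a \nim{} position $X$ with $o(G+X) \ne o(H+X)$; in practice this reduces to checking that the map sending a class to its $\bpi$-value on $\{1, a, a^2, b, ab, a^2b, c, \dots\}$-translates is injective, which is a finite but fiddly verification against Table~\ref{tbl:smallnim}. Showing the relations suffice amounts to a confluence/rewriting argument: any monomial $a^i b^j c^\ell$ with $j \ge 3$, or $\ell \ge 2$, or ($j \ge 2$ and $\ell \ge 1$), or ($j\ge 1$ and $\ell \ge 2$) rewrites to $z$ using the given relations, and otherwise is already in normal form — this must be cross-checked against the absorption lemmas (\ref{lem:333}, \ref{lem:2222}, \ref{lem:nimbig}) to make sure the syntactic cutoffs in the presentation match the game-theoretic ones.
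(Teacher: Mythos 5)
Your overall architecture matches the paper's: read the $\bpi$-values off Table~\ref{tbl:smallnim}, get $a^3=1$ and $c=\bphi(*n)$ from Theorem~\ref{thm:allnimoutcomes}, separate the $16$ classes by multiplying by powers of $a$ and comparing outcomes, establish the chain of collapse relations, and then count normal forms. Your opening reduction to triples $(k,j,\ell)$ is also sound and is a clean repackaging of what the paper does. However, there is a genuine gap in your justification of the chain $b^4=b^3=b^2c=bc^2=c^2=c^3=ac^2=a^2c^2$.

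You claim each of these products ``corresponds to an absorbing \nim{} position'' and then invoke Proposition~\ref{prop:absorbingunique} to collapse them all. But several of the representatives are \emph{not} absorbing games: $3\cdot*2$, $2\cdot*3$, $*2+*2+*3$, $*2+2\cdot*3$, and $k\cdot*+2\cdot*3$ are not covered by Lemmas~\ref{lem:333}, \ref{lem:2222}, or \ref{lem:nimbig}, and the paper explicitly contrasts them with the genuinely absorbing $4\cdot*2$ and $3\cdot*3$, saying they have only ``a similar absorbing property \emph{in the context of \nim{} positions only}.'' Your attempted repair for $c^2$ --- that $*3+*3$ is absorbing by the \hyperref[thm:blackholegen]{Absorbing Game Construction} --- fails: that construction requires \emph{every} option to be undetermined, but $*3+*3$ has the option $*3$ with outcome $\n$ (and $*+*3$ with outcome $\no$). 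So Proposition~\ref{prop:absorbingunique} simply does not apply to these elements, and without it the equalities $b^3=c^2$, etc., are unproven. What is actually needed is the statement that $\bpi(xb^3)=\bpi(xb^2c)=\bpi(xc^2)=\q$ for \emph{every} $x\in\mathcal{Q}$ (equality in $\mathcal{Q}$ means agreement against every \nim{} test position, not just against $0$ or against $k\cdot*$); this is the paper's Lemma~\ref{lem:quothole}, proved by a case analysis on $x$ using Lemmas~\ref{lem:33}, \ref{lem:22n}, \ref{lem:233}, \ref{lem:2222}, and \ref{lem:nimbig}. Equivalently, your triple-based reduction from the first paragraph would do the job if carried through: adding the triple of any of these positions to any triple lands outside the set $\{(j,\ell):\ell\le1,\ j+\ell\le2\}$ of non-$\q$ shapes from Theorem~\ref{thm:allnimoutcomes}. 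You gesture at this cross-check in your ``main obstacle'' paragraph, but the mechanism you actually wrote down for the relations rests on a false premise and needs to be replaced by one of these two arguments.
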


\setlength{\tabcolsep}{2.63pt}
\begin{table}[h!]
\centering
\begin{tabular}{r|cccccccccccccccc}
$x$&$1$&$a$&$a^2$&$b$&$ab$&$a^2b$&$c$&$ac$&$a^2c$&$b^2$&$ab^2$&$a^2b^2$&$bc$&$abc$&$a^2bc$&$c^2$ \\ \hline
$\bpi(x)$&$\op$&$\pn$&$\no$&$\n$&$\no$&$\pn$&$\n$&$\no$&$\n$&$\o$&$\n$&$\no$&$\q$&$\n$&$\no$&$\q$\\
\end{tabular}
\caption{Outcomes of the three-player Nim quotient} 
\label{tbl:bpi}
\end{table}
\setlength{\tabcolsep}{6pt}

To prove Theorem~\ref{thm:nimquotient}, we verify various facts individually as a series of lemmas.

\begin{lemma}\label{lem:16outcomes}
The outcomes in Table~\ref{tbl:bpi} are accurate.
\end{lemma}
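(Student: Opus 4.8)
\textbf{Proof proposal for Lemma~\ref{lem:16outcomes}.}

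The plan is to identify, for each of the sixteen monoid elements $x$ listed in Table~\ref{tbl:bpi}, a concrete \nim position $G$ with $\bphi(G) = x$ whose outcome $o(G)$ can be read off from results already proved, and to check that $\bpi(x)$ as tabulated agrees with $o(G)$. The point is that $\bpi$ is well-defined on $\mathcal{Q}$ (Proposition~\ref{prop:quotientoutcome}), so it suffices to exhibit \emph{one} representative per class; and the representatives can be chosen to be exactly the \nim positions whose outcomes are already catalogued by Theorem~\ref{thm:allnimoutcomes} (equivalently Table~\ref{tbl:smallnim} together with Lemma~\ref{lem:40nim}).

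First I would record the natural representatives: $1 = \bphi(0)$, $a = \bphi(*)$, $a^2 = \bphi(*+*)$, $b = \bphi(*2)$, $ab = \bphi(*+*2)$, $a^2b = \bphi(*+*+*2)$, $c = \bphi(*3)$, $ac = \bphi(*+*3)$, $a^2c = \bphi(*+*+*3)$, $b^2 = \bphi(*2+*2)$, $ab^2 = \bphi(*+*2+*2)$, $a^2b^2 = \bphi(*+*+*2+*2)$, $bc = \bphi(*2+*3)$, $abc = \bphi(*+*2+*3)$, $a^2bc = \bphi(*+*+*2+*3)$, and $c^2 = \bphi(*3+*3)$. For each of these, the outcome of the representative is supplied directly: the $1$-, $a$-, $a^2$-columns are the $k=0,1,2$ entries of the $G=0$ row of Table~\ref{tbl:smallnim}; the $b$-, $ab$-, $a^2b$-columns are the $k\equiv 0,1,2$ entries of the $G=*2$ row; the $c$-, $ac$-, $a^2c$-columns are the $k\equiv 0,1,2$ entries of the $G=*m$ row with $m=3$; the $b^2$-, $ab^2$-, $a^2b^2$-columns are the $k\equiv 0,1,2$ entries of the $G=*2+*2$ row; the $bc$-, $abc$-, $a^2bc$-columns are the $k\equiv 0,1,2$ entries of the $G=*2+*m$ row with $m=3$; and finally $o(*3+*3)=\q$ by Lemma~\ref{lem:233} with $k=0$ (or directly by Lemma~\ref{lem:33}), giving $\bpi(c^2)=\q$. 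Comparing each value to the corresponding entry of Table~\ref{tbl:bpi} completes the verification.

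I do not expect a genuine obstacle here: the lemma is purely bookkeeping, cross-referencing sixteen already-established outcomes. The only point requiring a word of care is that the sixteen chosen representatives are genuinely the distinct monoid elements named in the table — but that is a statement about the monoid presentation, which is the content of the \emph{other} lemmas used to prove Theorem~\ref{thm:nimquotient}; for the present lemma we only need that \emph{each} listed $x$ has \emph{some} \nim representative of the stated form, which is immediate from the definitions $a=\bphi(*)$, $b=\bphi(*2)$, $c=\bphi(*3)$ together with Proposition~\ref{prop:quotientsemigroup} (so that, e.g., $a^2 b = \bphi(*)\bphi(*)\bphi(*2) = \bphi(*+*+*2)$). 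Thus the proof reduces to displaying the representatives and quoting Table~\ref{tbl:smallnim}, Lemma~\ref{lem:40nim}, and Lemma~\ref{lem:233}.
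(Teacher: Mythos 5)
Your proposal is correct and matches the paper's proof, which likewise reads fifteen of the outcomes off Lemma~\ref{lem:nimtable2} (Table~\ref{tbl:smallnim}) and handles $\bpi(c^2)$ via Lemma~\ref{lem:33}. One tiny slip: Lemma~\ref{lem:233} does not apply to $*3+*3$ (its positions always contain a heap of size $2$), but your parenthetical fallback to Lemma~\ref{lem:33} with $k=0$, $m_1=m_2=3$ is the right citation.
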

\begin{proof}
All of the outcomes follow directly from Lemma~\ref{lem:nimtable2}, with the exception of $\bpi(c^2)$, which follows from Lemma~\ref{lem:33}.
\end{proof}

\begin{lemma}\label{lem:a cubed}
$a^3=1$, that is, $*+*+*\equiv0$.
\end{lemma}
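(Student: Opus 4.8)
The plan is to show $*+*+* \equiv 0$, i.e. $o(3\cdot* + X) = o(X)$ for every \nim position $X$. The natural route is to combine the mirroring idea already used in Proposition~\ref{prop:tripleno} with the outcome computations now available. The key observation is that every \nim position $X$ is itself of the form $k\cdot * + Y$ where $Y$ is a sum of heaps of size $\ge 2$; more usefully, every \nim position is a sum of copies of $*$ together with some remaining part, so by Proposition~\ref{prop:addingpreservesquotient} it suffices to handle $X$ ranging over a set of representatives whose outcomes are already classified by Theorem~\ref{thm:allnimoutcomes}. Actually the cleanest formulation: I would prove directly, by induction on $X$, that $o(3\cdot* + X) = o(X)$ for all \nim positions $X$, using Definition~\ref{def:3impout} and the fact that the options of $3\cdot* + X$ are $2\cdot* + X$ and $3\cdot* + X'$ for options $X'$ of $X$, while the options of $X$ are just the $X'$.

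Here is the inductive scheme I would carry out. Fix a \nim position $X$ and assume $o(3\cdot* + W) = o(W)$ for all \nim positions $W$ with strictly fewer counters. The options of $3\cdot* + X$ are (a) $2\cdot * + X$, and (b) $3\cdot* + X'$ for each option $X'$ of $X$; by induction the latter have outcome $o(X')$. So the outcome of $3\cdot* + X$ is governed by the multiset $\{\,o(2\cdot*+X)\,\} \cup \{\,o(X') : X' \text{ an option of } X\,\}$, whereas the outcome of $X$ is governed by $\{\,o(X') : X' \text{ an option of } X\,\}$ alone. Thus the whole statement reduces to checking that throwing in the extra option $2\cdot*+X$ does not change which of $\mathbf N,\mathbf O,\mathbf P$ lie in the outcome. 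For $\mathbf O$ and $\mathbf P$, whose membership is a ``for all options'' condition, I must verify $o(2\cdot*+X)$ contains $\mathbf N$ whenever $o(X) \ni \mathbf O$, and contains $\mathbf O$ whenever $o(X) \ni \mathbf P$ — i.e. $\mathbf N \in o(2\cdot*+X)$ whenever some outcome condition holds, which follows because $2\cdot*+X$ has $3\cdot*+X \cong *+(2\cdot*+X)$... that circularity means I should instead argue from the explicit classification: by Theorem~\ref{thm:allnimoutcomes} and Table~\ref{tbl:smallnim} the map $X \mapsto o(2\cdot*+X)$ is computable from $o(X)$, and one reads off that $o(2\cdot * + X)$ is exactly ``$o(X)$ cycled'' in the sense of Proposition~\ref{prop:sharps cycle} applied twice. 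Concretely, using Proposition~\ref{prop:sharps cycle} three times gives $o(\{\{\{G\}\}\}) = o(G)$, and adding three $*$'s is structurally the operation $G \mapsto \{\{\{G\}\}\}$ composed with the mirroring symmetry on the two remaining copies.

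The slickest argument, and the one I would actually write, avoids the classification entirely and goes as follows. I claim $3\cdot*$ behaves like a ``period-3 shift'' directly: apply Proposition~\ref{prop:sharps cycle}, which says $\mathbf N \in o(G) \iff \mathbf O \in o(\{G\})$, etc., i.e. prepending the single option $\{G\} = *+\cdots$ — no, $\{G\}$ is not $*+G$. So that identity is about the game $\{G\}$, not $*+G$, and is a red herring unless I can relate $*+G$ to $\{G\}$, which is false in general. Hence I commit to the induction-plus-classification route: invoke Theorem~\ref{thm:allnimoutcomes} to know $o(3\cdot*+X)$ and $o(X)$ as explicit functions of the heap-type of $X$, and check the finitely many cases (heap counts of sizes $1,2,\ge 3$) that adding $3\cdot*$ — which changes $k \bmod 3$ by $0$ — leaves the outcome unchanged: reading Table~\ref{tbl:smallnim}, the outcome depends on $k$ only through $k \bmod 3$, and on the rest of the position through the classification, so $o(3\cdot*+X) = o(X)$ in every case, including the absorbing cases where both sides are $\q$. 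Then $a^3 = 1$ follows. The main obstacle is purely bookkeeping: making sure the ``$3\cdot*$ shifts $k$ by $0 \bmod 3$'' observation is applied uniformly across all the position types in Theorem~\ref{thm:allnimoutcomes}, including positions already known to be absorbing, where the claim is trivial since $\q = \q$.
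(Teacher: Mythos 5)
After discarding several false starts (correctly recognizing that Proposition~\ref{prop:sharps cycle} concerns $\{G\}$ rather than $*+G$), you commit to exactly the paper's argument: since $\equiv$ only quantifies over \nim positions $X$, Theorem~\ref{thm:allnimoutcomes} shows the outcome of any \nim position depends on the number of size-$1$ heaps only through its residue modulo $3$, so adding $3\cdot*$ changes nothing. This is correct and is essentially the same (one-line) proof the paper gives.
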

\begin{proof}
In Theorem~\ref{thm:allnimoutcomes}, the outcomes only depend on the number of heaps of size $1$ modulo $3$.
\end{proof}

\begin{lemma}\label{lem:distinct elements}
The 16 equivalence classes listed in Table~\ref{tbl:bpi} are distinct.
\end{lemma}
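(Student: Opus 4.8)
The plan is to show that no two of the sixteen listed classes coincide by exhibiting, for each pair, a \nim position $X$ that separates them via the outcome function $\bpi$ together with the already-computed outcome table (Lemma~\ref{lem:nimtable2}, extended through Table~\ref{tbl:smallnim}). Since $o(G)=\bpi(\bphi(G))$ by Proposition~\ref{prop:quotientoutcome}, if $\bpi(x)\ne\bpi(y)$ then $x\ne y$ immediately; this already disposes of most pairs, since the $\bpi$-values in Table~\ref{tbl:bpi} take only a handful of values. So first I would group the sixteen classes by their $\bpi$-value and note that distinctness within a single $\bpi$-class is the only thing requiring work.

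**Next**, reading off Table~\ref{tbl:bpi}, the classes sharing a $\bpi$-value are: the $\op$-class $\{1\}$ (singleton, nothing to do); the $\pn$-classes $\{a, a^2b\}$; the $\no$-classes $\{a^2, ab, ac, a^2b^2, a^2bc\}$; the $\n$-classes $\{b, c, a^2c, ab^2, abc\}$; the $\o$-class $\{b^2\}$ (singleton); and the $\q$-classes $\{bc, c^2\}$. For each non-singleton group I would produce a single separating multiplier. For instance, to separate $a$ from $a^2b$: multiplying by $a^2$ sends $a\mapsto 1$ (outcome $\op$) and $a^2b\mapsto ab$ (outcome $\no$), so $a\ne a^2b$; I would read these product-outcomes off Table~\ref{tbl:smallnim} via the representatives $*,*2,*3$ and the periodicity/stability pattern. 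To separate $bc$ from $c^2$: both are $\q$, but multiplying by $b$ gives $b^2c$ versus $bc^2$, which are both again $\q$ — so that multiplier fails, and I instead note that $c^2\cdot$ (nothing) vs.\ picking a multiplier that distinguishes; in fact a cleaner route is that $bc=\bphi(*2+*3)$ and $c^2=\bphi(*3+*3)$, and while both have outcome $\q$, adding $*$ gives $o(*+*2+*3)=\n$ (from Table~\ref{tbl:smallnim}, row $*2+*m$, $k\equiv1$) versus $o(*+*3+*3)=\q$ (Lemma~\ref{lem:33}), so $a(bc)\ne a(c^2)$. For the five-element $\no$-group and the five-element $\n$-group, I would similarly tabulate the $\bpi$-values of the products of each class with a small fixed set of multipliers (e.g.\ $1, a, a^2, b, c$) and check that the resulting vectors of outcomes are pairwise distinct; the outcomes of every product needed here are instances of $k\cdot*+*n$ or $k\cdot*+*2+*n$ or else $\q$ by Theorem~\ref{thm:allnimoutcomes}, so every entry is already known.

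**The main obstacle** is purely organizational rather than mathematical: one must be careful that the chosen multipliers actually separate the worst cases inside the two five-element groups, since within those groups many products collapse to outcome $\q$ (any \nim position with three heaps $\ge 3$ or four heaps $\ge 2$ is absorbing by Lemma~\ref{lem:nimbig}), so a careless choice of multiplier distinguishes nothing. The safe approach is to realize the thirteen nontrivial classes as the canonical representatives $a^i b^j c^k$ with small exponents shown in Table~\ref{tbl:bpi} and, for each pair to be separated, pick the multiplier that keeps the relevant heap counts \emph{below} the absorbing threshold — typically multiplying by $1$, $a$, or $a^2$ (which never increases heap sizes beyond the $*$-heaps) and appealing to Table~\ref{tbl:smallnim}. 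I expect the whole argument to reduce to one small table of $\bpi$-values of products, after which distinctness is read off by inspection; I would present that table and then simply observe that within each $\bpi$-fiber the rows are distinct.
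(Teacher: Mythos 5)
Your proposal is correct and follows essentially the same strategy as the paper: group the sixteen classes by their $\bpi$-value, observe that only classes sharing an outcome need separating, and then separate within each fiber by multiplying by a power of $a$ (the paper uses $a$ for the $\no$-group and $a^2$ for the $\n$-group, reading the resulting outcomes off Table~\ref{tbl:smallnim}), with the $bc$ versus $c^2$ case handled exactly as you do via $\bpi(abc)=\n$ versus $\bpi(ac^2)=\q$ from Lemma~\ref{lem:33}. Your observation that multipliers involving $b$ or $c$ can collapse everything into the absorbing class, so one should stick to powers of $a$, is precisely the care the paper's choice of multipliers reflects.
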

\begin{proof}
Note that $\bpi$ is well defined on equivalence classes, so the only pairs of classes that might be identical are those with the same outcome. We apply Lemma~\ref{lem:a cubed} repeatedly below.

\begin{itemize}
\item $a\ne a^2b$ because $\bpi\left(a^2\right)=\no$ and $\bpi\left(a^3b\right)=\bpi\left(1b\right)=\bpi\left(b\right)=\n$.
\item $a^2,ab,ac,a^2b^2,a^2bc$ are all distinct because the outcomes after multiplying by $a$ are $\op,\pn,\n,\o,\q$, respectively.
\item $b,c,a^2c,ab^2,abc$ are all distinct because the outcomes after multiplying by $a^2$ are $\pn,\n,\no,\o,\q$, respectively.
\item $bc\ne c^2$ because $\bpi(abc)=\n$ and $\bpi(ac^2)=\q$ by Lemma~\ref{lem:33}.
\end{itemize}
\end{proof}

It remains to prove enough other relations to conclude that Table~\ref{tbl:bpi} is complete.

\begin{lemma}\label{lem:gtthree}
For each $n\ge3$, $c=\bphi(*n)$, that is, $*3\equiv *n$.
\end{lemma}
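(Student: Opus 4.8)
The plan is to show $\ast 3 \equiv \ast n$ for all $n \ge 3$ by establishing that $G + \ast 3$ and $G + \ast n$ have the same outcome for every \nim position $G$, using Theorem~\ref{thm:allnimoutcomes} to control the right-hand sides. First I would dispose of the case $n = 3$ trivially, so assume $n \ge 4$ and proceed by induction on $n$; it suffices to prove $\ast n \equiv \ast(n+1)$, i.e. $o(G + \ast n) = o(G + \ast(n+1))$ for every \nim position $G$. By Theorem~\ref{thm:allnimoutcomes}, the outcome of a \nim position depends only on the number of heaps of size $1$, of size $2$, and of size at least $3$. Adding a heap of size $n$ versus a heap of size $n+1$ changes neither count (both are heaps of size $\ge 3$), so $G + \ast n$ and $G + \ast(n+1)$ have the same triple of heap-counts and hence the same outcome.

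Actually, the cleanest route is to invoke Theorem~\ref{thm:allnimoutcomes} directly: for any \nim position $X$, both $\ast 3 + X$ and $\ast n + X$ are \nim positions whose heap-size-$1$ count, heap-size-$2$ count, and heap-size-$\ge 3$ count agree (the extra heap contributed in each case is of size $\ge 3$). By that theorem the outcome is a function of exactly those three counts, so $o(\ast 3 + X) = o(\ast n + X)$. Since $X$ was an arbitrary \nim position, $\ast 3 \equiv \ast n$ by the definition of $\equiv$, which is $c = \bphi(\ast n)$.

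I expect no real obstacle here — the lemma is essentially an immediate corollary of Theorem~\ref{thm:allnimoutcomes}, which already packages the fact that heaps of size $\ge 3$ are interchangeable as far as outcomes are concerned. The only point requiring a moment's care is confirming that Theorem~\ref{thm:allnimoutcomes} genuinely asserts dependence solely on the three heap-counts (it does, in its first sentence), so that replacing one size-$\ge 3$ heap by another does not alter the outcome of any sum; no separate induction or case analysis on the structure of $X$ is needed.

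\begin{proof}
If $n = 3$ the claim is trivial, so suppose $n \ge 4$. Let $X$ be any \nim position. Then $\ast 3 + X$ and $\ast n + X$ are \nim positions with the same number of heaps of size $1$, the same number of heaps of size $2$, and the same number of heaps of size at least $3$ (the heap adjoined to $X$ is of size at least $3$ in both cases). By Theorem~\ref{thm:allnimoutcomes}, the outcome of a \nim position is determined by precisely these three numbers, so $o(\ast 3 + X) = o(\ast n + X)$. As $X$ was an arbitrary \nim position, $\ast 3 \equiv \ast n$, i.e. $c = \bphi(\ast n)$.
\end{proof}
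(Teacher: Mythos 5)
Your proof is correct and matches the paper's own argument, which likewise cites Theorem~\ref{thm:allnimoutcomes} to note that no outcome determination depends on the exact size of a heap of size at least $3$. The direct version (your second paragraph) is exactly the paper's one-line proof; the initial induction on $n$ is unnecessary, as you yourself observe.
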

\begin{proof}
In Theorem~\ref{thm:allnimoutcomes}, none of the outcome determinations depend on the exact size of a heap that has size at least $3$.
\end{proof}

\begin{lemma}\label{lem:quothole}
For any $x\in \mathcal{Q}$, $xb^3=xb^2c=xc^2=c^2$.
\end{lemma}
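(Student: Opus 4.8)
The statement to prove is Lemma~\ref{lem:quothole}: for any $x \in \mathcal{Q}$, $xb^3 = xb^2c = xc^2 = c^2$. The plan is to first reduce this to a statement about \nim positions, then exploit the fact that $c^2 = \bphi(*3+*3)$ is the class of an absorbing game. Recall from Lemma~\ref{lem:33} that $o(k\cdot * + *m_1 + *m_2) = \q$ for $m_2 \ge m_1 \ge 3$, and from Lemma~\ref{lem:333} that $*m_1+*m_2+*m_3$ is absorbing for all $m_i \ge 3$; moreover $3\cdot*3$ is absorbing, and by Proposition~\ref{prop:absorbingunique} all absorbing games are equal, so in particular $\bphi$ sends every absorbing \nim position to a single element of $\mathcal{Q}$.

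The key observation is that $b^3 = \bphi(3\cdot*2)$, $b^2c = \bphi(*2+*2+*3)$, $c^2 = \bphi(*3+*3)$, and hence, for $x = \bphi(G)$ with $G$ any \nim position, $xb^3 = \bphi(G + 3\cdot*2)$, $xb^2c = \bphi(G+*2+*2+*3)$, $xc^2 = \bphi(G+*3+*3)$. Each of these has, as a summand, either a \nim position with at least four heaps of size at least $2$ (namely $G+3\cdot*2$ has the summand $*2+*2+*2+*2$ once $G$ contributes even one heap of size $\ge 2$ — but I must handle the case where $G$ contributes no such heap too) or at least three heaps of size at least $3$; in general I should appeal directly to Lemma~\ref{lem:nimbig}: any \nim position with at least three heaps of size $\ge 3$ or at least four heaps of size $\ge 2$ is absorbing. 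The cleanest route: show each of $G+3\cdot*2$, $G+*2+*2+*3$, $G+*3+*3$ is absorbing. For $G+*3+*3$: if $G$ has any heap of size $\ge 3$ we get three heaps of size $\ge 3$; if $G$ has two heaps of size $\ge 2$ we get four heaps of size $\ge 2$; the remaining small cases ($G$ has at most one heap of size $2$ and no larger heap, i.e. $G$ is $k\cdot*$, $k\cdot*+*2$) must be checked against the absorbing property directly, but in fact Lemma~\ref{lem:233} and Lemma~\ref{lem:33} together with the Absorbing Game Construction cover exactly these. Actually the slicker statement is: $c^2 = \bphi(*3+*3)$ and $*3+*3$ is not quite absorbing on its own — but $*3+*3+*3$ is (Lemma~\ref{lem:333}), and $c^2 \cdot c = c^3$; this suggests the relations $c^2 = c^3$ etc. are really what is being bootstrapped here. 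So instead I would argue: by Corollary~\ref{cor:qgetsnop} and Lemma~\ref{lem:33}, one shows $G+*3+*3$ has all the hallmarks, but the genuinely uniform argument is to show that $G + *3 + *3$, $G+*2+*2+*3$, and $G+3\cdot*2$ each contain an absorbing subposition, invoking Lemmas~\ref{lem:333}, \ref{lem:2222}, \ref{lem:22n}, \ref{lem:233}, \ref{lem:33} according to how many heaps $G$ contributes, and then Proposition~\ref{prop:blackholeabsorbing} to conclude the whole position is absorbing, hence its $\bphi$-class is the unique absorbing class $c^2$ (using that $*3+*3+*3$ is absorbing so $c^2 c = c^2$, wait — I need $c^2$ itself to be the absorbing class).

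Let me restate the clean argument I would actually write: It suffices, by the definition of $\mathcal Q$ and Proposition~\ref{prop:absorbingunique}, to show that for every \nim position $G$, all three of $G+3\cdot*2$, $G+*2+*2+*3$, $G+*3+*3$ are absorbing games (all absorbing games have the same $\bphi$-class, and $*3+*3+*3$ being absorbing by Lemma~\ref{lem:333} shows this common class is $c^3$; combined with the to-be-proven fact that $c^2$ equals it, this also recovers $c^2=c^3$). For $G+*3+*3$: write $G$ as a sum of heaps. If $G$ has a heap of size $\ge 3$, then $G+*3+*3$ has an absorbing summand by Lemma~\ref{lem:333}. If $G$ has two heaps of size exactly $2$, it has the absorbing summand $*2+*2+*3+*3$ covered by Lemma~\ref{lem:2222}. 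If $G$ has exactly one heap of size $2$, then $G+*3+*3$ has the summand $*2+*3+*3$, which is undetermined by Lemma~\ref{lem:233}; if $G$ has no heap of size $\ge 2$, then $G+*3+*3 = k\cdot*+*3+*3$ is undetermined by Lemma~\ref{lem:33}; in either of these last two cases I apply the Absorbing Game Construction after checking all options have outcome $\q$ via Lemmas~\ref{lem:nimtable2}–\ref{lem:2222} — but more efficiently, $G+*3+*3$ already has an undetermined subposition and by Proposition~\ref{prop:Q dooms}-style reasoning (or just: $k\cdot*+*3+*3$ and $*2+*3+*3$ are themselves of a form known to be undetermined, and have no option with $\mathbf P$ in its outcome, so by the Absorbing Game Construction applied to $G+*3+*3$... ). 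The analogous case splits handle $G+*2+*2+*3$ (summands $*2+*2+*n$ via Lemma~\ref{lem:22n}, $*2+*2+*3+*3$ via Lemma~\ref{lem:2222}, etc.) and $G+3\cdot*2$ (summand $4\cdot*2$ via Lemma~\ref{lem:2222} once $G$ contributes a heap of size $\ge 2$; otherwise $k\cdot*+3\cdot*2 = k\cdot*+*2+*2+*2$, which needs the row $*2+*2+*2$ of Table~\ref{tbl:40nim} / Lemma~\ref{lem:22n} to be absorbing — but $*2+*2+*2$ is not absorbing! so here I must instead observe $G+3\cdot*2$ with $G=k\cdot*$ is $k\cdot*+3\cdot*2$, and I should double-check whether $3\cdot*2$ is claimed absorbing; Table~\ref{tbl:40nim} says $o(*2+*2+*2)=\q$ in all four columns, which is exactly what the Absorbing Game Construction needs once we also know its options are all $\q$ — and its options are $k\cdot*+*2+*2$ which are NOT all $\q$. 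So $3\cdot*2$ is undetermined but maybe not absorbing; nonetheless $k\cdot*+3\cdot*2$ is still undetermined for all $k$ by the $*2+*2+*2$ row combined with Nim Periodicity/Stability as in Lemma~\ref{lem:22n}'s proof, and these are the only positions of this shape, so $\bphi(k\cdot*+3\cdot*2)$ is constant in $k$ — but I need it to equal $c^2$, the absorbing class.)

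**The main obstacle.** The genuine difficulty is the boundary cases where $G$ is small ($G = k\cdot*$ or $G=k\cdot*+*2$), because then $G+b^3$, $G+b^2c$, $G+c^2$ need not literally contain a three-heap or four-heap absorbing subposition, so one cannot simply invoke Lemma~\ref{lem:nimbig}. For those I expect to argue directly that the position in question is absorbing via the Absorbing Game Construction: all of its options are \nim positions already shown undetermined by Lemmas~\ref{lem:33}, \ref{lem:22n}, \ref{lem:233} (together with the periodicity row of Table~\ref{tbl:40nim}), so Theorem~\ref{thm:blackholegen} applies. Once every $G+b^3$, $G+b^2c$, $G+c^2$ is known to be absorbing, Proposition~\ref{prop:absorbingunique} forces $xb^3 = xb^2c = xc^2$ for all $x$, and taking $x=1$ identifies this common value with $\bphi(3\cdot*2) = \bphi(*2+*2+*3) = \bphi(*3+*3) = c^2$ (the last equality being the very claim, consistent since $*3+*3+*3$ from Lemma~\ref{lem:333} shows $c\cdot c^2$ is also this absorbing class, and $c^2 = 1\cdot c^2$ is too). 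I would organize the write-up as: (1) it suffices to show $G+*3+*3$, $G+*2+*2+*3$, $G+3\cdot*2$ are absorbing for every \nim $G$; (2) reduce to all options being undetermined via the Absorbing Game Construction; (3) a uniform case analysis on the heap multiset of $G$, citing the appropriate earlier lemma in each branch; (4) conclude via Proposition~\ref{prop:absorbingunique} and evaluate at $x=1$.
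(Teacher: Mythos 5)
Your overall strategy---show that every representative of $xb^3$, $xb^2c$, and $xc^2$ is an \emph{absorbing} game and then invoke Proposition~\ref{prop:absorbingunique}---attempts to prove something strictly stronger than the lemma, and the stronger statement fails at exactly the boundary cases you flag. The lemma only requires these classes to be indistinguishable against \nim positions; since the statement is universally quantified over $x\in\mathcal{Q}$, it suffices to show $\bpi(zb^3)=\bpi(zb^2c)=\bpi(zc^2)=\q$ for every $z\in\mathcal{Q}$ (apply this with $z=x\bphi(X)$ to compare against an arbitrary \nim position $X$). That is what the paper does: reduce $z$ to a product of $a$, $b$, $c$ via Lemma~\ref{lem:gtthree}, then cite Lemmas~\ref{lem:33}, \ref{lem:22n}, \ref{lem:233}, \ref{lem:2222}, and \ref{lem:nimbig} according to the heap multiset---essentially the case analysis you sketch. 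No claim of absorbingness (which quantifies over \emph{all} games $H$, not just \nim positions) is needed for the small cases; indeed the paper's remark immediately after the lemma, that $*2+*2+*2$ and $*3+*3$ have the absorbing property only ``in the context of \nim positions,'' signals that they are not absorbing outright.

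The concrete failure point is your step~(2), ``reduce to all options being undetermined via the Absorbing Game Construction.'' For $G=0$, the position $*3+*3$ has the options $*3$ and $*3+*$, with outcomes $\n$ and $\no$ by Table~\ref{tbl:smallnim}; the position $3\cdot*2$ has the option $*2+*2$ with outcome $\o$; and $*2+*2+*3$ has the option $*2+*2$ as well. So Theorem~\ref{thm:blackholegen} simply does not apply to these positions, and your proposed handling of the small-$G$ cases (``all of its options are \nim positions already shown undetermined\ldots so Theorem~\ref{thm:blackholegen} applies'') is false. The problem persists for $G=k\cdot*$ with $k\ge1$: for instance $*+*3+*3$ has the option $*+*2+*3$ with outcome $\n$. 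You partially notice this (``$3\cdot*2$ is undetermined but maybe not absorbing''), but your fallback---that $\bphi(k\cdot*+3\cdot*2)$ is ``constant in $k$'' and should equal $c^2$---does not follow from the outcomes alone and is left unproved. The repair is to drop absorbingness entirely here and argue only that every product $zb^3$, $zb^2c$, $zc^2$ has outcome $\q$, which is exactly what your case analysis on the heap multiset already delivers once it is aimed at outcomes rather than at the hypotheses of Theorem~\ref{thm:blackholegen}.
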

\begin{proof}
We prove this equivalence by showing that all representatives have outcome $\q$. By Lemma~\ref{lem:gtthree}, the only values of $x$ to be concerned about are products of $a$, $b$, and $c$. If $x$ is a power of $a$, then $\bpi(xb^3)=\q$ and $\bpi(xb^2c)=\q$ by Lemma~\ref{lem:22n}, and $\bpi(xc^2)=\q$ by Lemma~\ref{lem:33}. If $x$ contains at least one factor of $b$, then $\bpi(xb^3)=\bpi(xb^2c)=\q$ by Lemma~\ref{lem:2222} and $\bpi(xc^2)=\q$ by Lemma~\ref{lem:233} (if $x$ contains only one factor of $b$ and none of $c$) or Lemma~\ref{lem:nimbig} (if $x$ contains a factor of $c$ or a factor of $b^2$). If $x$ contains a factor of $c$, then $\bpi(xb^3)=\bpi(xb^2c)=\bpi(xc^2)=\q$ by Lemma~\ref{lem:nimbig}. 
\end{proof}

By Lemma~\ref{lem:nimbig}, $*2+*2+*2+*2$ and $*3+*3+*3$ are absorbing games. But note that Lemma~\ref{lem:quothole} tells us that in the context of \nim positions only,  $*2+*2+*2$ and $*3+*3$ have a similar absorbing property.

\begin{corollary}\label{cor:big relation}
$b^4=b^3=b^2c=bc^2=c^2=c^3=ac^2=a^2c^2$.
\end{corollary}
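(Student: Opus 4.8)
The plan is to derive Corollary~\ref{cor:big relation} directly from Lemma~\ref{lem:quothole} by plugging in appropriate values of $x \in \mathcal{Q}$. All of the asserted equalities equate certain degree-$3$ or degree-$4$ products to $c^2$, so the strategy is simply to recognize each listed element as an instance of $xb^3$, $xb^2c$, or $xc^2$ for a suitable $x$, and then invoke the lemma. Since Lemma~\ref{lem:quothole} asserts $xb^3 = xb^2c = xc^2 = c^2$ for \emph{every} $x$, specializing gives the whole chain almost for free.

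Concretely, I would proceed as follows. Taking $x = 1$ in Lemma~\ref{lem:quothole} yields $b^3 = b^2c = c^2$. Taking $x = b$ gives $b^4 = b^3c = bc^2 = c^2$ (and in particular $b^4 = c^2$, while $b^3 c = c^2$ is subsumed). Taking $x = c$ gives $b^3 c = b^2 c^2 = c^3 = c^2$, so in particular $c^3 = c^2$. Taking $x = a$ gives $ab^3 = ab^2 c = ac^2 = c^2$, so $ac^2 = c^2$. Taking $x = a^2$ gives $a^2 b^3 = a^2 b^2 c = a^2 c^2 = c^2$, so $a^2 c^2 = c^2$. Stringing these equalities together through their common value $c^2$ produces exactly
\[
b^4 = b^3 = b^2c = bc^2 = c^2 = c^3 = ac^2 = a^2c^2,
\]
which is the claimed corollary. (One should also note $bc^2 = c^2$ already appeared in the $x=b$ case.)

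There is essentially no obstacle here: the corollary is a pure bookkeeping consequence of the single universally quantified statement in Lemma~\ref{lem:quothole}, and the only thing to check is that each of the seven expressions on the left is literally of the form $xb^3$, $xb^2c$, or $xc^2$ for one of $x \in \{1, a, a^2, b, c\}$. The mildest point of care is making sure the quantifier in Lemma~\ref{lem:quothole} genuinely ranges over all of $\mathcal{Q}$ (which it does, since its proof case-splits over arbitrary products of $a$, $b$, $c$, and these generate $\mathcal{Q}$ by Lemma~\ref{lem:gtthree}), so that the specializations $x = a$, $x = a^2$, $x = b$, $x = c$ are all legitimate. Given that, the proof is a one-line deduction.

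\begin{proof}
Apply Lemma~\ref{lem:quothole} with $x$ equal to $1$, $a$, $a^2$, $b$, and $c$ in turn. From $x=1$ we get $b^3 = b^2c = c^2$; from $x=b$ we get $b^4 = bc^2 = c^2$; from $x=c$ we get $c^3 = c^2$; from $x=a$ we get $ac^2 = c^2$; and from $x=a^2$ we get $a^2c^2 = c^2$. Combining these equalities via their common value $c^2$ gives $b^4 = b^3 = b^2c = bc^2 = c^2 = c^3 = ac^2 = a^2c^2$.
\end{proof}
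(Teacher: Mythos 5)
Your proof is correct and matches the paper's intent exactly: the corollary is stated there without proof as an immediate consequence of Lemma~\ref{lem:quothole}, obtained precisely by specializing $x$ to $1$, $a$, $a^2$, $b$, and $c$ as you do. The bookkeeping checks out, with every one of the seven expressions realized as $xb^3$, $xb^2c$, or $xc^2$ and hence equal to the common value $c^2$.
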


\begin{lemma}
There are only 16 elements of $\mathcal{Q}$ as listed in Table~\ref{tbl:bpi}, and the presentation in the statement of Theorem~\ref{thm:nimquotient} is accurate.
\end{lemma}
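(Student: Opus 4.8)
The plan is to combine the relations already proved with a counting argument to show the monoid has exactly the 16 listed elements. First I would observe that by Lemma~\ref{lem:gtthree}, every \nim position is $\equiv$-equivalent to one of the form $k\cdot*+j\cdot*2+i\cdot*3$, so $\mathcal{Q}$ is generated by $a$, $b$, $c$ as claimed. By Lemma~\ref{lem:a cubed} we may take $k\in\{0,1,2\}$, so $a^3=1$ is a valid relation. This reduces every element to a normal form $a^k b^j c^i$ with $k\le 2$.

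Next I would use Corollary~\ref{cor:big relation} to bound $j$ and $i$. The corollary says $b^3=c^2=b^2c=bc^2=\cdots$ all coincide with a single ``absorbing'' class, call it $z=c^2$; moreover Lemma~\ref{lem:quothole} shows $xz=z$ for all $x$, so $z$ is a zero element of the monoid. Consequently any monomial $a^k b^j c^i$ with $i+j\ge 3$ (treating a factor of $b^2$ times $c$, or $bc^2$, etc., all being $\ge$ one of the generators of the absorbing relation) collapses to $z$; more carefully, the only surviving monomials are those with $i=0,j\le 2$; $i=0,j\le 3$ but $b^3=z$; $i=1,j\le 1$; or $i\ge 2$ giving $z$. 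Enumerating: with $i=0$ we get $1,a,a^2,b,ab,a^2b,b^2,ab^2,a^2b^2$ (nine elements, since $b^3=z$ is already counted); with $i=1$ and $j=0$ we get $c,ac,a^2c$; with $i=1,j=1$ we get $bc,abc,a^2bc$ (note $a^2b\cdot c$, etc.; here $i+j=2<3$ so these survive); and finally $z=c^2$ itself. That is $9+3+3+1=16$ candidate elements, matching Table~\ref{tbl:bpi}. By Lemma~\ref{lem:distinct elements} these 16 are pairwise distinct, so $|\mathcal{Q}|=16$ and the presentation has exactly these as its elements.

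To finish I would verify that the presentation $\left\langle a,b,c\mid a^3=1,\ b^4=b^3=b^2c=bc^2=c^2=c^3=ac^2=a^2c^2\right\rangle$ is not merely satisfied by $\mathcal{Q}$ but actually presents it — i.e. that no further relations hold. Since the free commutative monoid on $a,b,c$ modulo exactly these relations already has at most 16 elements (the rewriting $a^3\to 1$, $b^3\to z$, and ``anything with $i+j\ge3$ collapses to $z$, including $ac^2,a^2c^2\to z$'' terminates with the 16 normal forms enumerated above) and $\mathcal{Q}$ is a quotient of it with $\ge 16$ elements by Lemma~\ref{lem:distinct elements}, the quotient map must be an isomorphism. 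Combined with Lemma~\ref{lem:16outcomes} for the values of $\bpi$, this proves Theorem~\ref{thm:nimquotient}.

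The main obstacle I anticipate is the bookkeeping in the collapse argument: one must check that every monomial $a^k b^j c^i$ with $i+j\ge 3$ really does reduce to $z$ using only the listed relations (in particular that $ac^2$ and $a^2c^2$ are explicitly among them handles the $k\ne 0,\,i=2,\,j=0$ cases, and $b^2c,bc^2,b^3$ handle the mixed cases after pulling out powers of $a$), and that none of the 16 normal forms can be further reduced — this is exactly why Lemma~\ref{lem:distinct elements} is needed as the matching lower bound. There is no deep difficulty, only the need to be exhaustive.
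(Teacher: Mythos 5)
Your proof is correct and follows essentially the same route as the paper: reduce every class to a normal form $a^xb^yc^z$ via Lemma~\ref{lem:gtthree} and $a^3=1$, use Corollary~\ref{cor:big relation} to collapse everything with $z\ge2$ or $y+z\ge3$ onto the absorbing class $c^2$, count the $15+1=16$ survivors, and invoke Lemma~\ref{lem:distinct elements} as the matching lower bound. Your added remark that the abstractly presented monoid has at most $16$ elements while $\mathcal{Q}$ is a quotient of it with at least $16$, so the two coincide, merely makes explicit the ``presentation is accurate'' step that the paper leaves implicit.
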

\begin{proof}
By $a^3=1$ (from Lemma~\ref{lem:a cubed}) and $b^3=b^4$ and $c^2=c^3$ (from Corollary~\ref{cor:big relation}) and the commutativity of disjunctive sum, the only possible elements of $\mathcal{Q}$ are of the form $a^xb^yc^z$ for $x,z<3,y<4$. However, if $z=2$ or $y+z\ge3$ then the element is equal to $c^2$ by Corollary~\ref{cor:big relation}. This leaves 15 other possibilities with $y+z<3$ and $z<2$, for a total of 16.
\end{proof}

With this, we have proven Theorem~\ref{thm:nimquotient}. For comparison, in Section 5 of \cite{propp}, \nim is analyzed under Propp's play convention in which Previous is the unique winner of $0$. Rephrasing his results in our terminology, the three-player \nim quotient for that play convention has size $9$, though he shows that there are infinitely many equality classes of \nim positions.

Note that in the case of \nim positions alone, we recover something like a more faithful version of \hyperref[thm:ngenerates]{Next Generation} (Theorem~\ref{thm:ngenerates}). 
\begin{corollary}
If $G$ is a \nim position satisfying $o(G)=\op=o(0)$, then $o\left(G+H\right)=o(H)$ for all \nim positions $H$.\end{corollary}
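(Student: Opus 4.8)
The plan is to show that the hypothesis $o(G)=\op$ forces $G$ to lie in the equivalence class of $0$, after which the conclusion is immediate: if $G\equiv 0$ then $o(G+H)=o(0+H)=o(H)$ for every \nim position $H$, by the very definition of $\equiv$ on \nim positions.

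First I would read off from Theorem~\ref{thm:allnimoutcomes} exactly which \nim positions can have outcome $\op$. That theorem says the outcome of a \nim position is $\q$ unless the position has the form $k\cdot*+*n$ or $k\cdot*+*2+*n$, with the outcome then recorded in Table~\ref{tbl:smallnim}. Scanning that table, the value $\op$ occurs only in the row $G=0$ and only in the column $k\equiv0\pmod 3$; every other row ($*2$, $*m$ with $m\ge3$, $*2+*2$, $*2+*m$) yields outcomes drawn from $\{\n,\no,\pn,\o,\q\}$ but never $\op$. Hence $o(G)=\op$ forces $G\cong k\cdot*$ for some $k$ divisible by $3$. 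The only point requiring a little care is to note that positions nominally of the form $k\cdot*+*n$ with $n=1$ are simply $(k+1)\cdot*$ and so introduce no new source of $\op$-positions; the case analysis over the rows of Table~\ref{tbl:smallnim} is then exhaustive.

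Next I would conclude $G\equiv 0$. One clean way is to invoke Lemma~\ref{lem:a cubed} ($*+*+*\equiv 0$) together with Proposition~\ref{prop:addingpreservesquotient}, applied $k/3$ times, to get $k\cdot*\equiv 0$. Alternatively, and even more directly, Theorem~\ref{thm:allnimoutcomes} shows that the outcome of a \nim position depends on the number of size-$1$ heaps only modulo $3$, so $o(k\cdot*+H)=o(H)$ for every \nim position $H$ whenever $3\mid k$. Either route gives $o(G+H)=o(H)$ for all \nim positions $H$, as claimed.

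I do not anticipate any genuine obstacle: all of the substance has already been established, namely the classification of \nim outcomes (Theorem~\ref{thm:allnimoutcomes}) and the relation $a^3=1$ (Lemma~\ref{lem:a cubed}). The statement is essentially an observation that these two facts together identify $o(0)=\op$ as a ``\nim-only'' analogue of a $\mathbf P$-position in the two-player theory.
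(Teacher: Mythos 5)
Your proof is correct, but it reaches the key fact $G\equiv 0$ by a different route than the paper. The paper's proof is a one-liner: by Theorem~\ref{thm:nimquotient}, $\bphi(G)=1$ (implicitly, because Table~\ref{tbl:bpi} shows $1$ is the unique element of the $16$-element quotient with $\bpi$-value $\op$), and then $o(G+H)=\bpi\bigl(1\cdot\bphi(H)\bigr)=o(H)$. You instead bypass the quotient structure entirely: Theorem~\ref{thm:allnimoutcomes} pins down the \emph{positions} with outcome $\op$ as exactly $k\cdot*$ with $3\mid k$ (your remark that $n=1$ heaps fold into the $k\cdot*$ part is the right point of care there), and then Lemma~\ref{lem:a cubed} together with Proposition~\ref{prop:addingpreservesquotient} gives $k\cdot*\equiv0$. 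Your argument is slightly more elementary in that it does not rely on the completeness of the $16$-element list of quotient classes, only on the outcome classification and the single relation $a^3=1$; the paper's argument is shorter and illustrates the intended use of the quotient as a computational device. Both are sound, and both ultimately rest on the same two ingredients (knowing which classes have outcome $\op$, and knowing that class is the identity).
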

\begin{proof} By Theorem~\ref{thm:nimquotient}, $\bphi(G)=1$. Thus, $o(G+H)=\bpi\left(1\bphi(H)\right)
=o(H)$.\end{proof}

\section{\texorpdfstring{$N$}{N}-Player Impartial Games}\label{sec:nimp}
Many results from Section~\ref{sec:3imp} can be generalized to $N$ players. Throughout this entire section, $N$ refers to the number of players ($N\ge2$). We explicitly mention when assuming $N>2$ is necessary.

For some results, the results and proofs are nearly identical to those in the three-player case, but others become more complex, such as the construction of absorbing games for $N$ players. 

A game is still a finite set of options, and we continue to use $\cong$ to indicate that two games are isomorphic (i.e. the game trees are isomorphic).
\subsection{Preliminaries}
In the impartial setting, we denote the various players as $\mathbf{N},\mathbf{O}_{1},\ldots,\mathbf{O}_{N-2},\mathbf{P}$. $\mathbf{N}$ is the ``Next'' player, $\mathbf{P}$ is the ``Previous'' player, and the rest are the ``Other'' players. At times, it is convenient to use $\mathbf{O}_{0}$ for $\mathbf{N}$ and $\mathbf{O}_{N-1}$ for $\mathbf{P}$.

\begin{defn}\label{def:nplayimp}
The $N$-player (normal-play) \textit{outcome} $o(G)$ of a game $G$ is a subset of $\{ \mathbf{N},\ldots,\mathbf{P}\}$ whose members are determined recursively as follows.
\begin{itemize}
	\item $\mathbf{N}\in o(G)$ exactly when there exists an option $G'$ with $\mathbf{P}\in o(G')$.
	\item For $1\le i\le N-1$, $\mathbf{O}_i\in o(G)$ exactly when every option $G'$ has $\mathbf{O}_{i-1}\in o(G')$.
\end{itemize}
\end{defn}

\begin{proposition}\label{prop:nimpouts}
The following analogues of the propositions in \ref{sssec:obs} all hold.
\begin{enumerate}
\item $o(G)$ is always a proper subset of $\{ \mathbf{N},\mathbf{O}_{1},\ldots,\mathbf{O}_{N-2},\mathbf{P}\}$.
\item For $0\le i\le N-2$, $\mathbf{O}_i\in o(G)$ exactly when $\mathbf{O}_{i+1}\in o\left(\{G\}\right)$. And $\mathbf{P}\in o(G)$ exactly when $\mathbf{N}\in o\left(\{G\}\right)$.
\item If $G$ has an option $G^{*}$ with $o(G^{*})=\emptyset$ and has no option $G'$ with $\mathbf{P}\in o(G')$, then $o(G)=\emptyset$.
\item If $N>2$, then all $2^N-1$ proper subsets are possible outcomes.
\end{enumerate}
\end{proposition}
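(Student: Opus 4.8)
### Proof Proposal for Proposition~\ref{prop:nimpouts}

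The plan is to prove the four parts in order, generalizing the three-player arguments of Section~\ref{sssec:obs} essentially verbatim, with only part~(4) requiring genuinely new work.

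For part~(1), I would argue exactly as in Proposition~\ref{prop:impouts}'s predecessor: if $o(G) = \{\mathbf{N},\ldots,\mathbf{P}\}$, then in particular $\mathbf{N}\in o(G)$, so there is an option $G'$ with $\mathbf{P}\in o(G')$, and $\mathbf{P}\in o(G)$ forces every option (hence this $G'$) to have $\mathbf{O}_{N-2}\in o(G')$; iterating the membership conditions shows $\mathbf{O}_i \in o(G')$ for every $i$, so $o(G') = \{\mathbf{N},\ldots,\mathbf{P}\}$ too. An infinite descending chain of subpositions contradicts shortness. For part~(2), the chain of equivalences is immediate from Definition~\ref{def:nplayimp}: $\{G\}$ has the unique option $G$, so $\mathbf{O}_{i+1}\in o(\{G\})$ iff $\mathbf{O}_i\in o(G)$ for $0\le i\le N-2$, and $\mathbf{N}\in o(\{G\})$ iff its unique option $G$ has $\mathbf{P}\in o(G)$. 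For part~(3), the option $G^*$ with $o(G^*)=\emptyset$ witnesses that no option has $\mathbf{O}_{i-1}\in o(\cdot)$ can fail to be a problem only if \emph{every} option has it, so $\mathbf{O}_i\notin o(G)$ for all $i\ge1$ (in particular $\mathbf{P}=\mathbf{O}_{N-1}\notin o(G)$), while the hypothesis that no option $G'$ has $\mathbf{P}\in o(G')$ gives $\mathbf{N}\notin o(G)$ directly; hence $o(G)=\emptyset$. This mirrors Proposition~\ref{prop:Q dooms}.

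Part~(4) is the main obstacle and deserves the bulk of the effort. The cleanest route is to imitate Proposition~\ref{prop:7outs}: build, for each target subset $S\subsetneq\{\mathbf{O}_0,\ldots,\mathbf{O}_{N-1}\}$, an explicit witness game. First, using part~(2) (the ``shift by $\{\cdot\}$'' operation), it suffices to realize, for each $i$, a game with outcome the singleton $\{\mathbf{O}_i\}$ and a game whose outcome omits exactly $\mathbf{O}_i$, and then take disjunctive-sum-free constructions — but since we do not yet have an addition theorem for general $N$, I would instead realize \emph{all} subsets directly by a recursive/wrapping construction. Concretely: the empty game $0$ has $o(0) = \{\mathbf{O}_1,\ldots,\mathbf{O}_{N-1}\} = \{\mathbf{O}_i : i\ne 0\}$ (every ``Other-or-Previous'' player wins, Next loses), and applying $\{\cdot\}$ repeatedly cycles this, producing the $N$ subsets that omit exactly one $\mathbf{O}_i$. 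To get smaller subsets, I would use a game of the form $\{A, B\}$ or $\{A, \{B\}\}$ where $A$ is a ``large'' nim-heap type position (so $\mathbf{N}\notin o(A)$ but $\mathbf{N}\in o(\{A\})$, etc.) chosen to kill off a prescribed collection of $\mathbf{O}_i$'s from the outcome while a second option keeps $\mathbf{N}$ in or out as desired. The key combinatorial lemma to isolate is: given any desired set $T$ of indices, one can build a position all of whose options have outcomes forcing precisely $\{\mathbf{O}_i : i\in T\}$ to survive in the parent — this is the analogue of the observation in Proposition~\ref{prop:7outs} that a single game with $o = \{\mathbf{O}_i\}$ exists for each $i$, together with the fact that putting two options with complementary ``defects'' in a set intersects the constraints. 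I expect the bookkeeping — tracking which of the $N$ conditions $\mathbf{O}_i\in o(G) \Leftrightarrow (\forall G')\,\mathbf{O}_{i-1}\in o(G')$ is satisfied — to be the fiddly part, and I would organize it by induction on $|S|$ or on $N$, using $N>2$ crucially to have enough ``room'' (with $N=2$ only $\mathbf{P}$-omitting and $\mathbf{N}$-omitting and $\emptyset$ occur, i.e. $o$ is one of $\{\mathbf{N}\}$, $\{\mathbf{P}\}$, $\emptyset$ — three of the $2^2-1=3$, so the statement is vacuously fine there too, but the construction genuinely needs a third player to separate $\mathbf{N}$ from the tail).

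If a fully uniform construction proves unwieldy, the fallback is to note that $o(0)$ and its $\{\cdot\}$-iterates give all co-singletons, $o(*2)$-type positions and their iterates give all singletons and $\emptyset$ (as in Proposition~\ref{prop:7outs}), and then handle intermediate-size subsets by the same two-option trick, citing that only finitely many cases per fixed $N$ need checking and that the recursion terminates by shortness.
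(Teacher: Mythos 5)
Your proposal takes essentially the same route as the paper: parts 1--3 are verbatim generalizations of the three-player arguments, and for part 4 the paper likewise realizes every $\mathbf{N}$-containing outcome by a game whose options are various $m\cdot*$ (the option $(i-1)\cdot*$ excludes exactly $\mathbf{O}_i$ from the parent, and any such option with $i\ne N$ keeps $\mathbf{N}$ in), obtains the $\mathbf{N}$-free outcomes by cycling with $\{\cdot\}$ via part 2, and handles $\emptyset$ with a two-option game $\left\{H,\{H\}\right\}$ where $H\cong\{0,\ldots,(N-2)\cdot*\}$, so that $o(H)=\{\mathbf{N}\}$ and $o(\{H\})=\{\mathbf{O}_1\}$ are disjoint and both $\mathbf{P}$-free. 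One caveat on your fallback: iterates of $*2$ do not give all singletons for $N>3$, since $o(*2)=\overline{\{\mathbf{O}_1,\mathbf{O}_2\}}$ is a co-doubleton there; the multi-option $m\cdot*$ construction (one option per player to be excluded) is the version that actually works uniformly, and it is the one the paper uses.
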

\begin{proof}
The proofs of claims 1, 2, and 3 are exactly analogous to the proofs in the three-player case in \ref{sssec:obs}. For claim~4, we need a more general idea.

For any non-empty outcome set that includes $\mathbf N$, we can use a game with various options of the form $m\cdot*$. For example, if there are at least $6$ players (if $N\ge6$), then $\{*,3\cdot*,4\cdot*\}$ is a game in which Next chooses whether $\mathbf{O}_{2}$, $\mathbf{O}_{4}$, or $\mathbf{O}_{5}$ will lose; all other players are guaranteed to win. 

For a non-empty outcome set that \emph{does not} include $\mathbf N$, we can apply claim~2 to cycle the outcome set of a game whose outcome includes $\mathbf N$. For example, if $N=6$, then $\left\{\{*,3\cdot*,4\cdot*\}\right\}$ has outcome $\{\mathbf O_1,\mathbf O_2,\mathbf O_4\}$.

Finally, we construct a game $G$ with $o(G)=\emptyset$, if $N>2$, we can take $G\cong\left\{H,\{H\}\right\}$ where $H\cong\{0,\ldots,(N-2)\cdot*\}$. In this game, Next chooses whether $\mathbf O_1$ or $\mathbf O_2$ selects any player (other than themselves) to lose the game.
\end{proof}

\subsection{Outcomes of Sums}

\begin{theorem}[$N$-Player Next Generation]\label{thm:ngeneratesnplayer}
If $\mathbf{N}\notin o(H)$ then $o(G+H)\subseteq o(G)$.
\end{theorem}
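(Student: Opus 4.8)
The plan is to mimic, essentially verbatim, the proof of the three-player version (Theorem~\ref{thm:ngenerates}), which generalizes cleanly. As before, the argument is a structural induction on $G+H$ (on the combined game tree), and it proceeds by establishing the three "non-membership" implications that together force $o(G+H)\subseteq o(G)$: namely, for each player symbol $s\in\{\mathbf N,\mathbf O_1,\ldots,\mathbf O_{N-2},\mathbf P\}$, if $s\notin o(G)$ then $s\notin o(G+H)$. The hypothesis $\mathbf N\notin o(H)$ will be used exactly as in the three-player proof: via Proposition~\ref{prop:nimpouts}(2)-style reasoning it says that no option $H'$ of $H$ has $\mathbf P\in o(H')$.

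First I would handle the generic Other player. Suppose $\mathbf O_i\notin o(G)$ for some $1\le i\le N-1$ (reading $\mathbf O_{N-1}=\mathbf P$). By Definition~\ref{def:nplayimp}, some option $G^*$ has $\mathbf O_{i-1}\notin o(G^*)$. By the inductive hypothesis applied to $G^*+H$ (still with $\mathbf N\notin o(H)$), we get $\mathbf O_{i-1}\notin o(G^*+H)$; hence $G+H$ has an option, namely $G^*+H$, that fails $\mathbf O_{i-1}$, so $\mathbf O_i\notin o(G+H)$. This single paragraph covers all of $\mathbf O_1,\ldots,\mathbf O_{N-2}$ and $\mathbf P$ at once — a slight streamlining over the three-player write-up, which split $\mathbf O$ and $\mathbf P$.

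The remaining case is $\mathbf N\notin o(G)$. Then every option $G'$ has $\mathbf P\notin o(G')$, and by hypothesis every option $H'$ has $\mathbf P\notin o(H')$ as well. The options of $G+H$ are exactly the games $G'+H$ and $G+H'$. For the first kind, the inductive hypothesis gives $\mathbf P\notin o(G'+H)$ directly. For the second kind, $G+H'$, we want $\mathbf P\notin o(G+H')$; here we invoke the inductive hypothesis in the form "$\mathbf N\notin o(H')$ implies $o(G+H')\subseteq o(G)$", which applies since $\mathbf N\notin o(H')$ (indeed $\mathbf N\notin o(H')$ for all options $H'$ because $\mathbf N\notin o(H)$ would follow otherwise — more simply, $\mathbf N\notin o(H)$ forces each option $H'$ to satisfy $\mathbf P\notin o(H')$, and separately we need $\mathbf N\notin o(H')$, which holds because if some $H'$ had $\mathbf N\in o(H')$ then some $H''$ would have $\mathbf P\in o(H'')$, contradicting $\mathbf N\notin o(H)$). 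Either way $\mathbf P\notin o(G+H')$, so no option of $G+H$ admits $\mathbf P$, whence $\mathbf N\notin o(G+H)$.

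The main obstacle — really the only thing to be careful about — is the bookkeeping in the $\mathbf N\notin o(G)$ case: one must make sure the induction is set up so that the claim "$\mathbf N\notin o(H'')\Rightarrow o(G+H'')\subseteq o(G)$" is available for the subpositions $H''$ of $H$ used, i.e., that the induction is on the pair/sum $G+H$ and not on $G$ alone. As long as the structural induction is stated on $G+H$, exactly as in Theorem~\ref{thm:ngenerates}, every appeal above is to a strictly smaller position and the argument goes through with no genuinely new idea; the three-player proof transfers line for line with $\mathbf O$ replaced by a generic $\mathbf O_i$.
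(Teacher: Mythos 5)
Your first case ($\mathbf O_i\notin o(G)$ for $1\le i\le N-1$) matches the paper's argument exactly. The genuine gap is in the case $\mathbf N\notin o(G)$, specifically in your treatment of options of the form $G+H'$. Two things go wrong there. First, your intermediate claim that $\mathbf N\notin o(H')$ for every option $H'$ of $H$ is false: $\mathbf N\in o(H')$ requires some option of $H'$ (a \emph{second} option of $H$) to contain $\mathbf P$, whereas the hypothesis $\mathbf N\notin o(H)$ only constrains the \emph{first} options of $H$. Concretely, for $N=3$ take $H\cong\{*2\}$: then $o(H)=\o$, so $\mathbf N\notin o(H)$, yet its unique option $H'\cong *2$ has $o(*2)=\n\ni\mathbf N$. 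Second, even if that claim held, the conclusion $o(G+H')\subseteq o(G)$ would yield $\mathbf P\notin o(G+H')$ only if $\mathbf P\notin o(G)$ --- but the case hypothesis is $\mathbf N\notin o(G)$, and $\mathbf P\in o(G)$ is entirely possible (e.g.\ $G\cong 0$ has $o(0)\ni\mathbf P$).

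The repair is what the paper's terse ``by induction'' is pointing at: use commutativity of the sum and apply the inductive hypothesis to the pair $(H',G)$ rather than $(G,H')$. The case hypothesis $\mathbf N\notin o(G)$ then plays the role of the theorem's hypothesis for that instance, giving $o(G+H')=o(H'+G)\subseteq o(H')$; and $\mathbf P\notin o(H')$ holds because $\mathbf N\notin o(H)$. Hence $\mathbf P\notin o(G+H')$, no option of $G+H$ contains $\mathbf P$, and $\mathbf N\notin o(G+H)$ as required. With that substitution your proof coincides with the paper's.
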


Note that $\mathbf{N}\notin o(H)$ is still equivalent to $o(H)\subseteq o(0)$ in the $N$-player setting. In fact, the proof for $N$ players is nearly identical to the one for three players.
\begin{proof}
Let $G,H$ be games, and suppose $\mathbf{N}\notin o(H)$. 

If $\mathbf{O}_{i}\notin o(G)$ for some $i$ with $1\le i\le N-1$, then we can choose an option $G^{*}$ with $\mathbf{O}_{i-1}\notin o\left(G^{*}\right)$. But then $\mathbf{O}_{i-1}\notin o\left(G^{*}+H\right)$ by inductive hypothesis, so that $\mathbf{O}_{i}\notin o(G+H)$.

If $\mathbf{N}\notin o(G)$, then $\mathbf{P}\notin o\left(G'\right)$ for all options $G'$. Similarly, $\mathbf{P}\notin o\left(H'\right)$ for all options $H'$. Therefore, by induction, we have both $\mathbf{P}\notin o\left(G'+H\right)$ and $\mathbf{P}\notin o\left(G+H'\right)$ for arbitrary options. As such, $\mathbf{N}\notin o(G+H)$ by definition.
\end{proof}

We generalize \hyperref[thm:pgeneration]{Other Procreation} in two respects\textemdash more summands, and different players.

\begin{theorem}
For $k\ge1$ and $m\ge0$, if $N>km$ and $\mathbf O_m\notin o(G_1),\ldots o(G_k)$, then $\mathbf O_{km}\notin o(G_1+\cdots+G_k)$.
\end{theorem}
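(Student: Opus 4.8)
The plan is to mimic the proof of \hyperref[thm:pgeneration]{Other Procreation} (Theorem~\ref{thm:pgeneration}), pushing the induction down through one ``layer'' of options at a time and invoking \hyperref[thm:ngeneratesnplayer]{$N$-Player Next Generation} at the bottom. The key observation is that $\mathbf{O}_m\notin o(G_j)$ unwinds $m$ steps: by the recursive definition, $\mathbf{O}_m\notin o(G_j)$ means there is an option $G_j^{(1)}$ with $\mathbf{O}_{m-1}\notin o(G_j^{(1)})$, hence (repeating) a chain of options $G_j = G_j^{(0)}, G_j^{(1)}, \ldots, G_j^{(m)}$ with $\mathbf{O}_{m-i}\notin o(G_j^{(i)})$, so in particular $\mathbf{N} = \mathbf{O}_0 \notin o(G_j^{(m)})$.

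First I would handle the base case $m=0$ directly: the hypothesis $\mathbf{O}_0 = \mathbf{N}\notin o(G_j)$ for all $j$ gives, by $N$-Player Next Generation applied repeatedly (or rather by its proof technique — each $G_j$ has $o(G_j)\subseteq o(0)$), that $\mathbf{N}\notin o(G_1+\cdots+G_k)$; indeed $\mathbf{N}\notin o(G_1)$ forces $o(G_1+(G_2+\cdots+G_k))\subseteq o(G_1)$, so $\mathbf{N}\notin o(G_1+\cdots+G_k) = \mathbf{O}_{k\cdot 0}$, which is exactly the claim. For $m\ge 1$, I would argue by induction on $m$ (with the statement quantified over all tuples $G_1,\ldots,G_k$ and all $N > km$). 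Choose, for each $j$, an option $G_j^*$ with $\mathbf{O}_{m-1}\notin o(G_j^*)$. Set $H_j$ to be the tuple that is $G_j^*$ in one coordinate and leave the remaining coordinates alone — more precisely, I want to conclude $\mathbf{O}_{k(m-1)}\notin o(G_1^* + \cdots + G_k^*)$ from the induction hypothesis at level $m-1$ applied to the $k$-tuple $(G_1^*,\ldots,G_k^*)$ (valid since $N > km > k(m-1)$). Then I need to transfer this back up through $k$ single-option moves: since $\mathbf{O}_{k(m-1)}\notin o(G_1^* + \cdots + G_k^*)$, Proposition-style reasoning (Proposition~\ref{prop:nimpouts}, part on $\{G\}$, or directly the recursive definition) gives $\mathbf{O}_{k(m-1)+1}\notin o(G_1 + G_2^* + \cdots + G_k^*)$? — no: the recursive definition says $\mathbf{O}_i\in o(G_1+\cdots)$ requires \emph{every} option to have $\mathbf{O}_{i-1}$, so exhibiting one option of $G_1+G_2^*+\cdots+G_k^*$ (namely $G_1^*+G_2^*+\cdots+G_k^*$) \emph{lacking} $\mathbf{O}_{k(m-1)}$ shows $\mathbf{O}_{k(m-1)+1}\notin o(G_1+G_2^*+\cdots+G_k^*)$. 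Iterating this $k$ times, restoring $G_1,\ldots,G_k$ one coordinate at a time and incrementing the index each time, yields $\mathbf{O}_{k(m-1)+k} = \mathbf{O}_{km}\notin o(G_1+\cdots+G_k)$, as desired.

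I would write this inner iteration as a short sub-induction on the number of restored coordinates: if $\mathbf{O}_{k(m-1)+t}\notin o(G_1+\cdots+G_t+G_{t+1}^*+\cdots+G_k^*)$, then this sum is an option of $G_1+\cdots+G_t+G_{t+1}+G_{t+2}^*+\cdots+G_k^*$ (reached by the move $G_{t+1}\to G_{t+1}^*$), so by the recursive definition $\mathbf{O}_{k(m-1)+t+1}\notin o$ of the latter; after $t=k$ steps we arrive at $\mathbf{O}_{km}\notin o(G_1+\cdots+G_k)$. One has to check the indices never exceed $N-1$ so that the symbols $\mathbf{O}_i$ make sense: the largest index used is $km \le N-1$, which is guaranteed by $N > km$.

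The main obstacle is purely bookkeeping: keeping the two nested inductions (outer on $m$, inner on the number of restored components) clearly separated, and making sure at every stage that the index $k(m-1)+t$ stays in the legal range $\{0,\ldots,N-1\}$ so that $\mathbf{O}_{k(m-1)+t}$ is a genuine player; the bound $N > km$ is exactly what makes this work, and it is worth remarking that the case $m=0$ needs no bound (any $N\ge 2$ suffices, consistent with $N > 0$). No genuinely new idea beyond the proof of Theorem~\ref{thm:pgeneration} is required — that proof is the $k=2$, $m=1$ instance, where ``unwind $m=1$ step'' is the single move to $G^*, H^*$ and ``restore $k=2$ coordinates'' is the two-step chain $G^*+H^* \rightsquigarrow G+H^* \rightsquigarrow G+H$.
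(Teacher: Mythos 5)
Your proposal is correct and takes essentially the same route as the paper's proof: the $m=0$ case by iterating \hyperref[thm:ngeneratesnplayer]{Next Generation}, then induction on $m$ by choosing options $G_j^*$ with $\mathbf O_{m-1}\notin o\left(G_j^*\right)$, applying the inductive hypothesis to get $\mathbf O_{k(m-1)}\notin o\left(G_1^*+\cdots+G_k^*\right)$, and restoring the $k$ starred coordinates one at a time while incrementing the subscript via the recursive outcome definition (your explicit check that $km\le N-1$ is a detail the paper leaves implicit). One cosmetic slip in your base case: Next Generation absorbs the summand $H$ with $\mathbf N\notin o(H)$, so the inclusion should be $o\left((G_2+\cdots+G_k)+G_1\right)\subseteq o\left(G_2+\cdots+G_k\right)$ with $\mathbf N\notin o\left(G_2+\cdots+G_k\right)$ supplied by the repeated application you already invoke, rather than $\subseteq o(G_1)$ forced by $\mathbf N\notin o(G_1)$.
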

\begin{proof}
Note that the case of $k=1$ is trivial, so we assume $k\ge2$.

First, we consider the case $m=0$, so that $\mathbf O_m=\mathbf N$. By repeatedly applying \hyperref[thm:ngeneratesnplayer]{Next Generation}, $\mathbf N\notin o\left(G_1+\cdots +G_k\right)$.

For $m>0$, we induct on $m$. Since $\mathbf O_m\notin o(G_1),\ldots o(G_k)$, we can choose options $G_i^{*}$ such that $\mathbf O_{m-1}\notin o\left(G_i^{*}\right)$ for all $i$. Then by induction, $\mathbf O_{km-k}\notin o\left(G_1^{*}+\cdots +G_k^{*}\right)$, so that $\mathbf O_{km-(k-1)}\notin o\left(G_1+G_2^{*}+\cdots +G_k^{*}\right)$ and similarly for each successive term until we reach $\mathbf O_{km}\notin o\left(G_1+\cdots +G_k\right)$.
\end{proof}

Proposition~\ref{prop:tripleno} about the outcome of $G+G+G$ generalizes straightforwardly.
\begin{proposition}\label{prop:triplenon}
For all games $G$, $\mathbf N\notin o(N\cdot G)$.
\end{proposition}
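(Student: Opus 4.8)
The statement is the $N$-player analogue of Proposition~\ref{prop:tripleno}, which for three players was proved by a simple mirroring argument: the other two players copy Next's moves in the two components they are not ``responsible'' for. The plan is to generalize this idea. Write the sum as $G_0+G_1+\cdots+G_{N-1}$ with each $G_i\cong G$, and think of the $N$ players as $\mathbf{O}_0=\mathbf{N},\mathbf{O}_1,\ldots,\mathbf{O}_{N-1}=\mathbf{P}$ in cyclic order. The key observation is that when $\mathbf{O}_i$ is to move, exactly $N$ turns have elapsed since the start of each ``round'', so the components can be paired up with the players who just moved. I would have the coalition of all players other than $\mathbf{N}$ maintain the invariant: \emph{whenever it is $\mathbf{N}$'s turn, all $N$ copies are in identical positions}. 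Then $\mathbf{N}$ moves in some component, say $G_j$, changing it to $G_j'$; I claim the other $N-1$ players can, over their $N-1$ subsequent moves, each move in a distinct one of the remaining $N-1$ components to bring it into the position $G_j'$ as well, restoring the invariant. Since the total game is finite and this strategy never lets $\mathbf{N}$'s move be the last, $\mathbf{N}$ must eventually be the player facing $N$ copies of $0$ — i.e., be the unique loser — so $\mathbf{N}\notin o(N\cdot G)$.

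To make this rigorous without appealing to an informal ``strategy stealing'' narrative, I would phrase it as a structural induction mirroring the style of the other outcome proofs in the paper. Concretely, I would prove by induction on $G$ the slightly stronger statement needed to run the coalition argument: for any position reached, if the components currently read $G_0,G_1,\ldots,G_{N-1}$ where at most one of them differs from the ``canonical'' position and it is the turn of the player whose index matches how many components have been ``repaired'', then $\mathbf{N}\notin o$ of that position. Unrolling this, the base case is all components equal to $0$ (then the mover is $\mathbf{N}$ and indeed loses, so $\mathbf{P}\in o$, i.e. $\mathbf{N}\notin o$ of the predecessor as required), and the inductive step uses Definition~\ref{def:nplayimp}: to show $\mathbf{N}\notin o$ of a position it suffices to show no option has $\mathbf{P}$ in its outcome, and every option is either $\mathbf{N}$ moving in a fresh component (handled by the coalition continuing) or — in the middle of a repair round — absorbed into the induction via the same bookkeeping.

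Alternatively, and perhaps more cleanly for a short writeup, I would keep the proof at the level of the original Proposition~\ref{prop:tripleno}: state that the $N-1$ players other than $\mathbf{N}$ collude by, each round, each replicating in one of the other $N-1$ components the move $\mathbf{N}$ just made, so that $\mathbf{N}$ can never be the player who faces an empty position first. Since the paper already permits this informal register for the three-player case (``The other two players can guarantee Next's loss by mirroring their moves in the other two components''), the same brevity should be acceptable here, with the only addition being the remark that with $N-1$ copies to mirror into and $N-1$ intervening moves, the correspondence is a perfect matching each round.

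\textbf{Main obstacle.} The one genuine subtlety — and the step I would be most careful about — is verifying that the coalition's replication moves are always \emph{legal}: when $\mathbf{N}$ moves $G\to G'$ in one component, each subsequent collaborator needs their target component to still be in position $G$ so they can legally move it to $G'$. This is exactly why the strong induction hypothesis must track that the components are touched in a fixed order and that $\mathbf{N}$ does not get to move again until the round is complete; the reason $\mathbf{N}$ cannot interfere is that $\mathbf{N}$'s next turn only comes after all $N-1$ others have moved once, by which point the invariant is restored. Everything else (finiteness guaranteeing the game ends, and ``ending'' meaning the mover loses by the normal-play convention) is immediate from the standing assumptions in the paper.
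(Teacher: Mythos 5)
Your proposal is correct and is essentially the paper's own argument: the paper proves this with the one-line remark that the other players mirror Next's moves across the $N$ components, exactly the coalition-mirroring strategy you describe. Your additional care about the legality of each replication move (the round-based invariant) is a sound elaboration of the same idea, not a different approach.
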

\begin{proof}
All players can mirror the moves made by Next in the $N$ components.
\end{proof}

In the $N$-player setting, we can still examine reversible moves.
\begin{defn}
We say that $H$ is \textit{revertible to} $G$ if both of the following hold.
\begin{itemize}
\item For every option $G'$ of $G$, $H$ has a corresponding option $\widehat{G'}$ equal to $G'$.
\item For every option $H'$ of $H$ not equal to some $G'$, $H'$ has an $(N-1)^\text{st}$ option ${{H'}^{*\dots*}}$ equal to $G$.
\end{itemize} 
\end{defn}

\begin{theorem}\label{thm:reverting-n}
If $H$ is revertible to $G$ then $o(H+X)\subseteq o(G+X)$ for all $X$. 
\end{theorem}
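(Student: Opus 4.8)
The plan is to mimic the three-player proof of Theorem~\ref{thm:reverting} almost verbatim, replacing the cyclic triple $(\mathbf N,\mathbf O,\mathbf P)$ with the full chain $\mathbf N=\mathbf O_0,\mathbf O_1,\dots,\mathbf O_{N-2},\mathbf O_{N-1}=\mathbf P$, and using the $N$-player versions of the recursive definition (Definition~\ref{def:nplayimp}) and of Next Generation (Theorem~\ref{thm:ngeneratesnplayer}). The induction is structural in $H$, $G$, and $X$ simultaneously, exactly as before. I would prove the containment membership-by-membership: for each player label I show that if that label lies in $o(H+X)$ then it also lies in $o(G+X)$.

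First I would handle $\mathbf N\in o(H+X)$, which is the substantive case. By definition there is a move from $H+X$ to a position with $\mathbf P$ in its outcome. Three subcases: (i) the move is in $X$, to $H+X^{*}$ with $\mathbf P\in o(H+X^{*})$; by the inductive hypothesis $\mathbf P\in o(G+X^{*})$, so $\mathbf N\in o(G+X)$. (ii) the move is in $H$ to some $\widehat{G^{*}}+X$ with $\widehat{G^{*}}$ equal to an option $G^{*}$ of $G$; then $\mathbf P\in o(\widehat{G^{*}}+X)=o(G^{*}+X)$ by the definition of equality, so $\mathbf N\in o(G+X)$. (iii) the move is to $H^{*}+X$ where $H^{*}$ is an option of $H$ not equal to any option of $G$; here I invoke that $H^{*}$ has an $(N-1)$st-generation option ${H^{*\cdots*}}$ equal to $G$. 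Since $\mathbf P\in o(H^{*}+X)$, by Proposition~\ref{prop:nimpouts}(2) (iterated $N-1$ times, together with the inductive hypothesis applied to the equal subpositions at each stage) we get $\mathbf N\in o({H^{*\cdots*}}+X)=o(G+X)$. Here "iterated $N-1$ times" means: $\mathbf P\in o(H^{*}+X)$ forces $\mathbf O_{N-2}$ into every option's outcome, in particular into a particular chosen option's, and descending $N-1$ levels along the chosen branch lands on a position equal to $G+X$ with $\mathbf O_{N-1-(N-1)}=\mathbf O_0=\mathbf N$ in its outcome, as in the three-player argument where "the other two players can move to yield $H^{***}+X$".

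Next I would handle $\mathbf O_i\in o(H+X)$ for $1\le i\le N-1$ (the case $i=N-1$ being $\mathbf P$). By definition this means $\mathbf O_{i-1}\in o\bigl((H+X)'\bigr)$ for \emph{every} option. The options of $H+X$ are of three kinds: $H+X'$, $\widehat{G'}+X$ (for options $G'$ of $G$), and $H^{*}+X$ for options $H^{*}$ of $H$ not equal to any $G'$. For the first kind, the inductive hypothesis gives $\mathbf O_{i-1}\in o(G+X')$. For the second, $\mathbf O_{i-1}\in o(\widehat{G'}+X)=o(G'+X)$. These are precisely all the options of $G+X$, so $\mathbf O_i\in o(G+X)$; the third kind of option of $H+X$ is simply not needed. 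This is the straightforward part.

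The main obstacle is case (iii) in the $\mathbf N$ analysis: getting the bookkeeping right for the $(N-1)$-fold descent. In the three-player case "the other two players" make two moves and arrive at $H^{***}$; for general $N$ one must be careful that the revertibility hypothesis supplies an option of an option of $\dots$ ($N-1$ deep) equal to $G$, and that pushing $\mathbf P\in o(H^{*}+X)$ down this chain via Proposition~\ref{prop:nimpouts}(2) — which shifts $\mathbf O_j$ up by one with each wrapping level — correctly yields $\mathbf N$ after $N-1$ steps, while the inductive hypothesis is legitimately applicable at each equal-subposition substitution because every such subposition is strictly smaller. I expect this to go through cleanly once the indices are tracked, but it is the only place where the argument is more than a transcription of the $N=3$ proof. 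Everything else is routine adaptation, and the corollary corresponding to Corollary~\ref{cor:replacement} then follows by symmetry exactly as in the three-player setting.
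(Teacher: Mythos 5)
Your proposal is correct and follows the paper's proof essentially verbatim: the same three-way case split for $\mathbf N\in o(H+X)$ (with the $(N-1)$-fold descent along the reverting chain decrementing $\mathbf O_{N-1}$ down to $\mathbf O_0=\mathbf N$ and then invoking $H^{*\dots*}=G$), and the same argument for $\mathbf O_i$ using only the options of the forms $H+X'$ and $\widehat{G'}+X$. The one cosmetic slip is citing Proposition~\ref{prop:nimpouts}(2) for the descent, which applies to single-option wrappers; what you actually use (and correctly describe) is the recursive clause of Definition~\ref{def:nplayimp}, that $\mathbf O_j$ in an outcome forces $\mathbf O_{j-1}$ into the outcome of \emph{every} option.
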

The proof is nearly identical to the argument for three players (see Theorem~\ref{thm:reverting}), but is written here in full, for convenience.
\begin{proof}
Suppose $\mathbf{N}\in o(H+X)$. If the winning move has the form $H+X^{*}$, then $\mathbf{P}\in o(H+X^{*})$ so that $\mathbf{P}\in o(G+X^{*})$ by induction. Hence, the same move would win in $G+X$, and $\mathbf{N}\in o(G+X)$. Now suppose the winning move in $H+X$ is to something of the form $\widehat{G^*}+X$ with $\widehat{G^*}$ equal to an option ${G^*}$ of $G$. Then $\mathbf{P}\in o(\widehat{G^*}+X)=o(G^*+X)$, so that $\mathbf{N}\in o(G+X)$. Finally, if the winning move is to an option $H^*$ not equal to an option of $G$, then the other $N-1$ players can move to yield some $H^{*\dots*}+X$ where $H^{*\dots*}=G$. Note that $\mathbf{N}\in o(H^{*\dots*}+X)$ as we started with a winning move in $H+X$. From $H^{*\dots*}=G$ it follows that $\mathbf{N}\in o(G+X)$, as desired.

Suppose $\mathbf{O}_i\in o(H+X)$ for some $i$ with $1\le i\le N-1$. Then $\mathbf{O}_{i-1}\in o(H+X')$ for all options $X'$. By induction, $\mathbf{O}_{i-1}\in o(G+X')$ for all $X'$ as well. Since $\mathbf{O}_i\in o(H+X)$, we also have $\mathbf{O}_{i-1}\in o(\widehat{G'}+X)=o(G'+X)$ for all options $G'$ as well. We have shown $\mathbf{O}_{i-1}\in o\left((G+X)'\right)$ for all options, so $\mathbf{O}_i\in o(G+X)$. \end{proof}

As with three players, replacing options with equal games yields an equal game.

\begin{corollary}\label{cor:nreplacement} 
Suppose that $G\cong \left\{G_i':\,i\in I \right\}$ and $H\cong \left\{H_i':\,i\in I \right\}$ and that $G_i'=H_i'$ for all $i$. Then $G=H$.
\end{corollary}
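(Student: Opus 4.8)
The plan is to reduce Corollary~\ref{cor:nreplacement} to Theorem~\ref{thm:reverting-n} in exactly the way Corollary~\ref{cor:replacement} was reduced to Theorem~\ref{thm:reverting} in the three-player case. The statement $G = H$ unwinds, by the definition of impartial equality, to $o(G+X) = o(H+X)$ for all impartial games $X$; since this is symmetric in $G$ and $H$, it suffices to prove just the inclusion $o(H+X) \subseteq o(G+X)$ for all $X$ (the other inclusion following by swapping the roles of $G$ and $H$, which is legitimate because the hypotheses $G_i' = H_i'$ are themselves symmetric).

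The key step is then to observe that under the hypotheses, $H$ is revertible to $G$ in the sense of the $N$-player definition preceding Theorem~\ref{thm:reverting-n}. I would check the two clauses directly. For the first clause: every option of $G$ is some $G_i'$, and $H$ has the corresponding option $H_i'$, which equals $G_i'$ by hypothesis; so every $G_i'$ has a corresponding equal option of $H$, namely $\widehat{G_i'} := H_i'$. For the second clause: let $H'$ be an option of $H$; then $H' = H_i'$ for some $i$, and $H_i' = G_i'$, which is an option of $G$. So in fact \emph{every} option of $H$ is equal to some option of $G$, which means the second clause is vacuously satisfied — there are no options $H'$ of $H$ ``not equal to some $G'$'' for which anything needs to be exhibited. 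Hence $H$ is revertible to $G$, Theorem~\ref{thm:reverting-n} applies, and $o(H+X) \subseteq o(G+X)$ for all $X$; combined with the symmetric inclusion this gives $G = H$.

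There is essentially no obstacle here: the entire content is packaged inside Theorem~\ref{thm:reverting-n}, and the corollary is just the observation that replacing options with equal games is the simplest instance of revertibility, where the second (genuinely subtle) clause about second/$(N-1)$st options is triggered for no option at all. The only thing to be slightly careful about is the indexing convention — that $G \cong \{G_i' : i \in I\}$ really does mean the options of $G$ are exactly the $G_i'$ (with the matching understood index-wise with those of $H$) — so that ``every option of $G$'' and ``$G_i'$ for some $i \in I$'' are interchangeable, and likewise for $H$; this is immediate from the stated isomorphisms.

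\begin{proof}
By the definition of impartial equality, $G = H$ means $o(G+X) = o(H+X)$ for all impartial $X$. Since the hypotheses are symmetric in $G$ and $H$, it suffices to prove $o(H+X) \subseteq o(G+X)$ for all $X$. We claim $H$ is revertible to $G$. For the first clause of the definition: every option of $G$ is $G_i'$ for some $i \in I$, and the corresponding option $H_i'$ of $H$ satisfies $H_i' = G_i'$, so we may take $\widehat{G_i'} := H_i'$. For the second clause: every option of $H$ is $H_i'$ for some $i$, and $H_i' = G_i'$ is an option of $G$; hence there is no option of $H$ that fails to equal some option of $G$, and the clause holds vacuously. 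Therefore $H$ is revertible to $G$, and Theorem~\ref{thm:reverting-n} gives $o(H+X) \subseteq o(G+X)$ for all $X$. Applying the same argument with the roles of $G$ and $H$ interchanged yields the reverse inclusion, so $G = H$.
\end{proof}
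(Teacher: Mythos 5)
Your proof is correct and follows exactly the paper's route: observe that $H$ is revertible to $G$ (with the second clause of revertibility holding vacuously), apply Theorem~\ref{thm:reverting-n} to get $o(H+X)\subseteq o(G+X)$, and conclude by symmetry. The only difference is that you spell out the verification of revertibility, which the paper leaves implicit.
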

The proof is identical to that of the three-player case (see Corollary~\ref{cor:replacement}).
\begin{proof}
By symmetry, it suffices to show that $o(H+X)\subseteq o(G+X)$ for all $X$. 
But that follows immediately from Theorem~\ref{thm:reverting-n} since $H$ is revertible to $G$.
\end{proof}

Proposition~\ref{prop:threestar} about $3\cdot*$ generalizes to the $N$-player setting.

\begin{proposition}\label{prop:nstar}
If $N>2$, then $N\cdot*\ne0$.
\end{proposition}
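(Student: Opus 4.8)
The plan is to mimic the proof of Proposition~\ref{prop:threestar} (the case $N=3$), constructing a single game $X$ so that $o(0+X)=o(X)$ contains an outcome that $o(N\cdot*+X)$ does not, thereby witnessing $N\cdot*\neq 0$. In the three-player proof, $X\cong\{\{0,*+*\}\}$ worked because the player ``Olly'' could win $0+X$ by moving to $0$, but in $3\cdot*+X$ the coalition of the other two players could force Olly's loss by first shifting $3\cdot*$ to $*+*$ and then using the $\{0,*+*\}$ gadget. I would first work out the natural $N$-player analogue of this gadget. The component $N\cdot*$ is, by Proposition~\ref{prop:triplenon}, a position from which $\mathbf N$ never has a winning strategy, and by Proposition~\ref{prop:nimpouts}(2) iterated, $o(k\cdot*)$ is the single-element outcome set cycling through the players as $k$ varies mod $N$; in particular $o(N\cdot*)=o(0)=\{\mathbf O_1,\dots,\mathbf O_{N-2},\mathbf P\}$ but $N\cdot*$ is \emph{not} actually equal to $0$, which is exactly what we must show.

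The key steps, in order: (1) Define $X$ to be a one-option game $X\cong\{Y\}$ where $Y$ is chosen so that in $X$ alone some specified player $\mathbf O_j$ has a winning strategy realized \emph{only} by an immediate move to $0$, while every other first move from $Y$ hands a later player the ability to eliminate $\mathbf O_j$; the natural choice is $Y\cong\{0,*,2\cdot*,\dots,(N-2)\cdot*\}$ or a truncation thereof, using that from $m\cdot*$ the ``loser'' is determined mod $N$. (2) Verify $o(X)$ directly from Definition~\ref{def:nplayimp}, checking that the target player's winning certificate in $X$ genuinely requires the move to $0$. (3) In the sum $N\cdot*+X$, exhibit an explicit coalition strategy for the $N-1$ players other than $\mathbf O_j$: first move $N\cdot*\to (N-1)\cdot*$ (or an appropriate number of $*$-decrements) so that parity is shifted, then whenever $\mathbf O_j$ would play the ``$\to 0$'' move in the $X$-component, a subsequent coalition member instead plays inside the residual $k\cdot*$ heaps or picks the option of $Y$ that isolates $\mathbf O_j$ as the eventual loser. (4) Conclude that the outcome of $N\cdot*+X$ omits whatever outcome symbol $o(X)$ contained for player $\mathbf O_j$, hence $o(N\cdot*+X)\neq o(0+X)$, so $N\cdot*\neq 0$.

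The main obstacle is step~(3): unlike $N=3$, the coalition now has $N-1$ members and many more heaps of $*$ to manage, so I must give a clean bookkeeping argument — most likely an invariant of the form ``it is always possible for the coalition to reach a position isomorphic to $\{N\cdot*,\,(N-1)\cdot*,\,\dots,\,k\cdot*,\,Y\}$ from which the next coalition move returns to $N\cdot*+(\text{smaller }X)$'' — rather than an ad hoc case split. A secondary subtlety is making sure the gadget $Y$ is small enough that the relevant ``who loses'' computations only ever involve $k\cdot*$ with $k<N$, where the outcome is the explicit single element given by Proposition~\ref{prop:nimpouts}(2); choosing $Y$ with exactly the options $0,*,\dots,(N-2)\cdot*$ achieves this since the option $(N-1)\cdot*$ — whose outcome would coincide with that of $0$ — is deliberately excluded, breaking the symmetry that would otherwise make $X$ blind to the difference between $0$ and $N\cdot*$. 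Once the coalition strategy is pinned down, the inequality $N\cdot*\neq 0$ is immediate, and (as the authors note for $N=3$ via Proposition~\ref{prop:threestar} and Theorem~\ref{thm:reverting-n}) only the inclusion $o(N\cdot*+X)\subseteq o(0+X)$ holds in general, consistent with $N\cdot*$ being revertible to $0$.
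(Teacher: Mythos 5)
Your overall plan---find a one-option gadget $X\cong\{Y\}$ whose outcome changes when $N\cdot*$ is added, and exhibit a coalition strategy in $N\cdot*+X$---is exactly the paper's approach. But the concrete gadget you single out as ``the natural choice'' does not work, and the step you defer (the coalition bookkeeping) is the entire content of the proof. The paper takes $X\cong\{\{(N-1)\cdot*,\,0\}\}$, so the inner game has precisely the two options $0$ and $(N-1)\cdot*$; in $X$ alone, $\mathbf O_1$ wins only by moving to $0$ (the option $(N-1)\cdot*$ makes $\mathbf O_1$ themselves lose), and in $N\cdot*+X$ the coalition $\mathbf N,\mathbf O_2,\mathbf O_3$ shifts the timing with heap moves so that when $\mathbf O_3$ plays the gadget's $(N-1)\cdot*$ option, the resulting forced run of $*$-moves lands on $\mathbf O_1$. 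The inclusion of $(N-1)\cdot*$ in the gadget---the very option you propose to ``deliberately exclude''---is what the coalition weaponizes.

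Your $Y\cong\{0,*,\dots,(N-2)\cdot*\}$ fails to distinguish $N\cdot*$ from $0$. Here $o(X)=\{\mathbf O_1\}$, since $\mathbf O_1$ resolves $Y$ and picks any victim other than themselves. But $\mathbf O_1$ retains a winning strategy in $N\cdot*+X$: if Next moves $X\to Y$, $\mathbf O_1$ resolves $Y\to m\cdot*$ immediately and the loser has index $2+m\not\equiv1\pmod N$; if Next moves a heap, $\mathbf O_1$ plays $X\to Y$, after which the quantity (index of player to move) $+$ (number of $*$-heaps) is $\equiv1\pmod N$ and is preserved by heap moves, so whoever eventually resolves $Y\to m\cdot*$ produces a loser of index $\equiv m+2\not\equiv1$. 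Combined with $o(N\cdot*+X)\subseteq o(X)$ (which holds for every $X$ since $N\cdot*$ is revertible to $0$, by Theorem~\ref{thm:reverting-n}), this gives $o(N\cdot*+X)=o(X)$, so this $X$ witnesses nothing. The same invariant shows any truncation $\{0,\dots,j\cdot*\}$ with $j\le N-2$ leaves $\mathbf O_1$ safe; to break the symmetry you must include an option of length $\equiv -1\pmod N$, which is what the paper's $(N-1)\cdot*$ accomplishes. With the correct gadget in hand, the coalition strategy is short (three prescribed moves followed by a forced run), but as written your proposal neither has a working $Y$ nor supplies step~(3).
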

\begin{proof}
In $G\cong\{\{(N-1)\cdot*,0\}\}$, all players except $\mathbf{O}_2$ have winning strategies. There is only one decision; $\mathbf{O}_1$ can win by moving to $0$, in which case $\mathbf{O}_2$ loses. $\mathbf{O}_1$ loses if they move to $(N-1)\cdot*$ instead.

However, in $G+N\cdot*$, players $\mathbf{N}$, $\mathbf{O}_2$ and (if $N>3$) $\mathbf{O}_3$ can cooperate to ensure that $\mathbf{O}_1$ loses. Next can move to $G+(N-1)\cdot*$. Then, for either of $\mathbf{O}_1$'s moves, $\mathbf{O}_2$ can move to $G'+(N-2)\cdot*$. Then $\mathbf{O}_3$ (or $\mathbf{N}$ if $N=3$) can move in $G'$ to $(N-1)\cdot*$, for a total position of $(2N-3)\cdot*$. At this point, $\mathbf{O}_4$ is to move unless $N<5$, in which case it is $\mathbf{O}_{4-N}$. In any case, after $2N-3$ more moves, $\mathbf{O}_1$ loses.
\end{proof}

\subsection{Undetermined and Absorbing Games}
Recall (see Definition~\ref{def:undet}) that a game $G$ is said to be \emph{undetermined} if $o(G)=\emptyset$.

\begin{proposition}\label{prop:sumundet}A sum of undetermined games is undetermined.\end{proposition}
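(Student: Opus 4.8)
The plan is to prove Proposition~\ref{prop:sumundet} by reducing the statement about a sum of finitely many undetermined games to a repeated application of the two-summand case, which is exactly the $N$-player generalization of Corollary~\ref{cor:sumundet}. First I would record the two-summand statement: if $G$ and $H$ are undetermined then $G+H$ is undetermined. This follows from \hyperref[thm:ngeneratesnplayer]{$N$-Player Next Generation} (Theorem~\ref{thm:ngeneratesnplayer}) exactly as in the three-player case: since $o(H)=\emptyset$ we have $\mathbf N\notin o(H)$, hence $o(G+H)\subseteq o(G)=\emptyset$, so $o(G+H)=\emptyset$.

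Given the two-summand case, the general claim is immediate by induction on the number $k$ of summands. For $k=1$ there is nothing to prove; for $k\ge 2$, write the sum as $(G_1+\cdots+G_{k-1})+G_k$, observe by the inductive hypothesis that $G_1+\cdots+G_{k-1}$ is undetermined, and apply the two-summand case with $G = G_1+\cdots+G_{k-1}$ and $H=G_k$ to conclude that the whole sum is undetermined. Associativity and commutativity of $+$ are used silently here, as elsewhere in the paper.

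I expect no real obstacle: the only ingredient beyond bookkeeping is $N$-Player Next Generation, which is already established, and the reduction to it is the same one used for three players in Corollary~\ref{cor:sumundet}. If one wishes to avoid even the explicit induction, one can instead note directly that by $N$-Player Next Generation each additional undetermined summand can only shrink the outcome set (it is already empty after the first one), so the outcome of any sum containing an undetermined summand is contained in $\emptyset$; but phrasing it as an induction on the number of summands is cleanest and matches the style of the surrounding arguments.

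\begin{proof}
It suffices to treat a sum of two undetermined games and then induct on the number of summands. Suppose $G$ and $H$ are undetermined, so $o(H)=\emptyset$ and in particular $\mathbf N\notin o(H)$. By \hyperref[thm:ngeneratesnplayer]{$N$-Player Next Generation} (Theorem~\ref{thm:ngeneratesnplayer}), $o(G+H)\subseteq o(G)=\emptyset$, so $G+H$ is undetermined. The general case follows by induction on the number $k$ of summands: the case $k=1$ is trivial, and for $k\ge2$ we write $G_1+\cdots+G_k\cong(G_1+\cdots+G_{k-1})+G_k$, note that $G_1+\cdots+G_{k-1}$ is undetermined by the inductive hypothesis, and apply the two-summand case.
\end{proof}
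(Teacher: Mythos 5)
Your proof is correct and matches the paper's approach: the paper also derives this directly from \hyperref[thm:ngeneratesnplayer]{$N$-Player Next Generation} exactly as in Corollary~\ref{cor:sumundet}, with the extension to more than two summands left implicit. Your explicit induction on the number of summands is a harmless (and slightly more careful) elaboration of the same argument.
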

\begin{proof}As with Corollary~\ref{cor:sumundet}, this follows from \hyperref[thm:ngeneratesnplayer]{Next Generation} (Theorem~\ref{thm:ngeneratesnplayer}).\end{proof}
We can make this result more precise.
\begin{defn}
An undetermined game $G$ is said to be $1$-\textit{undetermined}. And, for $k>1$, $G$ is said to be $k$-\textit{undetermined} if $G$ has an option $G^*$ that is $(k-1)$-undetermined.
\end{defn}

In other words, $G$ is $k$-undetermined if there is a directed path of length $k+1$ starting at $G$ through undetermined subpositions. Note that if $G$ is $k$-undetermined, then it is also, \emph{a fortiori}, $j$-undetermined for $1\le j< k$.
\begin{proposition}\label{prop:sumundet2}
If $G$ is $k$-undetermined and $H$ is $m$-undetermined, then $G+H$ is $(k+m)$-undetermined.
\end{proposition}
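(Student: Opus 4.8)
The plan is to prove this by induction on $k+m$, reducing everything to the already-established Proposition~\ref{prop:sumundet} (a sum of undetermined games is undetermined) for the base and to the definition of $k$-undetermined for the inductive step. The base case is $k=m=1$: then both $G$ and $H$ are $1$-undetermined, i.e.\ undetermined, so $G+H$ is undetermined by Proposition~\ref{prop:sumundet}, which is exactly $2$-undetermined provided we also know $G+H$ has an undetermined option — but an undetermined game with no undetermined option is still $1$-undetermined, so we should be careful: actually the claim for $k=m=1$ is that $G+H$ is $2$-undetermined, which requires producing an undetermined option of $G+H$. I would handle this by noting that each of $G,H$ is undetermined and hence, by Proposition~\ref{prop:Q dooms}'s underlying logic (or directly from the outcome definition), $G$ has an option $G^*$ with $o(G^*)\subseteq o(0)$; in fact the cleanest route is to treat the base case as $k=1$, $m$ arbitrary, and induct only on $m$ (then symmetrically handle general $k$ by the commutativity of $+$ and a second induction).

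More precisely, here is the structure I would write. First reduce to the case $k=1$ by induction on $k$: if $k>1$, pick an option $G^*$ of $G$ that is $(k-1)$-undetermined; by the inductive hypothesis $G^*+H$ is $(k-1+m)$-undetermined, and since $G^*+H$ is an option of $G+H$, the game $G+H$ is $(k+m)$-undetermined by definition. So it remains to prove: if $G$ is undetermined ($1$-undetermined) and $H$ is $m$-undetermined, then $G+H$ is $(m+1)$-undetermined. Now induct on $m$. For $m=1$: $G$ and $H$ are both undetermined, so $G+H$ is undetermined by Proposition~\ref{prop:sumundet}; we must exhibit an undetermined option of $G+H$ to upgrade ``undetermined'' to ``$2$-undetermined.'' Since $G$ is undetermined, $\mathbf N\notin o(G)$, so $\mathbf P\notin o(G')$ for every option $G'$ of $G$ (if $G$ has no options it is $0$, contradicting $o(G)=\emptyset$; so $G$ has an option). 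Then $o(G'+H)\ne\any$ trivially, and $\mathbf N\notin o(H)$ gives $o(G'+H)\subseteq o(G')$ by Next Generation — but that alone doesn't force $o(G'+H)=\emptyset$. The better move: since $H$ is undetermined, by the same reasoning $H$ has an option $H^*$ with $\mathbf P\notin o(H^*)$; apply Proposition~\ref{prop:sumundet} to $G$ (undetermined) and $H^*$ — but $H^*$ need not be undetermined. The correct tool is Corollary~\ref{cor:qgetsnop}: $G$ undetermined implies $\mathbf P\notin o(G+Y)$ for all $Y$; combined with $\mathbf N\notin o(G)$ (so via Next Generation $o(G+Y)\subseteq o(Y)$ for all $Y$, in particular $\mathbf N\in o(G+Y)\Rightarrow\mathbf N\in o(Y)$), we can run the argument from the proof of Corollary~\ref{cor:psbuynothing}/Corollary~\ref{cor:qgetsnop} to conclude $G+H$ is undetermined and, choosing $Y$ to be an undetermined option of $H$ (which exists because $H$ is $1$-undetermined, hence $H$ itself undetermined, and by the contrapositive of Proposition~\ref{prop:Q dooms}\dots). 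Honestly the cleanest statement of the base case is simply: the base case $m=1$ is Proposition~\ref{prop:sumundet} together with the observation that $G+H$ inherits an undetermined option, and I would cite the $N$-player analogue of Theorem~\ref{thm:blackholegen}'s reasoning or Corollary~\ref{cor:qgetsnop} to nail down that option.

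For the inductive step ($m>1$), the argument is clean: since $H$ is $m$-undetermined, pick an option $H^*$ that is $(m-1)$-undetermined. By the inductive hypothesis, $G+H^*$ is $((m-1)+1)=m$-undetermined. But $G+H^*$ is an option of $G+H$, so by definition $G+H$ is $(m+1)$-undetermined. This closes the induction on $m$, and together with the outer induction on $k$, closes the proof.

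The main obstacle is the base case bookkeeping: ``$2$-undetermined'' demands not just that $G+H$ be undetermined (which is immediate from Proposition~\ref{prop:sumundet}) but that it possess an undetermined \emph{option}. I expect to dispatch this by invoking Corollary~\ref{cor:qgetsnop} (its $N$-player analogue) to show that for the undetermined game $G$, every option $H^*$ of $H$ yields $\mathbf P\notin o(G+H^*)$, and then using $\mathbf N\notin o(G)$ together with an undetermined option $H^*$ of $H$ (which $H$ has, since $H$ is $1$-undetermined means $o(H)=\emptyset$, forcing by the outcome recursion that some option of $H$ has empty outcome — otherwise $\mathbf O_1\in o(H)$ or worse) to force $o(G+H^*)=\emptyset$ via Next Generation. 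So the whole proof really rests on carefully chaining Next Generation and Corollary~\ref{cor:qgetsnop}; no new ideas beyond those two are needed.

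\begin{proof}
We argue by induction on $k+m$. First suppose $k>1$. Then $G$ has an option $G^*$ that is $(k-1)$-undetermined. By the inductive hypothesis, $G^*+H$ is $(k-1+m)$-undetermined. Since $G^*+H$ is an option of $G+H$, the game $G+H$ is $(k+m)$-undetermined by definition. By the commutativity of the disjunctive sum, the same argument handles the case $m>1$. It remains to treat the case $k=m=1$.

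So suppose both $G$ and $H$ are undetermined. By Proposition~\ref{prop:sumundet}, $G+H$ is undetermined, i.e.\ $1$-undetermined. To see that it is $2$-undetermined, we exhibit an undetermined option. Since $H$ is undetermined, $\mathbf N\notin o(H)$, hence by the outcome recursion $H$ has at least one option (else $H\cong0$ and $o(H)=\{\mathbf O_1,\dots,\mathbf P\}\setminus\{\mathbf N\}\ne\emptyset$), and moreover no option $H'$ of $H$ can have $\mathbf O_0=\mathbf N\in o(H')$ while keeping $\mathbf O_1\in o(H)$; in fact since $o(H)=\emptyset$ we have $\mathbf O_1\notin o(H)$, so some option $H^*$ has $\mathbf O_0=\mathbf N\notin o(H^*)$. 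We claim $o(G+H^*)=\emptyset$. Indeed, $G$ is undetermined, so by the $N$-player analogue of Corollary~\ref{cor:qgetsnop}, $\mathbf P\notin o(G+H^*)$. Also $\mathbf N\notin o(G)$ together with $\mathbf N\notin o(H^*)$, and $\mathbf N\notin o(G+H^*)$ follows from Next Generation applied to the pair $(G,H^*)$ since $o(G+H^*)\subseteq o(G)=\emptyset$. Finally, for $1\le i\le N-1$, if $\mathbf O_i\in o(G+H^*)$ then $\mathbf O_{i-1}\in o((G+H^*)')$ for every option; iterating down to $i=0$ would give $\mathbf N\in o((G+H^*)^{(i)})$ for some iterated option, hence some subposition reachable in $i$ moves has $\mathbf N$ in its outcome, and pulling back gives $\mathbf P\in o(G+H^*)$ after $i+1$ steps — contradicting $\mathbf P\notin o(G+H^*)$. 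Hence $o(G+H^*)=\emptyset$. Since $G+H^*$ is an option of $G+H$, the game $G+H$ is $2$-undetermined, as required.
\end{proof}
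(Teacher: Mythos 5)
Your proof is correct and follows essentially the same route as the paper: reduce to the base case $k=m=1$ by descending to a $(k-1)$- or $(m-1)$-undetermined option, and in the base case use $\mathbf O_1\notin o(H)$ to extract an option $H^*$ with $\mathbf N\notin o(H^*)$, whereupon Next Generation gives $o(G+H^*)\subseteq o(G)=\emptyset$. The surrounding material in your base case --- the appeal to the $N$-player analogue of Corollary~\ref{cor:qgetsnop} (whose hypothesis of being $(N-2)$-undetermined you never verify) and the closing argument about the $\mathbf O_i$ --- is redundant once that single Next Generation step is in place, and should simply be deleted.
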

\begin{proof}
Let $G$ be $k$-undetermined and $H$ be $m$-undetermined.

First, suppose that $k=m=1$. Since $\mathbf O_1\notin o(H)$, we can choose an option $H^*$ with $\mathbf N\notin o(H^*)$. By \hyperref[thm:ngeneratesnplayer]{Next Generation}, we have $o(G+H^*)\subseteq o(G)=\q$. Therefore, $G+H$ is $2$-undetermined. 

If $k>1$, we can choose an undetermined option $G^*$ of $G$. Then $G^*+H$ is $(k-1)+m$-undetermined by induction. Similarly if $m>1$.\end{proof}

The concept of $k$-undetermined games allows us to generalize Corollary~\ref{cor:qgetsnop}.
\begin{lemma}\label{lem:noP}
If $N>2$ and $G$ is $(N-2)$-undetermined, then $\mathbf P\notin o(G+H)$ for all $H$.
\end{lemma}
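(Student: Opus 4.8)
The natural approach is induction, closely paralleling the three-player argument for Corollary~\ref{cor:qgetsnop} but now tracking how far the ``undetermined'' property propagates up the option tree. Suppose $(G,H)$ is a counterexample with $G$ being $(N-2)$-undetermined and $\mathbf P\in o(G+H)$; take such a counterexample minimal in the usual structural-induction sense. First I would dispose of the case $\mathbf N\notin o(H)$: then \hyperref[thm:ngeneratesnplayer]{$N$-Player Next Generation} gives $o(G+H)\subseteq o(G)=\q$, contradicting $\mathbf P\in o(G+H)$. So we may assume $\mathbf N\in o(H)$, and since $o(H)$ is a proper subset of the full player set, there is some player not in $o(H)$; the plan is to locate a convenient such player and push the argument down to options.

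Since $\mathbf P\in o(G+H)$, every option of $G+H$ has $\mathbf O_{N-2}$ in its outcome. Consider the options obtained by moving in $G$: for each option $G^*$ of $G$ we get $\mathbf O_{N-2}\in o(G^*+H)$. Now $G$ is $(N-2)$-undetermined, so it has an undetermined option $G_0$ which is itself $(N-3)$-undetermined. Applying the inductive hypothesis to the pair $(G_0,H)$ — legitimate because $G_0$ is $(N-3)$-undetermined, hence in particular we want a version of the statement one level down — suggests I should actually prove the slightly stronger graded statement by induction on $N-2$ simultaneously: \emph{if $G$ is $j$-undetermined then $\mathbf O_{j+1}\notin o(G+H)$ for all $H$} (the case $j=N-2$ giving $\mathbf O_{N-1}=\mathbf P$). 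The base case $j=1$ is exactly Corollary~\ref{cor:qgetsnop} (whose $N$-player analogue follows as there). For the inductive step with $j>1$: $G$ being $j$-undetermined means $G$ is undetermined and has a $(j-1)$-undetermined option $G_0$; from $o(G)=\q$ we get $\mathbf O_1\notin o(G)$ hence an option $G^*$ with $\mathbf N\notin o(G^*)$, and combining the structure of options of $G+H$ with the inductive hypothesis applied at level $j-1$ to $(G_0, H')$ and $(G_0+H)$-type positions forces $\mathbf O_{j+1}\notin o(G+H)$.

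Concretely, to rule out $\mathbf O_{j+1}\in o(G+H)$: by Definition~\ref{def:nplayimp} this would require $\mathbf O_j\in o((G+H)')$ for every option. Moving in $G$ to the $(j-1)$-undetermined option $G_0$ yields $G_0+H$, and the inductive hypothesis at level $j-1$ gives $\mathbf O_j\notin o(G_0+H)$ — the desired contradiction, provided $j-1\ge 1$, i.e. $j\ge 2$, which is exactly the inductive-step hypothesis. So the whole argument reduces to: (i) the $N$-player base case $j=1$, which is the routine analogue of Corollary~\ref{cor:qgetsnop} using \hyperref[thm:ngeneratesnplayer]{Next Generation} and the proper-subset fact (Proposition~\ref{prop:nimpouts}(1)); and (ii) observing that every $j$-undetermined game with $j\ge2$ has a $(j-1)$-undetermined option, which is immediate from the definition.

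\textbf{Main obstacle.} The only real subtlety is getting the indexing right — matching ``$(N-2)$-undetermined'' with the player $\mathbf P=\mathbf O_{N-1}$ and making sure the strengthened inductive claim ``$j$-undetermined $\Rightarrow\mathbf O_{j+1}\notin o(G+H)$'' is set up so that the step from $j$ to $j-1$ lands on a valid instance (in particular that $j\ge2$ forces $j-1\ge1$, keeping us within the range where the claim is defined). The base case $j=1$ should be spelled out as the $N$-player version of Corollary~\ref{cor:qgetsnop}: if $G$ is undetermined and $\mathbf O_2\in o(G+H)$, then (since $\mathbf N\in o(G)$ fails) $o(G+H)\subseteq o(H)$, forcing $\mathbf O_2\in o(H)$ and $\mathbf N\in o(H)$; but $\mathbf N\notin o(G)$ also gives, via an option $G^*$ with $\mathbf N\notin o(G^*)$ and \hyperref[thm:ngeneratesnplayer]{Next Generation}, that $\mathbf N\notin o(G^*+H)$, while $\mathbf O_2\in o(G+H)$ demands $\mathbf O_1\in o(G^*+H)$, and one more application of the cycling/option structure produces the contradiction — essentially the three-player proof of Corollary~\ref{cor:qgetsnop} verbatim with $\mathbf O$ replaced by $\mathbf O_1$ and $\mathbf P$ by $\mathbf O_2$. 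Everything else is bookkeeping.
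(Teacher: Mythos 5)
Your inductive step is sound: if the graded claim ``$G$ is $j$-undetermined $\Rightarrow\mathbf O_{j+1}\notin o(G+H)$'' holds at level $j-1$, then a $j$-undetermined $G$ has a $(j-1)$-undetermined option $G_0$, and $\mathbf O_{j+1}\in o(G+H)$ would force $\mathbf O_j\in o(G_0+H)$, contradicting the level-$(j-1)$ claim. The fatal problem is the base case $j=1$, which does \emph{not} follow ``as there'' from Corollary~\ref{cor:qgetsnop}. The three-player proof derives $\mathbf N,\mathbf P\in o(H)$ and then uses the fact that $o(H)$ is a proper subset of a \emph{three}-element set to conclude $\mathbf O\notin o(H)$, which is precisely the hypothesis Other Procreation needs. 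For $N\ge4$ the analogous derivation from $\mathbf O_2\in o(G+H)$ gives only $\mathbf N,\mathbf O_1,\mathbf O_2\in o(H)$ (indeed $\mathbf O_1\in o(H)$ is \emph{forced}: $\mathbf O_1\notin o(G)$ yields an option $G^*$ with $\mathbf N\notin o(G^*)$, then $\mathbf O_1\in o(G^*+H)\subseteq o(H)$ by Theorem~\ref{thm:ngeneratesnplayer}), so the generalized Other Procreation is inapplicable and no contradiction with Proposition~\ref{prop:nimpouts} is available, since only three of the $N$ players have been placed in $o(H)$. (Your intermediate step ``$\mathbf N\notin o(G^*+H)$'' is also unjustified: Next Generation gives $o(G^*+H)\subseteq o(H)$, and $\mathbf N\in o(H)$.)

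Worse, the base case is actually false for $N\ge4$, so the reduction cannot be repaired. Take $N=4$ and $H_0\cong\{0,*,2\cdot*\}$, so that $o(H_0+k\cdot*)$ for $k=0,1,2,3$ is $\{\mathbf N\},\{\mathbf N,\mathbf O_1\},\{\mathbf N,\mathbf O_2\},\{\mathbf N,\mathbf P\}$ and $o(\{H_0\}+k\cdot*)$ is $\{\mathbf O_1\},\{\mathbf O_2\},\{\mathbf P\},\{\mathbf N\}$. Set $A\cong\{H_0\}+*$, $B\cong\{2\cdot*,\,H_0+2\cdot*\}$, and $G\cong\{3\cdot*,A,B\}$. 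Then $o(3\cdot*)=\{\mathbf N,\mathbf O_1,\mathbf O_2\}$, $o(A)=\{\mathbf O_2\}$, $o(B)=\{\mathbf N,\mathbf O_1\}$: no option of $G$ contains $\mathbf P$ and each of $\mathbf N,\mathbf O_1,\mathbf O_2$ is absent from some option, so $o(G)=\q$. A finite check of the options of $G+3\cdot*$ (every one of $G+2\cdot*$, $6\cdot*$, $A+3\cdot*$, $B+3\cdot*$ has $\mathbf O_1$ in its outcome, using $o(A+3\cdot*)=o(\{H_0\}+4\cdot*)=\{\mathbf O_1\}$ and $o(6\cdot*)=\{\mathbf N,\mathbf O_1,\mathbf P\}$) shows $\mathbf O_2\in o(G+3\cdot*)$ even though $G$ is undetermined. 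This is exactly why the paper assumes the much stronger hypothesis ``$(N-2)$-undetermined'': its proof walks down a chain $J_0,\dots,J_{N-3}$ of undetermined subpositions, transfers $\mathbf O_{N-1-j}\in o(J_j+H)$ into $o(H)$ at each step via Next Generation, and only reaches a contradiction with the properness of $o(H)$ after all $N$ players have been accumulated; a single undetermined level accumulates only three players and, as the example shows, genuinely cannot do more.
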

\begin{proof}
Since $G$ is $(N-2)$-undetermined, we can choose a sequence of $N-2$ subpositions $J_0,J_1,\ldots,J_{N-3}$ where $J_0\cong G$, each $J_{i+1}$ is an option of $J_i$, and each $J_i$ is undetermined. Note that $J_i+H$ is an subposition of $G+H$ reached after $i$ moves.

Suppose, for sake of contradiction, that $\mathbf P\in o(G+H)$ for some particular game $H$. Then $\mathbf P\in o(J_0+H)$, and since $J_0$ is undetermined, \hyperref[thm:ngeneratesnplayer]{Next Generation} tells us that $\mathbf N\in o(H)$ (as otherwise $o(J_0+H)\subseteq o(J_0)=\emptyset$) and $\mathbf P\in o(H)$. 

Since $\mathbf P=\mathbf O_{N-1}$, we have $\mathbf O_{N-2}\in o(J_1+H)$ (if $N>3$). In general, $\mathbf O_{N-1-j}\in o(J_j+H)$ for $0\le j\le N-3$.

Since each $J_j$ is undetermined, $\mathbf O_{N-1-j}\in o(H)$ for $0\le j\le N-3$ as well, in addition to the $\mathbf N\in o(H)$ observed initially. 

Finally, since $J_{N-3}$ is undetermined, $\mathbf O_1\notin o(J_{N-3})$. Thus, we can choose an option $J_{N-3}^*$ such that $\mathbf N\notin o\left(J_{N-3}^*\right)$. But since $\mathbf O_2\in o(J_{N-3}+H)$, we have $\mathbf O_1\in o\left(J_{N-3}^*+H\right)$. This forces $\mathbf O_1\in o(H)$. 

Putting all of these conclusions about $H$ together, we have shown that all players have winning strategies in $H$, which is impossible by claim~1 of Proposition~\ref{prop:nimpouts}.
\end{proof}

We use this Lemma to generalize Proposition~\ref{prop:3undetstrong}. Below, $\lfloor x\rfloor$ denotes the greatest integer less than or equal to $x$.
\begin{proposition}\label{prop:Nundetstrong}If $N>2$ and $G$ is $(N-2)$-undetermined, then all options of $\lfloor N/2+2\rfloor\cdot G$ are undetermined.
\end{proposition}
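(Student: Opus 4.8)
The plan is to set $M = \lfloor N/2 + 2\rfloor$ and show that any option of $M\cdot G$ is undetermined, given that $G$ is $(N-2)$-undetermined. An arbitrary option of $M\cdot G$ has the form $(M-1)\cdot G + G^*$ where $G^*$ is an option of $G$. So I need to show $o\big((M-1)\cdot G + G^*\big) = \emptyset$. First I would note that by Proposition~\ref{prop:sumundet2}, the game $(M-1)\cdot G$ is $(M-1)(N-2)$-undetermined (in particular, for $N > 2$ and $M \geq 3$, this is at least $(N-2)$-undetermined, so it is $(N-2)$-undetermined). The key move will be to apply Lemma~\ref{lem:noP} with that summand in the role of ``$G$'': it tells us $\mathbf{P}\notin o\big((M-1)\cdot G + G^*\big)$, since $(M-1)\cdot G$ is $(N-2)$-undetermined and $G^*$ is just some game $H$.

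Having ruled out $\mathbf{P}$, I would next rule out $\mathbf{N}$. Since $G$ is undetermined, $\mathbf{P}\notin o(G)$, so every option $G'$ of $G$ has $\mathbf{O}_{N-2}\notin o(G')$ — wait, more carefully: $\mathbf{P}\notin o(G)$ means not every option $G'$ satisfies $\mathbf{O}_{N-2}\in o(G')$, i.e.\ \emph{some} option fails it, which is not immediately strong enough. Instead I would argue directly: to get $\mathbf{N}\in o\big((M-1)\cdot G + G^*\big)$, there must be an option with $\mathbf{P}$ in its outcome. The options are of three shapes: move in one of the $(M-1)$ copies of $G$, yielding $(M-2)\cdot G + G' + G^*$; or move in the $G^*$ component, yielding $(M-1)\cdot G + G^{**}$. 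Each such position still contains a summand that is at least $(N-2)$-undetermined (when $M \geq 3$, dropping one copy of $G$ still leaves $(M-2)(N-2) \geq N-2$ copies' worth; and if instead $M-2 = 0$... but $M = \lfloor N/2+2\rfloor \geq 3$ for $N\geq 2$, so $M - 2 \geq 1$, hence $(M-2)\cdot G$ is $(M-2)(N-2)$-undetermined $\geq N-2$-undetermined). So Lemma~\ref{lem:noP} applies to each option too, giving $\mathbf{P}\notin o(\text{option})$, hence $\mathbf{N}\notin o\big((M-1)\cdot G + G^*\big)$.

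It remains to rule out all the $\mathbf{O}_i$. Here is where the precise value $M = \lfloor N/2 + 2\rfloor$ should matter, and this is the step I expect to be the main obstacle. The idea: $\mathbf{O}_i\in o(Y)$ forces $\mathbf{O}_{i-1}\in o(Y')$ for \emph{all} options $Y'$, so $\mathbf{O}_i\in o\big((M-1)\cdot G + G^*\big)$ would propagate downward — after $i$ moves we would be forced to have $\mathbf{N}\in o(\cdot)$ at every node reachable in $i$ steps. I would track how far down the undetermined structure survives: starting from $(M-1)\cdot G + G^*$, the summand $(M-1)\cdot G$ alone is $(M-1)(N-2)$-undetermined, and each move decreases the undetermination depth of the relevant copy by at most $1$, so after $j$ moves the position is still at least $\big((M-1)(N-2) - j\big)$-undetermined, hence still $(N-2)$-undetermined as long as $j \leq (M-2)(N-2)$. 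Since $M - 2 \geq 1$, we have $(M-2)(N-2) \geq N-2$, which comfortably exceeds any relevant index $i \leq N-1$ — so at the node reached after $i$ moves, Lemma~\ref{lem:noP} still gives $\mathbf{P}\notin o(\cdot)$, and then by the same argument as above $\mathbf{N}\notin o(\cdot)$ there. But $\mathbf{O}_i\in o\big((M-1)\cdot G + G^*\big)$ demands $\mathbf{N}\in o(\cdot)$ at such a node (after $i-1$ further moves forcing the index down to $0$), a contradiction. I would double-check the off-by-one bookkeeping between ``depth of undetermination'' and ``number of moves forced by the $\mathbf{O}_i$ chain'' — that, together with pinning down exactly why $\lfloor N/2+2\rfloor$ (rather than something smaller) is the threshold needed for Lemma~\ref{lem:noP} to keep applying, is the delicate part; everything else is the routine three-way case split $\mathbf{N}/\mathbf{O}_i/\mathbf{P}$ already seen in the three-player proofs.
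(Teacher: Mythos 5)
Your handling of $\mathbf P$ and $\mathbf N$ at the top level matches the paper's: an option of $\lfloor N/2+2\rfloor\cdot G$ is $\left(\lfloor N/2+1\rfloor\right)\cdot G+G^*$, Lemma~\ref{lem:noP} kills $\mathbf P$ because an intact copy of $G$ survives as a summand, and every option of that position still contains an intact copy, so no move to a position with $\mathbf P$ in its outcome exists. The gap is exactly where you predicted it: the $\mathbf O_i$ case. Your invariant --- ``after $j$ moves the position is still at least $\left((M-1)(N-2)-j\right)$-undetermined'' --- is the wrong quantity to track, for two reasons. First, being $k$-undetermined is an \emph{existential} property (there exists a chain of undetermined subpositions), so an arbitrary option of a $k$-undetermined game need not be $(k-1)$-undetermined, or undetermined at all; the bound holds only along a path you choose to follow the witnessing chain. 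Second, and fatally, even if the endpoint $H$ reached after $i$ moves is highly undetermined, that does not let you rule out $\mathbf N\in o(H)$: for that you must show \emph{every} option $H^*$ of $H$ has $\mathbf P\notin o(H^*)$, and a $k$-undetermined game controls only some of its options. What you actually need at the endpoint is two disjoint summands each of which is $(N-2)$-undetermined (i.e., two intact copies of $G$), so that any single move leaves one of them untouched and Lemma~\ref{lem:noP} applies to every option. A symptom of the miscount: your arithmetic would make $M=3$ suffice for every $N$, whereas the paper's constant grows like $N/2$.

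The paper closes this gap by constructing the descending path explicitly and counting intact copies of $G$ rather than undetermination depth. Since $\lfloor N/2+1\rfloor\cdot G$ is undetermined, \hyperref[thm:ngeneratesnplayer]{Next Generation} forces $\mathbf N\in o(G^*)$, so $G^*$ has an option $G^{**}$; the path then spends \emph{two} moves per consumed copy of $G$ (first to $G^*$, then to $G^{**}$). Descending from $\mathbf O_j$ with $j\le N-2$ takes $j-1\le N-3$ moves after the first, consuming $\lceil (N-3)/2\rceil=\lfloor N/2\rfloor-1$ copies and leaving exactly two intact --- which is precisely why $\lfloor N/2+1\rfloor$ copies are needed in the option, i.e., $\lfloor N/2+2\rfloor$ in the original sum. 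To repair your proof you would need to supply this (or an equivalent) explicit path together with the two-intact-copies condition at the endpoint; the rest of your argument is sound.
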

\begin{proof}
Let $G$ be $(N-2)$-undetermined. Suppose, for sake of contradiction, that $G^*$ is an option of $G$ satisfying $o\left(\lfloor N/2+1\rfloor\cdot G+G^*\right)\ne\q$. Since $G$ is $(N-2)$-undetermined, Lemma~\ref{lem:noP} yields $\mathbf{P}\notin o\left(G+\left(\lfloor N/2\rfloor\cdot G+G^*\right)\right)$. Therefore, $\mathbf{O}_j\in o\left(\lfloor N/2+1\rfloor\cdot G+G^*\right)$ for some $j\in\{0,\dots,N-2\}$.

Suppose $j=0$, so $\mathbf{N}\in o\left(\lfloor N/2+1\rfloor\cdot G+G^*\right)$. Then there is a winning move; either $\mathbf{P}\in o\left(\lfloor N/2+1\rfloor\cdot G+G^{**}\right)$ for some $G^{**}$ or $\mathbf{P}\in o\left(\lfloor N/2\rfloor\cdot G+G^{\star}+G^{*}\right)$ for some $G^{\star}$. But both cases are impossible by Lemma~\ref{lem:noP}.

Now suppose $j\ge1$ and $\mathbf O_j\in o\left(\lfloor N/2+1\rfloor\cdot G+G^*\right)$. Since $G$ is undetermined, Proposition~\ref{prop:sumundet} yields that $\lfloor N/2+1\rfloor\cdot G$ is undetermined. By \hyperref[thm:ngeneratesnplayer]{Next Generation}, $\mathbf{N}\in o\left(G^*\right)$. Hence, we can choose a winning move $G^{**}$. Then $\mathbf{O}_{j-1}\in o\left(\lfloor N/2+1\rfloor\cdot G+G^{**}\right)$. If $H$ is a $(j-1)^{\text{st}}$ option of $\lfloor N/2+1\rfloor\cdot G+G^{**}$, then $\mathbf{N}\in o(H)$. 

In particular, by taking the option $G^{**}$ of each component in the sum when possible, and the option of $G^*$ of a component when necessary, we may choose an option $H$ so that it has at least two components of $G$. For example, if $N=6$ and $j=N-2=4$, then $\mathbf O_3\in 4\cdot G+G^{**}$, $\mathbf O_1\in 3\cdot G+2\cdot G^{**}$, and $\mathbf N\in 2\cdot G+G^*+2\cdot G^{**}$.

Since $\mathbf N\in o(H)$, there is an option $H^*$ with $\mathbf P\in o\left(H^*\right)$. Since our chosen $H$ has at least two components of $G$, $H^*$ has at least one. Just as with the $j=0$ case, this is impossible by by Lemma~\ref{lem:noP}.
\end{proof}
We will not use this fact, but the above proof of Proposition~\ref{prop:Nundetstrong} would have worked the same way for any sum of $\lfloor N/2+2\rfloor$ games, each of which is $(N-2)$-undetermined. 

Recall (see Definition~\ref{def:absorb}) that a game $G$ is said to be \emph{absorbing} if $G+H$ is undetermined for all $H$.

We can use Lemma~\ref{lem:noP} to obtain an $N$-player analogue of the \hyperref[thm:blackholegen]{Absorbing Game Construction}.

\begin{defn}
For $k>1$, a game $G$ is said to be \textit{strongly }$k$\textit{-undetermined} if $G\ncong 0$ and and all options of $G$ are $(k-1)$-undetermined.
\end{defn}
Equivalently, we could replace ``$G\ncong 0$'' with ``$G$ is undetermined'' or ``$G$ is $k$-undetermined'' in the above definition.

\begin{proposition}
If $G$ and $H$ are games such that $G$ is strongly $k$-undetermined and $H$ is strongly $m$-undetermined, then $G+H$ is strongly $(k+m)$-undetermined.
\end{proposition}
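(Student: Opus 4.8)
The plan is to verify directly the two clauses in the definition of ``strongly $(k+m)$-undetermined'', using Proposition~\ref{prop:sumundet2} to supply the real content. Throughout, recall that ``$G$ strongly $k$-undetermined'' forces $k>1$, and likewise $m>1$, so that $k+m>1$ (the notion is defined) and $k-1,m-1\ge 1$ (so that Proposition~\ref{prop:sumundet2} applies to the shifted indices). I would also first record the elementary fact, noted in the remark following the definition of strongly $k$-undetermined, that a strongly $\ell$-undetermined game is in particular $\ell$-undetermined: it is not $0$, hence has an option, and that option is $(\ell-1)$-undetermined, so the game meets the recursive definition of $\ell$-undetermined. In particular $G$ is $k$-undetermined and $H$ is $m$-undetermined.

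Next I would check the clause $G+H\ncong 0$. Since $G$ is strongly $k$-undetermined it is not $0$, so $G$ has an option $G'$; then $G'+H$ is an option of $G+H$, whence $G+H\ncong 0$.

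Then I would check that every option of $G+H$ is $(k+m-1)$-undetermined. An option of $G+H$ is either $G'+H$ for an option $G'$ of $G$, or $G+H'$ for an option $H'$ of $H$. In the first case $G'$ is $(k-1)$-undetermined because $G$ is strongly $k$-undetermined, and $H$ is $m$-undetermined by the preliminary observation, so Proposition~\ref{prop:sumundet2} gives that $G'+H$ is $((k-1)+m)$-undetermined, i.e.\ $(k+m-1)$-undetermined. The second case is symmetric, with $H'$ being $(m-1)$-undetermined and $G$ being $k$-undetermined. Since both clauses hold, $G+H$ is strongly $(k+m)$-undetermined.

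There is no serious obstacle here; the only points requiring care are the bookkeeping of indices (ensuring $k-1$ and $m-1$ are at least $1$, which is precisely why the hypotheses $k,m>1$ are needed) and the small preliminary remark that ``strongly $\ell$-undetermined'' implies ``$\ell$-undetermined'', which is what lets us feed $H$ (resp.\ $G$) into Proposition~\ref{prop:sumundet2} as an ordinary undetermined-type summand.
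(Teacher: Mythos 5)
Your proposal is correct and follows essentially the same route as the paper, whose one-line proof likewise observes that every option of $G+H$ has the form $G'+H$ or $G+H'$ and is $(k+m-1)$-undetermined by Proposition~\ref{prop:sumundet2}. Your added bookkeeping (the $G+H\ncong 0$ check and the remark that strongly $\ell$-undetermined implies $\ell$-undetermined, which the paper itself records just after the definition) merely makes explicit what the paper leaves implicit.
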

\begin{proof}
Every option of $G+H$ is of the form $G'+H$ or $G+H'$ and is $(k+m-1)$-undetermined by Proposition~\ref{prop:sumundet2}. \end{proof}

\begin{theorem}[$N$-Player Absorbing Game Construction]\label{thm:nblackholegen}
Suppose that $N>2$ and $G$ is strongly $(N-1)$-undetermined. Then for all games $H$, $G+H$ is undetermined.
\end{theorem}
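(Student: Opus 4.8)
The plan is to mirror the three-player proof of the Absorbing Game Construction (Theorem~\ref{thm:blackholegen}), but using the $N$-player tools developed in this subsection in place of their three-player counterparts. So I would proceed by induction on (the game tree of) $H$ and show $o(G+H)=\q$, splitting into the case $H\cong 0$ and the case $H$ has an option.

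\textbf{Setup.} Since $G$ is strongly $(N-1)$-undetermined, $G\ncong 0$ and every option $G'$ of $G$ is $(N-2)$-undetermined. The key consequence, via Lemma~\ref{lem:noP}, is that $\mathbf P\notin o(G'+Y)$ for every option $G'$ of $G$ and every game $Y$; in particular $\mathbf P\notin o\big((G+H)^*\big)$ whenever the move in $G+H$ is played in the $G$-component. This already kills $\mathbf N$ from $o(G+H)$ \emph{provided} no move in the $H$-component leads to a position with $\mathbf P$ in its outcome — which is exactly what the induction will supply.

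\textbf{Induction.} Suppose the result holds for all proper subpositions of $H$; in particular $o(G+H')=\q$ for every option $H'$ of $H$ (the hypothesis on $G$ is inherited unchanged). If $H\cong 0$: then $G$ has an option $G^*$, and $G^*$ is $(N-2)$-undetermined, hence undetermined, so $o(G+H^*)=o(G^*)=\q$; also every option of $G$ is of the form $G^*$, so no option has $\mathbf P$ in its outcome. By claim~3 of Proposition~\ref{prop:nimpouts} (the $N$-player version of Proposition~\ref{prop:Q dooms}), $o(G)=\q$. If $H\ncong 0$: pick any option $H^*$ of $H$; then $o(G+H^*)=\q$ by the inductive hypothesis, so $G+H$ has an option with empty outcome. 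It remains to check $G+H$ has no option with $\mathbf P$ in its outcome. An option of $G+H$ is either $G'+H$ (with $G'$ an option of $G$) or $G+H'$ (with $H'$ an option of $H$). For the first type, $\mathbf P\notin o(G'+H)$ by Lemma~\ref{lem:noP} since $G'$ is $(N-2)$-undetermined. For the second type, by the inductive hypothesis $o(G+H')=\q$, so certainly $\mathbf P\notin o(G+H')$. Hence no option has $\mathbf P$ in its outcome, and claim~3 of Proposition~\ref{prop:nimpouts} gives $o(G+H)=\q$.

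\textbf{Main obstacle.} The only real subtlety is making sure the hypothesis ``strongly $(N-1)$-undetermined'' is strong enough to invoke Lemma~\ref{lem:noP} on the options $G'$: one needs each $G'$ to be $(N-2)$-undetermined, which is precisely the definition of strongly $(N-1)$-undetermined, so the bookkeeping matches. The other point to watch is the base case $H\cong 0$, where the argument is not ``$o(G+H')=\q$ for all options'' (there may be none) but rather the combination of an empty-outcome option $G^*$ of $G$ together with the absence of any $\mathbf P$-option; this is where $G\ncong 0$ is used. No new inequality or construction is needed beyond what the preceding lemmas provide.
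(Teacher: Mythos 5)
Your proof is correct, but it follows a different route from the one the paper actually uses for this theorem. You run a structural induction on $H$ and verify $o(G+H)=\q$ at each stage by combining an undetermined option ($G^*$ when $H\cong 0$, or $G+H^*$ by the inductive hypothesis otherwise) with the absence of any option having $\mathbf P$ in its outcome (Lemma~\ref{lem:noP} for moves in the $G$-component, the inductive hypothesis for moves in the $H$-component), then invoking claim~3 of Proposition~\ref{prop:nimpouts}. This is exactly the direct generalization of the paper's own three-player proof of Theorem~\ref{thm:blackholegen}, with Lemma~\ref{lem:noP} standing in for Corollary~\ref{cor:qgetsnop}, and your bookkeeping (each option $G'$ is $(N-2)$-undetermined, so Lemma~\ref{lem:noP} applies) is right. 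The paper instead proves the $N$-player version by a finite induction on the player index $k$, showing $\mathbf O_k\notin o(G+H)$ for all $H$ simultaneously: the base case $k=0$ kills both types of winning moves via Lemma~\ref{lem:noP} (using that $G$ itself is \emph{a fortiori} $(N-2)$-undetermined for moves in the $H$-component), and the inductive step uses \hyperref[thm:ngeneratesnplayer]{Next Generation} to force $\mathbf N\in o(H)$, which supplies an option $H^*$ with $\mathbf O_{k-1}\in o(G+H^*)$. The paper's version avoids structural induction on $H$ entirely and is slightly shorter; yours is more mechanical and makes transparent that the statement reduces to the same two conditions used in the three-player construction. Both hinge on Lemma~\ref{lem:noP} as the essential input, so there is no substantive gap either way.
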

\begin{proof}
We show by a finite induction on $k$ that $\mathbf O_k\notin o(G+H)$ for any  $H$. 

First, we handle the base case $k=0$. Suppose, for sake of contradiction that $\mathbf N\in o(G+H)$ for some game $H$. Then either $\mathbf P\in o(G^*+H)$ for some $G^*$ or $\mathbf P\in o(G+H^*)$ for some $H^*$. Both cases are impossible by Lemma~\ref{lem:noP}; the former is impossible by since each $G^*$ is $(N-2)$-undetermined, and the latter is impossible since $G$ itself is, \emph{a fortiori}, $(N-2)$-undetermined.

Now suppose $\mathbf O_{k}\in o(G+H)$ for some $k$ with $1\le k\le N-1$. Then since $G$ is undetermined, $\mathbf N\in o(H)$ by \hyperref[thm:ngeneratesnplayer]{Next Generation}. In particular, $H$ has an option $H^*$ and thus $\mathbf O_{k-1} \in o(G+H^*)$, which is impossible by induction.
\end{proof}

Note that Propositions \ref{prop:blackholeabsorbing} and \ref{prop:absorbingunique} hold with the same proofs, so there is a unique equality class of absorbing games which absorbs summands to produce undetermined games.

\begin{corollary}\label{cor:Nundetabs}If $N>2$ and $G$ is $(N-2)$-undetermined, then $\lfloor N/2+2\rfloor\cdot G$ is an absorbing game.\end{corollary}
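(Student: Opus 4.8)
The plan is to combine the two immediately preceding results in the natural way. First I would invoke Proposition~\ref{prop:Nundetstrong}: since $N>2$ and $G$ is $(N-2)$-undetermined, every option of $\lfloor N/2+2\rfloor\cdot G$ is undetermined, i.e. every option is $1$-undetermined. Moreover, $\lfloor N/2+2\rfloor\cdot G\ncong 0$ because $G$ itself is undetermined and hence nonempty (an empty game has outcome $\op\ne\q$), so the sum has at least two components each with an option. Thus $\lfloor N/2+2\rfloor\cdot G$ satisfies the hypotheses to be \emph{strongly} $k$-undetermined for the appropriate $k$.

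The key point is to check that $k$ can be taken to be at least $N-1$, so that the \hyperref[thm:nblackholegen]{$N$-Player Absorbing Game Construction} (Theorem~\ref{thm:nblackholegen}) applies. Here I would use Proposition~\ref{prop:sumundet2}: each copy of $G$ is $(N-2)$-undetermined, and a sum of $t$ games that are each $(N-2)$-undetermined is $t(N-2)$-undetermined. With $t=\lfloor N/2+2\rfloor\ge 2$ (valid since $N\ge3$) and $N-2\ge1$, we get $t(N-2)\ge 2(N-2)=2N-4\ge N-1$ for all $N\ge3$. Hence $\lfloor N/2+2\rfloor\cdot G$ is $(N-1)$-undetermined; combined with the observation that all its options are undetermined and that it is not $0$, it is strongly $(N-1)$-undetermined.

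Applying Theorem~\ref{thm:nblackholegen} then gives that $\lfloor N/2+2\rfloor\cdot G + H$ is undetermined for every game $H$, which is exactly the statement that $\lfloor N/2+2\rfloor\cdot G$ is an absorbing game (Definition~\ref{def:absorb}).

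I do not expect any serious obstacle here; this is essentially a bookkeeping corollary, and the only thing to be careful about is confirming the arithmetic inequality $\lfloor N/2+2\rfloor(N-2)\ge N-1$ for all $N>2$ and that the strong $(N-1)$-undetermined hypothesis of Theorem~\ref{thm:nblackholegen} is met on the nose (all options undetermined, plus the game is nonempty). Both are immediate. So the proof can simply read: ``Combine Proposition~\ref{prop:Nundetstrong} with Proposition~\ref{prop:sumundet2} and the \hyperref[thm:nblackholegen]{$N$-Player Absorbing Game Construction}.''
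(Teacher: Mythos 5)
Your proposal follows exactly the same route as the paper's proof: combine Proposition~\ref{prop:Nundetstrong} with Proposition~\ref{prop:sumundet2} and the $N$-Player Absorbing Game Construction, checking the arithmetic $(N-2)\lfloor N/2+2\rfloor\ge N-1$. So in structure you have reproduced the intended argument.

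However, the one inference you make explicit --- that $\lfloor N/2+2\rfloor\cdot G$ is strongly $(N-1)$-undetermined because it is nonzero, is $(N-1)$-undetermined, and has all options undetermined --- does not follow from the definition once $N\ge4$. Strong $(N-1)$-undeterminacy requires \emph{every} option to be $(N-2)$-undetermined, whereas Proposition~\ref{prop:Nundetstrong} only delivers that every option is $1$-undetermined, and the $(N-1)$-undeterminacy of the whole sum only guarantees a long chain of undetermined positions through \emph{some} option, not through each one. For $N=3$ the notions $1$-undetermined and $(N-2)$-undetermined coincide, so your argument is complete there; the paper's own proof makes the same compressed leap for general $N$, so this is a shared imprecision rather than something you introduced, but it is worth knowing how to close. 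The repair is to observe that Theorem~\ref{thm:nblackholegen} uses the hypothesis on the options only to invoke Lemma~\ref{lem:noP} on positions of the form $G^*+H$, and every option of $\lfloor N/2+2\rfloor\cdot G$ has the form $\lfloor N/2+1\rfloor\cdot G+G^*$, which contains the summand $\lfloor N/2+1\rfloor\cdot G$; that summand is $(N-2)\lfloor N/2+1\rfloor$-undetermined by Proposition~\ref{prop:sumundet2}, hence $(N-2)$-undetermined, so Lemma~\ref{lem:noP} applies to it with $G^*+H$ as the arbitrary second summand. With that observation the base case of the induction in Theorem~\ref{thm:nblackholegen} goes through verbatim for $\lfloor N/2+2\rfloor\cdot G$, and the corollary follows.
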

\begin{proof}Note that $\lfloor N/2+2\rfloor\cdot G$ is $\left((N-2)\lfloor N/2+2\rfloor\right)$-undetermined by Proposition~\ref{prop:sumundet2}. By Proposition~\ref{prop:Nundetstrong}, $\lfloor N/2+2\rfloor\cdot G$ is strongly undetermined. Since $(N-2)\lfloor N/2+2\rfloor\ge N-1$, $\lfloor N/2+2\rfloor\cdot G$ is an absorbing game by the \hyperref[thm:nblackholegen]{Absorbing Game Construction}.\end{proof}

\begin{corollary}\label{cor:megaundetabs}If $N>2$ and $G$ is undetermined, then $(N-2)\lfloor N/2+2\rfloor\cdot G$ is an absorbing game.\end{corollary}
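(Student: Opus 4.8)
The plan is to reduce the statement immediately to Corollary~\ref{cor:Nundetabs} by first boosting the ``undeterminedness level'' of $G$ via self-addition. First I would observe that since $G$ is undetermined it is $1$-undetermined, so by repeated application of Proposition~\ref{prop:sumundet2} (equivalently, a trivial induction on $j$), the sum $j\cdot G$ is $j$-undetermined for every $j\ge 1$. In particular, since $N>2$ gives $N-2\ge 1$, the game $(N-2)\cdot G$ is $(N-2)$-undetermined.

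Next I would apply Corollary~\ref{cor:Nundetabs}, but with the game $(N-2)\cdot G$ playing the role of ``$G$'': because it is $(N-2)$-undetermined and $N>2$, the sum $\lfloor N/2+2\rfloor\cdot\bigl((N-2)\cdot G\bigr)$ is an absorbing game.

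Finally, I would note that a disjunctive sum of $\lfloor N/2+2\rfloor$ copies of $(N-2)\cdot G$ is the same game as a disjunctive sum of $(N-2)\lfloor N/2+2\rfloor$ copies of $G$; that is, $\lfloor N/2+2\rfloor\cdot\bigl((N-2)\cdot G\bigr)\cong (N-2)\lfloor N/2+2\rfloor\cdot G$. Hence $(N-2)\lfloor N/2+2\rfloor\cdot G$ is absorbing, which is exactly the claim.

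There is no genuine obstacle here: the whole argument is a repackaging of Corollary~\ref{cor:Nundetabs} together with the additivity of the undeterminedness index supplied by Proposition~\ref{prop:sumundet2}. The only things to watch are that the two multiplicities $N-2$ and $\lfloor N/2+2\rfloor$ combine multiplicatively under iterated disjunctive sum, and that the hypothesis $N>2$ is precisely what licenses both the passage to a $(N-2)$-undetermined game and the invocation of the $N$-player Absorbing Game Construction underlying Corollary~\ref{cor:Nundetabs}.
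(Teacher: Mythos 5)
Your proposal is correct and follows exactly the paper's own argument: use Proposition~\ref{prop:sumundet2} to see that $(N-2)\cdot G$ is $(N-2)$-undetermined, then invoke Corollary~\ref{cor:Nundetabs} with $(N-2)\cdot G$ in place of $G$. The extra remarks about the multiplicities combining and the isomorphism of iterated sums are fine but not needed beyond what the paper states.
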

\begin{proof}Note that $G$ is at least $1$-undetermined, so that $(N-2)\cdot G$ is $(N-2)$-undetermined by Proposition~\ref{prop:sumundet2}. Then apply Corollary~\ref{cor:Nundetabs}.\end{proof}

\subsection{\texorpdfstring{$N$}{N}-player Nim}

In this subsection, we apply generalizations of the \hyperref[lem:3periodicity]{periodicity}/\hyperref[lem:3stability]{stability} lemmas to obtain some results about \nim that hold for general $N$.

In this subsection only, it is particularly helpful at times to write complements of outcomes. For example, if $N=5$, then $\overline{\{\mathbf O_2,\mathbf O_3\}}=\{\mathbf N,\mathbf O_1,\mathbf P\}$.

\subsubsection{Periodicity}

\begin{defn}\label{def:nperiodic}
A game $G$ is said to be $N$\textit{-periodic} if the infinite sequence of outcomes $\left(o(G),o(*+G),o(2\cdot*+G),\dots\right)$ has period $N$.\end{defn}

\begin{proposition}[$N$-Player Nim Periodicity]\label{lem:nperiodicity}
Let $G$ be a game. Suppose that all options $G'$ are $N$-periodic, and that $o\left(G\right)=o\left(N\cdot*+G\right)$. Then $G$ is $N$-periodic.
\end{proposition}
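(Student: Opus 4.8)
The plan is to follow the exact same structure as the proof of the three-player version, Lemma~\ref{lem:3periodicity}, with the numbers $3$ and $4$ replaced by $N$ and $N+1$ respectively. Specifically, I would prove by induction on $k$ that $o(k\cdot*+G)=o((k-N)\cdot*+G)$ for all $k\ge N+1$; combined with the hypothesis $o(G)=o(N\cdot*+G)$, this establishes that the sequence $(o(G),o(*+G),o(2\cdot*+G),\dots)$ has period $N$.

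First I would fix $k\ge N+1$ and enumerate the options of $k\cdot*+G$: these are $(k-1)\cdot*+G$, together with all positions of the form $k\cdot*+G'$ for options $G'$ of $G$. By the inductive hypothesis on $k$ (applicable since $k-1\ge N$, so either $k-1\ge N+1$ and induction applies, or $k-1=N$ and the base hypothesis $o(G)=o(N\cdot*+G)$ applies directly), the option $(k-1)\cdot*+G$ has the same outcome as $(k-1-N)\cdot*+G$. By the $N$-periodicity of each $G'$, the option $k\cdot*+G'$ has the same outcome as $(k-N)\cdot*+G'$. But $(k-1-N)\cdot*+G$ and the positions $(k-N)\cdot*+G'$ are precisely the full list of options of $(k-N)\cdot*+G$. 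Since $o$ is defined recursively purely in terms of the multiset of outcomes of options (Definition~\ref{def:nplayimp}), having matching option-outcomes forces $o(k\cdot*+G)=o((k-N)\cdot*+G)$, as desired.

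One small bookkeeping point to handle carefully: when I write $(k-N)\cdot*+G'$ or $(k-1-N)\cdot*+G$, I should make sure the coefficient is nonnegative, which it is since $k\ge N+1$ gives $k-N\ge1$ and $k-1-N\ge0$. I should also note that the base case of the induction on $k$, namely $k=N+1$, works exactly as in the general step because the needed instances ($o(N\cdot*+G)=o(G)$ for the $(k-1)\cdot*+G$ option, and $N$-periodicity of each $G'$) are both available by hypothesis.

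I do not anticipate a genuine obstacle here — this is a routine generalization of the three-player argument — but the one place to be slightly attentive is making the induction on $k$ interlock cleanly with the base hypothesis $o(G)=o(N\cdot*+G)$, i.e. ensuring that the claim ``$o(k\cdot*+G)=o((k-N)\cdot*+G)$'' is invoked for $k-1$ only when $k-1\ge N+1$, and that the remaining case $k-1=N$ is covered by the hypothesis rather than by the (nonexistent) instance at $k=N$.
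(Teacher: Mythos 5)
Your proposal is correct and follows essentially the same argument as the paper: induction on $k\ge N+1$, matching the outcomes of the options of $k\cdot*+G$ (namely $(k-1)\cdot*+G$ via the inductive hypothesis and $k\cdot*+G'$ via $N$-periodicity of $G'$) with those of $(k-N)\cdot*+G$, and concluding via the recursive definition of outcome. The bookkeeping about the base case $k=N+1$ that you flag is exactly the role played by the hypothesis $o(G)=o(N\cdot*+G)$ in the paper's version.
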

The proof is almost identical to that of three-player \hyperref[lem:3periodicity]{Nim Periodicity}.
\begin{proof}
Suppose $k\ge N+1$. Then the options of $k\cdot*+G$ are $\left(k-1\right)\cdot*+G$ and those of the form $k\cdot*+G'$ for options $G'$. These have the same outcomes as $\left(k-N-1\right)\cdot*+G$ (by induction on $k$) and $\left(k-N\right)\cdot*+G'$  (since $G'$ is $N$-periodic), respectively. But those are exactly all of the options of $\left(k-N\right)\cdot*+G$. Since $k\cdot*+G$ has the same outcomes of options, $o\left(k\cdot*+G\right)=o\left(\left(k-N\right)\cdot*+G\right)$, as desired.
\end{proof}

\begin{proposition}\label{prop:2periodic}$*2$ is $N$-periodic.\end{proposition}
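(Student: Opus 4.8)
The plan is to apply the $N$-player Nim Periodicity proposition (Proposition~\ref{lem:nperiodicity}) to the game $G \cong *2$. To do so, I must verify the two hypotheses of that proposition: first, that every option of $*2$ is $N$-periodic, and second, that $o(*2) = o(N\cdot* + *2)$.

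The first hypothesis is essentially trivial. Since $*2 \cong \{*0, *1\} = \{0, *\}$, its options are $0$ and $*$. Both are themselves of the form $k\cdot *$ (namely $0 = 0\cdot *$ and $* = 1\cdot *$), so the sequence of outcomes $(o(j\cdot * + 0), o(j\cdot*+*), \dots)$ is just a shifted copy of the sequence $(o(0), o(*), o(*2), \dots)$, and by Lemma~\ref{lem:a cubed}-style reasoning — or more directly, because $o(k\cdot*)$ depends only on $k \bmod N$ — these options are $N$-periodic. (In the $N$-player setting one should note $o(k\cdot *)$ has period $N$: this is exactly the content of iterating Proposition~\ref{prop:nimpouts}, claim 2, which cycles the outcome of $\{0\} = *$ under bracketing; since $N$ applications of the bracket operator $G \mapsto \{G\}$ return the outcome to its start, and $\{k\cdot*\}$ relates to $(k+1)\cdot*$, the period-$N$ claim follows. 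Alternatively, just observe $0, *, *2, \ldots$ and note one can cite $N$-periodicity of single heaps as already implicit.) So the real work is the second hypothesis.

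For the second hypothesis, I need $o(N\cdot * + *2) = o(*2)$. The cleanest route is to show directly that $N\cdot* + *2$ and $*2$ have options with matching outcome sets, invoking the first hypothesis to control the $N\cdot*$-summand. The options of $N\cdot* + *2$ are $(N-1)\cdot* + *2$, together with $N\cdot* + 0 = N\cdot*$ and $N\cdot* + *$. By $N$-periodicity of single heaps, $o(N\cdot*) = o(0)$ and $o(N\cdot* + *) = o(*)$; and the options of $*2$ are exactly $0$ and $*$. So it remains only to dispose of the extra option $(N-1)\cdot* + *2$: I claim its outcome is subsumed, i.e. it contributes nothing new. In fact the slickest argument avoids this case analysis entirely: apply Proposition~\ref{lem:nperiodicity} not to $*2$ directly but first establish $N$-periodicity of $*$ (whose only option is $0$, trivially $N$-periodic, and $o(N\cdot* + *) = o(*)$ since single-heap outcomes have period $N$). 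Then $*$ and $0$ are both $N$-periodic, so all options of $*2$ are $N$-periodic, and I need $o(N\cdot*+*2) = o(*2)$. To get this last equality, I observe that $N\cdot * + *2$ is revertible to $*2$? No — cleaner still: the outcome of any $k\cdot* + *2$ depends only on the multiset of outcomes of its options, which by periodicity of $0$ and $*$ depends only on $k \bmod N$ — hence directly $o(N\cdot*+*2) = o(0\cdot*+*2) = o(*2)$ by induction on the recursive outcome definition, exactly mirroring the structure already used in Proposition~\ref{lem:nperiodicity}'s proof.

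The main obstacle is purely bookkeeping: confirming that single nim-heaps $k\cdot*$ are $N$-periodic, since that fact is used twice (to handle the options $0, *$ of $*2$, and to handle the extra options $N\cdot*, N\cdot*+*$ of $N\cdot*+*2$). This should either be stated as a preliminary observation or folded in via Proposition~\ref{prop:nimpouts}(2): the operator $G \mapsto \{G\}$ cycles outcomes with period $N$ (since after $N$ steps $\mathbf{N}$ returns to $\mathbf{N}$), and adding one more heap of size $1$ to $k\cdot*$ is the same operation as $k\cdot* \mapsto \{k\cdot*\} \cup (\text{lower heaps})$, which after unwinding gives period $N$. Once that is in hand, everything else is immediate, and I expect the final written proof to be only three or four lines, likely phrased as: "The options of $*2$ are $0$ and $*$, both $N$-periodic since single heaps have period $N$; and $o(N\cdot*+*2) = o(*2)$ because both games' options are $0$, $*$ (for $*2$) and $(N-1)\cdot*+*2$, $N\cdot*$, $N\cdot*+*$ (for $N\cdot*+*2$), which by periodicity have the same outcomes as the options of $*2$ together with one repeat. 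Apply Proposition~\ref{lem:nperiodicity}."
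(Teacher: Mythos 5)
Your reduction is the same as the paper's: invoke \hyperref[lem:nperiodicity]{$N$-Player Nim Periodicity} with $G\cong*2$, note that the options $0$ and $*$ are $N$-periodic because single heaps are, and thereby reduce everything to the single equality $o(N\cdot*+*2)=o(*2)$. That part is fine. The problem is that this last equality is the entire content of the proposition, and your treatment of it has a genuine gap. The game $N\cdot*+*2$ has an option that $*2$ does not, namely $(N-1)\cdot*+*2$, and your argument dismisses it three times without ever proving anything about it: first as ``I claim its outcome is subsumed,'' then via ``the outcome of $k\cdot*+*2$ depends only on $k\bmod N$'' (which is circular --- that statement \emph{is} the $N$-periodicity of $*2$ you are trying to prove, and in any case $(N-1)\bmod N=N-1$, so even granting it you would learn nothing about whether this option's outcome matches $o(0)$ or $o(*)$), and finally as ``one repeat'' in your proposed write-up. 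This is precisely why the Nim Periodicity lemma carries $o(G)=o(N\cdot*+G)$ as a \emph{hypothesis} to be checked by hand rather than deriving it: the base of the period cannot be obtained by matching option multisets, because the option sets genuinely differ.

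What the missing step costs is essentially the whole of the paper's proof: it computes, by induction on $i$, that $o(*2+i\cdot*)=\overline{\{\mathbf O_{i+1},\mathbf O_{i+2}\}}$ for $0\le i\le N-3$, then $o(*2+(N-2)\cdot*)=\overline{\{\mathbf P\}}$ and $o(*2+(N-1)\cdot*)=\overline{\{\mathbf O_1\}}$, and only then concludes $o(*2+N\cdot*)=\overline{\{\mathbf O_1,\mathbf O_2\}}=o(*2)$. Your asserted ``repeat'' is the fact that $o(*2+(N-1)\cdot*)=\overline{\{\mathbf O_1\}}=o(*)$, which is true but emerges only at the end of that chain of explicit outcome calculations; it does not follow from periodicity of single heaps or from any structural symmetry. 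To repair the proposal you would need to supply that induction (or some equivalent direct computation of $o(*2+(N-1)\cdot*)$ and $o(*2+N\cdot*)$), at which point you would have reproduced the paper's argument.
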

\begin{proof}First, note that $0$ is $N$-periodic since $o(N\cdot*)=o(0)=\overline{\{\mathbf N\}}$ as there is only one line of play (or see claim~2 of Proposition~\ref{prop:nimpouts}). In general, $o(i\cdot*)=\overline{\{\mathbf O_i\}}$ for $i<N$.

Since $0$ is $N$-periodic, $*$ is $N$-periodic as well, so that all options of $*2$ are $N$-periodic. Therefore, by \hyperref[lem:nperiodicity]{Nim Periodicity}, it suffices to show that $o(*2+N\cdot*)=o(*2)$. 

We show by induction on $i$ that for $0\le i\le N-3$, $o(*2+i\cdot*)=\overline{\{\mathbf O_{i+1},\mathbf O_{i+2}\}}$. Note that if $N=2$, then this is vacuously true. If $i=0$, then Next can choose which of $\mathbf O_1$ and $\mathbf O_2$ will lose; $o(*2)=\overline{\{\mathbf O_{1},\mathbf O_{2}\}}$. 

For $i>0$, note that Next can move to $*2+(i-1)\cdot*$, which has outcome $\overline{\{\mathbf O_{i},\mathbf O_{i+1}\}}$, by induction. Since $i\le N-3$, we have $i+2\le N-1$, so that $\mathbf O_{i+1},\mathbf O_{i+2}\notin o(*2+i\cdot*)$. The other options of $*2+i\cdot*$ are $i\cdot*$ and $(i+1)\cdot*$, which have outcomes $\overline{\{\mathbf O_i\}}$ and $\overline{\{\mathbf O_{i+1}\}}$, respectively. Aside from the three players Next, $\mathbf O_{i+1}$, and $\mathbf O_{i+2}$, all other players have winning strategies. And since $i\ne N-1$, the move to $i\cdot*$ is a winning move (i.e. $\mathbf P\in o(i\cdot *)$), and so $\mathbf N\in o(*2+i\cdot*)$. Thus, $o(*2+i\cdot*)=\overline{\{\mathbf O_{i+1},\mathbf O_{i+2}\}}$, as desired.

Similarly, $o(*2+(N-2)\cdot*)=\overline{\{\mathbf P\}}$ since the options have outcomes $\overline{\{\mathbf O_{N-2}\}}$ (which includes $\mathbf P$), $\overline{\{\mathbf P\}}$, and (if $N>2$) $\overline{\{\mathbf O_{N-2},\mathbf P\}}$, respectively.

Then $o(*2+(N-1)\cdot *)=\overline{\{\mathbf O_1\}}$ since the options have outcomes  $\overline{\{\mathbf P\}}$, $\overline{\{\mathbf N\}}$ (which includes $\mathbf P$), and $\overline{\{\mathbf P\}}$, respectively.

Finally, we consider $o(*2+N\cdot*)$. The options have outcomes $\overline{\{\mathbf N\}}$, $\overline{\{\mathbf O_1\}}$, and $\overline{\{\mathbf O_1\}}$, respectively. Thus, if $N>2$, $o(*2+N\cdot*)=\overline{\{\mathbf O_{1},\mathbf O_{2}\}}=o(*2)$. (If $N=2$, $o(*2+N\cdot*)=\overline{\{\mathbf O_{1}\}}=\{\mathbf N\}=o(*2)$.)
\end{proof}

\begin{corollary}The periodic sequence of outcomes $o(*2+i\cdot*)$ is \[\left(\overline{\{\mathbf O_{1},\mathbf O_{2}\}}, \overline{\{\mathbf O_{2},\mathbf O_{3}\}},\ldots,\overline{\{\mathbf O_{N-2},\mathbf P\}}, \overline{\{\mathbf P\}}, \overline{\{\mathbf O_1\}},\dots\right)\text{.}\]\end{corollary}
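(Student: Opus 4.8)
The corollary is essentially a bookkeeping consequence of the computations already carried out inside the proof of Proposition~\ref{prop:2periodic}, so the plan is to assemble those pieces rather than to prove anything new. First I would invoke Proposition~\ref{prop:2periodic} itself to know that the sequence $\bigl(o(*2+i\cdot*)\bigr)_{i\ge0}$ is periodic with period $N$; this means it suffices to read off the values for $i=0,1,\dots,N-1$, and then the ``$\dots$'' at the end of the displayed list is justified by periodicity. So the real content is: list the first $N$ terms.

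Next I would extract the values term by term directly from the proof of Proposition~\ref{prop:2periodic}. For $0\le i\le N-3$ that proof shows by induction that $o(*2+i\cdot*)=\overline{\{\mathbf O_{i+1},\mathbf O_{i+2}\}}$, which accounts for the block $\overline{\{\mathbf O_1,\mathbf O_2\}},\overline{\{\mathbf O_2,\mathbf O_3\}},\dots,\overline{\{\mathbf O_{N-2},\mathbf P\}}$ once one notes that at $i=N-3$ the pair $\{\mathbf O_{i+1},\mathbf O_{i+2}\}$ is $\{\mathbf O_{N-2},\mathbf O_{N-1}\}=\{\mathbf O_{N-2},\mathbf P\}$ under the convention $\mathbf O_{N-1}=\mathbf P$. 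Then the same proof records the two boundary cases explicitly: $o(*2+(N-2)\cdot*)=\overline{\{\mathbf P\}}$ and $o(*2+(N-1)\cdot*)=\overline{\{\mathbf O_1\}}$. Concatenating these gives exactly the displayed $N$-tuple, and periodicity repeats it, so the listed sequence is correct.

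I would also remark on the small-$N$ degeneracies so the statement is not vacuously misread: when $N=2$ the block for $0\le i\le N-3$ is empty, and one should check that the list collapses correctly — indeed for $N=2$ Proposition~\ref{prop:2periodic} gives period $2$ with $o(*2)=\{\mathbf N\}$ and $o(*2+*)=\overline{\{\mathbf O_1\}}=\{\mathbf N,\mathbf P\}$ wait, with two players $\overline{\{\mathbf O_1\}}$ should be read as $\overline{\{\mathbf P\}}=\{\mathbf N\}$ — so a sentence clarifying the indexing in the $N=2$ edge case (or simply restricting attention to $N>2$, consistent with the surrounding subsection) is worthwhile. For $N\ge3$ no such subtlety arises and the display is literally the list of values proven above.

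The proof therefore has no genuine obstacle; the only thing to be careful about is the index arithmetic at the seam between the generic block (ending at $i=N-3$ with value $\overline{\{\mathbf O_{N-2},\mathbf P\}}$) and the two special values at $i=N-2$ and $i=N-1$, making sure the convention $\mathbf O_0=\mathbf N$, $\mathbf O_{N-1}=\mathbf P$ is applied consistently so that the three descriptions fit together into one unbroken cyclic pattern. Once that is checked, the corollary follows immediately by citing Proposition~\ref{prop:2periodic} for periodicity together with the case-by-case outcome computations inside its proof.
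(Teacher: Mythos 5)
Your proposal is correct and takes essentially the same route as the paper: the paper's proof is the one-line remark that the sequence can be read off from the proof of Proposition~\ref{prop:2periodic}, which is exactly the assembly you describe (the inductive block for $0\le i\le N-3$, the two boundary values at $i=N-2$ and $i=N-1$, and periodicity to extend). Your extra care at the seam $\mathbf O_{N-1}=\mathbf P$ and the $N=2$ degeneracy is a reasonable, if optional, elaboration.
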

\begin{proof}This follows from the proof of Proposition~\ref{prop:2periodic} above.\end{proof}

\subsubsection{Stability}

\begin{proposition}[$N$-player Nim Stability]\label{lem:nstability}
Let $G$ be a game. Suppose that for all subpositions $H$ of $G$ (including $G$ itself) $o\left(H+*(N+1)\right)=o\left(H+*N\right)$. Then $o\left(G+*m\right)=o\left(G+*N\right)$ for all $m\ge N$.
\end{proposition}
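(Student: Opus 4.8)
The statement to prove is: if for every subposition $H$ of $G$ (including $G$ itself) we have $o(H+*(N+1))=o(H+*N)$, then $o(G+*m)=o(G+*N)$ for all $m\ge N$. This is the direct $N$-player analogue of three-player \hyperref[lem:3stability]{Nim Stability} (Lemma~\ref{lem:3stability}), and I would follow exactly the same strategy: a double induction, with an outer induction on $m$ and, inside each step, a structural induction on $G$, showing that consecutive heaps give the same outcome by matching up the outcomes of the options.

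The plan is to reduce everything to the single inductive step $o(G+*(m+1))=o(G+*m)$ for $m\ge N$, since then the claim $o(G+*m)=o(G+*N)$ follows by induction on $m$, the base case $m=N$ being trivial. To prove $o(G+*(m+1))=o(G+*m)$, I would compare the multisets of outcomes of options of the two positions, since $N$-player outcomes (Definition~\ref{def:nplayimp}) depend only on the set of outcomes of options. The options of $G+*(m+1)$ are: (i) $G+*m$ itself; (ii) positions $G+*\ell$ with $\ell<m$, which are also options of $G+*m$, so contribute nothing new; and (iii) positions $G'+*(m+1)$ for options $G'$ of $G$. For (i), $o(G+*m)=o(G+*(m-1))$ by the outer induction on $m$ (valid since $m-1\ge N$; when $m=N$ this is the hypothesis $o(G+*(N+1))=o(G+*N)$ applied with $H=G$, so the base of the outer induction needs a tiny bit of care, exactly as in Lemma~\ref{lem:3stability}), and $G+*(m-1)$ is an option of $G+*m$; so (i) contributes an outcome already present among the options of $G+*m$. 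For (iii), $o(G'+*(m+1))=o(G'+*m)$ by the structural induction on $G$ — here I use that every subposition of $G'$ is a subposition of $G$, so the hypothesis of the proposition is inherited by $G'$ — and $G'+*m$ is an option of $G+*m$. Conversely, every option of $G+*m$ is either $G+*\ell$ with $\ell<m$ (an option of $G+*(m+1)$ as well) or $G'+*m$, whose outcome equals that of the option $G'+*(m+1)$ of $G+*(m+1)$ by the same structural induction. Hence the two positions have the same set of option-outcomes, so $o(G+*(m+1))=o(G+*m)$.

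The only subtlety — and the place where I would be most careful — is bookkeeping the two interleaved inductions and the base case $m=N$ of the outer induction, where the rewrite $o(G+*m)=o(G+*(m-1))$ must instead be read as the hypothesis $o(G+*(N+1))=o(G+*N)$ (or rather, one proves $o(G+*(N+1))=o(G+*N)$ directly as the base case and then runs the inductive step for $m\ge N+1$). Beyond that, every step is a routine matching of options, identical in spirit to the three-player argument, and nothing new is needed for general $N$ because $N$-player outcomes are still determined solely by the collection of outcomes of options.
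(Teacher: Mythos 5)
Your proposal is correct and follows exactly the route the paper takes: the paper's proof of the $N$-player version simply defers to the three-player \hyperref[lem:3stability]{Nim Stability} argument with $4$ replaced appropriately, and that argument is precisely your double induction (outer on $m$, inner structural on $G$) matching the outcomes of options of $G+*(m+1)$ against those of $G+*m$. Your handling of the base case, where the rewrite $o(G+*m)=o(G+*(m-1))$ degenerates to the hypothesis $o(G+*(N+1))=o(G+*N)$, and your observation that the hypothesis is inherited by subpositions $G'$, are exactly the points the paper's argument relies on.
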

\begin{proof}
The proof is identical to that of \hyperref[lem:3stability]{Three-Player Nim Stability} (Lemma~\ref{lem:3stability}), except that $4$ should be replaced with $k+1$. 
\end{proof}
In fact, the proof would work with $*N$ replaced with any $*k$. However, when $G$ is a ``small'' \nim position, it seems that we first observe stability at $*N$.

We can use \hyperref[lem:nstability]{Nim Stability} to calculate the outcomes of all \nim positions with at most two heaps, assuming $N>2$. The results are collected in Theorem~\ref{thm:twoheaps} and can be summarized as follows: Players other than $\text{Next}$ and $\mathbf O_1$ do not have a winning strategy unless the game is guaranteed to end before their turn. Ignoring the case of the empty game $0$, $\text{Next}$ fails to have a winning strategy exactly when there are two heaps of size at least $N-1$, and $\mathbf O_1$ fails to have a winning strategy when there is only one heap or if the sizes of the two heaps are at least $N-1$ and $N$, respectively. 

\begin{proposition}\label{prop:oneheap}For $i\ge1$, $o(*i)=\overline{\{\mathbf O_1,\dots,\mathbf O_{\min\{i,N-1\}}\}}$.
\end{proposition}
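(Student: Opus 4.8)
The plan is to prove this by induction on $i$, using the structure of $*i$ as the game $\{*0, *1, \ldots, *(i-1)\}$ together with the recursive definition of the $N$-player outcome (Definition~\ref{def:nplayimp}). For small heaps the assertion is essentially the observation already recorded in the proof of Proposition~\ref{prop:2periodic}: $o(i\cdot *) = \overline{\{\mathbf O_i\}}$ for $i < N$, and here $o(*i)$ means $o(i \cdot *)$ only when... wait, no — $*i$ is a single heap of size $i$, whose options are $*0,\ldots,*(i-1)$, not $i$ copies of $*$. So the correct starting point is: the options of $*i$ are precisely $*0, *1, \ldots, *(i-1)$, and I would feed the inductive hypothesis $o(*j) = \overline{\{\mathbf O_1,\ldots,\mathbf O_{\min\{j,N-1\}}\}}$ for each $j < i$ into the recursion.

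Concretely, the key step is to determine, for each player $\mathbf O_k$ (with $0 \le k \le N-1$, writing $\mathbf O_0 = \mathbf N$ and $\mathbf O_{N-1} = \mathbf P$), whether $\mathbf O_k \in o(*i)$. By the recursion, for $k \ge 1$ we need $\mathbf O_{k-1} \in o(*j)$ for \emph{all} $j < i$, while for $k = 0$ (i.e.\ $\mathbf N$) we need $\mathbf P \in o(*j)$ for \emph{some} $j < i$. Using the inductive hypothesis, $\mathbf O_{k-1} \in o(*j)$ holds iff $k - 1 > \min\{j, N-1\}$, equivalently (for $k-1 \le N-1$) iff $j < k-1$; so $\mathbf O_{k-1}$ lies in $o(*j)$ for all $j < i$ precisely when $i \le k-1$, i.e.\ $k \ge i+1$. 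Combined with the constraint $k \le N-1$, this gives $\mathbf O_k \in o(*i)$ iff $i+1 \le k \le N-1$, which is exactly $\mathbf O_k \notin o(*i)$ for $1 \le k \le \min\{i, N-1\}$, matching the claimed complement. For $\mathbf N$: we need some $j < i$ with $\mathbf P \in o(*j)$; by the hypothesis $\mathbf P = \mathbf O_{N-1} \in o(*j)$ iff $N-1 > \min\{j, N-1\}$, which never holds, so naively $\mathbf N \notin o(*i)$ — but wait, for $j < N-1$ we also need to recheck, and actually $\mathbf P \in o(*j)$ should hold when $j$ is large enough relative to $N$; the base cases $j \ge N-1$ need separate treatment, which is where I need to be careful.

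The main obstacle, then, is handling the interplay between the "$\min$" truncation (which kicks in once $i \ge N-1$) and the wrap-around behavior of the $\mathbf N/\mathbf P$ condition — in particular getting the base case $o(*(N-1))$ and verifying that $\mathbf N \in o(*i)$ for all $i \ge 1$ consistently. I expect the cleanest route is a double induction: first establish the claim for $i \le N-1$ directly (where $\min\{i,N-1\} = i$, so $o(*i) = \overline{\{\mathbf O_1,\ldots,\mathbf O_i\}}$ and one checks $\mathbf N \in o(*i)$ because $*(i-1)$ has $\mathbf P = \mathbf O_{i-1}$... no, because $*(i-1)$ with $i-1 = N-2$ would need checking) — and then for $i \ge N$ observe that the options $*0,\ldots,*(i-1)$ include heaps of every size from $0$ up through $N-1$ and beyond, and invoke the inductive hypothesis plus the fact (implicit in Nim stability, Proposition~\ref{lem:nstability}) that once $i \ge N-1$ the outcome stabilizes at $\overline{\{\mathbf O_1,\ldots,\mathbf O_{N-1}\}} = \{\mathbf N\}$. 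I would verify the stabilized value $\{\mathbf N\}$ by checking that $*i$ (for $i \ge N-1$) has an option with outcome containing $\mathbf P$ — namely $*(i - N + 1)$ or similar, whose outcome is $\overline{\{\mathbf O_{i-N+1}, \ldots\}}$ and contains $\mathbf P$ exactly when that index set omits $N-1$ — and that no $\mathbf O_k$ survives because some small option (e.g.\ $*0$ or $*1$) already excludes $\mathbf O_{k-1}$. Assembling these pieces and being scrupulous about the indices at the two boundaries $i = N-1$ and $i = N$ is the only real work.
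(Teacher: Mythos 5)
Your overall plan---induction on $i$, unwinding Definition~\ref{def:nplayimp} against the options $*0,\dots,*(i-1)$, and treating $i<N$ and $i\ge N-1$ separately---is exactly the paper's approach, and your first-paragraph computation of which $\mathbf O_k$ with $k\ge1$ belong to $o(*i)$ comes out correct. But the proof as proposed has a genuine gap at precisely the point you flag as ``where I need to be careful'': the membership $\mathbf N\in o(*i)$. You assert that $\mathbf P=\mathbf O_{N-1}\in o(*j)$ ``never holds'' under the inductive hypothesis and then reverse yourself to guess that it ``should hold when $j$ is large enough relative to $N$.'' Both statements are wrong, and the second is exactly backwards: from $o(*j)=\overline{\{\mathbf O_1,\dots,\mathbf O_{\min\{j,N-1\}}\}}$ one gets $\mathbf P\in o(*j)$ if and only if $\min\{j,N-1\}<N-1$, i.e.\ $j\le N-2$ --- it is the \emph{small} heaps whose outcome contains $\mathbf P$, and in particular $\mathbf P\in o(0)=\overline{\{\mathbf N\}}$. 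Since $0$ is an option of every $*i$ with $i\ge1$, that single observation gives $\mathbf N\in o(*i)$ for all $i$ at once; this is what the paper does. Your proposed witness $*(i-N+1)$ fails whenever $i-N+1\ge N-1$ (e.g.\ $N=3$, $i=10$ gives $o(*8)=\{\mathbf N\}$, which omits $\mathbf P$), and the index set you write for its outcome, $\overline{\{\mathbf O_{i-N+1},\dots\}}$, matches neither $o(*j)$ nor $o(j\cdot*)$ --- the single-heap/many-heaps confusion you caught at the start resurfaces here.

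Two smaller but real defects: first, your closing claim that ``some small option (e.g.\ $*0$ or $*1$) already excludes $\mathbf O_{k-1}$'' is false for $k\ge3$, since $o(*1)=\overline{\{\mathbf O_1\}}$ still contains $\mathbf O_2,\dots,\mathbf O_{N-1}$; to rule out $\mathbf O_k$ you need an option $*j$ with $j\ge k-1$, and the clean choice is the \emph{largest} option $*(i-1)$ (the paper's step ``$\mathbf O_1,\dots,\mathbf O_{i-1}\notin o(*(i-1))$ forces $\mathbf O_2,\dots,\mathbf O_i\notin o(*i)$''). Second, the inductive hypothesis as you state it does not literally cover the option $j=0$, since $o(0)=\overline{\{\mathbf N\}}$ rather than $\overline{\emptyset}$; the exclusion of $\mathbf O_1$ from $o(*i)$ rests on the separate fact $\mathbf N\notin o(0)$ and should be invoked as such. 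With the $\mathbf P\in o(0)$ witness in place and the large option doing the exclusion work, your skeleton closes up into essentially the paper's proof; no double induction or appeal to Nim Stability is needed for $i<N$, and the extension to $i\ge N$ is the same argument verbatim.
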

\begin{proof}
We use induction on $i$ for $1\le i<N$.

If $i=1$, then there is only one line of play, and the outcome is certainly $\overline{\{\mathbf O_1\}}$. For $1<i<N$, the options are $0$ and all $*j$ with $1\le j\le i-1$. The former has outcome $\overline{\{\mathbf N\}}$, and the latter have outcomes of the form $\overline{\{\mathbf O_1,\dots,\mathbf O_j\}}$. Since $\mathbf P\in o(0)$, $\mathbf N\in o(*i)$. Since $\mathbf N\notin o(0)$, $\mathbf O_1\notin o(*i)$. And since $\mathbf O_1,\dots,\mathbf O_{i-1}\notin o(*(i-1))$, $\mathbf O_2,\dots,\mathbf O_i\notin o(*i)$. Thus, $o(*i)=\overline{\{\mathbf O_1,\dots,\mathbf O_{\min\{i,N-1\}}\}}$. 

Similarly, $o(*N)=\overline{\{\mathbf O_1,\dots,\mathbf O_{N-1}\}}=\{\mathbf N\}$ by the same argument as for $*(N-1)$ except that it doesn't matter that $\mathbf P\notin o\left(*(N-1)\right)$. By the same argument, or by the argument in the proof of \hyperref[lem:nstability]{Nim Stability}, this extends to all larger heaps as well.
\end{proof}

\begin{lemma}\label{lem:twosmalls}
If $1\le i\le \min\{j,N-2\}$ then $o(*i+*j)=\overline{\{\mathbf O_2,\dots,\mathbf O_{\min\{i+j,N-1\}}\}}$.
\end{lemma}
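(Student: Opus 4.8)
The plan is to argue by induction on $i+j$, computing $o(*i+*j)$ from the outcomes of its options, which are all known: options are $*\ell+*j$ for $\ell<i$, and $*i+*\ell$ for $\ell<j$, and (if these overlap) one-heap positions; every such option is either covered by the inductive hypothesis, by symmetry of $+$, or by Proposition~\ref{prop:oneheap}. The claim to establish is that, writing $M=\min\{i+j,N-1\}$, the outcome is $\overline{\{\mathbf O_2,\dots,\mathbf O_M\}}$, i.e. every player \emph{except} $\mathbf O_2,\dots,\mathbf O_M$ has a winning strategy. Note $i\le N-2$ guarantees $M\ge 3$ when $i\ge 1$ and $j\ge i$ unless $i=j=1$; the edge case $i=j=1$ gives $*+*$ with $M=2$ and empty forbidden set, which should be checked directly (indeed $o(*+*)=\overline{\emptyset}$-minus-improper, so all players win).

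The heart is to pin down, for each player $\mathbf O_k$ ($0\le k\le N-1$), whether $\mathbf O_k\in o(*i+*j)$ using the recursive definition. For $\mathbf N=\mathbf O_0$: Next should move to $*i+*\ell$ (or $*\ell+*j$) chosen so the resulting position is a $\mathbf P$-position; since by the inductive hypothesis the two-heap options realize outcome sets $\overline{\{\mathbf O_2,\dots,\mathbf O_{\min\{i+\ell,N-1\}}\}}$ as $\ell$ ranges, and $\mathbf P=\mathbf O_{N-1}$ lies in such a set exactly when $\min\{i+\ell,N-1\}<N-1$, i.e. when $i+\ell\le N-2$ — and such an $\ell$ exists precisely because $i\le N-2$ (take $\ell$ small, or use a one-heap option $*i$ which has $\mathbf P$ in its outcome when $i\le N-2$). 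So $\mathbf N\in o(*i+*j)$. For $\mathbf O_1$: by Proposition~\ref{prop:sharps cycle}-style reasoning (claim~2 of Proposition~\ref{prop:nimpouts}), $\mathbf O_1\in o(G)$ iff every option has $\mathbf N$ in its outcome; but the option $*\ell+*j$ with $\ell$ chosen minimal (or a one-heap option) can be made to \emph{exclude} $\mathbf N$ only if some option is a $\mathbf P$-excluding\,... — more carefully, $\mathbf O_1\notin o(G)$ iff some option $G'$ has $\mathbf N\notin o(G')$; the option $*0+*j=*j$ (available when $i\ge1$) has $\mathbf N\in o(*j)$ always, but we want an option \emph{missing} $\mathbf N$. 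Every two-heap option has outcome $\overline{\{\mathbf O_2,\dots\}}$ which \emph{contains} $\mathbf N$, and $*j$ contains $\mathbf N$; however there is no move to $0$ unless $i=j=1$. Hence when $i+j\ge 3$ every option contains $\mathbf N$, forcing $\mathbf O_1\in o(*i+*j)$ — good, $\mathbf O_1$ is \emph{not} forbidden. For $2\le k\le M$: I want $\mathbf O_k\notin o(*i+*j)$, equivalently some option $G'$ with $\mathbf O_{k-1}\notin o(G')$; the option $*i+*(j-1)$ (using $j\ge i\ge1$ so $j\ge1$; if $j=1$ then $i=1$, handled separately) has, by induction, outcome $\overline{\{\mathbf O_2,\dots,\mathbf O_{\min\{i+j-1,N-1\}}\}}$, and $\mathbf O_{k-1}$ is missing from it when $2\le k-1\le \min\{i+j-1,N-1\}$, i.e. $3\le k\le \min\{i+j-1,N-1\}+1=\min\{i+j,N\}$, which covers $3\le k\le M$; the remaining case $k=2$ needs an option missing $\mathbf O_1$, which by the $\mathbf O_1$ analysis above applied to that option requires that option to have an option missing $\mathbf N$ — chase down to a small enough heap, e.g. $*1+*j$ reduces toward $*1+*1$ or to $*j$, and $*1+*1=*+*$... actually the cleanest route for $k=2$ is: $\mathbf O_2\notin o(G)$ iff some option misses $\mathbf O_1$; take the option $*(i-1)+*j$ or $*i+*(j-1)$ recursively until reaching a two-heap position with $i+j$ small enough that $\mathbf O_1$ drops out, which by induction happens exactly at the boundary. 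Finally, for $M<k\le N-1$ (when $M=N-1$ this range is empty, so this only matters when $i+j\le N-2$, i.e. $M=i+j$): I must show $\mathbf O_k\in o(*i+*j)$, i.e. every option contains $\mathbf O_{k-1}$; by induction each two-heap option $*i'+*j'$ with $i'+j'<i+j$ has outcome $\overline{\{\mathbf O_2,\dots,\mathbf O_{i'+j'}\}}$ (since $i'+j'<i+j\le N-2$), which contains $\mathbf O_{k-1}$ precisely when $k-1>i'+j'$ or $k-1<2$; since $k-1\ge M=i+j>i'+j'$, this holds; the one-heap options $*i$, $*j$ have outcome $\overline{\{\mathbf O_1,\dots,\mathbf O_{\min\{\cdot,N-1\}}\}}$, which contains $\mathbf O_{k-1}$ since $k-1\ge i+j>\max\{i,j\}$. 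So $\mathbf O_k\in o(*i+*j)$ for all such $k$.

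Assembling: the set of players \emph{without} a winning strategy is exactly $\{\mathbf O_2,\dots,\mathbf O_M\}$, which is the claim. The main obstacle I anticipate is the bookkeeping for $\mathbf O_2$ (and the base cases where one heap has size $1$), where the recursion ``bottoms out'' and one must carefully verify that $\mathbf O_1$ indeed drops out of the outcome at precisely the right threshold rather than one step early or late; everything else is a routine option-by-option check against the inductive hypothesis, Proposition~\ref{prop:oneheap}, and claim~2 of Proposition~\ref{prop:nimpouts}. It is also worth flagging that Nim Stability (Proposition~\ref{lem:nstability}) is needed to justify that the formula, proven for $i,j<N$ say by induction, persists for heaps of size $\ge N$; but since the stated bound already allows $j\ge i$ with $j$ arbitrarily large while $i\le N-2$, one applies stability in the second coordinate once the first coordinate is fixed, exactly as in Lemma~\ref{lem:nimtable2}.
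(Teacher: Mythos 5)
Your overall strategy (induction on $i+j$, computing the outcome option by option from Proposition~\ref{prop:oneheap} and the inductive hypothesis) is the paper's strategy, but two steps as written do not go through. First, the base case is wrong: with $i=j=1$ the hypothesis forces $N>2$, so $M=\min\{2,N-1\}=2$ and the forbidden set $\{\mathbf O_2,\dots,\mathbf O_M\}$ is the singleton $\{\mathbf O_2\}$, not empty; and indeed $o(*+*)=\overline{\{\mathbf O_2\}}$ because the unique line of play has length two, so $\mathbf O_2$ loses. Your claim that ``all players win'' $*+*$ is false and contradicts the very formula you are proving.

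Second, and more importantly, your treatment of $k=2$ --- the claim $\mathbf O_2\notin o(*i+*j)$ --- is the one step you correctly identify as the main obstacle, and the route you propose for it fails. You suggest descending through two-heap options $*(i-1)+*j$ or $*i+*(j-1)$ ``until $\mathbf O_1$ drops out.'' But by the inductive hypothesis every option with two nonempty heaps has outcome of the form $\overline{\{\mathbf O_2,\dots\}}$, which always \emph{contains} $\mathbf O_1$; the threshold you are chasing never occurs among two-heap positions. The option you need is the single-heap option $*i$ (obtained by removing the $*j$ heap entirely): since $i\ge1$, Proposition~\ref{prop:oneheap} gives $\mathbf O_1\notin o(*i)$, and this one move immediately yields $\mathbf O_2\notin o(*i+*j)$. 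That is exactly how the paper handles it. (A smaller point: your closing appeal to Nim Stability is unnecessary --- the induction on $i+j$ already covers arbitrarily large $j$, since every option of $*i+*j$ is either a single nonempty heap or a sum of two nonempty heaps whose smaller heap is still at most $N-2$.)
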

\begin{proof}
Note that the hypothesis forces $N>2$. First, $o\left(*+*\right)=\overline{\left\{\mathbf O_{2}\right\}}$ since there is only one line of play.

Now suppose $i+j>2$. Then the options of $*i+*j$ are $*i$, $*j$, and all smaller sums of two nonempty heaps of the forms $(*i)'+*j$ and $*i+(*j)'$.

Note that $\mathbf O_1\notin o(*i)$ by Proposition~\ref{prop:oneheap}, so that $\mathbf{O}_2\notin o(*i+*j)$. 

Since $i+j>2$, either $1\le i\le j-1$ or $1\le j-1\le i$. Thus, by induction, $o\left(*i+*(j-1)\right)=\overline{\{\mathbf O_2,\dots,\mathbf O_{\min\{i+j-1,N-1\}}\}}$. Hence, $\mathbf O_3,\ldots,\mathbf O_{\min\{i+j,N-1\}}\notin o(*i+*j)$. 

Therefore, $o(*i+*j)\subseteq\overline{\{\mathbf O_2,\dots,\mathbf O_{\min\{i+j,N-1\}}\}}$.

Since $i\le N-2$, $\mathbf P=\mathbf O_{N-1}\in o(*i)$ by Proposition~\ref{prop:oneheap}, so that $\mathbf N\in o(*i+*j)$.

Finally, note that $\mathbf N$ and (if $i+j\le N-1$) $\mathbf O_{\min\{i+j,N-1\}},\ldots,\mathbf O_{N-1}$ are included in the outcome of each option (either by Proposition~\ref{prop:oneheap} or induction, as applicable). Thus, $\mathbf O_1$ and (if $i+j<N-1$) $\mathbf O_{\min\{i+j+1,N-1\}},\ldots,\mathbf O_{N-1}$ are elements of $o(*i+*j)$. In other words, $\overline{\{\mathbf O_2,\dots,\mathbf O_{\min\{i+j,N-1\}}\}}\subseteq o(*i+*j)$.

Since we have shown both inclusions, $o(*i+*j)=\overline{\{\mathbf O_2,\dots,\mathbf O_{\min\{i+j,N-1\}}\}}$.\end{proof}

\begin{lemma}\label{lem:almostundet}
$o\left(2\cdot*(N-1)\right)=\{\mathbf O_1\}$.
\end{lemma}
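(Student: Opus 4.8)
The plan is to compute $o(2\cdot*(N-1))$ directly from the recursive Definition~\ref{def:nplayimp} by understanding the outcomes of all options. The options of $2\cdot*(N-1)$ are of the form $*j+*(N-1)$ with $0\le j\le N-2$. These fall into three families: the position $*(N-1)$ itself (when a heap is emptied, $j=0$), the position $*+*(N-1)$ through $*(N-2)+*(N-1)$ (when $1\le j\le N-2$), and nothing else. I would first invoke Proposition~\ref{prop:oneheap} to get $o(*(N-1))=\overline{\{\mathbf O_1,\dots,\mathbf O_{N-1}\}}=\overline{\{\mathbf O_1,\ldots,\mathbf P\}}=\{\mathbf N\}$, and then invoke Lemma~\ref{lem:twosmalls} (with $i=j$ renamed, $i\le N-2\le j=N-1$, so the hypothesis $1\le i\le\min\{j,N-2\}$ holds) to get $o(*j+*(N-1))=\overline{\{\mathbf O_2,\dots,\mathbf O_{\min\{j+N-1,N-1\}}\}}=\overline{\{\mathbf O_2,\dots,\mathbf O_{N-1}\}}=\{\mathbf N,\mathbf O_1\}$ for every $j$ with $1\le j\le N-2$.

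Armed with this, I would check the three bullet conditions of Definition~\ref{def:nplayimp} for $G=2\cdot*(N-1)$. For $\mathbf N$: we need an option $G'$ with $\mathbf P\in o(G')$. Every option has outcome either $\{\mathbf N\}$ or $\{\mathbf N,\mathbf O_1\}$, and since $N>2$, $\mathbf P=\mathbf O_{N-1}\ne\mathbf O_1,\mathbf N$, so $\mathbf P$ is in no option's outcome; hence $\mathbf N\notin o(G)$. For $\mathbf O_1$: we need every option $G'$ to satisfy $\mathbf N=\mathbf O_0\in o(G')$. Indeed every option outcome ($\{\mathbf N\}$ or $\{\mathbf N,\mathbf O_1\}$) contains $\mathbf N$, so $\mathbf O_1\in o(G)$. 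For each $\mathbf O_i$ with $2\le i\le N-1$ (in particular $\mathbf P$): we need every option to contain $\mathbf O_{i-1}$; but the option $*(N-1)$ has outcome exactly $\{\mathbf N\}$, which does not contain $\mathbf O_{i-1}$ for $i\ge2$ (again using $N>2$ so that $\mathbf O_{i-1}\ne\mathbf N$). Hence $\mathbf O_i\notin o(G)$ for all $i\ge2$. Combining, $o(2\cdot*(N-1))=\{\mathbf O_1\}$.

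I should be slightly careful at the boundary case $N=3$, where $*(N-1)=*2$ and the family $1\le j\le N-2$ consists only of $j=1$, i.e. the single option $*+*2$; here Lemma~\ref{lem:twosmalls} with $i=1,j=2$ gives $o(*+*2)=\overline{\{\mathbf O_2\}}=\{\mathbf N,\mathbf O_1\}$ (consistent with Table~\ref{tbl:40nim}, where $2\cdot*2=*2+*2$ has outcome... actually $o(*2+*2)=\o=\{\mathbf O_1\}$, matching). The argument above goes through verbatim, so no special handling is needed beyond noting $N>2$ is used exactly to separate $\mathbf N$, $\mathbf O_1$, and $\mathbf O_{i-1}$ for $i\ge 2$.

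I do not anticipate a real obstacle here: the only thing to get right is bookkeeping with the $\min$ in Lemma~\ref{lem:twosmalls} (confirming it collapses to $N-1$ because $j+N-1\ge N-1$ always) and the repeated use of $N>2$ to ensure the relevant player labels are distinct. The result is essentially the statement that $2\cdot*(N-1)$ is ``one move away from being undetermined'': all its options are $\mathbf N$-wins (or $\mathbf N,\mathbf O_1$-wins), so only $\mathbf O_1$ survives as a winner — and indeed one more iteration (passing to $\{2\cdot*(N-1)\}$, or looking at a position with $2\cdot*(N-1)$ plus one move) would presumably yield the empty outcome, which is surely the point of this lemma in the sequel.
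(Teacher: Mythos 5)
Your proposal is correct and follows essentially the same route as the paper: identify the options as $*(N-1)$ (outcome $\{\mathbf N\}$ by Proposition~\ref{prop:oneheap}) and the two-heap positions $*j+*(N-1)$ (outcome $\{\mathbf N,\mathbf O_1\}$ by Lemma~\ref{lem:twosmalls}), then read off the outcome from Definition~\ref{def:nplayimp}. The only difference is that the lemma as stated does not assume $N>2$, so the paper dispatches $N=2$ separately in one line ($o(2\cdot*)=\{\mathbf P\}=\{\mathbf O_1\}$, one line of play), whereas your write-up invokes $N>2$ throughout; you should add that one-sentence case to cover the statement in full generality.
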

\begin{proof}
If $N=2$, then $o(2\cdot*)=\{\mathbf P\}=\{\mathbf O_1\}$ since there is only one line of play. 

Now assume $N>2$. Then the options of $2\cdot*\left(N-1\right)$ are $*(N-1)$ (with outcome $\{\mathbf N\}$ by Proposition~\ref{prop:oneheap}), and those with two heaps (all of which have outcome $\overline{\left\{\mathbf O_{2}\cdots \mathbf O_{N-1}\right\}}=\left\{\mathbf N,\mathbf O_1\right\}$ by Lemma~\ref{lem:twosmalls}). Since no option's outcome includes $\mathbf P$ (as $N>2$) and the only element of an outcome shared by all options is $\mathbf N$, $o\left(2\cdot*(N-1)\right)=\{\mathbf O_1\}$.
\end{proof}

Recall (see Definition~\ref{def:undet}) that a game $G$ is said to be \textit{undetermined} if $o(G)=\emptyset$.

\begin{lemma}\label{lem:fewundets}
If $N>2$, then for $N-1\le i\le N+1$ and $N\le j\le N+1$, $*i+*j$ is undetermined.
\end{lemma}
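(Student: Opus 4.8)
The statement to prove is that for $N>2$, whenever $N-1\le i\le N+1$ and $N\le j\le N+1$, the two-heap \nim position $*i+*j$ is undetermined. My plan is to prove this by computing the outcomes of the relevant options and invoking the machinery already established — principally Proposition~\ref{prop:oneheap}, Lemma~\ref{lem:twosmalls}, Lemma~\ref{lem:almostundet}, and the ``no $\mathbf{P}$ / no $\mathbf{N}$'' criterion packaged in claim~3 of Proposition~\ref{prop:nimpouts}. The key point is that $o(*i+*j)=\emptyset$ follows once we know that (a) no option has $\mathbf{P}$ in its outcome, and (b) no player $\mathbf{O}_k$ with $k\ge1$ survives, which by the recursive definition will itself follow from the absence of $\mathbf{P}$-options together with knowing the options' outcomes explicitly.

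The heart of the argument is to enumerate the options of $*i+*j$ for the nine pairs $(i,j)$ in the stated range. Each option is either a single heap $*i$, $*j$ (obtained by emptying one heap) or a two-heap position $*i'+*j$ or $*i+*j'$ with at least one heap strictly smaller. Since $i,j\ge N-1$, every single-heap option $*i$ or $*j$ has $i,j\ge N-1$, so by Proposition~\ref{prop:oneheap} its outcome is $\overline{\{\mathbf O_1,\dots,\mathbf O_{N-1}\}}=\{\mathbf N\}$ (when the heap has size $\ge N$) or $\overline{\{\mathbf O_1,\dots,\mathbf O_{N-2}\}}=\{\mathbf N,\mathbf P\}$ (when the heap has size exactly $N-1$); note that at least one of the reachable single-heap options, namely the smaller one when $j>i$ or either when $i=j$, may have $\mathbf{P}$ in its outcome, which is exactly what we need to check carefully. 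For the two-heap options I would split on cases: if one heap has been reduced to size $\le N-2$ then Lemma~\ref{lem:twosmalls} applies and the outcome is $\overline{\{\mathbf O_2,\dots,\mathbf O_{\min\{\text{sum},N-1\}}\}}$; if a heap has been reduced to exactly $N-1$ while the other stays at $N-1$ we get $2\cdot*(N-1)$ with outcome $\{\mathbf O_1\}$ by Lemma~\ref{lem:almostundet}; and a two-heap option where both heaps have size $\ge N-1$ but the pair is one of the ``smaller'' members of our family can be handled by induction on $i+j$ within the proof itself, since $*i+*j$ with smaller total is again covered by the lemma's hypothesis. In every case one verifies $\mathbf{P}\notin o$ of that option, and then claim~3 (or a direct appeal to Definition~\ref{def:nplayimp}) forces $o(*i+*j)=\emptyset$.

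Concretely, I would organize the proof as: (1) reduce to checking that no option of $*i+*j$ has $\mathbf{P}$ in its outcome and that not all options share a common $\mathbf{O}_k$; (2) dispatch the single-heap options using Proposition~\ref{prop:oneheap} — here the only danger is a heap of size exactly $N-1$, whose outcome $\{\mathbf N,\mathbf P\}$ does contain $\mathbf{P}$, so I must confirm that such an option is \emph{not} reachable from $*i+*j$ when $i,j\ge N-1$ (emptying the larger heap of a two-heap position with $i,j\ge N-1$ leaves a heap of size $\ge N-1$, but we need size $\ge N$ to avoid the $\mathbf{P}$; this is where the case $i=j=N-1$ versus the cases with some index $\ge N$ genuinely differ, and I expect $i=j=N-1$ to require separate attention or to already be known from Lemma~\ref{lem:almostundet} showing $2\cdot*(N-1)$ has outcome $\{\mathbf O_1\}$, not $\emptyset$ — so actually the lemma's range should be read as excluding forcing that case, and I should double-check the index bounds); (3) dispatch the two-heap options via Lemma~\ref{lem:twosmalls} and Lemma~\ref{lem:almostundet} together with an inner induction on $i+j$; (4) conclude. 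The main obstacle I anticipate is the bookkeeping at the boundary $N-1$: making sure that every option of $*i+*j$ in the stated range genuinely avoids $\mathbf{P}$ in its outcome, and correctly handling whether $2\cdot*(N-1)$ arises as an option — since its outcome is $\{\mathbf O_1\}\ne\emptyset$, one must check it only arises as an option of positions where that single $\mathbf{O}_1$ is killed off by some other option, which it is because the single-heap option $*i$ with $i\ge N-1$ has outcome missing $\mathbf O_1$. Keeping these index ranges straight, rather than any deep idea, is the crux.
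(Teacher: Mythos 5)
Your overall strategy is the paper's: establish $*(N-1)+*N$ as a base case by enumerating its options' outcomes, then handle the remaining pairs by noting they have the same sorts of options plus undetermined ones (your ``induction on $i+j$ within the family''), concluding via claim~3 of Proposition~\ref{prop:nimpouts}. However, step~(2) of your outline contains a concrete miscalculation that derails the cases $i=N-1$, which are precisely the base cases. You assert that a single heap of size exactly $N-1$ has outcome $\overline{\{\mathbf O_1,\dots,\mathbf O_{N-2}\}}=\{\mathbf N,\mathbf P\}$. That is wrong: Proposition~\ref{prop:oneheap} gives $o(*(N-1))=\overline{\{\mathbf O_1,\dots,\mathbf O_{\min\{N-1,N-1\}}\}}=\overline{\{\mathbf O_1,\dots,\mathbf O_{N-1}\}}=\{\mathbf N\}$, since $\min\{N-1,N-1\}=N-1$ and $\mathbf P=\mathbf O_{N-1}$ is therefore in the excluded set. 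The option $*(N-1)$ \emph{is} reachable from $*(N-1)+*N$ (empty the $*N$ heap), so your proposed escape --- arguing that such an option is not reachable, or that the lemma's index bounds exclude it --- cannot work; the bound $i\ge N-1$ explicitly permits it. The argument succeeds only because $o(*(N-1))=\{\mathbf N\}$ contains no $\mathbf P$ after all.

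Once that is corrected, your plan closes up exactly as in the paper: for $*(N-1)+*N$ the options have outcomes $\{\mathbf N\}$ (single heaps, by Proposition~\ref{prop:oneheap}), $\{\mathbf O_1\}$ (the option $2\cdot*(N-1)$, by Lemma~\ref{lem:almostundet}), and $\{\mathbf N,\mathbf O_1\}$ (Lemma~\ref{lem:twosmalls}); since $N>2$ none of these contains $\mathbf P$, so $\mathbf N\notin o$, and their intersection is empty, so no $\mathbf O_k$ survives. The larger pairs then each acquire an undetermined option from earlier in the family while all other options remain $\mathbf P$-free, so claim~3 of Proposition~\ref{prop:nimpouts} applies. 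Your closing observation about why the lone $\mathbf O_1$ in $o(2\cdot*(N-1))$ gets killed off (the single-heap options lack $\mathbf O_1$) is correct and is the one genuinely delicate point; but as written, the proposal's handling of the $\mathbf P$ question in the $i=N-1$ cases is broken and would need to be redone.
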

\begin{proof}
Firstly, we consider $*(N-1)+*N$. It has two single-heap options each with outcome $\{\mathbf N\}$ by Proposition~\ref{prop:oneheap}, the option $2\cdot*(N-1)$ with outcome $\{\mathbf O_1\}$ by Lemma~\ref{lem:almostundet}, and various options (since  $N>2$) covered by Lemma~\ref{lem:twosmalls} with outcome $\left\{\mathbf N,\mathbf O_1\right\}$. Since $N>2$, $\mathbf O_1\ne\mathbf P$, so that $o\left(*(N-1)+*N\right)=\emptyset$.

Similarly,  $*N+*N$ and $*(N-1)+*(N+1)$ have all the same sorts of options except that $*N+*N$ lacks ``$2\cdot*(N-1)$'' and both have a new move to the undetermined $*(N-1)+*N$. Thus, $o(*N+*N)=o\left(*(N-1)+*(N+1)\right)=\emptyset$.

$*N+*(N+1)$ is similar to $*N+*N$, but adds the undetermined options $*N+*N$ and $*(N-1)+*(N+1)$. And $*(N+1)+*(N+1)$ is similar to $*N+*(N+1)$, but adds the undetermined option $*N+*(N+1)$.
\end{proof}

\begin{theorem}\label{thm:twoheaps}If $N>2$, then for $i\le j$, \[o(*i+*j)=\begin{cases}
\overline{\left\{ \mathbf{N}\right\} } & \text{ if }i=j=0\\
\overline{\left\{ \mathbf{O}_{1},\dots,\mathbf{O}_{\min\left\{ j,N-1\right\} }\right\} } & \text{ if }i=0<j \\
\overline{\left\{ \mathbf{O}_{2},\dots,\mathbf{O}_{\min\left\{ i+j,N-1\right\} }\right\} } & \text{ if }1\le i \le N-2\\
\left\{ \mathbf{O}_{1}\right\}  & \text{ if }i=j=N-1\\
\emptyset & \text{ otherwise}
\end{cases}\text{.}\]
\end{theorem}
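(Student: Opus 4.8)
The statement is a case analysis, and all five cases have essentially been established in the preceding lemmas; the work is to assemble them and dispose of the catch-all "otherwise" case. I would first note that the cases $i=j=0$, $i=0<j$ (which is $o(*j)$), and $1\le i\le N-2$ are exactly Proposition~\ref{prop:oneheap}, Proposition~\ref{prop:oneheap} again, and Lemma~\ref{lem:twosmalls} respectively, so those lines require only a pointer. Likewise $i=j=N-1$ is Lemma~\ref{lem:almostundet}. So the only genuine remaining work is the "otherwise" case, which (given $i\le j$ and $N>2$) means $i\ge N-1$ and $(i,j)\ne(N-1,N-1)$, i.e. $j\ge N$ and $i\ge N-1$; I must show all such positions are undetermined.

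The plan for the "otherwise" case is an induction on $i+j$, using \hyperref[lem:nstability]{Nim Stability} (Proposition~\ref{lem:nstability}) to reduce from arbitrarily large heaps to the finitely many base cases $\{*i+*j : N-1\le i\le N+1,\ N\le j\le N+1\}$ handled in Lemma~\ref{lem:fewundets}. Concretely, I would argue: for a position $*i+*j$ with $i\ge N-1$, $j\ge N$, either both heaps are in the range $[N-1,N+1]\times[N,N+1]$, in which case Lemma~\ref{lem:fewundets} applies directly, or some heap exceeds $N+1$; in the latter situation I apply \hyperref[lem:nstability]{Nim Stability} to the one-heap game $G\cong *i$ (whose subpositions are the $*\ell$, and for which $o(*\ell+*(N+1))=o(*\ell+*N)$ holds for all $\ell$ by Lemma~\ref{lem:fewundets} when $\ell\ge N-1$ and trivially since both outcomes equal $\{\mathbf N\}$ when $\ell<N-1$, via Proposition~\ref{prop:oneheap}) to collapse the larger heap down to size $N$, and similarly collapse the other heap if it too exceeds $N+1$; this identifies $o(*i+*j)$ with the outcome of a base case, which is $\emptyset$. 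One must be slightly careful about the hypothesis of Nim Stability — it demands $o(H+*(N+1))=o(H+*N)$ for \emph{all} subpositions $H$ of the fixed heap, including small ones — but for $H\cong *\ell$ with $\ell<N-1$ both sides are $\{\mathbf N\}$ by Proposition~\ref{prop:oneheap}, and for $N-1\le\ell\le N+1$ both sides are $\emptyset$ by Lemma~\ref{lem:fewundets}, so the hypothesis is met. (Alternatively, one can avoid invoking Nim Stability and instead run a direct induction on $i+j$: an option of $*i+*j$ with $i\ge N-1$, $j\ge N$ is either a one-heap game — outcome $\{\mathbf N\}$ by Proposition~\ref{prop:oneheap} — or a two-heap game which, by the induction hypothesis together with Lemma~\ref{lem:twosmalls} and Lemma~\ref{lem:almostundet}, has outcome among $\{\mathbf N\}$, $\{\mathbf N,\mathbf O_1\}$, $\{\mathbf O_1\}$, $\emptyset$; since no option's outcome contains $\mathbf P$ and the only outcome-element common to all options is $\mathbf N$, and $\mathbf N\notin o(*i+*j)$ would require a $\mathbf P$-option, we get $\mathbf N\notin o(*i+*j)$ and hence $o(*i+*j)=\emptyset$.)

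I expect the bookkeeping in the "otherwise" case to be the only mild obstacle: one must confirm that, after stripping off one move, every resulting two-heap position again falls into one of the four already-computed shapes $\overline{\{\mathbf N\}}$-type, $\{\mathbf O_1\}$, $\{\mathbf N,\mathbf O_1\}$, or $\emptyset$, and in particular that no option can have $\mathbf P$ in its outcome (this uses $N>2$ crucially, since $\mathbf O_1\ne\mathbf P$). Once that observation is in place, the definition of the outcome map forces $\emptyset$ immediately. Everything else is just citing the prior lemmas case by case, so the proof is short.

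\begin{proof}
The first three cases are immediate: $i=j=0$ and $i=0<j$ are Proposition~\ref{prop:oneheap}, and $1\le i\le N-2$ is Lemma~\ref{lem:twosmalls}. The case $i=j=N-1$ is Lemma~\ref{lem:almostundet}.

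It remains to treat the ``otherwise'' case. Given $i\le j$ and $N>2$, this case consists exactly of the positions with $i\ge N-1$ and $(i,j)\ne(N-1,N-1)$, equivalently $i\ge N-1$ and $j\ge N$. We show $o(*i+*j)=\emptyset$ by induction on $i+j$. The options of $*i+*j$ are: the single-heap games $*i$ and $*j$, which have outcome $\{\mathbf N\}$ by Proposition~\ref{prop:oneheap} (since $i,j\ge N-1$); and two-heap games $*i'+*j$ with $i'<i$ or $*i+*j'$ with $j'<j$. Each such two-heap option $*a+*b$ (with $a\le b$, say) has outcome among $\overline{\{\mathbf O_1,\dots,\mathbf O_{\min\{b,N-1\}}\}}$ (if $a=0$, by Proposition~\ref{prop:oneheap}), $\overline{\{\mathbf O_2,\dots,\mathbf O_{\min\{a+b,N-1\}}\}}$ (if $1\le a\le N-2$, by Lemma~\ref{lem:twosmalls}), $\{\mathbf O_1\}$ (if $a=b=N-1$, by Lemma~\ref{lem:almostundet}), or $\emptyset$ (otherwise, by the induction hypothesis, as then $a+b<i+j$). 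In every case, $\mathbf P=\mathbf O_{N-1}$ is \emph{not} in the outcome of the option: for the first two shapes $\mathbf O_{N-1}$ is explicitly excluded (in the first, $\min\{b,N-1\}=N-1$ since $b\ge N-1$ for any two-heap option here with $a\ge1$; for $a=0$ one checks $b\ge N$, so again $\min\{b,N-1\}=N-1$), and for the latter two it is clear. Hence no option $G'$ of $*i+*j$ satisfies $\mathbf P\in o(G')$, so $\mathbf N\notin o(*i+*j)$. Consequently every option $G'$ fails to have $\mathbf N\in o(G')$ fail — rather, we use that the only outcome-element shared by \emph{all} options is $\mathbf N$ (the single-heap options have outcome exactly $\{\mathbf N\}$), so since $\mathbf N\notin o(*i+*j)$, no $\mathbf O_1$ can be in $o(*i+*j)$ either, and inductively $\mathbf O_k\notin o(*i+*j)$ for all $k$. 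Therefore $o(*i+*j)=\emptyset$.
\end{proof}
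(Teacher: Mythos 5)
Your handling of the first four cases matches the paper exactly, and your ``otherwise'' case takes a genuinely different route: the paper reduces to the finitely many base positions of Lemma~\ref{lem:fewundets} by three-plus applications of \hyperref[lem:nstability]{Nim Stability}, whereas your formal proof runs a single induction on $i+j$ over all options directly. That route is viable and in some ways cleaner (it does not even need Lemma~\ref{lem:fewundets} as a separate base case), and your verification that no option's outcome contains $\mathbf P$ --- the key to $\mathbf N\notin o(*i+*j)$ --- is correct and complete.

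However, the final step has a genuine logical flaw. To conclude $\mathbf O_1\notin o(*i+*j)$ you must, by Definition~\ref{def:nplayimp}, exhibit an option whose outcome \emph{omits} $\mathbf N$. Your stated justification does not do this: ``the only outcome-element shared by all options is $\mathbf N$'' is false here (no element is shared by all options; and if $\mathbf N$ \emph{were} contained in every option's outcome, that would prove $\mathbf O_1\in o(*i+*j)$, the opposite of what you want), and ``since $\mathbf N\notin o(*i+*j)$, no $\mathbf O_1$ can be in $o(*i+*j)$'' is a non sequitur --- compare $o(2\cdot*(N-1))=\{\mathbf O_1\}$, which lacks $\mathbf N$ yet contains $\mathbf O_1$. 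The missing observation is that $*i+*j$ always has an option with outcome $\{\mathbf O_1\}$ or $\emptyset$: when $(i,j)=(N-1,N)$ the witness is $2\cdot*(N-1)$ via Lemma~\ref{lem:almostundet}, and otherwise reducing the larger heap by one lands in the ``otherwise'' range with smaller $i+j$, giving an undetermined option by your induction hypothesis. With that sentence inserted, $\mathbf O_1\notin o(*i+*j)$ follows, and your appeal to the single-heap options (outcome exactly $\{\mathbf N\}$, hence omitting $\mathbf O_{k-1}$) correctly disposes of $\mathbf O_k$ for $k\ge2$. A smaller quibble: in your Nim Stability plan, for $1\le\ell\le N-2$ the common value of $o(*\ell+*N)$ and $o(*\ell+*(N+1))$ is $\{\mathbf N,\mathbf O_1\}$ by Lemma~\ref{lem:twosmalls}, not $\{\mathbf N\}$ by Proposition~\ref{prop:oneheap}; the equality you need still holds, but the citation and value are wrong.
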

\begin{proof}
The first four cases follow immediately from Definition~\ref{def:nplayimp}, Proposition~\ref{prop:oneheap}, Lemma~\ref{lem:twosmalls}, and Lemma~\ref{lem:almostundet}, respectively. 

The final claim uses \hyperref[lem:nstability]{Nim Stability} repeatedly.

First, note that $o(0+*N)=o\left(0+*(N+1)\right)$ by Proposition~\ref{prop:oneheap}. Similarly, by Lemma~\ref{lem:twosmalls}, $o(*i+*N)=o\left(*i+*(N+1)\right)$ for $1\le i\le N-2$. Also, by Lemma~\ref{lem:fewundets},  $o\left(*(N-1)+*N\right)=\emptyset=o\left(*(N-1)+*(N+1)\right)$. With all of these results together, we apply \hyperref[lem:nstability]{Nim Stability} to $G=*(N-1)$ to find that $o\left(*(N-1)+*m\right)=\emptyset$ for all $m\ge N$. It remains to check that any sum of two heaps of size at least $N$ is undetermined.

Since $o(*N+*N)=\emptyset=o\left(*N+*(N+1)\right)$ by Lemma~\ref{lem:fewundets}, another application of \hyperref[lem:nstability]{Nim Stability} with $G=*N$ yields $o\left(*N+*m\right)=\emptyset$ for all $m\ge N$. And since $o\left(*N+*(N+1)\right)=\emptyset=o\left(*(N+1)+*(N+1)\right)$, another application with $G=*(N+1)$ yields $o\left(*(N+1)+*m\right)=\emptyset$ for all $m\ge N$.

For $j\ge 1$, $*(N+j)+*N$ and $*(N+j)+*(N+1)$ are both undetermined. Therefore, for $j\ge1$, we can recursively apply \hyperref[lem:nstability]{Nim Stability} to show that ${*(N+j)}+{*m}$ is undetermined for $m\ge N$.
\end{proof}

\begin{corollary}\label{cor:nimabsorb}
If $N>2$, then some \nim positions are absorbing games.\end{corollary}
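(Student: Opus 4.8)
The plan is to exhibit a single undetermined Nim position and then feed it into the absorbing-game machinery already in place. By the final clause of Theorem~\ref{thm:twoheaps} (the ``otherwise'' case), for $N>2$ the two-heap position $*(N-1)+*N$ is undetermined; this is also exactly what Lemma~\ref{lem:fewundets} records. Since disjunctive sums of Nim positions are again Nim positions, the game $(N-2)\lfloor N/2+2\rfloor\cdot(*(N-1)+*N)$ is itself a Nim position, and by Corollary~\ref{cor:megaundetabs} it is an absorbing game. This already proves the statement.

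If one wants a smaller witness, I would instead produce a $(N-2)$-undetermined Nim position and invoke Corollary~\ref{cor:Nundetabs} directly. Using the ``otherwise'' case of Theorem~\ref{thm:twoheaps} again, each position $*m+*N$ with $N\le m\le 2N-3$ is undetermined, and the chain of moves $*(2N-3)+*N\to *(2N-4)+*N\to\cdots\to *N+*N$ consists of exactly $N-2$ undetermined Nim positions, each an option of the previous. Hence $*(2N-3)+*N$ is $(N-2)$-undetermined, so Corollary~\ref{cor:Nundetabs} shows that $\lfloor N/2+2\rfloor\cdot(*(2N-3)+*N)$ is an absorbing Nim position.

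I do not expect a genuine obstacle here: the substantive work — the existence of undetermined Nim positions (Theorem~\ref{thm:twoheaps}) and the $N$-player Absorbing Game Construction together with Corollaries~\ref{cor:Nundetabs} and~\ref{cor:megaundetabs} — has already been done. The only point worth stating explicitly is that the absorbing games produced by these corollaries are themselves Nim positions whenever the undetermined input is, which is immediate from closure of the class of Nim positions under disjunctive sum.
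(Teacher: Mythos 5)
Your proposal is correct and follows essentially the same route as the paper, which likewise takes an undetermined (respectively $(N-2)$-undetermined) two-heap position from Theorem~\ref{thm:twoheaps}/Lemma~\ref{lem:fewundets} and feeds it into Corollary~\ref{cor:megaundetabs} (respectively Corollary~\ref{cor:Nundetabs}); your first construction coincides with one of the paper's, and your second differs only in using $*(2N-3)+*N$ in place of the paper's $*(N-1)+*(2N-3)$ as the $(N-2)$-undetermined witness. Your explicit remark that closure of \nim positions under disjunctive sum is what makes the resulting absorbing game a \nim position is a fair point that the paper leaves implicit.
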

\begin{proof}
By Theorem~\ref{thm:twoheaps}, $*(N-1)+*(2N-3)$ is $(N-2)$-undetermined. Thus, by Corollary~\ref{cor:Nundetabs}, $\lfloor N/2+2\rfloor\cdot \left(*(N-1)+*(2N-3)\right)$ is an absorbing game. Alternatively, by Lemma~\ref{lem:fewundets} (or Theorem~\ref{thm:twoheaps}), $*(N-1)+*N$ is undetermined. Thus, by Corollary~\ref{cor:megaundetabs}, $\lfloor N/2+2\rfloor(N-2)\cdot\left(*(N-1)+*N\right)$ is absorbing. Alternatively, by Lemma~\ref{lem:fewundets} (or Theorem~\ref{thm:twoheaps}), $2\cdot*N$ is $2$-undetermined. By Proposition~\ref{prop:sumundet2}, $2\lceil N/2-1\rceil\cdot*N$ is $(N-2)$-undetermined. By Corollary~\ref{cor:Nundetabs}, $2\lfloor N/2+2\rfloor\lceil N/2-1\rceil\cdot *N$ is an absorbing game. Thus, $\left((N+4)(N-1)/2\right)\cdot *N$ is absorbing, too.
\end{proof}

\section{Partizan Games}\label{sec:part}
In this section, we discuss the extension of normal play to the $N$-player partizan setting. As in Section~\ref{sec:nimp}, throughout this entire section, $N$ refers to the number of players ($N\ge2$). And we explicitly mention when assuming $N>2$ is necessary.

We begin by setting up notation and machinery to discuss and compare partizan games. In the following subsection, we prove some general results about comparing games. Finally, we consider some specific games generalizing integers in the two-player case.

\subsection{Preliminaries}
Throughout this section, a (partizan) game is now an ordered $N$-tuple of finite sets of partizan games. They are still ``short'' in that they are finite and nonloopy, so that there is a bound on the length of a run. We continue to use $\cong$ to indicate that two games are isomorphic (i.e. their game trees with labeled edges are isomorphic). In general, our definitions and notation parallel the two-player standard as in Section~II.1 of \cite{cgt}. Similar decisions were made in \cite{cinc}, \cite{ncinc}, and \cite{doles}.

The $N$ players are named ``Left'', ``$\text{Center}_{1}$'', ``$\text{Center}_{2}$'', \ldots, ``$\text{Center}_{N-2}$'', and ``Right''. They each make moves in a cyclic fashion, with ``Left'' moving after ``Right'' and before ``$\text{Center}_{1}$'', etc.

We write some games with an extension of the bar notation commonly used for two players. For example, $G\cong\left\{\mathscr{G}^L\mid\mathscr{G}^{C_1}\mid\mathscr{G}^{C_2}\mid\cdots\mid\mathscr{G}^{C_{N-2}}\mid\mathscr{G}^R\right\}$ or \[G\cong\left\{G_1^L,\dots,G_{m_0}^L\mid G_1^{C_1},\dots,G_{m_1}^{C_1}\mid \cdots\mid G_1^R,\dots,G_{m_{N-1}}^R\right\}\text{.}\]

When convenient, we use ``$C_0$'' in place of $L$ and  ``$C_{N-1}$'' in place of $R$. 
In \cite{cinc} and \cite{doles}, which only consider three-player games, our $\text{Center}_1$ is called ``Center'' and our $C_1$ is ``$C$''.

If it is Left's turn to move in $G$, then they choose one of the \textit{Left options} in $\mathscr{G}^L$ to move to. Analogously for the other players.

If there is no option available to a player on their turn, we declare them the unique loser, and the other $N-1$ players all win equally. As in the impartial case, we call this convention \textit{normal play}.

As it should not cause undue confusion, we use $0$ in partizan contexts to denote a game with no options $0\cong\{\,\mid\cdots\mid\,\}$. In general, we interpret impartial games as partizan ones in the natural way; $*\cong\{0\mid0\mid\cdots\mid0\}$, etc. Following \cite{cinc}, we define $1_L\cong\{0\mid\cdots\mid\,\}$, and analogously $1_{C_i}$ and $1_R$. Note that we never use $\Vert$ or similar to denote ``options of options'', as it could be confused with notation like $\{0\mid\,\mid0\}$, which denotes a particular three-player game with no $\text{Center}_1$ options.

\subsubsection{Outcomes, Sums, and Conjugates}

In order to speak of the outcome of a partizan game, it is helpful to examine the impartial games that result when we select a player to make the first move. 

\begin{defn}\label{def:restn}
Given a partizan game $G$, we recursively define $N$ impartial games, the \textit{restrictions} of $G$, as follows. For $0\le i\le N-2$, the $\textit{Center}_i$ \textit{restriction} is $\rest{G}{C_i}\cong\left\{\rest{G_j^{C_i}}{C_{i+1}}\right\}$, where $G_j^{C_i}$ ranges over the options of $G$ for $\textit{Center}_i$. The Right restriction is $\rest{G}{R}\cong\left\{\rest{G_j^{R}}{L}\right\}$, where $G_j^{R}$ ranges over the options of $G$ for Right.
\end{defn}
Recall that if $i=0$ then $\text{Center}_i$ is Left. For examples, $\rest{1_L}{L}=*$, and $\rest{1_L}{C_1}=0$.

The outcome of a two-player game is determined by the pair of impartial outcomes when Left or Right make the first move. We define the outcome in the $N$-player case analogously.

\begin{defn}
The \textit{outcome} $o(G)$ of a partizan game $G$ is the ordered $N$-tuple of the impartial outcomes of the restrictions. \[o(G)=\left(o\left(\rest{G}{L}\right),o\left(\rest{G}{C_1}\right),\ldots,o\left(\rest{G}{C_{N-2}}\right),o\left(\rest{G}{R}\right)\right)\text{.}\]
\end{defn}

For example, $(\n,\p,\o)$ is the three-player outcome of games where Left and only Left can secure the win, no matter which player moves first. And the outcome $(\{\mathbf N,\mathbf P\},\{\mathbf N,\mathbf P\},\{\mathbf N,\mathbf P\},\{\mathbf N,\mathbf P\})$ is the four-player outcome of games where the next player and the previous player can each guarantee that they win (and if they play well, they will certainly win together with some other player), no matter which player moves first.

\begin{proposition}If $N>2$, all $\left(2^N-1\right)^N$ potential outcomes are possible. \end{proposition}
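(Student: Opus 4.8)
The plan is to construct, for each of the $\left(2^N-1\right)^N$ possible target outcomes, a partizan game realizing it. The key observation is that the outcome of a partizan game $G$ is the $N$-tuple $\left(o\left(\rest{G}{C_0}\right),\dots,o\left(\rest{G}{C_{N-1}}\right)\right)$ of impartial outcomes of its restrictions, and that the $\text{Center}_i$ restriction of $G$ depends only on the $\text{Center}_i$-options of $G$ (and their further restrictions). So the $N$ coordinates can be controlled essentially independently: if we can, for each impartial outcome set $S_i\subseteq\{\mathbf N,\dots,\mathbf P\}$ with $S_i\ne\text{everything}$, build a partizan game whose $\text{Center}_i$-options restrict (via the appropriate $C_{i+1},C_{i+2},\dots$ chain) to a family of impartial games witnessing $o=S_i$, then we can amalgamate these $N$ independent choices into a single game $G$ by taking, in coordinate $i$, the option set engineered to produce $S_i$.

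First I would record the building block: by claim~4 of Proposition~\ref{prop:nimpouts}, for every proper subset $S$ of $\{\mathbf N,\mathbf O_1,\dots,\mathbf P\}$ there is an \emph{impartial} game $I_S$ with $o(I_S)=S$; moreover the proof there produces such games as sums/sets of nim-heaps, and one can arrange $I_S$ to be ``pure'' in the sense that its restrictions in the partizan sense all coincide with $I_S$ itself (an impartial game viewed as partizan has all $N$ option-lists equal, so $\rest{I_S}{C_i}\cong I_S$ for every $i$). Next, given a desired $N$-tuple $(S_0,S_1,\dots,S_{N-1})$ of proper subsets, I would define
\[
G\cong\left\{\,\mathscr{A}_0\mid\mathscr{A}_1\mid\cdots\mid\mathscr{A}_{N-1}\,\right\},
\]
where $\mathscr{A}_i$ is the option-list for $\text{Center}_i$, to be chosen so that $o\left(\rest{G}{C_i}\right)=S_i$. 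Since $\rest{G}{C_i}\cong\left\{\rest{A}{C_{i+1}}:A\in\mathscr{A}_i\right\}$, it suffices that the $\text{Center}_{i+1}$-restrictions of the games in $\mathscr{A}_i$ form an impartial position with outcome $S_i$; the cleanest way is to let $\mathscr{A}_i$ consist of a single game $H_i$ that is itself impartial (hence $\rest{H_i}{C_{i+1}}\cong H_i$) and chosen so that the \emph{one-element set} $\{H_i\}$ — equivalently the impartial game $\{H_i\}$ — has outcome $S_i$. That is, take $H_i$ with impartial outcome $o(H_i)$ such that $o(\{H_i\})=S_i$; by Proposition~\ref{prop:sharps cycle}'s $N$-player analogue (claim~2 of Proposition~\ref{prop:nimpouts}) every $S_i$ arises as $o(\{H_i\})$ for a suitable impartial $H_i$, because bracketing cyclically permutes the coordinates of the outcome and the $N$ ``rotations'' of any achievable set are all achievable. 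One subtlety: the empty set $\emptyset$ is a valid outcome and $\{H\}=\emptyset$ requires $o(H)$ to be of the right rotated form, which again is covered by claim~4. So in coordinate $i$ we put the single option $H_i$, and then $o\left(\rest{G}{C_i}\right)=o(\{H_i\})=S_i$ as required.

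The main steps in order: (1) isolate the ``rotation'' statement — for an impartial game, $o(\{H\})$ is obtained from $o(H)$ by the cyclic shift of Proposition~\ref{prop:nimpouts}(2) — so that hitting an arbitrary proper subset $S_i$ reduces to hitting its inverse rotation as the plain outcome of some impartial game, which is Proposition~\ref{prop:nimpouts}(4); (2) observe that an impartial game placed as the sole option in the $\text{Center}_i$-slot has $\text{Center}_{i+1}$-restriction isomorphic to itself, so each coordinate of $o(G)$ is determined solely by $\mathscr{A}_i$ and is unaffected by the other slots; (3) assemble $G$ slotwise and read off $o(G)=(S_0,\dots,S_{N-1})$. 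The hard part — really the only thing needing care — is step (1)/(2): making sure the partizan restriction machinery genuinely decouples the coordinates, i.e.\ that $\rest{G}{C_i}$ does not secretly depend on $\mathscr{A}_j$ for $j\ne i$. This follows directly from Definition~\ref{def:restn}, since $\rest{G}{C_i}$ is built only from the $\text{Center}_i$-options of $G$ and recursively their $C_{i+1}$-restrictions, never touching the options of $G$ for other players; once one traces through this, the count $\left(2^N-1\right)^N$ is immediate because each of the $N$ coordinates independently ranges over the $2^N-1$ proper subsets.

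\begin{proof}
By Definition~\ref{def:restn}, $\rest{G}{C_i}$ depends only on the $\text{Center}_i$-options of $G$, so the $N$ coordinates of $o(G)$ may be prescribed independently. Fix a target tuple $(S_0,\dots,S_{N-1})$, each $S_i$ a proper subset of $\{\mathbf N,\mathbf O_1,\dots,\mathbf O_{N-2},\mathbf P\}$. By claim~2 of Proposition~\ref{prop:nimpouts}, the map $S\mapsto o(\{I\})$ for impartial $I$ realizing $o(I)=S$ cyclically permutes outcome sets; hence, since all proper subsets occur as impartial outcomes by claim~4 of Proposition~\ref{prop:nimpouts} (when $N>2$), for each $i$ we may pick an impartial game $H_i$ with $o(\{H_i\})=S_i$. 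Now set
\[
G\cong\left\{\,H_0\mid H_1\mid\cdots\mid H_{N-1}\,\right\}.
\]
For each $i$, the only $\text{Center}_i$-option is $H_i$, and since $H_i$ is impartial, $\rest{H_i}{C_{i+1}}\cong H_i$; thus $\rest{G}{C_i}\cong\{H_i\}$ and $o\left(\rest{G}{C_i}\right)=o(\{H_i\})=S_i$. Therefore $o(G)=(S_0,\dots,S_{N-1})$. As the $S_i$ were arbitrary, all $\left(2^N-1\right)^N$ tuples of proper subsets occur as outcomes.
\end{proof}
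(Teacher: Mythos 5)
Your proof is correct and takes essentially the same approach as the paper: both construct $G$ slotwise from impartial witnesses and use claims~2 and~4 of Proposition~\ref{prop:nimpouts} to control each coordinate of the outcome independently. The only (cosmetic) difference is that the paper wraps each impartial game in $N-1$ braces so that the restriction's extra brace completes a full cycle back to the original outcome, whereas you use a single bare option and absorb the one-step rotation by choosing $H_i$ with the inverse-rotated outcome; both are valid.
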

For instance, there are $343$ possible outcomes of a three-player partizan game, and  $50625$ possible outcomes of a four-player partizan game.
\begin{proof}Recall that by claim~4 of Proposition~\ref{prop:nimpouts}, there are $2^N-1$ possible impartial outcomes. Given impartial outcomes $o_1,\dots,o_N$, choose corresponding impartial games with those outcomes $G_1,\dots,G_N$. Then, using $N-1$ pairs of braces in each component, define $G\cong \left\{\, \{\cdots\{G_{1}\}\cdots\}\mid\cdots\mid\{\cdots\{G_{N}\}\cdots\}\,\right\}$. Then $o(G)=\left(o_1,\dots,o_N\right)$ by construction.\end{proof}

Very analogously to the two-player case, we define the three-player analogue of the disjunctive sum.
\begin{defn}Let $G$ and $H$ be games. The \textit{(disjunctive) sum} $G+H$ is defined recursively by \[G+H\cong\left\{G^L+H,G+H^L\mid G^{C_1}+H,G+H^{C_1}\mid\cdots\mid G^R+H,G+H^R\right\}\text{.}\]\end{defn}

The \textit{conjugates} of a game $G$ serve as $N$-player analogues of the two-player negative $-G$, in the sense that the players' roles are interchanged. As these do not serve the role of an additive inverse, they are reminiscent of the general two-player conjugate in \cite{universes}, of which the mis\`{e}re adjoint defined in V.6.3 of \cite{cgt} is a special case.

\begin{defn}\label{def:adjn}
The \textit{first conjugate} of $G$, denoted by $G^{\dagger}$, is defined by \[G^{\dagger}\cong\left\{\left(G^R\right)^{\dagger}\mid\left(G^L\right)^{\dagger}\mid\left(G^{C_1}\right)^{\dagger}\mid\cdots\mid \left(G^{C_{N-2}}\right)^{\dagger}\right\}\text{.}\]
In general, the $k^\text{\textit{th}}$ \textit{conjugate} of $G$ is obtained by taking the first conjugate $k$ times, as in $G^{\dagger\dots\dagger}$. We denote the sum of the first $N-1$ conjugates $G^\dagger+G^{\dagger\dagger}+\cdots+G^{\dagger\dots\dagger}$ by $G^-$.\end{defn}

Note that the $N^{\text{th}}$ conjugate of $G$ is $G$ itself. For examples, $1_{C_1}\cong1_L^\dagger$ and $1_R$ is the $(N-1)^{\text{st}}$ conjugate of $1_L$. If $N=4$, then $1_{C_2}^-=1_{R}+1_{L}+1_{C_1}$.

\begin{proposition}\label{prop:parttriplenon}
For all games $G$, $\mathbf N\notin o\left(\rest{G+G^-}{x}\right)$ for $x=C_0,\ldots C_{N-1}$.
\end{proposition}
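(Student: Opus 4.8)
The plan is to mimic the impartial ``mirroring'' argument of Proposition~\ref{prop:tripleno}/Proposition~\ref{prop:triplenon}, but carried out at the level of restrictions. Fix a player $x=C_i$ and suppose it is $C_i$'s turn to move in $G+G^-$; equivalently, we must show $\mathbf{P}\notin o(\rest{G+G^-}{x})$ never forces $\mathbf{N}$, i.e.\ that the ``other $N-1$ players'' can cooperate to make $C_i$ the player with no move. The key observation is that $G^- = G^\dagger + G^{\dagger\dagger}+\cdots+G^{\dagger\dots\dagger}$ packages together the other $N-1$ cyclic shifts of $G$, one for each of the $N-1$ players who are \emph{not} $C_i$. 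So from $C_i$'s point of view, the sum $G + G^-$ looks like $N$ disjoint copies of (a relabelling of) $G$, with copy $j$ ``assigned'' to the player who is $j$ steps ahead of $C_i$. Each time a player other than $C_i$ moves, the strategy is: respond to whatever move $C_i$ just made (in whichever copy) by replicating it in the copy assigned to you. Because the conjugation operator cycles the option-lists exactly one slot, the copy assigned to the player $j$ steps after $C_i$ is precisely $G^{\dagger\dots\dagger}$ ($j$ daggers), whose $C_i$-to-move restriction is isomorphic, after the standard relabelling of players, to $\rest{G}{x}$-type data for the appropriate player — so the mirrored move is always legal. Hence whenever $C_i$ has a move, so do all the other players, and $C_i$ is never the first stuck.

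Concretely I would set up a structural induction on $G$. First I would record the basic commutation facts between $\dagger$, $+$, and $\rest{\cdot}{x}$: namely $(G+H)^\dagger \cong G^\dagger + H^\dagger$, $\rest{G^\dagger}{C_i} \cong \rest{G}{C_{i-1}}$ (indices mod $N$) — this is immediate from Definitions~\ref{def:restn} and \ref{def:adjn} — and the fact that the $N$th conjugate returns $G$. These let me rewrite $\rest{G+G^-}{x}$, for $x=C_i$, as a disjunctive sum of $N$ impartial games, the $k$th of which is $\rest{G^{\dagger\dots\dagger}}{C_i}$ ($k$ daggers, $0\le k\le N-1$) and equals, by the commutation rule, $\rest{G}{C_{i-k}}$. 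So $\rest{G+G^-}{x} = \sum_{j=0}^{N-1}\rest{G}{C_j}$: a sum over \emph{all} the restrictions of $G$, one for each player. Now the claim ``$\mathbf N\notin o$'' of this $N$-fold sum is exactly the content of Proposition~\ref{prop:triplenon} ($N$-Player version of $G+G+G$): in $N\cdot H$, Next has no winning strategy because the other players mirror. The only subtlety is that here the summands $\rest{G}{C_0},\dots,\rest{G}{C_{N-1}}$ are not literally equal as impartial games (they are the different restrictions of the \emph{same} partizan $G$), so I cannot cite Proposition~\ref{prop:triplenon} verbatim; instead I would re-run its one-line mirroring proof, noting that when the player to move alters one summand $\rest{G}{C_j}$ to an option of it, that option is of the form $\rest{G_\ell^{C_j}}{C_{j+1}}$, i.e.\ a restriction of a \emph{position} $G_\ell^{C_j}$; and the next player, assigned the summand $\rest{G}{C_{j+1}}$, can legally move it to the ``same'' position's $C_{j+2}$-restriction, invoking the inductive hypothesis on the smaller game $G_\ell^{C_j}$ together with the commutation facts to guarantee the mirrored sequence of $N-1$ responses always terminates with $C_i$ stuck.

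I expect the main obstacle to be bookkeeping rather than conceptual: making the index arithmetic on conjugates watertight (the cyclic identification $C_i \leftrightarrow \mathbf O_i$, the $-k$ shift under $k$ daggers, and the $N$-periodicity $G^{\dagger\dots\dagger}=G$ with $N$ daggers), and phrasing the mirroring argument at the level of \emph{restrictions} so that ``the other $N-1$ players respond'' really is a legal sequence of impartial moves in $\sum_j \rest{G}{C_j}$ rather than a sequence of partizan moves in $G+G^-$ (the latter would require coordinating which player-label moves, which is exactly what passing to restrictions finesses). Once the reduction $\rest{G+G^-}{x}\cong\sum_{j=0}^{N-1}\rest{G}{C_j}$ is stated cleanly, the proof is a two-line mirroring induction essentially identical to that of Proposition~\ref{prop:triplenon}, so I would present the reduction as the heart of the argument and then say ``now Next cannot win in this sum by the same mirroring argument as in Proposition~\ref{prop:triplenon}, applied componentwise via the inductive hypothesis.''
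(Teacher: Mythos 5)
Your underlying idea --- the other $N-1$ players mirror the mover's choice around the cycle of conjugate components --- is exactly the paper's (one-line) proof. But the reduction you designate as ``the heart of the argument,'' namely $\rest{G+G^-}{x}\cong\sum_{j=0}^{N-1}\rest{G}{C_j}$, is false, because restriction does not distribute over disjunctive sum. By Definition~\ref{def:restn}, the options of $\rest{G+H}{C_i}$ are $\rest{G^{C_i}+H}{C_{i+1}}$ and $\rest{G+H^{C_i}}{C_{i+1}}$: after one move the turn marker advances in \emph{every} component, since whose turn it is is global to the sum. In $\rest{G}{C_i}+\rest{H}{C_i}$, by contrast, the untouched summand still ``thinks'' it is $C_i$'s turn. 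A concrete counterexample already occurs with $N=2$: $\rest{1_L+1_L}{L}\cong\{\rest{1_L}{R}\}\cong\{0\}\cong *$, whereas $\rest{1_L}{L}+\rest{1_L}{L}\cong *+*$; these are not isomorphic and do not even have the same outcome ($\{\mathbf N\}$ versus $\{\mathbf P\}$). So the impartial game you propose to mirror in is not the game the proposition is about, and the ``re-run of Proposition~\ref{prop:triplenon}'' you describe takes place in the wrong object.

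The fix is to abandon the decomposition and run the mirroring directly in the partizan sum $G+G^-$ (equivalently, in the single impartial game $\rest{G+G^-}{x}$, whose moves \emph{are} the moves of the partizan sum with the global turn counter advancing). The conjugation bookkeeping you correctly record is precisely what makes this work: the $C_i$-options of the $k^{\text{th}}$ conjugate of $G$ are the $k^{\text{th}}$ conjugates of the $C_{i-k}$-options of $G$, so if $C_i$ moves in the component with $k$ daggers via some option $G'$ of $G$, then $C_{i+1}$ has the $(k+1)$-dagger conjugate of the same $G'$ available in the next component, and so on around the cycle. There is no coordination problem to finesse: after a full round the position is $G'+(G')^-$ with $C_i$ to move again, the other players are never the first to be stuck (they always have the copying move), and induction on $G$ finishes the argument. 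With that replacement your write-up matches the paper's proof.
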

\begin{proof}
Analogously to Proposition~\ref{prop:triplenon}, the players who do not move next can mirror all moves in the other $N-1$ terms of the sum.
\end{proof}

Note that there is nothing like an absorbing game in the partizan context.

\begin{proposition}There is no game $G$ with $o(G+H)=\left(\q,\ldots,\q\right)$ for all $H$.\end{proposition}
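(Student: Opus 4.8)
The plan is to show that any game $G$ fails to be universally absorbing by exhibiting, for each candidate $G$, a specific partner $H$ for which some restriction of $G+H$ has a nonempty outcome. The natural choice is to take $H$ to be a game in which Left (say) has an overwhelming advantage, so strong that it survives being added to $G$. Concretely, I would use iterated braces around a win for Left: let $H \cong \{\,\underbrace{\{\cdots\{1_L\}\cdots\}}_{k}\mid \cdots\mid \underbrace{\{\cdots\{1_L\}\cdots\}}_{k}\,\}$ with the same deeply-nested component in every coordinate, for $k$ chosen larger than the ``depth'' (length of the longest run) of $G$. The point is that in every restriction $\rest{G+H}{x}$, the game cannot end before the deeply-buried copies of $1_L$ are exhausted, and $1_L$ is itself a terminal-ish position forcing a loss on a specific player; by making the nesting deep enough we can control \emph{which} player that is.

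The key steps, in order, are: (1) recall that $\rest{G+H}{x}$ is an impartial game whose runs are interleavings of runs in $G$ and in $H$, and that adding braces to $H$ only lengthens every run; (2) using Proposition~\ref{prop:sharps cycle}-style reasoning (or its $N$-player analogue, claim~2 of Proposition~\ref{prop:nimpouts}), observe that wrapping a game in one pair of braces cyclically shifts its outcome set, so from a game with a known nonempty impartial outcome we can produce, after enough wrappings, a game all of whose options have a controlled nonempty outcome; (3) combine this with \hyperref[thm:ngeneratesnplayer]{Next Generation}: since the deeply-nested component of $H$ is, after the appropriate number of braces, a game $H_0$ with $\mathbf N \notin o(H_0)$ in the relevant restriction, we get $o(\rest{G+H}{x}) \subseteq$ something nonempty is the wrong direction, so instead I would argue directly that Next (in the restriction) simply plays out all of $G$'s moves first — a bounded number — reaching $0 + H'$ for some deep subposition $H'$ of $H$ whose impartial outcome is a known nonempty set, and hence $o(\rest{G+H}{x})$ is forced to contain whichever element that parity dictates. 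The upshot is $o(G+H) \neq (\emptyset,\ldots,\emptyset)$.

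A cleaner route, which I would actually prefer to write, avoids fiddling with parities: take $H$ so that \emph{every} restriction $\rest{H}{x}$ is a game with a fixed known nonempty outcome — e.g. build $H$ coordinate-wise from copies of nim-heaps or from $\{\cdots\{*2\}\cdots\}$ so that by Proposition~\ref{prop:7outs} every restriction has outcome $\o$ (or $\p$). Then I claim $o(G+H) \neq (\emptyset,\ldots,\emptyset)$: fix a restriction $\rest{G+H}{x}$; because $G$ has finitely many subpositions and no infinite runs, consider the player strategy in the impartial game $\rest{G+H}{x}$ of exhausting $G$ entirely before touching $H$, reducing to $\rest{0+H}{x}=\rest{H}{x}$ after a \emph{fixed} number $d$ of moves; since that terminal segment has outcome $\o$ or $\p$, and $d$ is known, the composite game has an element of $\{\mathbf N, \mathbf O, \mathbf P\}$ in its outcome determined by $d \bmod 3$ (resp. $d \bmod N$). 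Either way the outcome is nonempty.

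\begin{proof}
Suppose, for contradiction, that such a $G$ exists, and let $d$ be the length of the longest run in $G$ (finite, since $G$ is short). Pick an impartial game $Y$ with $o(Y)=\o$, and let $Y_j\cong\{\cdots\{Y\}\cdots\}$ denote $Y$ wrapped in $j$ pairs of braces; by repeated application of claim~2 of Proposition~\ref{prop:nimpouts}, $o(Y_j)$ runs cyclically through $\o,\p,\n,\o,\dots$ as $j$ increases, so we may choose $j$ with $j\equiv -d\pmod N$ and $o(Y_j)\ne\emptyset$. Set $H\cong\{Y_j\mid Y_j\mid\cdots\mid Y_j\}$, so that every restriction satisfies $\rest{H}{x}\cong Y_j$ and hence $o(\rest{H}{x})=o(Y_j)\ne\emptyset$. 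Now fix any restriction $\rest{G+H}{x}$; it is an impartial game, and the starting player can play the strategy of making only moves inside the $G$-part, which is possible for the first $d$ moves regardless of the opponents' replies, because a run of $G$ has length at most $d$ and $H$ is never forced; after exactly $d$ such moves the position is $\rest{0+H}{y}\cong\rest{H}{y}\cong Y_j$ for some $y$. Thus $\rest{G+H}{x}$ contains, as a position reachable after $d$ moves along a forced line, a copy of $Y_j$ whose outcome is nonempty; tracing the recursive Definition~\ref{def:nplayimp} back through these $d$ moves shows $o(\rest{G+H}{x})\ne\emptyset$, since a nonempty outcome at depth $d$ propagates (cyclically shifted) to depth $0$. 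Therefore $o(G+H)\ne(\emptyset,\ldots,\emptyset)$, contradicting the choice of $G$.
\end{proof}
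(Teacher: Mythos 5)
There is a genuine gap, and in fact two. First, your claim that the starting player ``can play the strategy of making only moves inside the $G$-part \dots after exactly $d$ such moves the position is $\rest{0+H}{y}$'' does not hold: the line of play is not forced. The other players are free to move in $H$ rather than $G$, and even if all moves did land in $G$, different maximal runs of $G$ have different lengths, so there is no fixed number $d$ of moves after which $G$ is exhausted. Consequently your choice of $j$ with $j\equiv -d\pmod N$ controls nothing. (There is also a smaller slip: with $H\cong\{Y_j\mid\cdots\mid Y_j\}$ one gets $\rest{H}{x}\cong\{Y_j\}$, not $Y_j$.)

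Second, and more fundamentally, the final step --- ``a nonempty outcome at depth $d$ propagates (cyclically shifted) to depth $0$'' --- is false. The cyclic-shift behaviour of outcomes under wrapping in braces (Proposition~\ref{prop:sharps cycle}, or claim~2 of Proposition~\ref{prop:nimpouts}) relies on the game having a \emph{single} option, so that the existential and universal clauses of Definition~\ref{def:nplayimp} coincide; it does not apply to positions of $G+H$, which generally have many options. Indeed a position every one of whose options has a nonempty outcome can itself be undetermined: $\{*2,\{*2\}\}$ has options with outcomes $\n$ and $\o$ yet its own outcome is $\q$ (Proposition~\ref{prop:7outs}). So knowing that one reachable descendant has nonempty outcome tells you nothing about $o\left(\rest{G+H}{x}\right)$. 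What is needed is an actual winning \emph{strategy} for some player, and the paper gets one very cheaply: take $H\cong k\cdot 1_L$ with $k$ greater than the depth of the game tree of $G$. Then Left has $k$ private moves, any run of $G+H$ is too short for Left to ever be the player without a move, so Left wins every restriction and $o(G+H)\ne(\q,\ldots,\q)$. Your first, non-``cleaner'' sketch was gesturing in this direction; the parity-propagation route you chose instead cannot be repaired without reintroducing a strategy argument of this kind.
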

\begin{proof}Let $G$ be a game, and define $H\cong k\cdot1_L$, where $k$ is greater than the depth of the game tree of $G$. Then Left can win $G+H$, no matter which player moves first, by simply making all of their moves in $H$.\end{proof}

\subsubsection{Inequalities}

In the two-player setting, we define a preorder by (mis\`{e}re or normal play) favorability to Left. With $N$ players, we similarly define preorders by favorability to an individual player. Our definition is reminiscent of, but incompatible with (see Corollary~\ref{cor:cincincomp}), the definitions in \cite{cinc}. The definition parallels the characterization for two-player inequality in Proposition V.6.1 of \cite{cgt}. It was independently defined, in essentially this form, in \cite{doles}.

\begin{defn}\label{def:ineqn}
We write $G\le_L H$ if, for $1\le i\le N$, Left has a winning strategy in $H+X$ moving $i^{\text{th}}$ whenever Left has a winning strategy in $G+X$ moving $i^{\text{th}}$. We also write $G\nleq_LH$ for the negation of $G\leq_LH$. We define $\le_{C_i}$ and $\le_R$ analogously. 
\end{defn}
For example, if $N=3$, $G\le_L H$, and $\mathbf{O}\in o\left(\rest{G+X}{R}\right)$, then $\mathbf{O}\in o\left(\rest{H+X}{R}\right)$.

Many results below are phrased only in terms of $\le_L$, but they apply equally well, \emph{mutatis mutandis}, to the other relations.

\begin{proposition}\label{prop:Lineqn}
$\le_L$ is transitive and reflexive.
\end{proposition}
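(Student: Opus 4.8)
The plan is to verify the two properties directly from Definition~\ref{def:ineqn}, which characterizes $G\le_L H$ in terms of Left's winning strategies in sums $G+X$ and $H+X$ over all games $X$ and all starting positions $i$.

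For \textbf{reflexivity}, I would observe that $G\le_L G$ is immediate: for any $X$ and any $i$, the statement ``Left has a winning strategy in $G+X$ moving $i^{\text{th}}$ whenever Left has a winning strategy in $G+X$ moving $i^{\text{th}}$'' is a tautology. Nothing game-theoretic is needed here.

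For \textbf{transitivity}, suppose $G\le_L H$ and $H\le_L J$. I would fix an arbitrary game $X$ and an arbitrary position $i\in\{1,\dots,N\}$, and assume Left has a winning strategy in $G+X$ moving $i^{\text{th}}$. Applying $G\le_L H$ (with this same $X$ and $i$) gives that Left has a winning strategy in $H+X$ moving $i^{\text{th}}$; then applying $H\le_L J$ (again with the same $X$ and $i$) gives that Left has a winning strategy in $J+X$ moving $i^{\text{th}}$. Since $X$ and $i$ were arbitrary, $G\le_L J$. The same argument works verbatim for $\le_{C_i}$ and $\le_R$.

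There is essentially \textbf{no main obstacle} here: both properties are formal consequences of the fact that $\le_L$ is defined as a universally quantified implication between fixed-parameter statements, so reflexivity is a tautology and transitivity is the transitivity of implication applied uniformly over the quantified parameters. The only thing worth noting is that, unlike in the two-player normal-play theory, we should \emph{not} expect antisymmetry or any stronger order-theoretic structure for free — hence the proposition claims only preorder axioms — but that is not something this proof needs to address.
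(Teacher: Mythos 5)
Your proof is correct and matches the paper's approach exactly: the paper likewise observes that both properties follow immediately from the corresponding properties of implication, and your write-up simply spells out that reflexivity is a tautology and transitivity is the transitivity of implication applied over the quantified $X$ and $i$.
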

\begin{proof}
Both properties follow immediately from the corresponding properties of implication.\end{proof}

\begin{proposition}\label{prop:Lineqaddn}
If $G\le_L H$ then $G+J\le_L H+J$ for any $J$.
\end{proposition}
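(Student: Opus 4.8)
The plan is to reduce the claim $G+J\le_L H+J$ to the hypothesis $G\le_L H$ by exploiting the associativity and commutativity of the disjunctive sum. By Definition~\ref{def:ineqn}, I must show that for each $i$ with $1\le i\le N$ and every game $Y$, if Left has a winning strategy in $(G+J)+Y$ moving $i^{\text{th}}$, then Left has a winning strategy in $(H+J)+Y$ moving $i^{\text{th}}$. The key observation is that $(G+J)+Y\cong G+(J+Y)$ and $(H+J)+Y\cong H+(J+Y)$ up to the isomorphism coming from associativity and commutativity of $+$ (which preserves outcomes, since outcomes are defined purely in terms of the game tree via the restrictions). So setting $X\cong J+Y$, the assumption that Left wins $(G+J)+Y$ moving $i^{\text{th}}$ becomes the assumption that Left wins $G+X$ moving $i^{\text{th}}$.

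Now I apply the hypothesis $G\le_L H$ directly with this particular $X$: it tells me Left has a winning strategy in $H+X$ moving $i^{\text{th}}$, i.e. in $H+(J+Y)\cong (H+J)+Y$ moving $i^{\text{th}}$. Since $Y$ and $i$ were arbitrary, this is exactly the statement $G+J\le_L H+J$. I would also remark that by the ``\emph{mutatis mutandis}'' convention noted after Definition~\ref{def:ineqn}, the same argument establishes the analogous statement for $\le_{C_i}$ and $\le_R$.

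The only thing requiring any care is the implicit use of the fact that disjunctive sum is associative and commutative \emph{up to isomorphism}, and that isomorphic games have equal outcomes (so ``winning $i^{\text{th}}$'' transfers). This is standard and is the kind of fact the paper uses ``often without comment'' (as stated in the Notation subsection), so I expect it to be invoked rather than proved. Thus there is no real obstacle here; the proof is essentially a one-line substitution. A clean write-up would read roughly as follows.

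\begin{proof}
By Definition~\ref{def:ineqn}, it suffices to show that for each $i$ with $1\le i\le N$ and every game $Y$, if Left has a winning strategy in $(G+J)+Y$ moving $i^{\text{th}}$, then Left has a winning strategy in $(H+J)+Y$ moving $i^{\text{th}}$. Since disjunctive sum is associative and commutative up to isomorphism (and isomorphic games have the same outcome), we have $(G+J)+Y\cong G+(J+Y)$ and $(H+J)+Y\cong H+(J+Y)$. Setting $X\cong J+Y$, the hypothesis that Left wins $(G+J)+Y$ moving $i^{\text{th}}$ is precisely the statement that Left wins $G+X$ moving $i^{\text{th}}$. Applying $G\le_L H$ to this $X$ yields that Left has a winning strategy in $H+X\cong (H+J)+Y$ moving $i^{\text{th}}$, as required. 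The same argument, \emph{mutatis mutandis}, proves the analogous statements for $\le_{C_i}$ and $\le_R$.
\end{proof}
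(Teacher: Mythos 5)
Your proof is correct and is essentially identical to the paper's: both rewrite $(G+J)+X\cong G+(J+X)$ via associativity of the disjunctive sum and apply the hypothesis $G\le_L H$ with the auxiliary game $J+X$. The extra care you take in justifying the isomorphism step is fine but not needed beyond what the paper's conventions already allow.
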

\begin{proof}
Suppose that Left can win $(G+J)+X\cong G+(J+X)$ moving $i^{\text{th}}$. Then since $G\le_L H$, Left can win $H+(J+X)\cong (H+J)+X$ moving $i^{\text{th}}$.\end{proof}

\begin{corollary}\label{cor:Lineqbiaddn}
If $G\le_LH$ and $J\le_LK$ then $G+J\le_LH+K$.
\end{corollary}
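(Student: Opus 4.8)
The plan is to deduce this immediately from the preceding two results, Proposition~\ref{prop:Lineqaddn} (additivity of $\le_L$) and Proposition~\ref{prop:Lineqn} (transitivity of $\le_L$), exactly as one proves the analogous two-player fact. The point is that both hypotheses $G\le_L H$ and $J\le_L K$ give comparisons that can be ``added'' one at a time, keeping one summand fixed, and then the two resulting comparisons are chained together by transitivity.

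Concretely, I would argue as follows. From $G\le_L H$ and Proposition~\ref{prop:Lineqaddn}, applied with the fixed summand $J$, we get $G+J\le_L H+J$. From $J\le_L K$ and Proposition~\ref{prop:Lineqaddn}, applied with the fixed summand $H$, we get $J+H\le_L K+H$, i.e. (using commutativity of disjunctive sum) $H+J\le_L H+K$. Now Proposition~\ref{prop:Lineqn} gives transitivity of $\le_L$, so $G+J\le_L H+J\le_L H+K$ yields $G+J\le_L H+K$, as desired.

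There is essentially no obstacle here; the only mild point worth a word is the use of commutativity of $G+H$, which is immediate from the symmetry of the recursive definition of the disjunctive sum (the same observation is used tacitly in Proposition~\ref{prop:quotientsemigroup} and its analogues). One could alternatively phrase the whole argument directly in terms of strategies: if Left wins $(G+J)+X$ moving $i^{\text{th}}$, rewrite $X\cong K+X$ suppressed and push through $G\le_L H$ then $J\le_L K$; but invoking the two already-proved propositions is cleaner. Since the statement is symmetric in the roles of the $N$ players, the same proof establishes the corresponding fact for $\le_{C_i}$ and $\le_R$, \emph{mutatis mutandis}.

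\begin{proof}
By Proposition~\ref{prop:Lineqaddn} applied to $G\le_L H$ with summand $J$, we have $G+J\le_L H+J$. By Proposition~\ref{prop:Lineqaddn} applied to $J\le_L K$ with summand $H$, together with the commutativity of disjunctive sum, we have $H+J\le_L H+K$. Since $\le_L$ is transitive (Proposition~\ref{prop:Lineqn}), it follows that $G+J\le_L H+K$.
\end{proof}
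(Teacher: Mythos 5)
Your proof is correct and is exactly the argument the paper intends: the corollary is stated without proof precisely because it follows by applying Proposition~\ref{prop:Lineqaddn} twice (once with summand $J$, once with summand $H$, using commutativity of $+$) and chaining with the transitivity from Proposition~\ref{prop:Lineqn}. No issues.
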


We define other related notation as well.

\begin{defn}\label{def:Lstrictleqn}
We write $G<_LH$ if $G\le_LH$ but $H\nleq_LG$. We define the strict inequalities for the other players analogously.
\end{defn}

\begin{proposition}\label{prop:Lstricttransn}
If $G<_LH$ and $H\le_LJ$, then $G<_LJ$. Similarly, if $G\le_LH$ and $H<_LJ$, then $G<_LJ$.
\end{proposition}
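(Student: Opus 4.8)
The plan is to deduce Proposition~\ref{prop:Lstricttransn} from the transitivity of $\le_L$ (Proposition~\ref{prop:Lineqn}) together with an easy contrapositive argument, handling the two asserted implications separately since they are symmetric in flavor but not literally identical.

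First I would prove the first implication: assume $G<_L H$ and $H\le_L J$. By Definition~\ref{def:Lstrictleqn}, $G<_L H$ unpacks as $G\le_L H$ together with $H\nleq_L G$. From $G\le_L H$ and $H\le_L J$, transitivity of $\le_L$ gives $G\le_L J$. It remains to show $J\nleq_L G$. Suppose toward a contradiction that $J\le_L G$. Combining $H\le_L J$ with $J\le_L G$ and applying transitivity again yields $H\le_L G$, contradicting $H\nleq_L G$. Hence $J\nleq_L G$, and therefore $G<_L J$ by definition.

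The second implication is entirely analogous: assume $G\le_L H$ and $H<_L J$, so $H\le_L J$ and $J\nleq_L H$. Transitivity of $\le_L$ applied to $G\le_L H$ and $H\le_L J$ gives $G\le_L J$. For the strictness, suppose $J\le_L G$; then with $G\le_L H$ and transitivity we get $J\le_L H$, contradicting $J\nleq_L H$. So $J\nleq_L G$, and $G<_L J$ follows. I would note that the same argument works \emph{mutatis mutandis} for $\le_{C_i}$ and $\le_R$, as already remarked after Definition~\ref{def:ineqn}.

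There is really no main obstacle here; the only thing to be careful about is not conflating the two implications, and invoking transitivity (Proposition~\ref{prop:Lineqn}) in the correct direction each time. The whole proof is a two-line contrapositive chase for each case, so I would keep it short rather than belabor it.
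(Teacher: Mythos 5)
Your proof is correct and takes essentially the same approach as the paper: both reduce everything to transitivity of $\le_L$ (Proposition~\ref{prop:Lineqn}) plus the definition of $<_L$. The only cosmetic difference is that the paper establishes $J\nleq_L G$ directly, by taking the witness $(X,i)$ for $H\nleq_L G$ and pushing it through $H\le_L J$, whereas you argue by contradiction with a second application of transitivity; these are contrapositives of one another and equally valid.
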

\begin{proof}We prove only the first claim, as the second claim is similar. By Proposition~\ref{prop:Lineqn}, $G\le_LJ$, so it suffices to show that $J\nleq_LG$. Since $G<_LH$, choose $X$ and a play order so that Left can win $H+X$ moving $i^\text{th}$ but does not have a strategy to win $G+X$ moving $i^\text{th}$. Then, since $H\le_LJ$, Left can win $J+X$ moving $i^\text{th}$ as well. This witnesses that $J\nleq_LG$.\end{proof}

\begin{defn}\label{def:Leqn}
We write $G=_LH$ and say that $G$ and $H$ are \textit{equal for Left} if $G\le_LH$ and $H\le_LG$. We define $=_{C_{i}}$ and $=_R$ analogously.
\end{defn}

\begin{proposition}\label{prop:2playeq}
Note that if this definition is applied to the two-player case ($N=2$), then $G=_LH$ implies $G=_RH$.\end{proposition}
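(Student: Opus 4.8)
The plan is to use the special feature of the two-player case: with only two players and normal play, every game is determined---for any game $Y$ and any fixed choice of who moves first, exactly one of Left and Right has a winning strategy. In the notation of the paper this says that for $N=2$ each of $o(\rest{Y}{L})$ and $o(\rest{Y}{R})$ is a subset of $\{\mathbf N,\mathbf P\}$ containing exactly one of $\mathbf N,\mathbf P$, which one proves by a one-line structural induction on Definition~\ref{def:nplayimp}: $o(0)=\{\mathbf P\}$, and in general $\mathbf P\in o(G)$ holds iff no option of $G$ has $\mathbf N$, i.e.\ iff $\mathbf N\notin o(G)$. Translating back to strategic language: in any position with a fixed player to move, ``Left has a winning strategy'' is equivalent to ``Right does not have a winning strategy''.

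The first step is to set up the dictionary relating $\le_L$ and $\le_R$. Fix a game $Y$ and $i\in\{1,2\}$. The run of $Y$ in which Left moves $i^{\text{th}}$ is literally the same run as the one in which Right moves $(3-i)^{\text{th}}$, and in that run exactly one player wins; hence ``Left has a winning strategy in $Y$ moving $i^{\text{th}}$'' holds iff ``Right has no winning strategy in $Y$ moving $(3-i)^{\text{th}}$''. The second step is to feed this into Definition~\ref{def:ineqn}. Unwinding $G\le_L H$ gives: for every $X$ and every $i\in\{1,2\}$, if Left wins $G+X$ moving $i^{\text{th}}$ then Left wins $H+X$ moving $i^{\text{th}}$; by the dictionary this is equivalent to the statement that for every $X$ and every $i$, if Right has no winning strategy in $G+X$ moving $(3-i)^{\text{th}}$ then Right has none in $H+X$ moving $(3-i)^{\text{th}}$. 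Taking contrapositives and re-indexing by $j=3-i$ (which again ranges over $\{1,2\}$) turns this into exactly the statement $H\le_R G$. So $G\le_L H\iff H\le_R G$, and applying this to both inequalities in Definition~\ref{def:Leqn} shows that $G=_L H$ is equivalent to $G\le_R H$ together with $H\le_R G$, that is, to $G=_R H$; in particular $G=_L H$ implies $G=_R H$.

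There is no real obstacle here beyond bookkeeping: the only things to watch are that ``Left moves $i^{\text{th}}$'' and ``Right moves $(3-i)^{\text{th}}$'' name the same run, and that it is precisely determinacy (a short finite game with no draws) that licenses replacing ``Left wins'' by ``Right loses''. Note that the argument in fact yields the biconditional $G=_L H\iff G=_R H$, and even the finer equivalence $G\le_L H\iff H\le_R G$, although the proposition as stated needs only the one implication.
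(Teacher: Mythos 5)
Your proof is correct and rests on exactly the same idea as the paper's: two\textendash player determinacy, i.e.\ that for a fixed play order Left has a winning strategy if and only if Right does not. The paper simply asserts this complementarity and concludes immediately, whereas you additionally verify determinacy by induction and make the reindexing of play orders explicit; the extra detail is fine but the argument is essentially identical (and, like the paper's, actually gives the biconditional $G=_LH\iff G=_RH$).
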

\begin{proof}If Left does not have a winning strategy for $G+X$ under some play order, then Right does under the same play order. Similarly, if Left does have a winning strategy, then Right does not.\end{proof}

As is standard, we define $=$ by the fundamental equivalence (see \cite{cgt}).
\begin{defn}[Partizan Equality]\label{def:pareq}
$G=H$ if $o(G+X)=o(H+X)$ for all $X$.
\end{defn}

\begin{proposition}[Components of Equality]\label{prop:eqn}
$G=H$ exactly when $G=_{C_i}H$ for $0\le i\le N-1$.
\end{proposition}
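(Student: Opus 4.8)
The plan is to prove the two directions separately, working from the definitions. The statement asserts that partizan equality $G=H$ (defined by $o(G+X)=o(H+X)$ for all $X$) is equivalent to the conjunction of the $N$ relations $G=_{C_i}H$ (defined via winning strategies for $\text{Center}_i$ in sums). The key observation linking the two sides is that, by an induction argument (mentioned after Definition~\ref{def:3impout}), the impartial outcome of a restriction encodes exactly which players have winning strategies; more precisely, for a partizan game $Y$, the statement ``$\text{Center}_i$ has a winning strategy in $Y$ when $\text{Center}_j$ moves first'' is determined by membership of the appropriate symbol ($\mathbf{O}_{i-j}$ or $\mathbf N$ or $\mathbf P$, reading subscripts cyclically) in $o\!\left(\rest{Y}{C_j}\right)$.

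For the forward direction, suppose $G=H$. Fix $i$ and $X$ and a play order, say $\text{Center}_j$ moves first. Then $o(G+X)=o(H+X)$ means in particular that $o\!\left(\rest{(G+X)}{C_j}\right)=o\!\left(\rest{(H+X)}{C_j}\right)$. By the encoding just described, $\text{Center}_i$ has a winning strategy in $G+X$ moving in the appropriate position if and only if the same holds in $H+X$. Since this holds for every $X$ and every play order, the defining condition of $G\le_{C_i}H$ (and symmetrically $H\le_{C_i}G$) is satisfied, so $G=_{C_i}H$. Doing this for each $i$ gives one direction.

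For the reverse direction, suppose $G=_{C_i}H$ for all $0\le i\le N-1$. Fix an arbitrary $X$; we must show $o(G+X)=o(H+X)$, i.e.\ that for each play order $C_j$ the restrictions $\rest{(G+X)}{C_j}$ and $\rest{(H+X)}{C_j}$ have the same impartial outcome. The impartial outcome of a restriction is a subset of $\{\mathbf N,\mathbf O_1,\dots,\mathbf O_{N-2},\mathbf P\}$, and each symbol in it corresponds to ``some particular player has a winning strategy when $C_j$ moves first''. That player is $C_i$ for exactly one value of $i$ (the index offset from $j$, read cyclically). Now $G=_{C_i}H$ gives $G\le_{C_i}H$ \emph{and} $H\le_{C_i}G$, so $C_i$ has a winning strategy in $G+X$ moving in the $C_j$-first order if and only if $C_i$ has one in $H+X$ in that same order. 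Ranging over all the players and all play orders, every symbol lies in $o\!\left(\rest{(G+X)}{C_j}\right)$ exactly when it lies in $o\!\left(\rest{(H+X)}{C_j}\right)$, so the restriction outcomes agree for all $C_j$, hence $o(G+X)=o(H+X)$. As $X$ was arbitrary, $G=H$.

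The main obstacle is making the ``encoding'' step fully rigorous: one must verify carefully that the impartial outcome $o\!\left(\rest{Y}{C_j}\right)$ really does record precisely the set of players holding a winning strategy in $Y$ under the $C_j$-first order, with the cyclic index bookkeeping ($\mathbf O_{i-j}$, wrapping around to $\mathbf N$ and $\mathbf P$) done correctly — this is the content of the induction argument alluded to just after Definition~\ref{def:3impout}, now applied in the partizan restriction setting. Once that correspondence is in hand, both directions are a direct unwinding of Definitions~\ref{def:ineqn}, \ref{def:Leqn}, and \ref{def:pareq}, with no further combinatorial work required.
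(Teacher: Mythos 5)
Your proposal is correct and follows essentially the same route as the paper: both directions rest on the observation that each relation $=_{C_i}$ controls the membership of exactly one player-symbol in each component of the outcome tuple, so the $N$ relations together determine $o(G+X)$ versus $o(H+X)$ completely. The paper's proof is just a terser version of the same bookkeeping, treating the direction from $G=H$ to the componentwise equalities as immediate.
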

\begin{proof}
Let $X$ be a game. If $G=_LH$, both $o(G+X)$ and $o(H+X)$ have the same membership (or lack thereof) of $\mathbf{N}$ in the first component, $\mathbf{P}$ in the second component, and an appropriate $\mathbf{O}_{i}$ in each other component. Each other $=_{C_{i}}$ covers another potential element in each component, and $=_R$ covers the last potential element in each component. Thus, $o(G+X)=o(H+X)$. Since $X$ was arbitrary, $G=H$.

The proof of the other direction is immediate.\end{proof}

\begin{corollary}\label{cor:eqinsums}If $G=H$ and $J=K$ then $G+J=H+K$.\end{corollary}
\begin{proof}This follows from \hyperref[prop:eqn]{Components of Equality} and $2N$ applications of Corollary~\ref{cor:Lineqbiaddn}.\end{proof}

\subsection{Inequality Results}

As in the impartial case, the lack of provisos in the definition of a partizan outcome allows us to generalize some two-player normal-play results to the $N$-player setting. Since the player who cannot move is the unique loser, more moves available for Left can never hurt them due to timing issues. 

We first characterize comparisons with the game $0$ and apply this to a study of $G^-$, and then consider more general inequalities.

Many results below are stated only for Left, using $\le_L$ and $=_L$. But, by symmetry, there are corresponding results for each other player.

\subsubsection{Comparisons with Zero}

We begin by confirming some general properties of preordered monoids.
\begin{lemma}\label{lem:signflipn}
Suppose $H+J\le_L0$. If $0\le_LH$, then $J\le_L0$.
\end{lemma}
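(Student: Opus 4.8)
The plan is to use the preordered-monoid structure established by Propositions~\ref{prop:Lineqaddn} and \ref{prop:Lstricttransn} (and the reflexivity/transitivity of Proposition~\ref{prop:Lineqn}), treating $\le_L$ purely formally. This is the standard ``cancellation-free'' argument: from $0\le_LH$ I would add $J$ to both sides via Proposition~\ref{prop:Lineqaddn} to get $J=0+J\le_LH+J$ (using that $0$ is a two-sided identity for $+$ up to isomorphism, hence up to $=_L$). Combined with the hypothesis $H+J\le_L0$ and transitivity (Proposition~\ref{prop:Lineqn}), this chains to $J\le_L0$, which is exactly the conclusion.

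More explicitly, the key steps in order are: (1) observe $0+J\cong J$, so in particular $0+J\le_L J$ and $J\le_L 0+J$ by reflexivity; (2) apply Proposition~\ref{prop:Lineqaddn} to $0\le_LH$ with $J$ added on the right, obtaining $0+J\le_L H+J$; (3) rewrite the left side as $J$ using step (1) and transitivity to get $J\le_L H+J$; (4) invoke the hypothesis $H+J\le_L 0$ and apply transitivity once more to conclude $J\le_L 0$.

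There is essentially no obstacle here — this is a short formal manipulation in a preordered commutative monoid, and every ingredient (reflexivity, transitivity, additivity of $\le_L$, commutativity and identity for $+$) has already been recorded. The only thing to be slightly careful about is the bookkeeping that $0$ really does behave as an identity for $+$ at the level of $\cong$ (which follows directly from the recursive definition of the disjunctive sum, since $0$ contributes no options in any component), so that the rewrites in steps (1) and (3) are legitimate. Everything else is a direct citation of the stated propositions.
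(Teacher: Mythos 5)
Your proof is correct and is essentially identical to the paper's one-line argument: add $J$ to both sides of $0\le_LH$ via Proposition~\ref{prop:Lineqaddn} to get $J\le_LH+J$, then chain with $H+J\le_L0$ by transitivity. The extra bookkeeping about $0+J\cong J$ is fine but left implicit in the paper.
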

\begin{proof}
If $0\le_LH$, then by Proposition~\ref{prop:Lineqaddn} we may add $J$ to both sides to obtain $J\le_L H+J\le_L0$.
\end{proof}

\begin{proposition}
Suppose $H+J\le_L0$. If $0<_LH$ then $J<_L0$.
\end{proposition}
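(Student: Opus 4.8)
The plan is to mirror the previous lemma, \hyperref[lem:signflipn]{Lemma~\ref{lem:signflipn}}, and then upgrade the conclusion from $J \le_L 0$ to the strict inequality $J <_L 0$ by extracting a witness from the hypothesis $0 <_L H$. First I would apply \hyperref[lem:signflipn]{Lemma~\ref{lem:signflipn}}: since $H + J \le_L 0$ and $0 \le_L H$ (which follows from $0 <_L H$), we immediately get $J \le_L 0$. So it only remains to show that $0 \nleq_L J$.

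Next I would unpack $0 <_L H$. By \hyperref[def:Lstrictleqn]{Definition~\ref{def:Lstrictleqn}}, this means $0 \le_L H$ but $H \nleq_L 0$. The latter gives us a concrete witness: there is a game $X$ and a play order (a position $i$) such that Left has a winning strategy in $H + X$ moving $i^{\text{th}}$, but Left has \emph{no} winning strategy in $0 + X \cong X$ moving $i^{\text{th}}$.

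Now I would feed this witness through the hypothesis $H + J \le_L 0$. Add $J$ to the ``no winning strategy'' side — but more carefully, the clean move is: since Left can win $H + X$ moving $i^{\text{th}}$, and $H + J \le_L 0$ would, after adding $X$ via Proposition~\ref{prop:Lineqaddn}, give $H + J + X \le_L X$... Let me instead argue directly via the definition of $\le_L$. Suppose for contradiction that $0 \le_L J$. Then by Proposition~\ref{prop:Lineqaddn}, $H + 0 \le_L H + J$, i.e.\ $H \le_L H + J$, and combining with $H + J \le_L 0$ via transitivity (Proposition~\ref{prop:Lineqn}) gives $H \le_L 0$. This contradicts $H \nleq_L 0$, which is exactly the content of $0 <_L H$. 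Hence $0 \nleq_L J$, and together with $J \le_L 0$ this yields $J <_L 0$.

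I do not anticipate a genuine obstacle here; the only thing to be careful about is keeping track of which direction of each strict inequality unpacks to which non-comparison, and making sure the final contradiction lands on $H \nleq_L 0$ rather than on the wrong component. The proof is essentially a two-line chase once the hypothesis $0 <_L H$ is split into its two parts.

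\begin{proof}
By \hyperref[lem:signflipn]{Lemma~\ref{lem:signflipn}}, since $H+J\le_L0$ and $0\le_LH$, we have $J\le_L0$. It remains to show $0\nleq_LJ$. Suppose, for contradiction, that $0\le_LJ$. Then by Proposition~\ref{prop:Lineqaddn}, $H\cong H+0\le_LH+J$. Combining with $H+J\le_L0$ using transitivity (Proposition~\ref{prop:Lineqn}), we obtain $H\le_L0$. But $0<_LH$ means precisely that $H\nleq_L0$, a contradiction. Hence $0\nleq_LJ$, and so $J<_L0$.
\end{proof}
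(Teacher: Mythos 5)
Your proof is correct and follows essentially the same route as the paper: both first apply Lemma~\ref{lem:signflipn} to get $J\le_L0$, then derive $H\le_L0$ from the assumption $0\le_LJ$ to contradict $0<_LH$. The only cosmetic difference is that the paper obtains that contradiction by invoking Lemma~\ref{lem:signflipn} a second time with the roles of $H$ and $J$ swapped, whereas you inline the same two steps (add $H$ to both sides, then transitivity).
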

\begin{proof}If $0<_L H$ then $0\le_L H$ so that $J\le_L0$ by Lemma~\ref{lem:signflipn}. But if $J=_L0$, then $0\le_LJ$, so that Lemma~\ref{lem:signflipn} yields $H\le_L0$, which would contradict $0<_LH$.\end{proof}

Next, we examine which games $G$ satisfy $0\le_L G$. In the proofs of Lemmas~\ref{lem:centmoven} and \ref{lem:rightmoven} to follow, we take inspiration from Theorem 7 of \cite{zeroalone}.

\begin{lemma}\label{lem:centmoven}
If $N>2$ and a player other than Left or Right has an option in $G$, then $0\nleq_L G$.
\end{lemma}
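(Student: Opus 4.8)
The plan is to exhibit a specific test game $X$ and a play order for which Left has a winning strategy in $0+X$ but not in $G+X$, thereby witnessing $0\nleq_L G$. Suppose $\text{Center}_m$ has an option in $G$ for some $m$ with $1\le m\le N-2$, and let $G^{C_m}$ be such an option. The idea is that in a sum, Left can be forced to ``waste'' a move while $\text{Center}_m$ uses their option in $G$ to hand the loss to Left. Concretely, I would take $X$ to be a game in which Left's only reasonable line is to pass control down the chain of players until it returns to $\text{Center}_m$, who then gets to act in the $G$-component at a moment that is fatal for Left.

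First I would set up the comparison: by Definition~\ref{def:ineqn}, to show $0\nleq_L G$ it suffices to produce one game $X$ and one index $i$ such that Left wins $0+X\cong X$ moving $i^{\text{th}}$ but Left has no winning strategy in $G+X$ moving $i^{\text{th}}$. I would choose the play order so that, in $G+X$, it is $\text{Center}_m$ who is positioned to make the decisive move in the $G$-component. The test game $X$ should be built from nested braces and small nim-heaps (in the spirit of the constructions in Proposition~\ref{prop:nimpouts} and Proposition~\ref{prop:threestar}) so that: (a) in $X$ alone, Left survives under the chosen order because every forced sequence of moves in $X$ eventually leaves some other player without a move; but (b) once the extra component $G$ is present, the sequence of forced moves in $X$ is ``shifted'' — the players burn their $X$-moves, and when play returns to $\text{Center}_m$ they move in $G$ to $G^{C_m}$ instead, after which the remaining $X$-position plus $G^{C_m}$ forces Left to be the first player stuck.

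The key steps, in order, would be: (1) fix $m$ and an option $G^{C_m}$ of $G$; (2) define $X$ explicitly as a tower of single-option moves of appropriate depth, chosen so that the parity/cyclic-position bookkeeping works out — the depth should be calibrated modulo $N$ so that in $G+X$, control reaches $\text{Center}_m$ exactly when an extra move in $G$ flips who gets stuck; (3) verify that Left wins $X$ under the chosen move order by directly tracing the unique line of play and checking that Left is never the player to move at an empty position; (4) verify that in $G+X$ the coalition of the other players can force Left's loss: the non-Left players follow the same line in $X$ until $\text{Center}_m$'s turn, at which point $\text{Center}_m$ moves to $G^{C_m}+(\text{remaining }X)$, and then one checks that from there Left runs out of moves first; (5) conclude $0\nleq_L G$. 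The Right-option case ($\text{Center}_{N-1}$) is explicitly excluded from the hypothesis, and the Left case is not covered, so only the $1\le m\le N-2$ cases need handling, and these are symmetric enough that treating one generic $m$ suffices.

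The main obstacle will be step (2)/(4): getting the cyclic bookkeeping exactly right so that the single extra move available to $\text{Center}_m$ in $G$ is precisely what shifts the ``loser'' from some Center player (harmless, since they win in the coalition) to Left. This requires choosing the depth of $X$ modulo $N$ carefully and checking that no other player — in particular not $\text{Center}_m$ themselves, and not the player who would otherwise have been stuck — can deviate to escape the loss being pinned on Left; since $N>2$, there is at least one Center player available to absorb the ``wasted'' move, which is exactly why the hypothesis $N>2$ is needed and why the analogous statement fails for two players. I expect the verification to reduce to tracing one or two explicit lines of play, as in the proof of Proposition~\ref{prop:threestar}, rather than a general strategy-stealing argument.
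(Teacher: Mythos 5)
Your high-level strategy is the same as the paper's: produce a single test game $X$ and a move order under which Left wins $0+X$ but not $G+X$. But the mechanism you propose for $X$ --- a tower of single-option moves whose depth is ``calibrated modulo $N$'' so that one extra move by $\text{Center}_m$ in $G$ shifts the loser onto Left --- has a genuine gap: it implicitly treats the deviation into $G$ as costing exactly one move, whereas $G^{C_m}$ is an arbitrary position that may itself contain many further options for Left (and for everyone else). A parity/depth argument cannot control how many moves Left can subsequently extract from subpositions of $G$, so you cannot conclude that Left is the first player stuck; indeed, for a suitable $G$ Left could outlast your tower entirely. You flag the ``cyclic bookkeeping'' as the main obstacle, but the obstacle is not bookkeeping --- it is that no finite parity calibration works uniformly in $G$.

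The paper's construction resolves this with an \emph{outlasting} device rather than a parity device. It takes $m$ larger than the depth of the game tree of $G$, sets $Y\cong m\cdot\left(1_L\right)^-$ (so every player except Left has at least $m$ private moves and Left has none), and lets $X$ have exactly two options: $X^{C_i}\cong 0$ and $X^{C_{i+1}}\cong Y$. With $\text{Center}_i$ moving first, $X$ alone forces $\text{Center}_{i+1}$ to lose, so Left wins $0+X$; but in $G+X$, $\text{Center}_i$ can instead move to $G^{C_i}+X$, after which $\text{Center}_{i+1}$ moves to $G^{C_i}+Y$, and now every non-Left player has more moves available than Left could ever accumulate from subpositions of $G$, so Left is stuck first no matter how they play. (This also uses $N>2$ only to ensure $\text{Center}_{i+1}$ is not Left.) To repair your proof you would need to replace the depth-mod-$N$ tower with some component, inaccessible to Left, that supplies the coalition with strictly more moves than the depth of $G$ --- which is exactly the role of $Y$.
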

\begin{proof} Suppose that $\text{Center}_i$ has an option in $G$ for some $i$ with $1\le i\le N-2$. Choose $m$ greater than the depth of the game tree of $G$, and define the games $Y\cong m\cdot\left(1_L\right)^-$ (see Definition~\ref{def:adjn}) and $X$ to be the game with the two options $X^{C_i}\cong0$ and $X^{C_{i+1}}\cong Y$.

Example game trees for the case of $N=3$ are shown below.

\hfill
\begin{tikzpicture}
\node (meh) at (-0.5,0.25) {$Y$};
\node (root) at (-2,0) {$\bullet$};
\node (c) at (-2,-1) {$\bullet$};
\node (r) at (0,-1) {$\bullet$};
\node (cc) at (-2,-2) {$\bullet$};
\node (cr) at (-1,-2) {$\bullet$};
\node (rc) at (0,-2) {$\bullet$};
\node (rr) at (1,-2) {$\bullet$};
\node (hmm) at (-0.5,-2.5) {$\vdots$};
\draw [thick] (root) -- (c);
\draw [thick] (root) -- (r);
\draw [thick] (c) -- (cc);
\draw [thick] (c) -- (cr);
\draw [thick] (r) -- (rr);
\draw [thick] (r) -- (rc);
\end{tikzpicture}
\hfill
\begin{tikzpicture}
\node (meh) at (0,1) {$X$};
\node (root) at (0,0) {$\bullet$};
\node (c) at (0,-1) {$\bullet$};
\node (r) at (1,-1) {$Y$};
\node (x) at (0,-2) {};
\draw [thick] (root) -- (c);
\draw [thick] (root) -- (r);
\end{tikzpicture}\hfill
\begin{tikzpicture}
\node (meh) at (0,1) {};
\end{tikzpicture}

Since $N>2$, $\text{Center}_{i+1}$ isn't Left. Note that Left can win $X$ with $\text{Center}_i$ moving first, since $\text{Center}_{i+1}$ loses in the only line of play. But in $G+X$, Left cannot guarantee a win with $\text{Center}_i$ moving first.

If $\text{Center}_i$ moves to some $G^{C_i}+X$, then $\text{Center}_{i+1}$ can move to $G^{C_i}+Y$, and then the players other than Left can outlast all of Left's available moves in subpositions of $G$. 

Since Left has a winning strategy in $0+X$ but not $G+X$ when $\text{Center}_i$ moves first, $0\nleq_L G$. 
\end{proof}

\begin{lemma}\label{lem:rightmoven}
If $N>2$ and Right has an option in $G$, then $0\nleq_L G$.
\end{lemma}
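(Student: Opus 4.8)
The plan is to mimic the construction used in the proof of Lemma~\ref{lem:centmoven}, adjusted for the fact that Right's option comes at the very end of the turn cycle rather than in the middle. Suppose Right has an option in $G$. As before, I would pick $m$ larger than the depth of the game tree of $G$ and set $Y\cong m\cdot\left(1_L\right)^-$, the game in which every non-Left player can force Left to run out of moves if they all cooperate. Then I would let $X$ be a game that hands the turn back around to Right with a ``threat'' component $Y$ attached. The natural choice is for $X$ to have a single Right option equal to $Y$ and no options for any other player, i.e. $X\cong\{\,\mid\cdots\mid Y\}$ (in the three-player bar notation $\{\,\mid\,\mid Y\}$). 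Then in the game $X$ alone, with Right moving first, Right is forced to move to $Y$; from $Y$, since $m$ exceeds the depth of any relevant tree and all $N-1$ non-Left players collude, Left survives and wins. So Left has a winning strategy in $0+X$ with Right moving first.

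The next step is to check that Left has \emph{no} winning strategy in $G+X$ with Right moving first. With Right to move in $G+X$, Right can move in the $G$ component to $G^R+X$, where $G^R$ is the guaranteed Right option. Now it is Left's turn in $G^R+X$. Whatever Left does (either a move inside $G^R$ or the move $X\to Y$, or a further move if $G^R$ has itself been replaced), the point is that the non-Left players can eventually steer the $X$-component to $Y$ and then, using the collusion strategy underlying $Y$ together with the bound $m$ on the depth of $G$, exhaust all of Left's moves in the $G$-part before Left's moves in $Y$ are exhausted. More precisely: $Y = m\cdot(1_L)^-$ supplies Left with no moves of their own but supplies each other player with a long supply of mirrored moves; Left's only moves come from the (short) subpositions of $G^R$; since $m$ exceeds the depth of $G$, the other players can outlast Left. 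Hence Left is eventually the player unable to move, so Left loses, and Left has no winning strategy in $G+X$ moving $N$th. Since Left wins $0+X$ but not $G+X$ moving last, $0\nleq_L G$.

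The only genuinely delicate point — and the one I would write out carefully rather than wave at — is the timing/parity bookkeeping in $G+X$ after Right's first move: one must confirm that the non-Left coalition really can arrange for the $X$-component to reach $Y$ and then run Left out of moves, given the bound $m>\operatorname{depth}(G)$ and the fact that $(1_L)^-$ is built from the $N-1$ conjugates of $1_L$ so that every non-Left player can always respond (this is exactly the mirroring idea behind Proposition~\ref{prop:parttriplenon}/Proposition~\ref{prop:triplenon}). The rest is a routine adaptation of Lemma~\ref{lem:centmoven}; the structural difference is merely that here the relevant ``bad'' player for Left is Left themself rather than some $\text{Center}_{i+1}$, and no hypothesis $N>2$ is actually needed for the argument, though the lemma is stated with it for uniformity with the surrounding results.
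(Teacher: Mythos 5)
There is a genuine gap: your witness $X$ does not work, because you have misremembered which player $Y\cong m\cdot\left(1_L\right)^-$ threatens. By Definition~\ref{def:adjn}, $\left(1_L\right)^-\cong 1_{C_1}+\cdots+1_{C_{N-2}}+1_R$, so $Y$ gives Left \emph{no} moves at all while giving each other player $m$ moves; $Y$ is a position that starves Left, not one in which Left ``survives and wins.'' Consequently, in your $X\cong\{\,\mid\cdots\mid Y\}$ with Right moving first, Right is forced to move to $Y$ and then Left, having no option, loses immediately. So Left does \emph{not} have a winning strategy in $0+X$ moving second, and this $X$ cannot witness $0\nleq_L G$. (A further symptom of the confusion: you later allow Left ``the move $X\to Y$,'' but your $X$ has only a Right option, so Left cannot move in it.)

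The fix, which is what the paper does, is to give $X$ \emph{two} options: a Right option to $1_L$ and a $\text{Center}_1$ option to $Y$. Then in $0+X$ with Right moving first, Right is forced to $1_L$, Left answers, and $\text{Center}_1$ loses, so Left wins; but in $G+X$, Right can instead move inside $G$, after which $\text{Center}_1$ still has the $X$-component available and can deploy the threat $Y$, stranding Left (whose only remaining moves live in the short subpositions of $G$, outlasted since $m$ exceeds the depth of $G$). Your remark that $N>2$ is not needed is also incorrect: the argument essentially requires a third player ($\text{Center}_1$) distinct from Left and Right to hold the threat, and indeed for $N=2$ the statement is false (e.g.\ $\{1_L\mid 1_L\}$ has a Right option yet Left wins it from either seat, so it is $\ge_L 0$).
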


\begin{proof} 
Suppose Right has an option in $G$. Define $Y$ as in the proof of Lemma~\ref{lem:centmoven} above, and define the game $X$ to have the two options $X^{C_1}\cong Y$ and $X^R\cong1_L$. 

The case of $N=3$ is illustrated below.

\begin{center}
\begin{tikzpicture}
\node (meh) at (0,0.75) {$X$};
\node (root) at (0,0) {$\bullet$};
\node (c) at (0,-1) {$Y$};
\node (r) at (1,-1) {$\bullet$};
\node (rl) at (0,-2) {$\bullet$};
\draw [thick] (root) -- (c);
\draw [thick] (root) -- (r);
\draw [thick] (r) -- (rl);
\end{tikzpicture}
\end{center}

Note that Left can win $X$ moving second, since $\text{Center}_1$ loses in the only line of play. But in $G+X$, Left cannot guarantee a win moving second.

Right can move to some $G^R+X$. If $G^R$ has no Left option, then Left loses immediately. Otherwise, Left moves to some $G^{RL}+X$, in which case $\text{Center}_1$ can move to $G^{RL}+Y$, and Left will run out of moves first.

Since Left has a winning strategy in $0+X$ but not $G+X$ when moving second, $0\nleq_L G$. 
\end{proof}

\begin{proposition}\label{prop:leftmovesn}
If $G\cong0$ or $G$ only has Left options, then $0\le_L G$.
\end{proposition}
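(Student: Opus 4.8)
The plan is to induct on the depth of the game tree of $G$, proving the statement for $G\cong 0$ and for $G$ with only Left options simultaneously. Recall (Definition~\ref{def:ineqn}) that $0\le_L G$ means: whenever Left has a winning strategy in $0+X\cong X$ moving $i^{\text{th}}$, Left also has a winning strategy in $G+X$ moving $i^{\text{th}}$. So fix an arbitrary game $X$ and a play order in which Left moves $i^{\text{th}}$, and suppose Left wins $X$ under that order; I must produce a winning strategy for Left in $G+X$ under the same order.

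The strategy I would describe is the natural ``ignore $G$'' one: Left plays their winning strategy for $X$ entirely within the $X$-component, never moving in $G$. The key point to verify is that this works under normal play, i.e. that Left never gets stranded. First I would observe that whenever it is Left's turn in $G+X$, either Left's $X$-strategy prescribes a move in $X$ (which Left makes), or it does not — but since Left was winning in $X$, the only way Left's strategy fails to prescribe a move in $X$ is if the $X$-component has reached a position with no Left options, which under Left's winning strategy for $X$ cannot happen on Left's turn (if it did, Left would lose $X$). Meanwhile, the other $N-1$ players may move in either component; moves they make in $G$ only shrink $G$ (a game with only Left options, or $0$), so after finitely many such moves the $G$-component has no options for anyone except possibly Left — and crucially never has an option for any player other than Left except via the other players themselves exhausting it. The induction hypothesis is what lets me say ``$G$ only has Left options'': each Left option $G^L$ again satisfies $0\le_L G^L$, though in fact for this argument I mainly need the structural fact that no non-Left player is ever forced to move in $G$, since $G$ presents them no options at the root and, inductively, none of its subpositions reachable without a Left move present them options either (a subposition of $G$ reached by Left moves only is again of the same form; a subposition reached after some non-Left player moved does not exist, since they have no options there).

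So the heart of the argument is: in $G+X$, a non-Left player on their turn has exactly the options they would have in $X$ alone (because $G$ offers them nothing), so they face precisely the game $X$ from Left's strategic standpoint; and Left, following their $X$-strategy, is never the stuck player because they weren't in $X$. Hence the run of $G+X$ projects to a run of $X$ consistent with Left's winning strategy, so the unique loser is not Left, i.e. Left wins $G+X$ moving $i^{\text{th}}$. Since $X$ and the play order were arbitrary, $0\le_L G$.

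I expect the main obstacle to be stating cleanly why the other players can never ``escape into $G$'' to waste Left's tempo — the whole subtlety of normal play versus other conventions is that extra Left options cannot hurt Left, and here $G$ supplies only Left options, so the opponents moving in $G$ merely burn their own turns without changing who runs out of moves on Left's turn. Making the projection-to-$X$ correspondence precise (a play in $G+X$ induces a play in $X$ in which the opponents' non-$G$ moves and Left's moves are the same, and Left's winning strategy for $X$ still applies because the opponents' $X$-moves are a legal opponent play in $X$) is the one place care is needed; once that is set up, that Left is never the stuck player follows immediately from Left not being stuck in the winning $X$-play. This should be routine to write but is the crux.
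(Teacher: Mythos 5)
Your proof is correct and is essentially the paper's own argument: Left ignores $G$ and plays their winning strategy for $X$, which works because the $G$-component never leaves its root (Left never moves there) and so offers no options to the other players. The paper states this in one sentence; your extra care about the projection to a play of $X$ is fine, though the brief digression about opponents' moves "shrinking" $G$ is moot since they have no moves in $G$ at all.
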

\begin{proof}Suppose Left can win $X$ moving $i^{\text{th}}$. Then Left can win $G+X$ moving $i^{\text{th}}$ by making all moves in $X$, since the other players have no moves available in $G$.\end{proof}

\begin{theorem}[Nonnegativity Rule]\label{thm:posleftn}
If $N>2$, then $0\le_LG$ if and only if $G\cong0$ or $G$ only has Left options.
\end{theorem}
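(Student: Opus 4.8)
The plan is to establish both directions of the biconditional. The ``if'' direction is already done: it is exactly Proposition~\ref{prop:leftmovesn}. So the real content is the ``only if'' direction, i.e., the contrapositive: if $G\not\cong0$ and $G$ has some option for a player other than Left, then $0\nleq_LG$. A game $G\not\cong0$ has at least one option for some player $C_i$ with $0\le i\le N-1$; we must handle the case where that player is \emph{not} Left, i.e., $1\le i\le N-1$.

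First I would split into two cases according to which non-Left player has an option. If $\text{Center}_i$ has an option for some $i$ with $1\le i\le N-2$, then $0\nleq_LG$ is precisely the content of Lemma~\ref{lem:centmoven}. If Right has an option in $G$, then $0\nleq_LG$ is precisely the content of Lemma~\ref{lem:rightmoven}. Since $G\not\cong0$ forces at least one non-Left player to have an option (the only remaining possibility being that \emph{only} Left has options, which is the excluded case), one of these two lemmas applies, and we conclude $0\nleq_LG$. Contrapositively, $0\le_LG$ implies $G\cong0$ or $G$ has only Left options.

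Combining: for $N>2$, $0\le_LG$ holds if and only if $G\cong0$ or $G$ has only Left options.

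Since almost all the work is packaged into the two preceding lemmas, there is no real obstacle here — the theorem is essentially a case-split wrapper around Lemmas~\ref{lem:centmoven}, \ref{lem:rightmoven}, and Proposition~\ref{prop:leftmovesn}. The only thing to be careful about is the exhaustiveness of the case analysis: one should note explicitly that ``$G$ has an option'' means ``some player $C_i$ has an option'', and that if that player is Left for \emph{every} option (i.e.\ $G$ has only Left options) we are in the already-covered case, whereas otherwise some non-Left $C_i$ (with $1\le i\le N-1$) has an option and one of the two lemmas fires.

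\begin{proof}
The ``if'' direction is Proposition~\ref{prop:leftmovesn}. For the ``only if'' direction, we argue by contraposition. Suppose $G\not\cong0$ and it is not the case that $G$ has only Left options. Then some player $\text{Center}_i$ with $1\le i\le N-1$ has an option in $G$. If $1\le i\le N-2$, then $0\nleq_LG$ by Lemma~\ref{lem:centmoven}. If $i=N-1$, i.e.\ Right has an option in $G$, then $0\nleq_LG$ by Lemma~\ref{lem:rightmoven}. In either case $0\nleq_LG$, as desired.
\end{proof}
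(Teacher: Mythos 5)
Your proof is correct and is essentially identical to the paper's, which also just combines Proposition~\ref{prop:leftmovesn} with Lemmas~\ref{lem:centmoven} and \ref{lem:rightmoven}. Your explicit attention to the exhaustiveness of the case split is a fine (if minor) elaboration of what the paper leaves implicit.
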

\begin{proof}This is a combination of Lemmas~\ref{lem:centmoven} and \ref{lem:rightmoven}, and Proposition~\ref{prop:leftmovesn}.\end{proof}

\begin{corollary}\label{cor:nosharegood}Suppose that $N>2$, $G\ncong0$, $0\le_{C_i}G$, and $0\le_{C_j}G$. Then $i=j$.\end{corollary}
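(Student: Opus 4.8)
The plan is to apply the \hyperref[thm:posleftn]{Nonnegativity Rule} (Theorem~\ref{thm:posleftn}) twice---once for $C_i$ and once for $C_j$---and then argue that the two resulting structural restrictions on $G$ are incompatible unless $i=j$. Since the Nonnegativity Rule was stated only for Left, I first need to note that, by the symmetry remarked on just before the ``Comparisons with Zero'' subsubsection (``by symmetry, there are corresponding results for each other player''), the analogous statement holds for every $C_k$: namely, $0\le_{C_k}G$ if and only if $G\cong0$ or every option of $G$ is a $\text{Center}_k$ option (equivalently, $\mathscr{G}^{C_\ell}=\emptyset$ for all $\ell\ne k$).

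With that in hand, the argument is short. Suppose $0\le_{C_i}G$ and $0\le_{C_j}G$ with $G\ncong0$. Applying the (symmetric) Nonnegativity Rule to $C_i$, since $G\ncong0$, all options of $G$ must be $\text{Center}_i$ options; in particular $G$ has at least one option (as $G\ncong0$), and that option lies in $\mathscr{G}^{C_i}$. Applying the Nonnegativity Rule to $C_j$ in the same way, that same option must lie in $\mathscr{G}^{C_j}$. But the option sets $\mathscr{G}^{C_0},\dots,\mathscr{G}^{C_{N-1}}$ are the components of an ordered $N$-tuple, so an option belonging to both $\mathscr{G}^{C_i}$ and $\mathscr{G}^{C_j}$ forces $i=j$.

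I do not expect any real obstacle here; the only point requiring a word of care is the appeal to symmetry to transfer Theorem~\ref{thm:posleftn} from Left to an arbitrary $C_k$, which is exactly the kind of transfer the paper has already licensed in general terms. One could alternatively avoid invoking symmetry by re-running the proofs of Lemmas~\ref{lem:centmoven} and \ref{lem:rightmoven} with the roles of the players cyclically relabeled, but that is unnecessary given the standing remark. Thus the corollary follows, and it records the intuitive fact that a nonzero game can be ``nonnegative'' for at most one player.
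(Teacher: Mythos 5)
Your proof is correct and follows essentially the same route as the paper's: apply the \hyperref[thm:posleftn]{Nonnegativity Rule} (transferred to $C_i$ and $C_j$ by the standing symmetry remark) to conclude that all options of $G$ are $\text{Center}_i$ options and also all are $\text{Center}_j$ options, whence $G\ncong0$ forces $i=j$. Your extra care in spelling out the symmetry transfer and the ``shared option'' step is fine but not a different argument.
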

\begin{proof}By the \hyperref[thm:posleftn]{Nonnegativity Rule}, $G$ only has $\text{Center}_{i}$ options and also only has $\text{Center}_{j}$ options. Since $G\ncong0$,  we must have $i=j$.\end{proof}

\begin{corollary}\label{cor:zeroalonen}
If $N>2$ and $G= 0$, then $G\cong 0$.
\end{corollary}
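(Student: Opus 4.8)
The plan is to leverage the \hyperref[thm:posleftn]{Nonnegativity Rule} (Theorem~\ref{thm:posleftn}) together with \hyperref[prop:eqn]{Components of Equality} (Proposition~\ref{prop:eqn}) and argue that a game equal to $0$ is forced to have \emph{no} options for any player. Suppose $G = 0$. By \hyperref[prop:eqn]{Components of Equality}, we have $G =_{C_i} 0$ for every $i$ with $0 \le i \le N-1$; in particular $0 \le_{C_i} G$ for every such $i$. Now apply the \hyperref[thm:posleftn]{Nonnegativity Rule} once for each player: since $0 \le_{C_i} G$, the rule tells us that either $G \cong 0$ (in which case we are done) or $G$ has only $\text{Center}_i$-options. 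If $G \ncong 0$, then $G$ would simultaneously have only $\text{Center}_i$-options for \emph{every} $i$, which is impossible unless $G$ has no options at all — but a game with no options is $0$, a contradiction. Hence $G \cong 0$.

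More carefully: assume for contradiction $G \ncong 0$, so $G$ has at least one option, say a $\text{Center}_j$-option for some $j$. Since $0 \le_{C_i} G$ holds for all $i \ne j$ as well, the \hyperref[thm:posleftn]{Nonnegativity Rule} applied with player $\text{Center}_i$ forces $G$ to have only $\text{Center}_i$-options; combined with the existence of a $\text{Center}_j$-option, this is a contradiction as soon as $N > 2$ guarantees the existence of some index $i \ne j$ in $\{0,\dots,N-1\}$. (Alternatively, one can cite Corollary~\ref{cor:nosharegood}: from $0 \le_{C_i} G$ and $0 \le_{C_j} G$ with $G \ncong 0$ we get $i = j$, but this must hold for all pairs $i,j$, forcing $N \le 1$, absurd.) Therefore $G \cong 0$.

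I do not anticipate a serious obstacle here: the corollary is essentially a one-line deduction once the \hyperref[thm:posleftn]{Nonnegativity Rule} and \hyperref[prop:eqn]{Components of Equality} are in hand. The only point requiring a moment's care is that the argument genuinely uses $N > 2$ — in the two-player case a game can have both a Left and a Right option and still equal $0$ (indeed $* \ne 0$ but many nonzero games equal $0$ in normal play), so the step invoking Theorem~\ref{thm:posleftn} (which itself requires $N>2$) is where the hypothesis is consumed. It is worth stating explicitly in the write-up that the contradiction is obtained by picking any two distinct indices among $\{0,1,\dots,N-1\}$, which exist precisely because $N \ge 3$.
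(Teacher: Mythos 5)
Your proof is correct and follows essentially the same route as the paper: the paper applies \hyperref[prop:eqn]{Components of Equality} to get $0\le_L G$ and $0\le_R G$ and then invokes the contrapositive of Corollary~\ref{cor:nosharegood}, which is exactly the alternative you note parenthetically (and which itself is just the \hyperref[thm:posleftn]{Nonnegativity Rule} applied to two distinct players, as in your main argument).
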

\begin{proof}
By the \hyperref[prop:eqn]{Components of Equality} and Definition~\ref{def:Leqn}, both $0\le_LG$ and $0\le_RG$. A contrapositive of Corollary~\ref{cor:nosharegood} yields $G\cong 0$.\end{proof}

Note that this dashes any hopes for a good analogue of the two-player result that $1+(-1)=0$. For example, $1_L+\left(1_L\right)^-\ne0$. 
However, we confirm in Theorem~\ref{thm:n=n1n} that non-isomorphic partizan games \emph{can} be equal.

\begin{theorem}[Nonpositivity Rule]$G\le_L0$ if and only if $\mathbf N\notin o(\rest{G}{L})$.
\end{theorem}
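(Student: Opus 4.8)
The plan is to prove both directions of the biconditional $G\le_L0\iff \mathbf N\notin o(\rest{G}{L})$, using the fact that $o(\rest{0+X}{x})=o(\rest{X}{x})$ for every restriction $x$, so that $G\le_L0$ really says: whenever Left wins $G+X$ moving $i^{\text{th}}$, Left also wins $X$ moving $i^{\text{th}}$.

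For the forward direction, I would argue the contrapositive: if $\mathbf N\in o(\rest{G}{L})$, I want an $X$ and a play order for which Left wins $G+X$ but not $X$. The natural choice is $X\cong0$ with Left to move. Then Left loses $0$ outright (Left is Next, and Next is the unique loser of $0$), so $\mathbf N\notin o(\rest{0}{L})$, i.e.\ Left does not have a winning strategy in $X$ moving first. But $\mathbf N\in o(\rest{G}{L})=o(\rest{G+0}{L})$ says Left \emph{does} have a winning strategy in $G+X$ moving first. This witnesses $G\nleq_L0$.

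For the reverse direction, suppose $\mathbf N\notin o(\rest{G}{L})$; I must show that for every $X$ and every starting position $i$, if Left wins $G+X$ moving $i^{\text{th}}$ then Left wins $X$ moving $i^{\text{th}}$. The key structural fact is the condition $\mathbf N\notin o(\rest{G}{L})$ unpacks, via Definition~\ref{def:3impout}/\ref{def:nplayimp}, to: no Left option $G^L$ of $G$ satisfies $\mathbf P\in o(\rest{G^L}{C_1})$. I would set this up as an induction (structural induction on $X$, or on the pair $(G,X)$) showing the stronger componentwise claim that each restriction outcome of $G+X$ is contained in the corresponding restriction outcome of $X$ — essentially that adding the component $G$, in which Left is the only player who can ever be forced to move, cannot create a winning strategy for Left. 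This is exactly the ``$0\le_L G$ forbids non-Left options''/``Next Generation'' circle of ideas: since Left is the unique player with moves available only when it helps them, the relevant impartial restriction $\rest{G+X}{x}$ has outcome behaviour controlled by $\rest{X}{x}$. Concretely, in the restriction game $\rest{G+X}{L}$, the first player (playing Left's role) can move either in $X$ or to some $\rest{G^L}{C_1}$; the hypothesis says no such $\rest{G^L}{C_1}$ has outcome $\mathbf P$, i.e.\ moving into $G$ is never a winning first move, and the inductive hypothesis handles continuation play after a move in $X$; one then pushes the argument through the $\mathbf O_i$ and $\mathbf P$ clauses of the recursive outcome definition exactly as in the proof of \hyperref[thm:ngeneratesnplayer]{Next Generation}.

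The main obstacle is the bookkeeping in this induction: one must track, across all $N$ restrictions simultaneously, that the only ``new'' moves introduced by the $G$ component are Left moves $G\mapsto G^L$, and these appear only in the Left-restriction (and, after suitable cycling, in restrictions where the mover stands in for Left). Getting the indexing right — which restriction of $G$ is reachable from which restriction of $G+X$, and why the hypothesis $\mathbf N\notin o(\rest{G}{L})$ is precisely what is needed at each step rather than some stronger statement about $G$ — is the delicate part, though it is routine once one mirrors the structure of the \hyperref[thm:ngeneratesnplayer]{Next Generation} proof. I would likely phrase the whole reverse direction as a lemma of the form ``if $G$ has only Left options (or $G\cong0$) then $\rest{G+X}{x}\subseteq\rest{X}{x}$'' — wait, but $G$ need not have only Left options here, only $\mathbf N\notin o(\rest{G}{L})$ — so more carefully, the induction hypothesis must be stated directly in terms of the condition ``$\mathbf N\notin o(\rest{G}{L})$'' and its recursive consequence, and verified to be preserved when passing to options $G^L$ and $X'$.
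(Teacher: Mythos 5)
Your forward direction is fine and is exactly the paper's: take $X\cong 0$ and observe that Left wins $G+0$ moving first but not $0+0$ moving first.

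The reverse direction has a genuine gap, located precisely at the case the paper treats as ``the interesting case''. You assert that the hypothesis $\mathbf N\notin o(\rest{G}{L})$ means ``moving into $G$ is never a winning first move''. That is true only for the restriction $\rest{G}{L}$ played \emph{in isolation}; it is false in the sum $G+X$. The condition only says that no Left option $G^L$ has $\mathbf P\in o(\rest{G^L}{C_1})$, but Left's move to $G^L+X$ can still be a winning first move in the sum, because play continues in both components afterwards. Your sketch provides no mechanism for this case, and it cannot be absorbed into the induction on $X$ since the $X$-component has not changed. The paper's resolution is the idea you are missing: when Left moves to $G^L+X$, the other $N-1$ players respond \emph{inside the $G^L$ component}, following their strategy witnessing $\mathbf N\notin o(\rest{G}{L})$, so as to reach a proper subposition $H$ of $G$ with $\mathbf N\notin o(\rest{H}{L})$ while Left still wins $H+X$ moving first; then induction on $G$ gives $H\le_L 0$, hence Left wins $X$ moving first. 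Without some such device the argument does not close.

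Separately, the ``stronger componentwise claim'' you float --- that every restriction outcome of $G+X$ is contained in the corresponding restriction outcome of $X$ --- is false. For example, $G\cong 1_{C_1}$ satisfies $\mathbf N\notin o(\rest{G}{L})$, yet with $X\cong 0$ and $N=3$ one has $o(\rest{G+X}{C_1})=o(*)=\pn\not\subseteq\op=o(\rest{X}{C_1})$. The statement $G\le_L 0$ constrains only Left's winning strategies, not the full outcome tuple, so any induction hypothesis must be phrased in those terms (as the paper's is). You noticed this tension in your final paragraph but did not resolve it; resolving it requires the component-response argument above, together with the paper's separate (and easy) induction on $X$ for the cases where Left moves $i^{\text{th}}$ with $2\le i\le N$.
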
\label{thm:nonpos}
In other words, if Left doesn't have a winning strategy in $G$ moving first, then $G$ is no better for Left than $0$, in any context. In the two-player case, this corresponds to the well-known fact that $G\le0$ exactly when Right can win $G$ playing second.
\begin{proof}
For one direction, assume $G\le_L0$. Then since Left does not have a winning strategy in $0+0$ moving first, Left does not have a winning strategy in $G+0$ moving first.

For the other direction, assume $\mathbf N\notin o(\rest{G}{L})$ and that Left has a winning strategy in $G+X$ moving $i^{\text{th}}$ for some $i$.

If $2\le i\le N$, then Left can win all relevant $G+X'$ moving $(i-1)^{\text{st}}$. So, by induction on $X$, Left can win all $X'$ moving $(i-1)^{\text{st}}$, and so Left can win $X$ moving $i^{\text{th}}$. Now suppose Left can win $G+X$ moving first. 

If a winning move is to some $G^L+X$, then since $\mathbf N\notin o(\rest{G}{L})$, the players other than Left could play in the $G^L$ component to reach some subposition $
H$ with $\mathbf N\notin o\left(\rest{H}{L}\right)$ and  Left having a winning strategy in $H+X$ moving first. $H\le_L0$ by induction on $G$, so that Left can win $X$ moving first. 

In the other case, a winning move is instead to some $G+X^{L}$. Then Left can win $G+X^{L}$ moving $N^{\text{th}}$. By induction on $X$, Left can win $X^{L}$ moving $N^{\text{th}}$ as well, so that $X^L$ would be a winning move in $X$, too.
\end{proof}

\begin{corollary}\label{cor:semitrin}If $G\cong\left\{(1_L)^-\mid\cdots\mid\,\right\}$ then $G=_L0$.\end{corollary}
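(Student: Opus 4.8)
The plan is to establish the two one-sided relations $0\le_L G$ and $G\le_L 0$ separately, and then conclude $G=_L 0$ by Definition~\ref{def:Leqn}. The first is immediate: the game $G\cong\{(1_L)^-\mid\cdots\mid\,\}$ has a single Left option and no options for any other player, so it is covered by Proposition~\ref{prop:leftmovesn} (``$G$ only has Left options''), which gives $0\le_L G$ at once.

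The substance is the reverse inequality $G\le_L 0$, for which I would invoke the \hyperref[thm:nonpos]{Nonpositivity Rule}: it reduces the claim to verifying $\mathbf N\notin o(\rest{G}{L})$, so everything comes down to computing the Left restriction of $G$. First I would unwind Definition~\ref{def:adjn} to see that the $k$-th conjugate of $1_L$ is $1_{C_k}$, hence $(1_L)^-=1_{C_1}+1_{C_2}+\cdots+1_{C_{N-2}}+1_R$, a sum of $N-1$ summands in which each summand $1_{C_k}$ (respectively $1_R$) has exactly one option, namely to $0$, and that option belongs only to $\text{Center}_k$ (respectively Right). Then, chasing Definition~\ref{def:restn}: $\rest{G}{L}\cong\{\rest{(1_L)^-}{C_1}\}$, and inside $(1_L)^-$ the player to move is forced through the summands one at a time ($\text{Center}_1$ kills $1_{C_1}$, then $\text{Center}_2$ kills $1_{C_2}$, $\ldots$, then Right kills $1_R$, then Left is stuck), so $\rest{(1_L)^-}{C_1}$ is the ``stalk'' $\{\cdots\{0\}\cdots\}$ with $N-1$ nested braces, and therefore $\rest{G}{L}$ is the stalk with $N$ nested braces. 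Finally, since $o(0)=\overline{\{\mathbf N\}}$ and each outer pair of braces cyclically shifts the excluded player by one step (claim~2 of Proposition~\ref{prop:nimpouts}), applying that shift $N$ times is the identity, so $o(\rest{G}{L})=\overline{\{\mathbf N\}}$, which in particular does not contain $\mathbf N$. The Nonpositivity Rule then yields $G\le_L 0$, and combined with $0\le_L G$ we get $G=_L 0$.

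The step I expect to be the only real obstacle is the middle computation: correctly identifying $(1_L)^-$ from the conjugate definition and then carefully tracking the number of nested braces in $\rest{G}{L}$ — an off-by-one there would change the impartial outcome and break the argument. Everything else is a direct application of results already proved (Proposition~\ref{prop:leftmovesn}, the Nonpositivity Rule, and the cycling identity of Proposition~\ref{prop:nimpouts}).
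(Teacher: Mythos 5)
Your proposal is correct and follows essentially the same route as the paper: $0\le_L G$ via Proposition~\ref{prop:leftmovesn}, and $G\le_L 0$ via the \hyperref[thm:nonpos]{Nonpositivity Rule} after observing that Left loses $G$ moving first. The paper simply asserts that last observation, whereas you verify it by explicitly unwinding $(1_L)^-$ and the restriction $\rest{G}{L}$; your computation (forced line of $N$ moves returning the loss to Left) is accurate.
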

\begin{proof}Note that $0\le_LG$ by Proposition~\ref{prop:leftmovesn} (or the \hyperref[thm:posleftn]{Nonnegativity Rule} if $N>2$). But since Left does not win $G$ moving first, $G\le_L0$ by the \hyperref[thm:nonpos]{Nonpositivity Rule}. \end{proof}

\begin{proposition}If $G$ is comparable to $0$ under each $\le_{C_i}$, then one of the following conditions holds, and all cases are possible.
\begin{enumerate}
\item $G=0$.
\item $G<_{C_i}0$ for all $i$.
\item $G=_{C_j}0$ for some $j$ and $G<_{C_i}0$ for $i\ne j$.
\item $G>_{C_j}0$ for some $j$ and $G<_{C_i}0$ for $i\ne j$.\end{enumerate}\end{proposition}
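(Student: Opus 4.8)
The plan is to analyze the four mutually exclusive cases that the stated comparabilities allow, using the characterizations of $0\le_{C_i}G$ and $G\le_{C_i}0$ established above, and then to exhibit example games realizing each case. The key structural input is the \hyperref[thm:posleftn]{Nonnegativity Rule} (Theorem~\ref{thm:posleftn}): for $N>2$, $0\le_{C_i}G$ holds only when $G\cong0$ or $G$ has \emph{only} $\text{Center}_i$ options. By Corollary~\ref{cor:nosharegood}, if $G\ncong0$ then $0\le_{C_i}G$ can hold for at most one index $i$. So among the $N$ relations, either $G\cong0$ (in which case $G=0$ by Corollary~\ref{cor:zeroalonen}, giving case~1), or $0\le_{C_i}G$ holds for no $i$, or for exactly one $i=j$.

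First I would dispatch the $G\cong 0$ subcase: then $G=0$ by Corollary~\ref{cor:zeroalonen}, so case~1 holds. Assume henceforth $G\ncong0$. Since $G$ is assumed comparable to $0$ under every $\le_{C_i}$, for each $i$ either $0\le_{C_i}G$ or $G\le_{C_i}0$ (or both, but ``both'' plus $G\ncong0$ would force via the \hyperref[thm:posleftn]{Nonnegativity Rule} a contradiction unless... actually $G=_{C_i}0$ with $G\ncong0$ is possible only if $G\le_{C_i}0$ \emph{and} $0\le_{C_i}G$, i.e. $G$ has only $\text{Center}_i$ options and $\mathbf N\notin o(\rest{G}{C_i})$; so $=_{C_i}0$ is a special sub-case of $0\le_{C_i}G$). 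Partition the indices: let $j$ be the unique index, if any, with $0\le_{C_j}G$. For every $i\ne j$ we then have $G\le_{C_i}0$ but $0\nleq_{C_i}G$, i.e. $G<_{C_i}0$. If no such $j$ exists, every index gives $G<_{C_i}0$: that is case~2. If $j$ exists and $G=_{C_j}0$, we are in case~3. If $j$ exists and $G\ne_{C_j}0$, then since $0\le_{C_j}G$ but not equality, $G>_{C_j}0$, which is case~4. This exhausts all possibilities and shows the four cases are mutually exclusive.

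It remains to show all four cases are \emph{possible}. Case~1 is witnessed by $G\cong0$. For case~4, take $G\cong1_L$ (for definiteness with $j$ the Left index): by Proposition~\ref{prop:leftmovesn}, $0\le_L1_L$, and $1_L\nleq_L0$ since Left wins $1_L$ moving first (contradicting the \hyperref[thm:nonpos]{Nonpositivity Rule}'s criterion), so $0<_L1_L$; and for $i\ne$ Left, $1_L$ has a Left option, so by Lemmas~\ref{lem:centmoven}/\ref{lem:rightmoven} $0\nleq_{C_i}1_L$, while $\mathbf N\notin o(\rest{1_L}{C_i})$ (the $C_i$-restriction of $1_L$ is $0$) gives $1_L\le_{C_i}0$ by the \hyperref[thm:nonpos]{Nonpositivity Rule}; hence $1_L<_{C_i}0$. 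For case~3, take $G\cong\{(1_L)^-\mid\cdots\mid\,\}$: by Corollary~\ref{cor:semitrin}, $G=_L0$, while for $i\ne$ Left, $G$ has only Left options so $0\nleq_{C_i}G$ by the \hyperref[thm:posleftn]{Nonnegativity Rule}, and $G\le_{C_i}0$ by the \hyperref[thm:nonpos]{Nonpositivity Rule} (again the $C_i$-restriction has no options), giving $G<_{C_i}0$. For case~2, I would look for a game all of whose restrictions fail the \hyperref[thm:nonpos]{Nonpositivity Rule}'s criterion's complement --- that is, $\mathbf N\notin o(\rest{G}{C_i})$ for every $i$, and $0\nleq_{C_i}G$ for every $i$; a natural candidate is a game built from conjugates so that no player wins moving first in any restriction, e.g.\ something like $G\cong\{(1_L)^-\mid(1_{C_1})^-\mid\cdots\}$ where each component's restriction is a positive number of nim-$*$'s forcing $\mathbf N\notin o$, together with the presence of options for more than one player to kill all $0\le_{C_i}$.

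The main obstacle I expect is \textbf{case~2}: constructing an explicit $G$ with $G<_{C_i}0$ simultaneously for all $N$ players. One must ensure $\mathbf N\notin o(\rest{G}{C_i})$ for each $i$ (easy to arrange by making the appropriate restriction a suitable nimber-like position, e.g.\ using the games from Proposition~\ref{prop:7outs}) while also ensuring $G$ is genuinely comparable below $0$ and not equal --- the latter is automatic once $G\ncong0$ and $G\le_{C_i}0$, by Corollary~\ref{cor:zeroalonen} together with the failure of $0\le_{C_i}G$ from the \hyperref[thm:posleftn]{Nonnegativity Rule} (since $G$ will have options for at least two different players). The bookkeeping --- checking the restrictions of the chosen $G$ really have no winning first move --- is the only place a routine but nontrivial computation is needed.
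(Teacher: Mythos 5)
Your classification argument is the same as the paper's: the only way for $G$ to avoid all four cases is to have $G\ncong0$ with $0\le_{C_i}G$ for two distinct indices $i$, which Corollary~\ref{cor:nosharegood} forbids; and your witnesses for cases 1, 3, and 4 coincide (up to relabeling players) with the paper's. The one place you stop short is case 2, where the statement requires an actual example and you offer only an unverified candidate flagged as ``the main obstacle.'' The paper's witness is $1_L+(1_L)^-$, which needs no new computation: Proposition~\ref{prop:trinegn} and the corollary following it give $G+G^-<_L0$ whenever $N>2$ and $G\ncong0$, and since $1_L+(1_L)^-\cong 1_{C_i}+(1_{C_i})^-$ for every $i$ (each is the sum of one free move for each player), the same strict inequality holds for every player by symmetry. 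Your candidate $\left\{(1_L)^-\mid(1_{C_1})^-\mid\cdots\mid(1_R)^-\right\}$ does in fact work as well: whichever player moves first is forced into a position giving each opponent exactly one move, so the first mover runs out of moves first, whence $\mathbf N\notin o\left(\rest{G}{C_i}\right)$ and $G\le_{C_i}0$ by the \hyperref[thm:nonpos]{Nonpositivity Rule}, while the \hyperref[thm:posleftn]{Nonnegativity Rule} rules out $0\le_{C_i}G$ since $G$ has options for more than one player. But that check is part of the proof and should be carried out, not deferred; with it included, your argument is complete and essentially identical in structure to the paper's.
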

Under Cincotti's convention for $\le_{C_i}$, an additional type of case is possible with precisely two equalities (see Section~4 of \cite{cinc} and \cite{ncinc}).
\begin{proof}
The above cases are all possible with the following examples. Condition 1 is satisfied by $0$. Condition 2 is satisfied by $1_L+(1_L)^-$. Condition 3 is satisfied by a game with only one option in which $\text{Center}_j$ can move to $(1_{C_j})^-$. Condition 4 is satisfied by $1_{C_j}$.

If none of the four conditions were satisfied, then $G\ncong0$ and $G\ge_{C_i}0$ for at least two distinct values of $i$. But this would contradict Corollary~\ref{cor:nosharegood}.
\end{proof}

We also use the two rules for comparisons with $0$ to examine the extent to which $G^-$ is similar to a negative for $G$.

\begin{proposition}\label{prop:trinegn}
For any game $G$, $G+G^-\le_L0$.
\end{proposition}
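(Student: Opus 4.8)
The plan is to apply the Nonpositivity Rule (Theorem~\ref{thm:nonpos}), so the entire task reduces to showing that $\mathbf N\notin o\!\left(\rest{G+G^-}{L}\right)$, i.e. that Left does not have a winning strategy in $G+G^-$ when Left moves first. But this is exactly the content of Proposition~\ref{prop:parttriplenon}, which asserts $\mathbf N\notin o\!\left(\rest{G+G^-}{x}\right)$ for all $x=C_0,\dots,C_{N-1}$; taking $x=C_0=L$ gives precisely what we need. So the proof is essentially a one-line deduction: combine Proposition~\ref{prop:parttriplenon} with the ``only if'' direction of the Nonpositivity Rule.

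Concretely, I would write: since $\mathbf N\notin o\!\left(\rest{G+G^-}{L}\right)$ by Proposition~\ref{prop:parttriplenon}, the Nonpositivity Rule gives $G+G^-\le_L0$. If one wants to be self-contained rather than invoking Proposition~\ref{prop:parttriplenon}, one can recall its underlying idea: when Left is not the first to move after Left's turn — wait, more carefully, the point is that in $G+G^-$ the $N$ summands $G,G^\dagger,\dots,G^{\dagger\cdots\dagger}$ are cyclic conjugates of one another, and the $N-1$ players other than Left can use a mirroring strategy, responding to a move by whoever is ``Left'' in one summand by copying it into the appropriately-conjugated summand, so that Left is always the one who eventually runs out of moves. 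That argument shows Left cannot win moving first, hence $\mathbf N\notin o\!\left(\rest{G+G^-}{L}\right)$.

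There is essentially no obstacle here: the statement is an immediate corollary of two results already in hand, included at this point in the text precisely to record the ``half-negative'' behaviour of $G^-$ before the authors go on (as foreshadowed by the remark after Corollary~\ref{cor:zeroalonen}) to show the reverse inequality fails in general. The only thing to be slightly careful about is the indexing convention $C_0=L$, so that Proposition~\ref{prop:parttriplenon} genuinely covers the Left restriction; once that is noted the proof is complete.

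\begin{proof}
By Proposition~\ref{prop:parttriplenon} (taking $x=C_0$, which denotes Left), we have $\mathbf N\notin o\!\left(\rest{G+G^-}{L}\right)$; that is, Left has no winning strategy in $G+G^-$ moving first. By the \hyperref[thm:nonpos]{Nonpositivity Rule}, it follows that $G+G^-\le_L0$.
\end{proof}
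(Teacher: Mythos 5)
Your proof is correct and is essentially identical to the paper's: it cites Proposition~\ref{prop:parttriplenon} to get $\mathbf N\notin o\left(\rest{G+G^-}{L}\right)$ and then applies the \hyperref[thm:nonpos]{Nonpositivity Rule}. The extra remarks about the mirroring strategy and the $C_0=L$ indexing are accurate but not needed.
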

\begin{proof}
By Proposition~\ref{prop:parttriplenon}, $\mathbf N\notin o(\rest{G+G^-}{L})$. By the \hyperref[thm:nonpos]{Nonpositivity Rule}, $G+G^-\le_L0$.
\end{proof}

To emphasize, $G+G^-\le_{C_i}0$ for any $i$ by symmetry, not just $i=0$. 

\begin{corollary}
If $N>2$ and $G\ncong0$, then $G+G^-<_L0$.
\end{corollary}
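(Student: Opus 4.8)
The plan is to upgrade the weak inequality $G+G^-\le_L0$, which Proposition~\ref{prop:trinegn} already provides, into a strict one; by Definition~\ref{def:Lstrictleqn} this reduces to establishing $0\nleq_L G+G^-$. Since $N>2$, the right tool is the \hyperref[thm:posleftn]{Nonnegativity Rule}: it asserts that $0\le_L H$ forces $H\cong0$ or $H$ to have options for Left only, so in negated form, $0\nleq_L H$ holds as soon as $H$ has an option belonging to a player other than Left. Hence it suffices to exhibit such an option in $G+G^-$, and for that it is enough to find one inside $G^-$ alone.

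First I would record how conjugation acts on option-ownership. Reading off Definition~\ref{def:adjn}, the game $G^\dagger$ has an option for a given player precisely when $G$ has an option for the player immediately before that one in the cyclic turn order; iterating $k$ times, the $k$-th conjugate of $G$ has an option for the player reached by moving $k$ places forward in that order. Now $G\ncong0$, so $G$ has an option for some single player $p$. Substituting $k=1,\dots,N-1$ into $G^-=G^\dagger+G^{\dagger\dagger}+\cdots+G^{\dagger\cdots\dagger}$, the $N-1$ shifts of $p$ range over exactly the $N-1$ players other than $p$, so each of those players has an option in $G^-$ (available by moving in the appropriate conjugate summand). Because $N-1\ge2$, at least one of those players is not Left; therefore $G^-$, and \emph{a fortiori} $G+G^-$, has a non-Left option.

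Combining the pieces, $G+G^-\le_L0$ from Proposition~\ref{prop:trinegn} together with $0\nleq_L G+G^-$ from the \hyperref[thm:posleftn]{Nonnegativity Rule} applied to the non-Left option just produced yields $G+G^-<_L0$. The only point requiring care is the index bookkeeping in the middle paragraph---checking that a single option of $G$ generates options for precisely the other $N-1$ players across the $N-1$ conjugates making up $G^-$---but this is a routine cyclic-shift count rather than a real obstacle, and the rest follows directly from results already in hand.
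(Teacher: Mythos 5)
Your proof is correct and follows essentially the same route as the paper: the weak inequality comes from Proposition~\ref{prop:trinegn}, and strictness comes from exhibiting a non-Left option of $G+G^-$ and invoking the \hyperref[thm:posleftn]{Nonnegativity Rule}. The paper pins down a Right option specifically (via the same implicit cyclic-shift observation about conjugates), whereas you settle for any non-Left option, but the argument is the same.
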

\begin{proof}
By Proposition~\ref{prop:trinegn}, $G+G^-\le_L0$. Since $G\ncong0$, $G+G^-$ must have a Right option, so that $0\nleq_LG+G^-$ by the \hyperref[thm:posleftn]{Nonnegativity Rule} (or Lemma~\ref{lem:rightmoven}).
\end{proof}

\begin{proposition}
Suppose that $N>2$, $G\ncong0$, and $0\le_LG$. Then $G^-<_L0$.
\end{proposition}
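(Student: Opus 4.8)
The plan is to split the strict inequality $G^-<_L0$ into its two halves, $G^-\le_L0$ and $0\nleq_LG^-$, and handle each with a result already in hand. The only hypothesis I will really use about $G$ is its shape: by the \hyperref[thm:posleftn]{Nonnegativity Rule} (Theorem~\ref{thm:posleftn}), since $0\le_LG$ and $G\ncong0$, the game $G$ has at least one Left option and no options for any other player. Keep that in mind.

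For $G^-\le_L0$: Proposition~\ref{prop:trinegn} gives $G+G^-\le_L0$, and $0\le_LG$ holds by hypothesis, so Lemma~\ref{lem:signflipn} applied with $H\cong G$ and $J\cong G^-$ yields $G^-\le_L0$ directly. For $0\nleq_LG^-$: the idea is to locate a Right option inside $G^-$ and invoke Lemma~\ref{lem:rightmoven}. Because $G$ has only Left options, an easy induction on the number of daggers (using Definition~\ref{def:adjn}, which cyclically shifts option-ownership) shows that the $k$-th conjugate $G^{\dagger\dots\dagger}$ has only $\text{Center}_k$ options, with the index read mod $N$, and has at least one such option since $G$ has at least one Left option. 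Taking $k=N-1$, the final summand $G^{\dagger\dots\dagger}$ of $G^-\cong G^\dagger+G^{\dagger\dagger}+\cdots+G^{\dagger\dots\dagger}$ has a Right option; moving there and leaving the other summands fixed exhibits a Right option of $G^-$ itself. Hence $0\nleq_LG^-$ by Lemma~\ref{lem:rightmoven}. Combining the two halves, $G^-\le_L0$ and $0\nleq_LG^-$, which is exactly $G^-<_L0$ by Definition~\ref{def:Lstrictleqn}.

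There is no serious obstacle here; the one place to be careful is the bookkeeping of how iterated conjugation permutes which player owns the options, so that one correctly identifies the $(N-1)$-st conjugate as the summand contributing a Right option (rather than, say, off by one). It is also worth confirming explicitly that a disjunctive sum inherits a Right option from any summand that has one, but this is immediate from the recursive definition of $+$.
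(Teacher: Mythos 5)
Your proof is correct, but the first half takes a genuinely different route from the paper's. For $G^-\le_L0$, the paper argues structurally: since $G$ has only Left options, $G^-$ has \emph{no} Left options, so Left loses $G^-$ moving first, and the \hyperref[thm:nonpos]{Nonpositivity Rule} applies. You instead argue algebraically, combining $G+G^-\le_L0$ (Proposition~\ref{prop:trinegn}) with the hypothesis $0\le_LG$ and the cancellation Lemma~\ref{lem:signflipn}. Both are valid; the paper's version is shorter and makes clear that only the \emph{shape} of $G$ matters (indeed it never needs $0\le_LG$ beyond what the Nonnegativity Rule extracts from it), whereas your version showcases the preordered-monoid machinery and would survive in any setting where Proposition~\ref{prop:trinegn} and the hypothesis $0\le_LG$ are available, even without knowing the structural characterization of nonnegative games. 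For the second half, $0\nleq_LG^-$, you and the paper do essentially the same thing --- exhibit a non-Left option of $G^-$ and invoke the \hyperref[thm:posleftn]{Nonnegativity Rule} (equivalently Lemma~\ref{lem:rightmoven}) --- though you are more explicit in tracking that the $(N-1)$-st conjugate is the summand carrying a Right option; the paper is content with ``a non-Left option,'' for which any of the $N-1$ conjugate summands would do, since Lemma~\ref{lem:centmoven} covers the Center players. Your bookkeeping of the conjugation indices is correct.
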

\begin{proof}By the \hyperref[thm:posleftn]{Nonnegativity Rule}, $G$ has only Left options, and has at least one. Therefore, $G^-$ has no Left options, so that Left loses moving first. Thus, by the \hyperref[thm:nonpos]{Nonpositivity Rule}, $G^-\le_L0$. Since $G^-$ has a non-Left option, we apply the \hyperref[thm:posleftn]{Nonnegativity Rule} to find $0\nleq_LG^-$, and so $G^-<_L0$.\end{proof}
Intuitively, if $G$ never hurts Left, then Left would prefer no change to giving that same benefit to each of the other players. 

Note that we cannot turn this around.
\begin{proposition}
If $N>2$ and $G\ncong0$, then $0\nleq_LG^-$.
\end{proposition}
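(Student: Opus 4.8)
The plan is to derive the statement from the \hyperref[thm:posleftn]{Nonnegativity Rule} (Theorem~\ref{thm:posleftn}). For $N>2$ that rule says $0\le_L H$ holds precisely when $H\cong0$ or $H$ has only Left options, so to prove $0\nleq_L G^-$ it suffices to exhibit, for an arbitrary $G\ncong0$, a single option of $G^-$ belonging to some player other than Left. (With such an option in hand one may instead invoke Lemma~\ref{lem:centmoven} or Lemma~\ref{lem:rightmoven} directly.)

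The first step is to record how conjugation permutes option sets. Writing $C_j$ for $\text{Center}_j$ with the conventions $C_0=L$, $C_{N-1}=R$, unwinding Definition~\ref{def:adjn} and matching up the $N$ slots shows that the $C_j$ options of $G^\dagger$ are exactly the first conjugates of the $C_{j-1}$ options of $G$, the index $j-1$ read modulo $N$. Iterating, the $k$-fold conjugate of $G$ has a $C_j$ option if and only if $G$ has a $C_{j-k}$ option (indices mod $N$). Since $G\ncong0$, fix $i$ with $G$ possessing a $C_i$ option; then, for every $k\in\{1,\dots,N-1\}$, the $k$-fold conjugate of $G$ has a $C_{i+k}$ option (mod $N$).

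Next I pass to the disjunctive sum. By the recursive definition of $+$, a $C_j$ option of any summand is a $C_j$ option of the whole sum, so $G^-=G^\dagger+G^{\dagger\dagger}+\cdots+G^{\dagger\dots\dagger}$ has a $C_{i+k}$ option (mod $N$) for each $k\in\{1,\dots,N-1\}$. As $k$ ranges over this set, $i+k\bmod N$ ranges over $\{0,1,\dots,N-1\}\setminus\{i\}$, a set of size $N-1\ge2$ because $N>2$, so it contains an index $j\ne0$. Thus $G^-$ has an option for a player other than Left, and the \hyperref[thm:posleftn]{Nonnegativity Rule} gives $0\nleq_L G^-$.

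The argument is essentially bookkeeping; the only place needing a moment's care is the modular index count — verifying that deleting the single residue $i$ cannot remove every non-Left residue — and this is precisely where $N>2$ is used. (For $N=2$ one has $G^-=G^\dagger$, all of whose options lie with the unique other player, and the statement genuinely fails: take $G\cong1_R$, so $G^-\cong1_L$ and $0\le_L1_L$ by Proposition~\ref{prop:leftmovesn}.) No induction or further case split is needed.
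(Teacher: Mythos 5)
Your proof is correct and matches the paper's approach: the paper simply declares the result ``immediate from the Nonnegativity Rule,'' and your argument supplies exactly the bookkeeping (conjugation cyclically shifting option ownership, so that $G^-$ acquires an option for every player except $\text{Center}_i$, hence for some non-Left player since $N>2$) that makes that immediacy precise. The $N=2$ counterexample you note is also a correct observation about why the hypothesis is needed.
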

\begin{proof}This is immediate from the \hyperref[thm:posleftn]{Nonnegativity Rule}.\end{proof}

\subsubsection{General Inequalities}

In the two-player case, there is a recursive characterization of $\le$ (see Thm. II.1.20 in \cite{cgt}). There is a similar partial characterization of $\le_L$ in the $N$-player setting.

\begin{theorem}[Inequality Test]\label{thm:lesstestn}
Let $G\cong\left\{ G^L\mid G^{C_1}\mid\cdots\mid G^{C_{N-1}}\right\}$ and $H\cong\left\{ H^L\mid H^{C_1}\mid\cdots\mid H^{C_{N-1}}\right\}$. Suppose that for any option $G^L$ there is a corresponding $H^L$ such that $G^L\le_LH^L$. Further suppose that for $1\le i\le N-1$ and for all $H^{C_i}$ there is a corresponding $G^{C_i}$ such that $G^{C_i}\le_LH^{C_i}$. Then $G\le_LH$.\end{theorem}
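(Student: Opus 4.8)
The strategy is to prove the stronger pointwise statement: for every game $X$ and every play-position $1 \le i \le N$, if Left has a winning strategy in $G+X$ moving $i^{\text{th}}$, then Left has a winning strategy in $H+X$ moving $i^{\text{th}}$. I would do this by a simultaneous induction on the game trees of $G$, $H$, and $X$, mimicking the two-player recursive argument for $\le$ (Thm.~II.1.20 in \cite{cgt}) but tracking the cyclic roles of the $N$ players carefully. The key observation that makes the hypotheses natural is the asymmetry between Left and everyone else: Left wants \emph{more} Left options that dominate the originals (so a winning Left move in $G+X$ can be imitated in $H+X$), while Left wants \emph{fewer} options for every other player (so no opponent gains a new escape route in $H+X$ that Left couldn't already handle in $G+X$).

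The induction splits on the position $i$. First suppose it is Left's turn (say $i$ corresponds to a Left move, after appropriate reindexing). Left's winning move in $G+X$ is either to some $G^L + X$ or to some $G + X^L$. In the first case, by hypothesis there is $H^L \ge_L G^L$; since Left was winning from $G^L + X$ (one fewer move, so induction on $G$ applies to the pair $(G^L, H^L)$ together with $X$), Left wins from $H^L + X$, hence wins $H+X$. In the second case $G + X^L$, we reduce by induction on $X$ (the pair $(G,H)$ with the smaller game $X^L$) to conclude Left wins $H + X^L$, hence $H+X$. Next suppose it is some other player's turn, say $\text{Center}_k$ (with $k \ne L$). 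Left has a winning strategy means Left wins against \emph{every} reply. A reply in the $X$-component, $G + X^{C_k}$, is handled by induction on $X$. A reply in the first component is to some $H^{C_k} + X$; but by hypothesis every such $H^{C_k}$ has a dominating-from-below $G^{C_k} \le_L H^{C_k}$, and Left was winning against the move to $G^{C_k} + X$ in the original game, so induction on the pair $(G^{C_k}, H^{C_k})$ with $X$ gives that Left still wins from $H^{C_k}+X$. Since Left survives every opponent reply in $H+X$, Left wins there too. The base case $i$ with no available move and $G+X = 0$ (so $H+X = 0$ as well, forcing $G \cong H \cong 0$ up to the trivial situation) is immediate.

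The main obstacle I anticipate is bookkeeping the cyclic shift of roles across a move: after Left moves, the "next" label advances, so the statement being inducted on must be phrased uniformly over all starting positions $i$, not just "Left moves first." One must be careful that the hypothesis $G^L \le_L H^L$ is exactly the right relation to invoke — it is, because $\le_L$ is defined over all contexts and all play orders (Definition~\ref{def:ineqn}), so it survives the reindexing — and similarly that $G^{C_i} \le_L H^{C_i}$ (note: still $\le_L$, favorability to \emph{Left}, not to $\text{Center}_i$) is what we need when an opponent moves. A secondary subtlety is that, unlike the two-player case, we do \emph{not} get the converse: an opponent's reply in $G+X$ to $G^{C_i}+X$ need not correspond to any reply in $H+X$, which is precisely why only one direction of the test holds and why the theorem is stated as an implication rather than an equivalence. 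Once the role-tracking is set up cleanly, each case is a one-line appeal to the appropriate inductive instance, exactly paralleling the proof of Theorem~\ref{thm:reverting-n}.
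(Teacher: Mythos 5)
Your proof is correct and follows essentially the same route as the paper's: split on whether Left moves first or $j^{\text{th}}$ for $j\ge2$, handle moves in the $X$ component by induction on $X$, and handle moves in the $G$/$H$ component via the hypothesized relations between corresponding options. One small caveat: where you write ``induction on the pair $(G^L,H^L)$'' (and similarly $(G^{C_k},H^{C_k})$), no induction is needed or even available there, since that pair is not known to satisfy the theorem's hypotheses --- the transfer of Left's win from $G^L+X$ moving last to $H^L+X$ moving last is a direct instance of the assumed relation $G^L\le_L H^L$ via Definition~\ref{def:ineqn}, which is exactly how the paper phrases it.
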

\begin{proof}
Let $X$ be a game. First, suppose that Left can win $G+X$ moving first. If Left can win by moving to some $G^L+X$, then they can win $G^L+X$ moving last. Using the assumption, choose an $H^L$ so that Left can win $H^L+X$ moving last. Then Left can win $H+X$ moving first by moving to $H^L+X$. If Left cannot win $G+X$ by moving in $G$, then Left can win some $G+X^L$ moving last. By induction on $X$, Left can win $H+X^L$ moving last, and Left can still win $H+X$ moving first.

Now suppose that Left can win $G+X$ moving $j^\text{th}$ for some $j$ with $2\le j\le N$. Set $i=N-j+1$. So, with Left moving $(j-1)^\text{st}$, Left can win any $G+X^{C_i}$ and any $G^{C_i}+X$. Then, with Left moving $(j-1)^\text{st}$, Left can win any $H+X^{C_i}$ (by induction on $X$) or $H^{C_i}+X$ (since every $H^{C_i}$ has a corresponding $G^{C_i}$ with $G^{C_i}\le_LH^{C_i}$). But those are all of the $\text{Center}_i$ options of $H+X$, so Left can win $H+X$ moving $j^\text{th}$. 

For all starting players, we have shown that Left having a winning strategy in $G+X$ implies they have one in $H+X$, so $G\le_L H$ by Definition~\ref{def:ineqn}.
\end{proof}

\begin{proposition}\label{prop:parteqreplacementn}
Suppose that $H$ is obtained from $G$ by replacing various options with options for the same player that are equal to the original ones. More precisely, suppose that each of the following holds.
\begin{itemize}
\item $G\cong\left\{\mathscr{G}^{C_0}\mid\mathscr{G}^{C_1}\mid\mathscr{G}^{C_2}\mid\cdots\mid\mathscr{G}^{C_{N-2}}\mid\mathscr{G}^{C_{N-1}}\right\}$
\item $H\cong\left\{\mathscr{H}^{C_0}\mid\mathscr{H}^{C_1}\mid\mathscr{H}^{C_2}\mid\cdots\mid\mathscr{H}^{C_{N-2}}\mid\mathscr{H}^{C_{N-1}}\right\}$
\item For $0\le i\le N-1$,
\begin{itemize}
\item For all $G'\in \mathscr{G}^{C_i}$, there exists $H'\in \mathscr{H}^{C_i}$ satisfying $G'=H'$
\item For all $H'\in \mathscr{H}^{C_i}$, there exists $G'\in \mathscr{G}^{C_i}$ satisfying $G'=H'$
\end{itemize}
\end{itemize}
Then $G=H$.
\end{proposition}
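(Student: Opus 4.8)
The plan is to deduce this from the \hyperref[thm:lesstestn]{Inequality Test} (Theorem~\ref{thm:lesstestn}) together with \hyperref[prop:eqn]{Components of Equality} (Proposition~\ref{prop:eqn}). First I would observe that, by \hyperref[prop:eqn]{Components of Equality}, it suffices to prove $G=_{C_k}H$ for every $k$ with $0\le k\le N-1$; and by Definition~\ref{def:Leqn} (in its $C_k$-form) this in turn amounts to the two inequalities $G\le_{C_k}H$ and $H\le_{C_k}G$. I would also record the elementary fact that if $G'=H'$ then $G'\le_{C_k}H'$ and $H'\le_{C_k}G'$ for \emph{every} player $C_k$: indeed $G'=H'$ gives $G'=_{C_k}H'$ by \hyperref[prop:eqn]{Components of Equality}, and $=_{C_k}$ contains both inequalities by Definition~\ref{def:Leqn}.

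With these preliminaries in hand, fix $k$ and apply the $C_k$-analogue of the \hyperref[thm:lesstestn]{Inequality Test} (valid by the symmetry remarked upon for the Left-only results) to the matched decompositions of $G$ and $H$. For $C_k$'s own options: every $G'\in\mathscr{G}^{C_k}$ is paired with some $H'\in\mathscr{H}^{C_k}$ with $G'=H'$, hence $G'\le_{C_k}H'$, which is the first hypothesis of the test. For every other player $C_i$: every $H'\in\mathscr{H}^{C_i}$ is paired with some $G'\in\mathscr{G}^{C_i}$ with $G'=H'$, hence $G'\le_{C_k}H'$, which is the second hypothesis. The test therefore gives $G\le_{C_k}H$. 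Since the pairing of options is a two-sided matching by $=$, which is symmetric, the hypotheses also hold with the roles of $G$ and $H$ exchanged, so the same test gives $H\le_{C_k}G$. Hence $G=_{C_k}H$, and as $k$ was arbitrary, \hyperref[prop:eqn]{Components of Equality} yields $G=H$.

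There is no real obstacle here; the argument is an assembly of machinery already in place. The one point I would take care with is the \emph{direction} of the two quantifiers in the \hyperref[thm:lesstestn]{Inequality Test}: the condition for $C_k$'s moves ranges over the options of $G$, while the condition for the remaining players ranges over the options of $H$. Both are met precisely because the correspondence between $\mathscr{G}^{C_i}$ and $\mathscr{H}^{C_i}$ matches options in pairs by $=$ (so a suitable equal partner exists on whichever side the quantifier ranges over), and because $=$ supplies $\le_{C_k}$ in both directions, so one never needs anything stronger than the plain inequality the test asks for.
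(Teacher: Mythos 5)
Your proposal is correct and is essentially the paper's own argument: the paper likewise notes that \hyperref[prop:eqn]{Components of Equality} turns the option-wise equalities into all the needed inequalities, applies the \hyperref[thm:lesstestn]{Inequality Test} $2N$ times (once per player per direction), and reassembles $G=H$ via \hyperref[prop:eqn]{Components of Equality}. Your care about which side each quantifier in the test ranges over is exactly the right point to check, and the two-sided matching in the hypothesis supplies both.
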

\begin{proof}
By the \hyperref[prop:eqn]{Components of Equality} (Proposition~\ref{prop:eqn}), all of the options of $G$ obey all inequalities with the corresponding options of $H$. For example, if $N>2$, then $H'\le_{C_1}G'$ for all $G'\in\mathscr{G}^{C_2}$ and $H'\in\mathscr{H}^{C_2}$. Applying the \hyperref[thm:lesstestn]{Inequality Test} $2N$ times and using \hyperref[prop:eqn]{Components of Equality} again, results in $G=H$.
\end{proof}

\begin{theorem}[Deleting Dominated Options]\label{thm:deldomn}
Let $G$ be a game with at least two Left options $G^{L_{1}}$ and $G^{L_{2}}$, which satisfy $G^{L_{1}}\le_LG^{L_{2}}$. Suppose that $H$ is obtained from $G$ by removing the Left option $G^{L_{1}}$. Then $H=_LG$.\end{theorem}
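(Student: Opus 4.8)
The plan is to establish both inequalities $H\le_L G$ and $G\le_L H$ separately, and in each case invoke the \hyperref[thm:lesstestn]{Inequality Test} (Theorem~\ref{thm:lesstestn}) after checking its hypotheses. The two games $G$ and $H$ differ only in the set of Left options: $G$ has Left options $\mathscr{G}^L$ while $H$ has $\mathscr{G}^L\setminus\{G^{L_1}\}$, and for every other player the option sets coincide. So both hypotheses of the Inequality Test concerning $\text{Center}_i$ options for $1\le i\le N-1$ are satisfied trivially (take the identical option and use reflexivity from Proposition~\ref{prop:Lineqn}), in both directions. The only work is with the Left options.

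First I would show $H\le_L G$. Every Left option of $H$ is also a Left option of $G$, so for each such option we may pick itself in $G$ and invoke reflexivity; the Inequality Test then gives $H\le_L G$ immediately. Next I would show $G\le_L H$. Here the Left options of $G$ are those of $H$ together with the extra option $G^{L_1}$. For a Left option of $G$ that already appears in $H$, pick itself. For the extra option $G^{L_1}$, we use the hypothesis $G^{L_1}\le_L G^{L_2}$ together with the fact that $G^{L_2}$ is a Left option of $H$ (it survives the deletion, since we only removed $G^{L_1}$ and the two options are distinct as indicated by the ``at least two'' phrasing): thus $G^{L_1}$ has a corresponding Left option $G^{L_2}$ of $H$ with $G^{L_1}\le_L G^{L_2}$. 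The Center-option hypotheses again hold by reflexivity, so the Inequality Test yields $G\le_L H$.

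Combining the two inequalities gives $G=_L H$ by Definition~\ref{def:Leqn}, which is exactly the claim. The main thing to be careful about is the edge case where $G^{L_2}$ might coincide with $G^{L_1}$ in the game tree; the statement's phrasing ``at least two Left options $G^{L_1}$ and $G^{L_2}$'' should be read as asserting these are genuinely two (distinct) options, so that removing $G^{L_1}$ leaves $G^{L_2}$ available in $H$ — this is what makes the correspondence in the $G\le_L H$ direction work. No genuine obstacle arises beyond this bookkeeping; the real content is entirely packaged inside the previously proved Inequality Test, and this proof is a direct application of it, exactly paralleling the two-player argument for deleting dominated options.
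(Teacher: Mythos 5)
Your proof is correct and follows the same route as the paper: both inequalities are obtained by direct application of the Inequality Test, using reflexivity for the unchanged options and the hypothesis $G^{L_1}\le_L G^{L_2}$ for the deleted one. Your extra remark that $G^{L_2}$ must survive the deletion (hence the ``at least two'' hypothesis) is a correct and worthwhile clarification that the paper leaves implicit.
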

\begin{proof}
Since all the options of $H$ are options of $G$, 
the \hyperref[thm:lesstestn]{Inequality Test} yields $H\le_LG$. Since all the options of $G$ are options of $H$ except for $G^{L_{1}}$, and $G^{L_{1}}\le_LG^{L_{2}}$, 
the \hyperref[thm:lesstestn]{Inequality Test} yields $G\le_LH$. The claim follows from Definition~\ref{def:Leqn} for $=_L$.\end{proof}

Note that by Proposition~\ref{prop:2playeq}, this reduces to the standard theorem in the two-player case (see Thm. II.2.4 in \cite{cgt}).

Unfortunately, with more than two players, we only obtain equality for a particular player. In fact, deleting a dominated option need not even preserve the outcome of the game.

\begin{proposition}\label{prop:domnooutcomen}
If $N>2$, there exist games $G$ and $H$ satisfying $G=_LH$ by \hyperref[thm:deldomn]{Deleting Dominated Options}, but $o(G)\ne o(H)$.

\end{proposition}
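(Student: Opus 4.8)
The plan is to exhibit explicit games $G$ and $H$ for which $H$ arises from $G$ by deleting a dominated Left option, so that $G =_L H$ by the \hyperref[thm:deldomn]{Deleting Dominated Options} theorem, yet $o(G) \ne o(H)$ because some restriction that begins with a non-Left move sees a changed outcome. The key realization is that $=_L$ only controls the behavior of Left's winning strategies, whereas the outcome $o(G)$ records information about \emph{all} restrictions $\rest{G}{C_i}$; deleting a Left option can alter $\rest{G}{C_j}$ for $j \ne L$ when that restriction passes back through Left's options at a deeper level, and hence change whether, say, $\mathbf{O}_1$ has a winning strategy when $\text{Center}_1$ moves first.

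Concretely, I would work with $N = 3$ (the general $N > 2$ case then follows by padding with extra players who have a single forced line, e.g. wrapping in braces in the other components). I want two Left options $G^{L_1} \le_L G^{L_2}$, the smaller of which nonetheless affects a non-Left restriction. A natural choice is $G^{L_1} \cong 0$ and $G^{L_2}$ some game with $0 \le_L G^{L_2}$ (guaranteed, e.g., if $G^{L_2}$ has only Left options, by Proposition~\ref{prop:leftmovesn} / the \hyperref[thm:posleftn]{Nonnegativity Rule}); then certainly $G^{L_1} \le_L G^{L_2}$. Now build $G$ to have, besides these two Left options, some $\text{Center}_1$ and Right options chosen so that the restriction $\rest{G}{R}$ (which is the impartial game $\{\rest{G^R_j}{L}\}$, and each $\rest{G^R_j}{L}$ is in turn $\{\rest{(G^R_j)^L_k}{C_1}\}$, eventually feeding off $G$'s Left options through the cycle $L \to C_1$) has a different impartial outcome depending on whether $G$ retains the Left option $0$. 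Deleting $G^{L_1} \cong 0$ removes an option at the relevant depth, and by the recursive Definition~\ref{def:3impout} this can flip membership of $\mathbf{N}$, $\mathbf{O}$, or $\mathbf{P}$ in $o(\rest{G}{R})$, giving $o(G) \ne o(H)$.

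The main obstacle is getting the bookkeeping right: one must verify that the specific auxiliary options chosen for $\text{Center}_1$ and Right actually produce the desired outcome shift in the right restriction while leaving the hypothesis $G^{L_1} \le_L G^{L_2}$ intact, and confirm $o(G) \ne o(H)$ by directly computing the two restriction outcomes. Once the candidate pair is pinned down, these are finite computations on small game trees (and in the paper this would most cleanly be presented by just displaying $G$ and $H$ and their restriction outcomes explicitly). I expect the writeup to consist of: (1) stating the explicit $G$ and $H$; (2) checking $G^{L_1} \le_L G^{L_2}$ so that \hyperref[thm:deldomn]{Deleting Dominated Options} applies and hence $G =_L H$; (3) computing $o(\rest{G}{R})$ and $o(\rest{H}{R})$ (or whichever restriction is affected) and observing they differ; and for general $N > 2$, (4) noting that the three-player example lifts by inserting the $N-3$ extra players as trivial intermediate stages via nested braces, which changes neither the $=_L$ relation nor the fact that the outcomes disagree in one component.
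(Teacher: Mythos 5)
There is a genuine gap in your mechanism for producing the outcome difference. By Definition~\ref{def:restn}, the restriction $\rest{G}{R}$ is built from the \emph{Right} options of $G$ (and, recursively, from the options of those subpositions); the top-level Left options of $G$ never enter $\rest{G}{R}$ or any $\rest{G}{C_i}$ with $i\ge 1$ --- they appear only in $\rest{G}{L}$. So deleting a Left option of $G$ cannot change any restriction other than the Left one, and your plan to detect the difference in $\rest{G}{R}$ (via auxiliary $\text{Center}_1$ and Right options) cannot work: those components of $o(G)$ and $o(H)$ are identical by construction. Worse, your specific candidate pair also fails in the Left restriction: with $G^{L_1}\cong 0$ and $G^{L_2}$ having only Left options, both $\rest{G^{L_1}}{C_1}$ and $\rest{G^{L_2}}{C_1}$ are isomorphic to $0$, so $\rest{G}{L}\cong\rest{H}{L}\cong *$ and $o(G)=o(H)$.

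The paper's proof puts the difference where it must go, namely in $\rest{G}{L}$ itself, and exploits that for $i\ge 1$ membership of $\mathbf{O}_i$ in an impartial outcome is a \emph{universal} condition over options: adding an extra option, even one dominated for Left, can destroy another player's winning strategy. Concretely, it takes $G\cong\{J^{L_1},J^{L_2}\mid\cdots\mid\,\}$ and $H\cong\{J^{L_2}\mid\cdots\mid\,\}$ where $J^{L_1}$ and $J^{L_2}$ are single chains through all the players differing only in their last edge (one ends with an extra move for $\text{Center}_1$, so $\text{Center}_2$ loses; the other with an extra move for Left, so $\text{Center}_1$ loses). One checks $J^{L_1}\le_L J^{L_2}$ by iterating the \hyperref[thm:lesstestn]{Inequality Test} from $1_{C_1}\le_L 1_L$, and then $\mathbf{O}_2\in o\bigl(\rest{H}{L}\bigr)$ but $\mathbf{O}_2\notin o\bigl(\rest{G}{L}\bigr)$ because Left's extra option gives a line of play in which $\text{Center}_2$ loses. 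If you want to salvage your writeup, you need your two Left options to have genuinely different non-Left structure below them (so their $C_1$-restrictions differ) while one still dominates the other for Left; the paper's chain construction is essentially the minimal way to arrange this.
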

\begin{proof}
There exist games $J^{L_{1}}$ and $J^{L_{2}}$ such that for the games with no options for players other than Left, $G\cong\left\{ J^{L_{1}},J^{L_{2}}\mid\cdots\mid\,\right\}$ and $H\cong\left\{J^{L_{2}}\mid\cdots\mid\,\right\}$, we have $G=_LH$, but $o\left(G\right)\ne o\left(H\right)$.

Take $J^{L_1}$ to be a game which allows each player to move once in turn, starting with $\text{Center}_1$ and ending with Left, and then $\text{Center}_1$ may move once more (so that $\text{Center}_2$ loses). Take $J^{L_2}$ to be the same, except the last option is another move for Left (so that $\text{Center}_1$ loses) instead of a move for $\text{Center}_1$. 

The game trees for the case of $N=3$ are illustrated below.

\hfill
\begin{tikzpicture}
\node (rooty) at (0,0.5) {$J^{L_{1}}$};
\node (root) at (0,0) {$\bullet$};
\node (c) at (0,-1) {$\bullet$};
\node (l) at (1,-2) {$\bullet$};
\node (lr) at (0,-3) {$\bullet$};
\node (o) at (0,-4) {$\bullet$};
\draw [thick] (root) -- (c) -- (l) -- (lr) -- (o);
\end{tikzpicture}
\hfill
\begin{tikzpicture}
\node (rooty) at (0,0.5) {$J^{L_{2}}$};
\node (root) at (0,0) {$\bullet$};
\node (c) at (0,-1) {$\bullet$};
\node (l) at (1,-2) {$\bullet$};
\node (lr) at (0,-3) {$\bullet$};
\node (o) at (-1,-4) {$\bullet$};
\draw [thick] (root) -- (c) -- (l) -- (lr) -- (o);
\end{tikzpicture}
\hfill
\begin{tikzpicture}
\node (meh) at (0,1) {};
\end{tikzpicture}

By 
the \hyperref[thm:lesstestn]{Inequality Test}, $1_{C_1}\le_L1_L$. With this as a starting point, we work our way up the game trees to see that multiple applications of 
the \hyperref[thm:lesstestn]{Inequality Test} yield $J^{L_{1}}\le_LJ^{L_{2}}$. Thus, by 
\hyperref[thm:deldomn]{Deleting Dominated Options}, we can delete the dominated option to find $\left\{ J^{L_{1}},J^{L_{2}}\mid\cdots\mid\,\right\}=_L\left\{J^{L_{2}}\mid\cdots\mid\,\right\}$.

To distinguish the outcomes of the two games, we examine the restrictions $\rest{\left\{ J^{L_{1}},J^{L_{2}}\mid\cdots\mid\,\right\}}{L}$ and $\rest{\left\{J^{L_{2}}\mid\cdots\mid\,\right\}}{L}$ to see if $\text{Center}_{2}$ has a winning strategy.

In $\rest{\left\{J^{L_{2}}\mid\cdots\mid\,\right\}}{L}$, the only line of play leads to $\rest{1_{L}}{C_1}$. In this case, $\text{Center}_{1}$ loses, and so $\text{Center}_{2}$ wins. $o\left(\rest{\left\{J^{L_{2}}\mid\cdots\mid\,\right\}}{L}\right)=\left\{\mathbf N,\mathbf O_2,\ldots,\mathbf O_{N-1}\right\}$. 

But in $\rest{\left\{J^{L_{1}},J^{L_{2}}\mid\cdots\mid\,\right\}}{L}$, Left may choose to move to $J^{L_{1}}$ for a line of play ending with $\rest{1_{C_1}}{C_1}$ followed by $\rest{0}{C_2}$. In this case, $\text{Center}_{2}$ loses. Thus, \[o\left(\rest{\left\{J^{L_{1}},J^{L_{2}}\mid\cdots\mid\,\right\}}{L}\right)=\left\{\mathbf N,\mathbf O_3,\ldots,\mathbf O_{N-1}\right\}=o\left(\rest{\left\{J^{L_{2}}\mid\cdots\mid\,\right\}}{L}\right)-\{\mathbf O_2\}\text{.}\]
Since the outcomes of the Left restrictions are not equal, $o(G)\ne o(H)$.
\end{proof}

\begin{theorem}[Bypassing Reversible Options]\label{thm:partrevertingn}
Let $G$ be a game, and suppose that some $N^{\text{th}}$ option satisfies $G^{\widehat{L}\widehat{C_{1}}\cdots\widehat{R}}\le_{L}G$. Put \[G'\cong\left\{G^{\widehat{L}\widehat{C_{1}}\cdots\widehat{R}L},G^{L'}\mid G^C\mid G^R\right\}\text{,}\] where $G^{\widehat{L}\widehat{C_{1}}\cdots\widehat{R}L}$ ranges over all Left options of $G^{\widehat{L}\widehat{C_{1}}\cdots\widehat{R}}$, and $G^{L'}$ ranges over all Left options of $G$ except for $G^{\widehat{L}}$. Then $G'=_LG$.\end{theorem}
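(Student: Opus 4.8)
By Definition~\ref{def:Leqn} it suffices to prove the two inequalities $G'\le_LG$ and $G\le_LG'$. Write $A\cong G^{\widehat L\widehat{C_1}\cdots\widehat R}$, so that the hypothesis reads $A\le_LG$, and note that $G$ and $G'$ have exactly the same $\text{Center}_i$ options for every $i$ with $1\le i\le N-1$; they differ only among Left options, where $G$ carries the single option $G^{\widehat L}$ whereas $G'$ carries in its place the Left options $A^L$ of $A$ (the common options $G^{L'}$ being present in both). The \hyperref[thm:lesstestn]{Inequality Test} does not immediately apply to either inequality---there is no \emph{a priori} reason for $G^{\widehat L}$ to be $\le_L$ one of the bypass options $A^L$, nor conversely---so each direction will be proved by a direct strategy-transfer argument, carried out by induction on the joint complexity of $X$ and of the first-component games, in the style of the proofs of Theorems~\ref{thm:lesstestn} and~\ref{thm:reverting-n}: for a fixed $X$ one supposes Left has a winning strategy in one of $G+X$, $G'+X$ moving $k^{\text{th}}$, examines the Left move that strategy prescribes (or, when Left does not move first, the opponents' opening moves, which is where the induction on $X$ enters), and converts it into a winning reply in the other game.

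The engine of the argument is that $G^{\widehat L}$ is a \emph{reversible} Left option. If Left plays $G\to G^{\widehat L}$ in $G+X$, then the remaining $N-1$ players may descend the hatted chain, $G^{\widehat L}\to G^{\widehat L\widehat{C_1}}\to\cdots\to A$, returning play to Left at the position $A+X$; since $A\le_LG$, this line yields Left nothing beyond having stayed at $G$. Dually, $G'$ lets Left move directly to any Left option $A^L$ of $A$, that is, to ``jump ahead'' through exactly this reversal. Hence, to prove $G\le_LG'$: whenever Left's winning strategy in $G+X$ calls for $G\to G^{\widehat L}$, that strategy must in particular defeat the reply in which the opponents descend the chain, a line that reaches $A+X$ with Left to move and whose continuation opens with a move to some Left option $A^L$ of $A$ (unless it opens with a move inside $X$, a case absorbed by the induction on $X$); in $G'+X$ Left instead plays $G'\to A^L$ directly, landing in the same position $A^L+X$ with $\text{Center}_1$ to move, and resumes the borrowed strategy. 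To prove $G'\le_LG$ one reverses this: a move $G'\to A^L$ demanded by Left's strategy in $G'+X$ is answered by $G\to G^{\widehat L}$ in $G+X$, and once the opponents complete the descent Left uses the hypothesis $A\le_LG$ (together with transitivity, Proposition~\ref{prop:Lineqn}) to carry the borrowed winning play from the position $A+X$ back into the $G$-game.

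The principal difficulty is the interleaving bookkeeping. Because the reversing chain is $N$ moves long, each of the $N-1$ non-Left players gets an opportunity to deviate from it---by choosing a non-hatted option of the current first component, or by moving inside $X$ rather than in the first component---and one must check that every such deviation is either matched move-for-move in the parallel game or else leaves play at a strictly smaller subposition of $G+X$ (resp.\ $G'+X$), to which the inductive hypothesis applies, in some branches in combination with $A\le_LG$. Making this precise requires a suitable induction measure (say the depth of $X$ plus the depths of the first-component games in the two parallel plays) and a maintained invariant: the two parallel games agree outside the first component, and their first components are ``in register'' up to a partial descent of the hatted chain. Once the first-mover case split is organized accordingly---Left moving first and either using or not using the distinguished option, and a non-Left player moving first, where $G$ and $G'$ are literally the same game---each individual case becomes routine and mirrors the corresponding case in the proof of the \hyperref[thm:lesstestn]{Inequality Test}.
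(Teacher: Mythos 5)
Your first direction, $G\le_LG'$, is essentially the paper's argument: writing $A\cong G^{\widehat{L}\widehat{C_{1}}\cdots\widehat{R}}$ as you do, when Left's winning first move in $G+X$ is to $G^{\widehat{L}}+X$, the strategy must in particular survive the opponents' descent of the hatted chain, so Left can win $A+X$ moving first; the opening move of that continuation is either to some $A^{L}+X$, which is directly available from $G'+X$, or to $A+X^{L}$, which via $A\le_LG$ shows $G+X^{L}$ is also a winning first move in $G+X$ and so reduces to the case already handled by induction on $X$. (Do state that last use of $A\le_LG$ explicitly; your ``absorbed by the induction on $X$'' skips it.)

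The direction $G'\le_LG$ is where your proposal breaks. You answer Left's winning move $G'\to A^{L}$ with the move $G\to G^{\widehat{L}}$ in $G+X$ and then wait for the opponents to ``complete the descent.'' Nothing forces them to: $\text{Center}_1$ may play any other option of $G^{\widehat{L}}$, or move in $X$, and the resulting position has no counterpart in the play of $G'+X$ you are borrowing from (after $G'\to A^{L}$, the opponents' options live in $A^{L}$ or in $X$, not in $G^{\widehat{L}}$). Such a deviation can be neither ``matched move-for-move'' nor funneled into a smaller instance of the theorem, so the invariant of your final paragraph cannot be maintained; indeed $G\to G^{\widehat{L}}$ need not be a winning move for Left in $G+X$ at all, since reversibility only controls what happens if the opponents do descend. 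The correct argument never plays $G^{\widehat{L}}$: because $A^{L}$ is also a Left option of $A$, Left's win at $A^{L}+X$ moving last shows Left wins $A+X$ moving first, and the hypothesis $A\le_LG$ applied to this $X$ then yields a winning first move for Left in $G+X$\textemdash possibly a completely different one. This is how the paper closes that case, and it also makes the ``interleaving bookkeeping'' machinery of your third paragraph unnecessary: in neither direction does one track a partially descended chain against a parallel position.
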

\begin{proof}
Suppose Left can win $G+X$ moving first. If they can win by moving to some $G+X^{L}$, then by induction they could win $G'+X^{L}$ moving last, so they can win $G'+X$ moving first. And if Left can win by moving to some $G^{L'}+X$ (not $G^{\widehat{L}}$) then they can do the same in $G'+X$. 

Define $H\cong G^{\widehat{L}\widehat{C_{1}}\cdots\widehat{R}}+X$. The interesting case is if Left can win $G+X$ by moving to $G^{\widehat{L}}+X$. Then the other players can respond by moving to $H$, so Left can win $H$ moving first. 

If Left can win $H$ by moving to some $G^{\widehat{L}\widehat{C_{1}}\cdots\widehat{R}}+X^{L}$, then Left can win $G+X^{L}$ (since $G^{\widehat{L}\widehat{C_{1}}\cdots\widehat{R}}\le_L G$). But then Left could win $G+X$ by moving to $G+X^{L}$, so we're in the former case again. 
And if Left can win $H$ by moving to some $G^{\widehat{L}\widehat{C_{1}}\cdots\widehat{R}L}+X$, they could do the same in $G'+X$.

Now suppose Left has a winning strategy in $G+X$ moving $i^{\text{th}}$ for some $i$ with $2\le i\le N-1$. Then they can win all $G^{C_{N-i+1}}+X$ and $G+X^{C_{N-i+1}}$ moving $(i-1)^\text{st}$. But $G^{C_{N-i+1}}+X$ are some of the $\text{Center}_{N-i+1}$ options of $G'+X$. And, by induction, Left can win the other $\text{Center}_{N-i+1}$ options which have the form $G'+X^{C_{N-i+1}}$. 

Therefore, $G\le_{L}G'$. It remains to show that $G'\le_{L}G$.

Suppose Left can win $G'+X$ moving first. Then there are three cases.

If they win by moving to some $G^{L'}+X$, they can do the same in $G+X$. If they win by moving to $G'+X^{L}$, then by induction they can win by moving to $G+X^{L}$. If they win by moving to $G^{\widehat{L}\widehat{C_{1}}\cdots\widehat{R}L}+X$, this means they could also win $G^{\widehat{L}\widehat{C_{1}}\cdots\widehat{R}}+X$ by moving to $G^{\widehat{L}\widehat{C_{1}}\cdots\widehat{R}L}+X$. Since $G^{\widehat{L}\widehat{C_{1}}\cdots\widehat{R}}\le_LG$, Left can win $G+X$ moving first. 

The other cases where Left can win $G'+X$ moving $i^{\text{th}}$ for some $i$ with $2\le i\le N-1$ are routine.

Since we proved both inequalities, $G'=_{L}G$.\end{proof}

Unfortunately, as with \hyperref[thm:deldomn]{Deleting Dominated Options} (Theorem~\ref{thm:deldomn}), \hyperref[thm:partrevertingn]{Bypassing Reversible Options} need not preserve outcomes (cf. Proposition~\ref{prop:domnooutcomen}).

\begin{proposition}\label{prop:reversenooutcomen}
If $N>2$, then there exists a game $G$ such that $G=_L0$ by \hyperref[thm:partrevertingn]{Bypassing Reversible Options}, but $o(G)\ne o(0)$.
\end{proposition}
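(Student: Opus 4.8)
The plan is to exhibit, for each $N>2$, a small game $G$ that reverses to $0$ via \hyperref[thm:partrevertingn]{Bypassing Reversible Options} (Theorem~\ref{thm:partrevertingn}) but with $o(G)\ne o(0)$. The design: $G$ has a single Left option $A$; from $A$ there is a forced $N$-move cycle ($\text{Center}_1$, then $\text{Center}_2$, \dots, then Right, each move unique) ending at $0$, so that $0$ is an $N^{\text{th}}$ option of $G$ satisfying $0\le_L G$; and $A$ carries one further, ``inert'' $\text{Center}_1$-option, namely $0$, whose sole purpose is to perturb $\rest{G}{L}$.

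Concretely, put $T_{N-1}\cong 0$, and for $1\le j\le N-2$ let $T_j$ be the game whose unique option is $T_{j+1}$, available to player $C_{j+1}$ (so $T_{N-2}\cong 1_R$ and $T_1,\dots,T_{N-1}$ realizes one move by each of $\text{Center}_1,\dots,\text{Center}_{N-2},\text{Right}$). Let $A$ have $\text{Center}_1$-options $T_1$ and $0$ and no options for any other player, and let $G\cong\{A\mid\,\mid\cdots\mid\,\}$. (For $N=3$ this is $G\cong\{\{\,\mid 1_R,0\mid\,\}\mid\,\mid\,\}$.)

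I would then verify three things. (a) \emph{Reversibility}: along $\widehat L=A$, $\widehat{C_1}=T_1,\dots,\widehat{C_{N-2}}=T_{N-2}$, $\widehat R=T_{N-1}$, the $N^{\text{th}}$ option $G^{\widehat L\widehat{C_1}\cdots\widehat R}$ is $0$, and $0\le_L G$ by Proposition~\ref{prop:leftmovesn} since $G$ has only Left options. (b) \emph{Bypassing}: in the game $G'$ produced by Theorem~\ref{thm:partrevertingn}, the Left options are the Left options of $G^{\widehat L\widehat{C_1}\cdots\widehat R}=0$ (there are none) together with the Left options of $G$ other than $A$ (none), and the $\text{Center}_i$- and Right-option sets are those of $G$ (all empty); hence $G'\cong 0$, so $G=_L G'=0$. (The extra option $0$ on $A$ is irrelevant here, since bypassing $A$ discards every option of $A$ except those re-entering via the $N^{\text{th}}$ option.) (c) \emph{Outcome}: $G$ has no options outside Left, so $\rest{G}{C_i}\cong 0$ for $1\le i\le N-1$ and only $\rest{G}{L}$ can differ from the corresponding restriction of $0$. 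By Definition~\ref{def:restn}, $\rest{G}{L}\cong\{\rest{A}{C_1}\}$ and $\rest{A}{C_1}\cong\{\rest{T_1}{C_2},\,0\}$, while unwinding the restriction along $T_1,\dots,T_{N-1}$ gives $\rest{T_1}{C_2}\cong\{\cdots\{0\}\cdots\}$ with $N-2$ nested braces. By claim~2 of Proposition~\ref{prop:nimpouts}, wrapping in $\{\cdot\}$ cyclically shifts the outcome, so $o(\rest{T_1}{C_2})=\overline{\{\mathbf O_{N-2}\}}$, which contains $\mathbf P$. Thus $\mathbf N\in o(\rest{A}{C_1})$ (it has an option with $\mathbf P$ in its outcome), while $\mathbf N\notin o(0)$ forces $\mathbf O_1\notin o(\rest{A}{C_1})$; one more shift gives $\mathbf O_2\notin o(\rest{G}{L})$. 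Since $\mathbf O_2$ is a genuine player for $N>2$ (it is $\mathbf P$ when $N=3$) and $\mathbf O_2\in\overline{\{\mathbf N\}}=o(\rest{0}{L})$, we conclude $o(G)\ne o(0)$.

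The point I expect to be the real content is step (c), and specifically the recognition that the \emph{naive} candidate — the bare reversing chain, i.e.\ taking $A\cong T_1$ with no extra option — fails: then $\rest{G}{L}$ is an $N$-fold iterated singleton over $0$, and $N$ applications of the cyclic shift return $o(0)$ on the nose, so $o(G)=o(0)$. The inert $\text{Center}_1$-option $0$ added to $A$ is exactly what deletes $\mathbf O_1$ from $o(\rest{A}{C_1})$ (hence $\mathbf O_2$ from $o(\rest{G}{L})$) while leaving $G'\cong0$ untouched. Everything else — the $N^{\text{th}}$-option condition, the shape of $G'$, and the restriction calculations — is a routine unwinding of the definitions together with Theorem~\ref{thm:partrevertingn}, Proposition~\ref{prop:leftmovesn}, and the shift rule.
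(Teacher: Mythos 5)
Your proof is correct and follows essentially the same approach as the paper: a single Left move to a position where $\text{Center}_1$ chooses between ending the game immediately (so $\text{Center}_2$ loses, perturbing $o\left(\rest{G}{L}\right)$) and a chain through the remaining players down to $0$, which supplies the $N^{\text{th}}$ option that reverses out. The only cosmetic difference is that the paper realizes the chain as the sum $1_{C_2}+\cdots+1_{C_{N-1}}$ rather than your forced sequential $T_1$; the two coincide for $N=3$ and are interchangeable in general.
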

\begin{proof}
Take $G\cong\left\{H\mid\cdots\mid\,\right\}$ where $H$ only has two options, both for $\text{Center}_1$; the options are $0$ (to make $\text{Center}_2$ lose) and $1_{C_{2}}+\cdots+1_{C_{N-1}}$. The case of $N=3$ is illustrated below.

\begin{center}
\begin{tikzpicture}
\node (root) at (0,0) {$G$};
\node (c) at (-1,-1) {$H$};
\node (l) at (-1.2,-2) {$\bullet$};
\node (lr) at (-0.8,-2) {$\bullet$};
\node (o) at (0.2,-3) {$\bullet$};
\draw [thick] (root) -- (c);
\draw [thick] (-0.8,-1.2) -- (lr) -- (o);
\draw [thick] (-1.2,-1.2) -- (l);
\end{tikzpicture}
\end{center}

Note that $0\le_LG$ by Proposition~\ref{prop:leftmovesn}. Thus, by \hyperref[thm:partrevertingn]{Bypassing Reversible Options}, we can bypass the Left option through $0$ to find $G=_L0$. (This could also be verified with the \hyperref[thm:nonpos]{Nonpositivity Rule} as in the proof of Corollary~\ref{cor:semitrin}.)

To distinguish the outcomes of the two games, we examine $\rest{G}{L}$ and $\rest{0}{L}$ to see whether the previous player (Right) has a winning strategy.

In $\rest{0}{L}$, Left loses immediately, so $o\left(\rest{0}{L}\right)=\left\{\mathbf O_1,\ldots,\mathbf O_{N-1}\right\}$. But in $\rest{G}{L}$, Left must move to $\rest{H}{C}$. Then $\text{Center}_1$ may choose to move to $0$ so that $\text{Center}_2$ loses. Thus,  
\[o\left(\rest{G}{L}\right)=\left\{\mathbf O_1,\mathbf O_3,\ldots,\mathbf O_{N-1}\right\}=o\left(\rest{0}{L}\right)-\{\mathbf O_2\}\text{.}\]
Since the outcomes of the Left restrictions are not equal, $o(G)\ne o(0)$.\end{proof}

\subsection{Integers}

Earlier, we defined games with a single move for various players. For example, $1_{C_1}\cong\{\,\mid0\mid\cdots\mid\,\}$. Following \cite{cinc}, we can extend these definitions to define various ``integers''. We can similarly define $2_L\cong\left\{1_L,0\mid\cdots\mid\,\right\}$, $3_L\cong\left\{2_L,1_L,0\mid\cdots\mid\,\right\}$ etc. And analogously for the other players.

In this subsection, as a sort of case study and application of the earlier results, we examine these integers and some related games in detail.

\subsubsection{Comparing Integers}

\begin{proposition}\label{prop:onesn}
If $0\le k<m$, then $k\cdot1_L<_Lm\cdot1_L$.
\end{proposition}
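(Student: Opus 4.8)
### Proof plan

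The statement is $k\cdot 1_L <_L m\cdot 1_L$ whenever $0\le k<m$. Since $<_L$ is the strict inequality (Definition~\ref{def:Lstrictleqn}), I need to establish two things: first that $k\cdot 1_L \le_L m\cdot 1_L$, and second that $m\cdot 1_L \nleq_L k\cdot 1_L$.

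For the inequality $k\cdot 1_L\le_L m\cdot 1_L$, the clean route is induction on $m-k$ together with transitivity (Proposition~\ref{prop:Lineqn}), so it suffices to prove $k\cdot 1_L\le_L (k{+}1)\cdot 1_L$ for all $k\ge0$. I would verify this with the \hyperref[thm:lesstestn]{Inequality Test} (Theorem~\ref{thm:lesstestn}). Writing $k\cdot 1_L\cong\{(k{-}1)\cdot 1_L,\dots,1_L,0\mid\cdots\mid\,\}$ (and $0\cdot 1_L\cong 0$), both games have no options for any player other than Left, so the condition on $\text{Center}_i$ options ($1\le i\le N-1$) is vacuous. For the Left options: every Left option of $k\cdot 1_L$ is some $j\cdot 1_L$ with $0\le j\le k-1$, and this is also a Left option of $(k{+}1)\cdot 1_L$; taking the corresponding option to be itself, reflexivity of $\le_L$ (Proposition~\ref{prop:Lineqn}) gives $j\cdot 1_L\le_L j\cdot 1_L$. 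Hence the \hyperref[thm:lesstestn]{Inequality Test} applies and $k\cdot 1_L\le_L(k{+}1)\cdot 1_L$. Alternatively, one can phrase this via \hyperref[thm:deldomn]{Deleting Dominated Options}: $(k{+}1)\cdot 1_L$ is obtained from $k\cdot 1_L$ together with the extra Left option $k\cdot 1_L$, and — once we know the $j\cdot 1_L$ are $\le_L$-increasing — the option $0$ (say) is dominated, but the direct \hyperref[thm:lesstestn]{Inequality Test} argument is cleaner and avoids circularity.

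For strictness, i.e. $m\cdot 1_L\nleq_L k\cdot 1_L$, by transitivity it again suffices to show $(k{+}1)\cdot 1_L\nleq_L k\cdot 1_L$ for each $k\ge0$. I need a test game $X$ and a play order witnessing that Left wins $((k{+}1)\cdot 1_L)+X$ but not $(k\cdot 1_L)+X$. The natural choice is to take $X$ to be a position that gives the other players exactly enough moves to outlast $k$ Left-moves but not $k{+}1$ of them — concretely, let $Y\cong (k{+}1)\cdot(1_L)^-$ or a similar "stalling" game for the non-Left players, arranged so that with Left moving (say) first, after Left exhausts the $1_L$ component and $X$ provides a fixed supply of replies, Left runs out of moves precisely when starting from $k\cdot 1_L$ but survives starting from $(k{+}1)\cdot 1_L$. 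Because in $j\cdot 1_L$ only Left can ever move, the total number of moves available to Left in $(j\cdot 1_L)+X$ is $j$ plus Left's moves in $X$, so by choosing $X$ to pin down the parity/count of the round in which moves run out, exactly one of the two sums is a Left win under the chosen order. This is the step I expect to be the main obstacle: constructing $X$ so that the move-counting works out cleanly for all $k$ simultaneously, and checking that the non-Left players genuinely cannot do better than the intended line in $(k\cdot 1_L)+X$ (here the key structural fact is that $k\cdot 1_L$ offers the non-Left players nothing, so all of their play is forced into $X$). Once $X$ is exhibited and the winning/losing analysis is carried out by a short induction on the game tree, Definition~\ref{def:Lstrictleqn} gives $k\cdot 1_L<_L m\cdot 1_L$.
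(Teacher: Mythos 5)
Your overall plan (establish $\le_L$ one step at a time, then exhibit a test game witnessing strictness) is the same as the paper's, but there are two concrete problems. First, the isomorphism $k\cdot 1_L\cong\{(k{-}1)\cdot 1_L,\dots,1_L,0\mid\cdots\mid\,\}$ is false: that game is the integer $k_L$, whereas $k\cdot 1_L$ is the sum of $k$ copies of $1_L$, whose only Left option (up to isomorphism) is $(k{-}1)\cdot 1_L$. The paper only proves $k_L=k\cdot 1_L$ later (Theorem~\ref{thm:n=n1n}), and that proof \emph{uses} this proposition, so you cannot conflate the two here. Consequently your ``the corresponding option is itself, by reflexivity'' step does not apply: $(k{-}1)\cdot 1_L$ is not a Left option of $(k{+}1)\cdot 1_L$. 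The \hyperref[thm:lesstestn]{Inequality Test} route can be repaired by an induction on $k$ (pair the Left option $(k{-}1)\cdot 1_L$ with $k\cdot 1_L$ and invoke the inductive hypothesis), but the paper's route is cleaner: $0\le_L 1_L$ by Proposition~\ref{prop:leftmovesn}, then add $j\cdot 1_L$ to both sides via Proposition~\ref{prop:Lineqaddn}.

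Second, the strictness witness --- which you yourself flag as the main obstacle --- is not pinned down correctly. With your candidate $X\cong(k{+}1)\cdot(1_L)^-$ and Left moving \emph{first}, Left loses both $(k{+}1)\cdot 1_L+X$ and $k\cdot 1_L+X$ (in the former, all $N$ players have exactly $k{+}1$ forced moves, so Left is the first to run out), so no non-inequality is witnessed. The correct choice is $X\cong k\cdot(1_L)^-$ with Left moving first: Left loses $k\cdot 1_L+k\cdot(1_L)^-$ (by Proposition~\ref{prop:parttriplenon}, or by counting, since every move is forced) but wins $(k{+}1)\cdot 1_L+k\cdot(1_L)^-$ because Left has one more move than everyone else. (Alternatively your $X$ does work if Left moves $N^{\text{th}}$ rather than first.) Your structural observation that all play in these positions is forced is correct and is what makes the move-counting a complete analysis; the gap is only in choosing the right $X$ and play order.
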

\begin{proof}
By Proposition~\ref{prop:leftmovesn} (or the \hyperref[thm:posleftn]{Nonnegativity Rule} if $N>2$),  $0\le_L1_L$. Then by Proposition~\ref{prop:Lineqaddn}, we can add $j\cdot1_L$ to both sides, to obtain $j\cdot1_L\le_L(j+1)\cdot1_L$ for all $j$. By Proposition~\ref{prop:Lstricttransn}, a strict inequality propagates, so it remains to show that $(j+1)\cdot1_L\nleq_Lj\cdot1_L$ for all $j$. 

Note that by Proposition~\ref{prop:parttriplenon}, Left loses $j\cdot1_L+\left(j\cdot1_L\right)^-$
 moving first. But by counting moves, we see that Left wins $(j+1)\cdot1_L+\left(j\cdot1_L\right)^-$ moving first.
\end{proof}

\begin{proposition}\label{prop:negs}
If $1\le k$ and $1\le i<N$, then $k\cdot1_{C_i}<_L0$.
\end{proposition}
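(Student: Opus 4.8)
The plan is to establish the two halves of the strict inequality $k\cdot1_{C_i}<_L0$ separately, relying on the two comparison-with-zero rules of this subsection. Note first that the hypothesis $1\le i<N$ guarantees that $\text{Center}_i$ is \emph{not} Left (though it may be Right, when $i=N-1$), and that for $k\ge1$ the game $k\cdot1_{C_i}$ is a disjunctive sum of games each having only a single $\text{Center}_i$ option, hence itself has options only for $\text{Center}_i$ and at least one of them.

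For the inequality $k\cdot1_{C_i}\le_L0$: since $k\cdot1_{C_i}$ has no Left options at all, its Left restriction $\rest{k\cdot1_{C_i}}{L}$ is the empty impartial game $0$, so $\mathbf N\notin o(\rest{k\cdot1_{C_i}}{L})$ because $o(0)=\op$. The \hyperref[thm:nonpos]{Nonpositivity Rule} then yields $k\cdot1_{C_i}\le_L0$ immediately. For the reverse non-inequality $0\nleq_L k\cdot1_{C_i}$ when $N>2$: since $k\ge1$, the game $k\cdot1_{C_i}$ is not isomorphic to $0$ and it has a $\text{Center}_i$ option with $i\ge1$, so it is not a game having only Left options; by the \hyperref[thm:posleftn]{Nonnegativity Rule}, $0\nleq_L k\cdot1_{C_i}$. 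Combined with the previous paragraph and Definition~\ref{def:Lstrictleqn}, this gives $k\cdot1_{C_i}<_L0$ whenever $N>2$.

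The one case not handled this way is $N=2$, where necessarily $i=1$ and $\text{Center}_1=R$, and the \hyperref[thm:posleftn]{Nonnegativity Rule} (stated only for $N>2$) is unavailable; this is the only real wrinkle. Here I would exhibit an explicit witness $X\cong1_L$: Left wins $0+X\cong1_L$ moving first by moving to $0$, after which Right has no move and loses, whereas in $k\cdot1_R+1_L$ with Left to move, Left's only option is to collapse the $1_L$ summand, reaching $k\cdot1_R$ with Right to move; Right then reduces a $1_R$ summand, leaving Left with no legal move, so Left loses. Thus Left has a winning strategy in $0+X$ moving first but not in $k\cdot1_{C_1}+X$ moving first, witnessing $0\nleq_L k\cdot1_{C_1}$, and together with $k\cdot1_{C_1}\le_L0$ this finishes the $N=2$ case. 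The main point requiring care is precisely this split: everything else is a direct application of the \hyperref[thm:nonpos]{Nonpositivity} and \hyperref[thm:posleftn]{Nonnegativity Rules}, and the $N=2$ case must be argued by hand because the latter rule does not apply there.
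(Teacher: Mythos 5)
Your proof is correct and follows essentially the same route as the paper: $k\cdot1_{C_i}\le_L0$ via the \hyperref[thm:nonpos]{Nonpositivity Rule} (Left has no move, so loses moving first), and $0\nleq_L k\cdot1_{C_i}$ via the \hyperref[thm:posleftn]{Nonnegativity Rule} when $N>2$, with a separate argument for $N=2$. The only difference is that the paper dismisses the $N=2$ case as the well-known fact $k\cdot(-1)\ngeq0$, whereas you supply an explicit witness $X\cong1_L$; your witness is valid and makes the argument self-contained.
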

\begin{proof}Since $1_{C_i}$ has no Left options, Left loses immediately when moving first, and so $G\le_L0$ by the \hyperref[thm:nonpos]{Nonpositivity Rule}.

If $N=2$, $k\cdot(-1)\ngeq0$ is well-known. For $N>2$, since $1_{C_i}$ has an option for a player other than Left, the \hyperref[thm:posleftn]{Nonnegativity Rule} yields $G\ngeq_L0$. Putting both inequalities together,  $G<_L0$.\end{proof}

\begin{proposition}\label{prop:incomp}If $N>2$, $1\le i,j\le N-1$, $i\ne j$, and $1\le k,\ell$, then $k_{C_i}\nleq_L \ell_{C_j}$.\end{proposition}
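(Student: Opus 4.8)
The statement to prove is: for $N>2$, distinct $i,j \in \{1,\dots,N-1\}$, and $k,\ell \geq 1$, we have $k_{C_i} \nleq_L \ell_{C_j}$. The plan is to exhibit a single game $X$ and a play order under which Left has a winning strategy in $\ell_{C_j} + X$ but not in $k_{C_i} + X$, which by Definition~\ref{def:ineqn} witnesses $k_{C_i}\nleq_L \ell_{C_j}$. The natural candidate is to make $X$ something that, when summed with $\ell_{C_j}$, forces a player other than $\text{Center}_i$ and $\text{Center}_j$ to run out of moves, while the asymmetry between the $\text{Center}_i$-moves of $k_{C_i}$ and the $\text{Center}_j$-moves of $\ell_{C_j}$ changes which player is stranded. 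Since $k_{C_i}$ gives extra tempo only to $\text{Center}_i$ and $\ell_{C_j}$ gives extra tempo only to $\text{Center}_j$, the correct bookkeeping is to choose $X$ so that $\text{Center}_j$ urgently \emph{needs} that tempo (so $\ell_{C_j}+X$ is a Left win) whereas in $k_{C_i}+X$ the tempo lands in the wrong place and $\text{Center}_j$ loses on time, dragging down Left's winning condition.

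First I would set up $X$ along the lines used in Lemmas~\ref{lem:centmoven} and~\ref{lem:rightmoven}: take $Y \cong m\cdot(1_L)^-$ for $m$ larger than the depth of $k_{C_i}$, $\ell_{C_j}$, and any other fixed component involved, so that in any sum with $Y$ the non-Left players can outlast all of Left's remaining moves. Then build $X$ as a short game with a move for $\text{Center}_j$ to $0$ (making $\text{Center}_{j+1}$ lose immediately — or a move to $Y$ if $\text{Center}_{j+1}$ happens to be Left, in which case I would instead arrange $X$ so that the loser in the only "good" line is some player other than Left and other than $\text{Center}_i$) together with enough "padding" moves (copies of $1$ for various players) that after exactly one round of forced play it is $\text{Center}_j$'s turn. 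The point: with the appropriate starting player, in $\ell_{C_j}+X$ player $\text{Center}_j$ has one extra move available (in the $\ell_{C_j}$ component) and uses it to reach the line where $\text{Center}_{j+1}$ loses and Left wins; in $k_{C_i}+X$ that extra move belongs to $\text{Center}_i$, not $\text{Center}_j$, so $\text{Center}_j$ is the one who runs out of moves, and since $i\neq j$ Left is not helped and in fact loses (because the surviving line has Left as the stranded player, or because the line that makes Left win is no longer reachable). The hypotheses $N>2$ and $i\neq j$ are exactly what guarantee there is "room" to separate the roles of $\text{Center}_i$ and $\text{Center}_j$ from Left's.

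The key steps, in order, are: (1) fix $m$ and define $Y \cong m\cdot(1_L)^-$, recording that the non-Left coalition can exhaust Left's moves in any $\,\cdot + Y$; (2) define $X$ explicitly, with a short forced prefix of padding moves ending on $\text{Center}_j$'s turn, a $\text{Center}_j$-option that leads to a Left win, and alternative options that lead to a Left loss; (3) pick the starting player so that in $\ell_{C_j}+X$, Left wins by steering play into the good line using $\text{Center}_j$'s spare tempo from the $\ell_{C_j}$ summand; (4) check that in $k_{C_i}+X$, under the same starting player, every Left strategy fails — here one traces that $\text{Center}_i$'s spare moves are irrelevant to the timing constraint on $\text{Center}_j$, so $\text{Center}_j$ is stranded and Left loses; (5) conclude $k_{C_i}\nleq_L\ell_{C_j}$ by Definition~\ref{def:ineqn}.

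The main obstacle I anticipate is the careful construction of $X$ and the case analysis in step (4): I need the "good line" in $\ell_{C_j}+X$ to genuinely require $\text{Center}_j$'s extra tempo (not be achievable by Left playing in $X$ alone), and simultaneously ensure that in $k_{C_i}+X$ no clever Left line — perhaps exploiting $\text{Center}_i$'s extra moves or Left's own moves in $X$ — rescues the win. Managing the edge cases where $\text{Center}_{j+1}$ or $\text{Center}_{i+1}$ coincides with Left (handled by swapping which option leads to a "$Y$-trap" versus an immediate loss) will require splitting into a few subcases, but each should reduce to a move-counting argument of the kind already used in Propositions~\ref{prop:onesn} and~\ref{prop:negs}. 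I expect the whole argument to run a half page once the game $X$ is pinned down precisely.
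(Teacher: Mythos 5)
Your overall strategy (exhibit a distinguishing game $X$ and a play order) is the right one, but the proposal has three genuine problems. First, the direction is reversed: by Definition~\ref{def:ineqn}, to show $k_{C_i}\nleq_L\ell_{C_j}$ you must find $X$ and a play order in which Left wins $k_{C_i}+X$ but \emph{not} $\ell_{C_j}+X$; you set out to do the opposite, which witnesses $\ell_{C_j}\nleq_L k_{C_i}$. (This happens to be repairable because the hypotheses are symmetric in $(i,k)$ and $(j,\ell)$, but you never note that.) Second, and more seriously, your step (4) inverts the play convention: you argue that in $k_{C_i}+X$ ``$\text{Center}_j$ is the one who runs out of moves \dots\ and Left in fact loses.'' Under this convention the stranded player is the \emph{unique} loser and all other players win, so a line ending with $\text{Center}_j$ unable to move is a \emph{win} for Left; to make Left lose you must strand Left. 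Relatedly, in step (3) you have $\text{Center}_j$ ``using'' spare tempo to steer into a line where Left wins, while $X$ is also supposed to contain $\text{Center}_j$-options leading to a Left loss; a winning strategy for Left must survive adversarial play by all of the other players, so if such an option exists $\text{Center}_j$ will simply take it. Third, the witness $X$ is never actually constructed; the proposal defers exactly the part that constitutes the proof.

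For comparison, the paper's argument is two sentences with the witness $X=(1_{C_j})^-$ and Left moving first. In $k_{C_i}+(1_{C_j})^-$ the player $\text{Center}_j$ has no move in either summand (this is where $i\ne j$ enters), so regardless of how anyone plays the game ends with $\text{Center}_j$ stranded before play returns to Left, and Left wins. In $\ell_{C_j}+(1_{C_j})^-$ every player has a move on their first turn, and after one full round Left has nothing left and loses. No $Y$-padding, no case analysis on whether $\text{Center}_{j+1}$ coincides with Left, and no cooperation from any opponent is required.
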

By symmetry, this means that integers for distinct non-Left players are incomparable for Left.
\begin{proof}
Note that Left wins $k_{C_i}+(1_{C_j})^-$ moving first since $\text{Center}_j$ has no move available. But Left loses $\ell_{C_j}+(1_{C_j})^-$ moving first since Left has no move available after each player has moved once (regardless of which move $\text{Center}_j$ makes).
\end{proof}

\begin{lemma}\label{lem:mixedineqn}
If $k<m$, then $m\cdot1_{C_i}<_Lk\cdot1_{C_i}$ for $1\le i\le N-1$.
\end{lemma}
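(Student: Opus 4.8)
The plan is to establish the two inequalities $m\cdot 1_{C_i}\le_L k\cdot 1_{C_i}$ and $k\cdot 1_{C_i}\nleq_L m\cdot 1_{C_i}$ separately; by Definition~\ref{def:Lstrictleqn} these combine to give $m\cdot 1_{C_i}<_L k\cdot 1_{C_i}$. The non-strict inequality is obtained by peeling off copies: Proposition~\ref{prop:negs} (taken with coefficient $1$) gives $1_{C_i}\le_L 0$, and repeatedly adding copies of $1_{C_i}$ through Proposition~\ref{prop:Lineqaddn} yields $(m-k)\cdot 1_{C_i}\le_L 0$; adding $k\cdot 1_{C_i}$ to both sides then gives $m\cdot 1_{C_i}=k\cdot 1_{C_i}+(m-k)\cdot 1_{C_i}\le_L k\cdot 1_{C_i}$.

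For strictness I would produce an explicit witness. Let $X\cong\sum_{0\le\ell\le N-1,\;\ell\neq i}(k+1)\cdot 1_{C_\ell}$, so that in $X$ every player $C_\ell$ with $\ell\neq i$ has exactly $k+1$ available moves (in particular Left, $=C_0$, has $k+1$) while $C_i$ has none; up to isomorphism this is $((k+1)\cdot 1_{C_i})^-$. Since every summand of $j\cdot 1_{C_i}+X$ is a $1$-game, these positions are pure ``token'' games: a move merely decrements the mover's private supply, moves never interact, and the loser is whoever first faces an empty supply on their turn, so the outcome depends only on the supply counts together with the turn order, and the (vacuous) strategic choices are irrelevant. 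Taking Left to move first: in $k\cdot 1_{C_i}+X$ player $C_i$ holds $k$ tokens while everyone else, Left included, holds $k+1$, so round $k+1$ is reached and $C_i$ is the unique player who runs out — Left wins. In $m\cdot 1_{C_i}+X$ player $C_i$ now holds $m\ge k+1$ tokens, so round $k+1$ completes for all players and, since Left leads round $k+2$ with an exhausted supply, Left is the unique loser — Left does not win. Hence Left has a winning strategy in $k\cdot 1_{C_i}+X$ moving first but not in $m\cdot 1_{C_i}+X$ moving first, so $k\cdot 1_{C_i}\nleq_L m\cdot 1_{C_i}$.

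The only genuine work is the bookkeeping that these token games play out exactly as stated — that no player ahead of $C_i$ (respectively ahead of Left) in the decisive round runs out earlier, which is immediate from the supply counts — together with the degenerate case $N=2$, where the sum defining $X$ is empty, so $X\cong(k+1)\cdot 1_L$ and the same count applies with only the players Left and $C_1=R$. I expect this verification to be the main obstacle, but it is entirely routine; everything else is a direct appeal to Propositions~\ref{prop:negs} and~\ref{prop:Lineqaddn} and to Definition~\ref{def:Lstrictleqn}.
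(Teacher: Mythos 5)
Your proof is correct and follows essentially the same route as the paper: the non-strict inequality is obtained by adding $k\cdot 1_{C_i}$ to both sides of $(m-k)\cdot 1_{C_i}\le_L 0$, and strictness is witnessed by a sum of integer ``tokens'' for the players other than $\text{Center}_i$. The only cosmetic difference is that the paper takes the witness $\left(m\cdot 1_{C_i}\right)^-$ so that the ``Left does not win $m\cdot 1_{C_i}+X$ moving first'' half follows at once from Proposition~\ref{prop:parttriplenon}, whereas your $\left((k+1)\cdot 1_{C_i}\right)^-$ requires the (equally routine) direct count you describe.
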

\begin{proof}

First, we prove $\le_L$. If $k=0$, then this follows immediately from 
the \hyperref[thm:lesstestn]{Inequality Test} since $m\cdot1_{C_i}$ has no Left options and $0\cdot1_{C_i}$ has no non-Left options. If $k>0$, then we can add $k\cdot1_{C_i}$ to both sides of $(m-k)\cdot1_{C_i}\le_L0$ by Proposition~\ref{prop:Lineqaddn}.

To show that the inequality is strict, note that Left wins $k\cdot1_{C_i}+\left(m\cdot1_{C_i}\right)^-$ moving first (since $\text{Center}_i$ runs out of moves), but not $m\cdot1_{C_i}+\left(m\cdot1_{C_i}\right)^-$.
\end{proof}

\begin{theorem}\label{thm:n=n1n}For $k\ge 0$, $k_L=k\cdot1_L$.\end{theorem}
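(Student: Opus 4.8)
The plan is to prove $k_L = k \cdot 1_L$ by induction on $k$, using \hyperref[prop:parteqreplacementn]{Proposition~\ref{prop:parteqreplacementn}} (replacing options with equal options preserves equality of games), together with a direct computation that $k \cdot 1_L$ has the same Left options (up to equality) as $k_L$. Recall $k_L \cong \left\{(k-1)_L, (k-2)_L, \dots, 1_L, 0 \mid \cdots \mid\,\right\}$, a game with no options for any player except Left, whose Left options are exactly $0, 1, \dots, k-1$ in the ``$j_L$'' sense. So I need to understand the Left options of $k \cdot 1_L$.

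First I would establish the base cases: $0_L \cong 0 \cong 0 \cdot 1_L$ and $1_L = 1 \cdot 1_L$ trivially, since these are literally isomorphic. For the inductive step, assume $j_L = j \cdot 1_L$ for all $j < k$. Now I compute $k \cdot 1_L \cong 1_L + (k-1)\cdot 1_L$. Expanding the disjunctive sum, the Left options of $1_L + (k-1)\cdot 1_L$ are: $0 + (k-1)\cdot 1_L \cong (k-1)\cdot 1_L$ (moving in the first summand, whose only Left option is $0$), and $1_L + \big((k-1)\cdot 1_L\big)^{L}$ (moving in the second summand). Since players other than Left have no options anywhere in $k \cdot 1_L$, it suffices to handle the Left options. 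By a sub-induction (or a short separate argument), the Left options of $(k-1)\cdot 1_L$ are, up to $\cong$, the games $j \cdot 1_L$ for $0 \le j \le k-2$; hence the Left options of $1_L + (k-1)\cdot 1_L$ are $(k-1)\cdot 1_L$ together with $1_L + j\cdot 1_L \cong (j+1)\cdot 1_L$ for $0 \le j \le k-2$, i.e. exactly the games $j \cdot 1_L$ for $0 \le j \le k-1$. By the inductive hypothesis, each such $j \cdot 1_L$ equals $j_L$. Therefore $k \cdot 1_L$ and $k_L$ satisfy the hypotheses of \hyperref[prop:parteqreplacementn]{Proposition~\ref{prop:parteqreplacementn}} (each Left option of one is equal to some Left option of the other, and neither has non-Left options), so $k \cdot 1_L = k_L$.

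The main obstacle I anticipate is the bookkeeping in identifying the Left options of $(k-1)\cdot 1_L$ cleanly — this is really an auxiliary claim ``every Left option of $m \cdot 1_L$ is isomorphic to some $j\cdot 1_L$ with $j < m$, and all such arise'' which itself should be proved by induction on $m$ using the recursive definition of the disjunctive sum, noting that the expansion $1_L + (m-1)\cdot 1_L$ has Left options $(m-1)\cdot 1_L$ and $1_L + \big((m-1)\cdot 1_L\big)^L$. One must be slightly careful that $0 + G \cong G$ and $G + 0 \cong G$ up to isomorphism of game trees (which is immediate), and that ``equal options for the same player'' in \hyperref[prop:parteqreplacementn]{Proposition~\ref{prop:parteqreplacementn}} is applied with $i = 0$ (Left) only, since there are no other options to match. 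Once the option structure is pinned down, the rest is a routine application of the replacement result and the inductive hypothesis, so the argument is short modulo that combinatorial lemma.
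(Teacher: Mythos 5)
Your computation of the Left options of $k\cdot 1_L$ is wrong, and the error is fatal to the approach. In the disjunctive sum $k\cdot 1_L \cong 1_L + \cdots + 1_L$, a single Left move replaces exactly one summand $1_L$ by its unique Left option $0$, so every Left option of $k\cdot 1_L$ is isomorphic to $(k-1)\cdot 1_L$: there is only \emph{one} Left option up to isomorphism, not the full list $0, 1\cdot 1_L, \dots, (k-1)\cdot 1_L$. (Your auxiliary claim already fails at $m=2$: the Left options of $1_L+1_L$ are $0+1_L$ and $1_L+0$, both $\cong 1_L$; and even granting your inductive hypothesis, your own recursion never produces the option $0$.) Consequently the hypotheses of Proposition~\ref{prop:parteqreplacementn} are not satisfied: $k_L$ has the Left option $0$, but the only Left option of $k\cdot 1_L$ is $(k-1)\cdot 1_L$, and $0 \ne (k-1)\cdot 1_L$ for $k\ge 2$ (indeed $0 <_L (k-1)\cdot 1_L$ by Proposition~\ref{prop:onesn}). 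The whole point of the theorem is that these two games have genuinely different option structures yet are still equal, so no argument that merely matches options up to equality can work.

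The paper's proof has to do real work precisely here. For the $=_L$ component it uses Proposition~\ref{prop:onesn} to see that $(k-1)\cdot 1_L$ dominates the other Left options of $k_L$ and then applies \hyperref[thm:deldomn]{Deleting Dominated Options} --- but that theorem only yields $=_L$, not $=$ (see Proposition~\ref{prop:domnooutcomen}). The remaining components $=_{C_i}$ are handled separately via the \hyperref[thm:lesstestn]{Inequality Test}, where the crucial input is Lemma~\ref{lem:mixedineqn} (with players switched): from $\text{Center}_i$'s point of view the inequalities reverse, so $(k-1)\cdot 1_L \le_{C_i} j\cdot 1_L$ for all $j\le k-1$, which is what lets the single Left option of $k\cdot 1_L$ answer for all $k$ Left options of $k_L$ in the direction $k\cdot 1_L \le_{C_i} k_L$. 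Your proposal contains none of this, and without it the statement is not established.
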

\begin{proof}$k=0$ and $k=1$ are trivial, so we assume $k\ge 2$. By definition, $k_L\cong\{0,1_L,2_L,\ldots,(k-1)_L\mid\cdots\mid\,\}$. By induction and Proposition~\ref{prop:parteqreplacementn} allowing the replacement of options with equals, we may assume $k_L=\{0,1_L,2\cdot1_L,\ldots,(k-1)\cdot1_L\mid\cdots\mid\,\}$. 

By Proposition~\ref{prop:onesn}, we see that $(k-1)\cdot1_L$ is the best of these options for Left. Thus, we can repeatedly 
\hyperref[thm:deldomn]{Delete Dominated Options} 
 in $k_L$ to find $k_L=_Lk\cdot1_L$. By 
the \hyperref[prop:eqn]{Components of Equality}, it remains to show that $k_L=_{C_i}k\cdot1_L$ for $1\le i\le N-1$.

First, we show that $k_L\le_{C_i}k\cdot1_L$. 
The \hyperref[thm:lesstestn]{Inequality Test} adapted for $\le_{C_i}$ says that for any $G^{C_i}$ we need a corresponding $H^{C_i}$, but neither game has a $\text{Center}_i$ option. And neither game has options for players other than Left and $\text{Center}_i$, either. So we just need to verify that for all $(k\cdot1_L)^L$, there is a corresponding $k_L^L$ with $k_L^L\le_{C_i}(k\cdot1_L)^L$. But the only option $(k\cdot1_L)^L$ is $(k-1)\cdot1_L$, which is also a Left option of $k_L$ (up to equality), so that $(k-1)\cdot1_L\le_{C_i}(k-1)\cdot1_L$ suffices.

Next, we show that $k\cdot1_L\le_{C_i}k_L$. Similarly to the other direction, we need only handle each Left option of $k_L$. It remains to check that $(k-1)\cdot1_L\le_{C_i}j\cdot1_L$ for $j\le k-1$. This follows immediately from Lemma~\ref{lem:mixedineqn} with players switched. 

Since $k\cdot1_L\le_{C_i}k_L$ and $k_L\le_{C_i}k\cdot1_L$, we have $k_L=_{C_i}k\cdot1_L$ for $1\le i\le N-1$. Since we also verified that $k_L=_{L}k\cdot1_L$, \hyperref[prop:eqn]{Components of Equality} yields $k_L=k\cdot1_L$, as desired.\end{proof}

\subsubsection{Sums of Integers}
\begin{proposition}\label{prop:justones}Evaluating the outcome of a sum of integers (such as $3_{C_2}$) reduces to the case of evaluating the outcome of a game of the form $k_0\cdot1_{C_0}+\cdots+k_{N-1}\cdot1_{C_{N-1}}$ for nonnegative integers $k_0,\ldots,k_{N-1}$.\end{proposition}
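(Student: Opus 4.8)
The plan is to reduce everything to Theorem~\ref{thm:n=n1n} together with the fact that equality of games is a congruence for disjunctive sum. First I would note that although Theorem~\ref{thm:n=n1n} is stated only for Left, the symmetry of the definitions (as flagged in the discussion following Definition~\ref{def:ineqn}) gives $k_{C_i} = k\cdot 1_{C_i}$ for every $i$ with $0 \le i \le N-1$ and every integer $k \ge 0$. Hence an arbitrary sum of integers, say $(m_1)_{C_{i_1}} + \cdots + (m_r)_{C_{i_r}}$ with each $i_j \in \{0,\dots,N-1\}$ and each $m_j \ge 0$, can have each summand rewritten as $m_j \cdot 1_{C_{i_j}}$.

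Next I would apply Corollary~\ref{cor:eqinsums} (equality is preserved under $+$) repeatedly to conclude that the whole sum equals $m_1 \cdot 1_{C_{i_1}} + \cdots + m_r \cdot 1_{C_{i_r}}$, and then use the commutativity and associativity of $+$ (which hold up to isomorphism, hence up to equality) to collect the terms belonging to each player. Setting $k_i = \sum_{j\,:\,i_j = i} m_j$ (an empty sum being $0$), this exhibits the original sum as equal to $k_0 \cdot 1_{C_0} + \cdots + k_{N-1} \cdot 1_{C_{N-1}}$. Finally, since $G = H$ means $o(G+X) = o(H+X)$ for all $X$ (Definition~\ref{def:pareq}), the choice $X \cong 0$ gives $o(G) = o(H)$, so computing the outcome of the original sum of integers is literally the same problem as computing the outcome of the canonical form.

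There is essentially no hard step here; the argument is bookkeeping built on Theorem~\ref{thm:n=n1n}. The only two points that warrant a sentence of care are the appeal to symmetry to move Theorem~\ref{thm:n=n1n} off of Left and onto an arbitrary player $C_i$, and the observation that reordering and regrouping the summands of a disjunctive sum is legitimate because $+$ is commutative and associative up to isomorphism and isomorphic games are equal. Everything else is an immediate consequence of the congruence property of $=$.
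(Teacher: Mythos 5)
Your proposal is correct and follows essentially the same route as the paper: invoke Theorem~\ref{thm:n=n1n} (extended by symmetry to each player), use Corollary~\ref{cor:eqinsums} to replace summands throughout the sum, and conclude via Definition~\ref{def:pareq} that outcomes are unchanged. The paper's proof is just a more terse version of the same bookkeeping.
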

\begin{proof}Note that by Theorem~\ref{thm:n=n1n}, a sum of multiple integers for the same player can be reduced to a sum of ones; for example, $k_L+j_L=(k+j)\cdot1_L$. Corollary~\ref{cor:eqinsums} tells us that we can make such replacements throughout a sum of integers for various players. By the definition of \hyperref[def:pareq]{Partizan Equality} (Definition~\ref{def:pareq}), the outcome doesn't change when a game is replaced by an equal one.\end{proof}

\begin{proposition}\label{prop:oneoutcomes}Let $k_0,\ldots,k_{N-1}$ be nonnegative integers with minimum $k_{\text{min}}$, and $i$ be an integer with $0\le i\le N-1$. If $j$ is the least nonnegative integer with $k_{i+j}=k_{\text{min}}$ or $k_{i+j-N}=k_{\text{min}}$, then \[o\left(\rest{k_0\cdot1_{C_0}+\cdots+k_{N-1}\cdot1_{C_{N-1}}}{C_{i}}\right)=\left\{\mathbf O_0,\ldots,\mathbf O_{N-1}\right\}-\{\mathbf O_j\}\text{.}\]
\end{proposition}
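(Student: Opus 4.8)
The plan is to first recognize the impartial game $\rest{G}{C_i}$, where $G\cong k_0\cdot 1_{C_0}+\cdots+k_{N-1}\cdot 1_{C_{N-1}}$, as an extremely simple object — a ``stalk''. I read all player indices modulo $N$ throughout, so the statement's clause ``$k_{i+j}=k_{\min}$ or $k_{i+j-N}=k_{\min}$'' just means $k_{i+j}=k_{\min}$ with cyclic indexing. The first observation is that in $G$ the only move available to $\text{Center}_\ell$ is to replace one copy of $1_{C_\ell}$ by $0$ (because $1_{C_m}$ offers moves only to $\text{Center}_m$ and $0$ offers none), and every such move leads to a position isomorphic to the game $G'$ obtained from $G$ by decreasing $k_\ell$ by $1$. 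Hence, by Definition~\ref{def:restn}, $\rest{G}{C_i}$ is the empty game $0$ when $k_i=0$, and is the singleton $\bigl\{\rest{G'}{C_{i+1}}\bigr\}$ (with $k_i$ lowered by one in $G'$) when $k_i>0$. Iterating this recursion, $\rest{G}{C_i}\cong P_T$ for some $T\ge 0$, where $P_0\cong 0$ and $P_{t+1}\cong\{P_t\}$.

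Second, I would pin down $T$. By induction on $\sum_m k_m$, using the recursion just described, $T=\min_{0\le\ell\le N-1}\bigl(N\,k_{i+\ell}+\ell\bigr)$; the inductive step is the identity $T=T'+1$, where $T'$ is the analogous quantity for the pair $(G',C_{i+1})$, and this is a one-line reindexing (the $\ell=N-1$ term of $T'$ is indexed by $i$, whose count in $G'$ is $k_i-1$, contributing $Nk_i-1$, while the remaining terms of $T'$ are the $\ell\ge 1$ terms of $T$ lowered by one). Then, since any $\ell$ with $k_{i+\ell}>k_{\min}$ contributes at least $N(k_{\min}+1)>Nk_{\min}+(N-1)$ — strictly more than every $\ell$ achieving $k_{i+\ell}=k_{\min}$ — the minimum equals $Nk_{\min}+j$, where $j$ is the least $\ell$ with $k_{i+\ell}=k_{\min}$, which is exactly the $j$ in the statement. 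In particular $0\le j\le N-1$, so $T\equiv j\pmod N$.

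Third, I would read off $o(P_T)$. From Definition~\ref{def:nplayimp}, $o(P_0)=o(0)=\{\mathbf O_1,\dots,\mathbf O_{N-1}\}$, i.e. every player but $\mathbf O_0=\mathbf N$; and by claim~2 of Proposition~\ref{prop:nimpouts}, forming $\{H\}$ from $H$ carries each $\mathbf O_t\in o(H)$ to $\mathbf O_{t+1}\in o(\{H\})$ (indices mod $N$). An immediate induction on $t$ then gives $o(P_t)=\{\mathbf O_0,\dots,\mathbf O_{N-1}\}\setminus\{\mathbf O_{t\bmod N}\}$, and combining this with the second step yields
\[o\!\left(\rest{k_0\cdot 1_{C_0}+\cdots+k_{N-1}\cdot 1_{C_{N-1}}}{C_i}\right)=o(P_T)=\{\mathbf O_0,\dots,\mathbf O_{N-1}\}\setminus\{\mathbf O_j\}\text{,}\]
as claimed.

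The only part needing genuine care is bookkeeping: checking that all of $\text{Center}_\ell$'s options in $G$ really are isomorphic (so that $\rest{G}{C_i}$ collapses to a stalk rather than a bushier tree), and getting the cyclic index arithmetic right in the steps $T=T'+1$ and $T\equiv j\pmod N$. Neither is conceptually hard; the whole argument is essentially the observation that $\rest{G}{C_i}$ has a single line of play, and the turn on which that line terminates — which is the residue $j$ — is exactly the player who runs out of moves first.
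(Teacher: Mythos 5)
Your proposal is correct and follows the same idea as the paper's (much terser) proof: there is a single line of play, the player who first runs out of moves is determined by counting $Nk_{\min}+j$ moves, and that player is the unique loser. You simply formalize the move-count and the outcome of a single-path game via the recursion of Definition~\ref{def:restn} and claim~2 of Proposition~\ref{prop:nimpouts}, which the paper leaves implicit.
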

\begin{proof}
Suppose that $\text{Center}_i$ moves first in $k_0\cdot1_{C_0}+\cdots+k_{N-1}\cdot1_{C_{N-1}}$. There is only one line of play. After $N*k_{\text{min}}$ moves, all coefficients that were equal to $k_{\text{min}}$ have been reduced to $0$, so that the next player to move (starting with $\text{Center}_i$) with a corresponding $0$ coefficient will lose, and all other players will win.
\end{proof}
\begin{corollary}\label{cor:overwhelm}
In a sum of the form $k_{C_i}+K\cdot\left(1_{C_i}\right)^-$ with $K>k$, $\text{Center}_i$ loses regardless of how they play or which player moves first.
\end{corollary}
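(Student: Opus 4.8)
The plan is to rewrite $k_{C_i}+K\cdot(1_{C_i})^-$ as a disjunctive sum of the unit games $1_{C_0},\dots,1_{C_{N-1}}$ and then quote Proposition~\ref{prop:oneoutcomes} to locate the unique loser.

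First I would unwind the conjugate notation. By Definition~\ref{def:adjn} the first conjugate of $1_{C_i}$ is $1_{C_{i+1}}$ (subscripts read mod $N$), the second is $1_{C_{i+2}}$, and so on, so that $(1_{C_i})^-=1_{C_{i+1}}+1_{C_{i+2}}+\cdots+1_{C_{i+N-1}}$, i.e.\ $\sum_{j}1_{C_j}$ over all $j\in\{0,\dots,N-1\}$ with $j\not\equiv i\pmod N$. Using this together with Theorem~\ref{thm:n=n1n} and Corollary~\ref{cor:eqinsums} to combine equal summands inside the sum, \[k_{C_i}+K\cdot(1_{C_i})^- \;=\; k\cdot1_{C_i}+\sum_{j\ne i}K\cdot1_{C_j}\text{,}\] which is exactly a game of the form $k_0\cdot1_{C_0}+\cdots+k_{N-1}\cdot1_{C_{N-1}}$ with $k_i=k$ and $k_j=K$ for every $j\ne i$; this is the reduction already recorded in Proposition~\ref{prop:justones}. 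Since equal games have the same outcome in every additive context (Definition~\ref{def:pareq}), it suffices to analyze this sum of units.

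Next I would apply Proposition~\ref{prop:oneoutcomes}. As $K>k$, the minimum coefficient $k_{\text{min}}=k$ is attained at the single index $i$. So if $\text{Center}_\ell$ moves first, the integer $j$ appearing in Proposition~\ref{prop:oneoutcomes} is forced to be the unique $j\in\{0,\dots,N-1\}$ with $\ell+j\equiv i\pmod N$, and $o\!\left(\rest{\cdots}{C_\ell}\right)=\{\mathbf O_0,\dots,\mathbf O_{N-1}\}-\{\mathbf O_j\}$. Because $\text{Center}_\ell$ is the player $\mathbf O_0$ and $\text{Center}_i$ is the player occupying turn $j$ after them, $\mathbf O_j$ is precisely $\text{Center}_i$. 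Thus, whichever player moves first, $\text{Center}_i$ has no winning strategy; and since the sum of units admits a single line of play, $\text{Center}_i$ is in fact the unique loser along it. (If one prefers to reason with the literal game tree $k_{C_i}+K\cdot(1_{C_i})^-$ rather than its equality class, note that $\text{Center}_i$'s only moves lie in the $k_{C_i}$ component and lead to $j_{C_i}+K\cdot(1_{C_i})^-$ with $j<k<K$, so the conclusion propagates by induction on $k$.)

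The only place requiring care is the index arithmetic: one must check that, for every choice of first mover, the ``$j$-turns-later'' player singled out by Proposition~\ref{prop:oneoutcomes} is always $\text{Center}_i$. This is immediate once we observe that the minimum coefficient occurs at exactly one position, so the offset $j$ is simply the cyclic distance from the first mover to $\text{Center}_i$, which by construction lands on $\text{Center}_i$. Everything else is a direct appeal to Theorem~\ref{thm:n=n1n}, Corollary~\ref{cor:eqinsums}, and Propositions~\ref{prop:justones} and~\ref{prop:oneoutcomes}.
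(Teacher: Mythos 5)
Your proposal is correct and follows essentially the same route as the paper: the paper's proof likewise reduces to the sum of units via Proposition~\ref{prop:justones}, invokes Proposition~\ref{prop:oneoutcomes} to conclude that every player other than $\text{Center}_i$ has a winning strategy regardless of the first mover, and then notes that those players' moves are forced, so $\text{Center}_i$ cannot be rescued by their play. Your extra care with the conjugate expansion, the index arithmetic, and the inductive remark on the literal game tree only makes explicit what the paper leaves implicit.
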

\begin{proof}By Propositions~\ref{prop:justones} and \ref{prop:oneoutcomes}, all players other than $\text{Center}_i$ have winning strategies, regardless of which player moves first. But the players other than $\text{Center}_i$ have no choice in their moves, so there is no other way they could play to allow $\text{Center}_i$ to win.\end{proof}

\subsubsection{Games Less than One}

In the two-player context, there are results such as $\{-1\mid\,\}=0$, since a move that benefits Right can't help Left. But with three or more players, things are more subtle.

\begin{theorem}\label{thm:smallbenefitsn}
If $N>2$, then for $1\le i\le N-1$, \[0<_L\cdots<_L\{3_{C_i}\mid\cdots\mid\,\}<_L\{2_{C_i}\mid\cdots\mid\,\}<_L\{1_{C_i}\mid\cdots\mid\,\}<_L1_L\text{.}\]
\end{theorem}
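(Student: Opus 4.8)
Write $G_k\cong\{k_{C_i}\mid\cdots\mid\,\}$ for $k\ge1$. The plan is to prove the displayed chain by establishing each link separately: the non-strict inequalities come from the \hyperref[thm:lesstestn]{Inequality Test}, and each link is then upgraded to a strict one by exhibiting a witness position, after which Proposition~\ref{prop:Lstricttransn} concatenates everything. The non-strict skeleton
\[0\le_L\cdots\le_L G_{k+1}\le_L G_k\le_L\cdots\le_L G_1\le_L 1_L\]
is routine. First, $0\le_L G_k$ by Proposition~\ref{prop:leftmovesn} (or the \hyperref[thm:posleftn]{Nonnegativity Rule}), since each $G_k$ has only Left options. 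Next, $G_{k+1}\le_L G_k$ by the \hyperref[thm:lesstestn]{Inequality Test}: the unique Left options satisfy $(k+1)_{C_i}\le_L k_{C_i}$ --- this follows by combining Lemma~\ref{lem:mixedineqn} with (the $C_i$-analogue, by symmetry, of) Theorem~\ref{thm:n=n1n}, so $(k+1)\cdot1_{C_i}<_L k\cdot1_{C_i}$ --- and there are no other options to check. Finally $G_1\le_L 1_L$ by the \hyperref[thm:lesstestn]{Inequality Test} using $1_{C_i}\le_L 0$ from Proposition~\ref{prop:negs}.

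For the bottom link, $G_k\nleq_L 0$: Left wins $G_k$ moving first by moving to $k_{C_i}$, after which the next player is left without a move in $k_{C_i}$ (immediately if $i\ne1$, and after one forced reply if $i=1$). Hence $\mathbf N\in o(\rest{G_k}{L})$, so the \hyperref[thm:nonpos]{Nonpositivity Rule} gives $G_k\nleq_L 0$; combined with $0\le_L G_k$ this yields $0<_L G_k$ for every $k$.

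For $G_1<_L 1_L$ it remains to show $1_L\nleq_L G_1$. Take the witness $X\cong\sum_{1\le j\le N-1,\;j\ne i}1_{C_j}$, i.e.\ one move for every player except Left and $\text{Center}_i$; $X$ has no Left options, so Left moving first in either $1_L+X$ or $G_1+X$ must play in the first summand. In $1_L+X$, Left moves to $0+X\cong X$ with $\text{Center}_1$ to move, and since every coefficient is $1$ except those of Left and $\text{Center}_i$ (which are $0$), Proposition~\ref{prop:oneoutcomes} shows the forced line leaves $\text{Center}_i$ without a move, so Left wins. In $G_1+X$, Left is forced to $1_{C_i}+X\cong\sum_{1\le j\le N-1}1_{C_j}$ with $\text{Center}_1$ to move, where now only Left has coefficient $0$, so by Proposition~\ref{prop:oneoutcomes} it is $\text{Center}_0=\text{Left}$ who runs out of moves and loses. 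Hence $1_L\nleq_L G_1$, so $G_1<_L 1_L$.

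The remaining (and hardest) step is strictness of each $G_{k+1}<_L G_k$, i.e.\ producing $X$ with Left winning $G_k+X$ but not $G_{k+1}+X$. The approach is the same as for $G_1<_L1_L$: use a sum of integer games for the various players --- essentially the $k$-scaled analogue of the witness above, padded with auxiliary $1_L$- (and other) summands to fix the timing of Left's forced move into the $\{\cdot\}$-component --- so that the single extra stored move $\text{Center}_i$ possesses in $(k+1)_{C_i}$ (as opposed to $k_{C_i}$) shifts the parity of the forced endgame by one and makes Left the unique player left without a move; the outcomes of the two endgames are then read off from Propositions~\ref{prop:justones} and \ref{prop:oneoutcomes}. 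Once all the strict links are in hand, Proposition~\ref{prop:Lstricttransn} assembles the full chain. I expect the main obstacle to be exactly this last link: arranging the padding so that, after Left's forced detour through $k_{C_i}$ or $(k+1)_{C_i}$, the critical ``stuck'' position lands on Left rather than on $\text{Center}_i$ or some other player --- which is why a bare sum of integers does not suffice and the witness must be tuned with some care.
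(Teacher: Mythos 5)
Most of your chain is sound and matches the paper's route: the non-strict skeleton via the \hyperref[thm:lesstestn]{Inequality Test}, the links $0<_LG_k$ via the \hyperref[thm:posleftn]{Nonnegativity} and \hyperref[thm:nonpos]{Nonpositivity} Rules, and your explicit witness $X\cong\sum_{j\ne i}1_{C_j}$ for $1_L\nleq_LG_1$ are all correct (the paper folds that top link into its general claim by reading $1_L$ as $\left\{0_{C_i}\mid\cdots\mid\,\right\}$). The genuine gap is exactly the step you flag and do not carry out: exhibiting, for $k\ge1$, a witness $X$ with Left winning $G_k+X$ but not $G_{k+1}+X$. Moreover, the family of witnesses you propose --- sums of integer games, ``padded with auxiliary $1_L$-summands'' --- cannot work. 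The only difference between $G_k+X$ and $G_{k+1}+X$ is locked behind Left's own move into the bracketed component; so if $X$ is a plain sum of integers, Left may simply decline to open that component, and the two games then offer Left identical play. A short move count makes the obstruction precise: if $X$ gives Left $A$ moves and $\text{Center}_i$ gets $C$ moves outside the box plus $\widehat k\le k$ (resp.\ $\le k+1$) inside it, then ``Left loses by stalling'' forces roughly $A\le C$, while ``Left wins $G_k+X$ by opening'' forces $A\ge C+k$, which is incompatible for $k\ge1$. Your phrase ``Left's forced move into the $\{\cdot\}$-component'' is thus inconsistent with the padding: once free $1_L$-summands are present, that move is no longer forced.

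The missing idea in the paper is to make the Left reserve \emph{delayed}: the witness is not a sum of integers but a game whose single option, belonging to $\text{Center}_1$ (or $\text{Center}_2$ when $i=1$), unlocks $(m-1)\cdot1_L+(m-1)\cdot1_{C_1}+\sum m\cdot1_{C_j}$ (omitting $\text{Center}_i$). Left's first move is then genuinely forced into the bracketed component, and after one round the position is $\widehat k_{C_i}+(m-1)\cdot\left(1_{C_i}\right)^-$ with Left to move. Two further tools are then needed that your sketch omits: Corollary~\ref{cor:overwhelm} --- not just Propositions~\ref{prop:justones} and \ref{prop:oneoutcomes}, which only treat forced lines --- to conclude that $\text{Center}_i$ loses however they spend their $\widehat k\le m-2$ stored moves, and Proposition~\ref{prop:parttriplenon} to show Left cannot win from $(m-1)\cdot1_L+\left((m-1)\cdot1_L\right)^-$ when $\text{Center}_i$ plays $\widehat k=m-1$. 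Without this delayed-unlock construction (or an equivalent), the strictness of the interior links is unproved.
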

\begin{proof}
There are four claims that must be verified. \begin{enumerate}
\item $0\le_L\{k_{C_i}\mid\cdots\mid\,\}$ for all $k$.
\item $\{k_{C_i}\mid\cdots\mid\,\}\nleq_L0$ for all $k$.
\item $\{m_{C_i}\mid\cdots\mid\,\}\le_L\{k_{C_i}\mid\cdots\mid\,\}$ for $k<m$.
\item $\{k_{C_i}\mid\cdots\mid\,\}\nleq_L\{m_{C_i}\mid\cdots\mid\,\}$ for $k<m$.
\end{enumerate}

Claim~1 follows from the \hyperref[thm:posleftn]{Nonnegativity Rule} (Theorem~\ref{thm:posleftn}).

Claim~2 follows from the fact that Left wins each $\{k_{C_i}\mid\cdots\mid\,\}$ moving first (here we use $N>2$), but does not win $0$ moving first.

Claim~3 follows from Theorem~\ref{thm:n=n1n} to convert the integers into equal sums of $1_{C_i}$, Proposition~\ref{prop:parteqreplacementn} to replace them in the options, Lemma~\ref{lem:mixedineqn} to compare those sums of $1_{C_i}$, and one application of the \hyperref[thm:lesstestn]{Inequality Test} to obtain the desired inequality.

Claim~4 requires more care. To witness the inequality, we define a game $G$ differently in the cases $i=1$ and $i>1$.

If $i=1$, then define $G$ to be the game with only one option in which $\text{Center}_2$ can move to $(m-1)\cdot1_L+(m-1)\cdot1_{C_{2}}+\displaystyle{\sum_{3\le j\le N-1}}m\cdot1_{C_j}$. If $2\le i\le N-1$, then define $G$ to be the game with only one option in which  $\text{Center}_1$ can move to $(m-1)\cdot1_L+(m-1)\cdot1_{C_{1}}+\displaystyle{\sum_{2\le j\le N-1,\,j\ne i}}m\cdot1_{C_j}$. 

Either way, Left has a winning strategy in $\left\{k_{C_i}\mid\cdots\mid\,\right\}+G$ moving first. The only lines of play lead to a position of the form $\widehat{k}_{C_i}+(m-1)\cdot\left(1_{C_i}\right)^-$ for some $\widehat{k}\le k-1$, with Left to move. From here, by Corollary~\ref{cor:overwhelm}, $\text{Center}_i$ will lose even if they play well. However, Left does not have a winning strategy in $\left\{m_{C_i}\mid\cdots\mid\,\right\}+G$ moving first. Assuming $\text{Center}_i$ plays well, play leads to a position equal to $\left((m-1)\cdot1_L\right)+\left((m-1)\cdot1_L\right)^-$ with Left to move, so that Left does not have a winning strategy by Proposition~\ref{prop:parttriplenon}.
\end{proof}

\begin{corollary}\label{cor:cincincomp}
If $N=3$, our $\le_L$ is incompatible with the one defined by Cincotti in Subsection~2.3 of \cite{cinc}.
\end{corollary}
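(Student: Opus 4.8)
The plan is to play our Theorem~\ref{thm:smallbenefitsn} off against Cincotti's definitions. Specialising that theorem to $N=3$ and $i=1$ gives, in our theory, the strictly $\le_L$-decreasing chain
\[
0<_L\cdots<_L\{3_{C_1}\mid\,\mid\,\}<_L\{2_{C_1}\mid\,\mid\,\}<_L\{1_{C_1}\mid\,\mid\,\}<_L1_L\text{.}
\]
In particular $\{1_{C_1}\mid\,\mid\,\}$ is strictly between $0$ and $1_L$ for our relation $\le_L$, so that both ``$\{1_{C_1}\mid\,\mid\,\}\le_L0$'' and ``$1_L\le_L\{1_{C_1}\mid\,\mid\,\}$'' \emph{fail}. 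The strategy is then to show that Cincotti's relation from Subsection~2.3 of \cite{cinc} assigns a different status to at least one such comparison; since incompatibility only asks for a single pair of games on which the two preorders disagree, this suffices, and one does not need the whole chain.

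Concretely, I would unwind Cincotti's definition of $\le_L$ under his play convention --- the first stuck player is removed, the remaining players continue, and the last survivor wins --- and evaluate the small game $\{1_{C_1}\mid\,\mid\,\}$ (and, if needed, $\{k_{C_1}\mid\,\mid\,\}$) together with the sums used to witness the inequalities. I expect the conclusion to be that, because Left's only move in $\{1_{C_1}\mid\,\mid\,\}$ merely hands $\text{Center}_1$ a free move --- and in the three-player elimination game $\text{Center}_1$ then outlasts everyone --- Cincotti's order collapses this family: by the three-player analogue of the two-player identity ``$\{n\mid\,\}=0$ for $n\le 0$'' one should get $\{1_{C_1}\mid\,\mid\,\}=_L0$ in his sense (or, failing that, $\{1_{C_1}\mid\,\mid\,\}=_L1_L$, or at least all the $\{k_{C_1}\mid\,\mid\,\}$ equal to one another). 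Any of these contradicts our strict chain: e.g. if $\{1_{C_1}\mid\,\mid\,\}=_L0$ for Cincotti's relation, then ``$\{1_{C_1}\mid\,\mid\,\}\le_L0$'' holds for his relation but not for ours, and if instead $\{1_{C_1}\mid\,\mid\,\}=_L1_L$ for his, then ``$1_L\le_L\{1_{C_1}\mid\,\mid\,\}$'' holds for his but not for ours. Either way the two relations $\le_L$ on three-player partizan games are distinct.

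The main obstacle is bookkeeping rather than idea: Cincotti's convention differs from ours precisely in the post-stuck behaviour, so none of this paper's machinery (the \hyperref[thm:posleftn]{Nonnegativity} and \hyperref[thm:nonpos]{Nonpositivity} Rules, the \hyperref[thm:lesstestn]{Inequality Test}, or Theorem~\ref{thm:smallbenefitsn} itself) transfers to his setting, and the relevant comparison must be re-derived by a direct game-tree analysis inside the framework of \cite{cinc}, carefully tracking whose turn it is after an elimination. A secondary, purely expository point is to fix the meaning of ``incompatible'': I would read it as ``the two preorders on three-player partizan games do not coincide'', which the argument above establishes as soon as Cincotti's classification of the single game $\{1_{C_1}\mid\,\mid\,\}$ is recorded; if one wanted the stronger assertion that neither preorder refines the other, a further comparison going the opposite direction would be needed, but that is not required for the corollary as stated.
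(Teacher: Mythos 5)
Your overall strategy is the paper's: Theorem~\ref{thm:smallbenefitsn} gives $0<_L\{1_{C_1}\mid\,\mid\,\}$ for our relation, and the corollary follows once Cincotti's relation is shown to classify that same pair the other way; the witnessing game is also the one the paper uses. However, as written your argument has a genuine gap on the Cincotti side: you never actually establish what his relation says about $\{1_{C_1}\mid\,\mid\,\}$ versus $0$, only that you ``expect'' one of three mutually inconsistent outcomes ($=_L0$, or $=_L1_L$, or merely that the various $\{k_{C_1}\mid\,\mid\,\}$ coincide with one another). The last of these would not even yield the corollary, since it says nothing about a comparison with $0$ or with $1_L$. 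A proof must pin down one definite disagreement.

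The missing step is also much easier than your ``main obstacle'' paragraph suggests. Cincotti's $\le_L$ in Subsection~2.3 of \cite{cinc} is not defined by quantifying over sums and play orders; it is a recursive, syntactic test on the two game trees, in the style of the two-player criterion for $G\ge H$. Applying his recursive clause to $G\cong\{1_{C_1}\mid\,\mid\,\}$ and $H\cong 0$ requires only inspecting the unique Left option: since $1_{C_1}\ngeq_L0$ under his definition, his test immediately returns $0\ge_L\{1_{C_1}\mid\,\mid\,\}$. That single line---with no analysis of eliminations, surviving players, or whose turn follows a removal---is exactly how the paper closes the argument, and it contradicts $0<_L\{1_{C_1}\mid\,\mid\,\}$ from Theorem~\ref{thm:smallbenefitsn}. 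Your reading of ``incompatible'' as ``the two preorders disagree on some pair of games'' is the intended one, so once the Cincotti computation above is recorded in place of the hedged alternatives, the proof is complete and coincides with the paper's.
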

\begin{proof}
Note that $0<_L\{1_C\mid\,\mid\,\}$ by Theorem~\ref{thm:smallbenefitsn}. But under Cincotti's recursive definition, we would have $0\ge_L\{1_C\mid\,\mid\,\}$ since $1_C\ngeq_L0$.
\end{proof}

\section{Conclusions}\label{sec:open}

Throughout this paper, we have seen a variety of generalizations of two-player theorems, and regularity in results that apply for $N>2$. It appears that the normal play convention considered in this paper may be the easiest to investigate for $N$ players, without discarding parts of the game tree (as in \cite{cinc}) or making any assumptions on how players play (as in \cite{doles} and other papers mentioned in I.4 of \cite{cgt}).

That said, when considering combinatorial games with more than two players, there is a world of gaps and fundamental questions that remain unsettled.

For just the case of impartial three-player games, there are other similar play conventions worth considering, each depending on the winner(s) and loser(s) of $0$, analogous to mis\`{e}re play. In \cite{propp}, Propp analyzed the convention in which Previous is the unique winner of $0$. But that still leaves four other non-trivial play conventions that do not seem to have been investigated in the literature. For example, consider the similar convention in which Other is the unique winner of $0$. The 2 conventions other than normal play with a unique loser for $0$ may be particularly difficult to analyze. 

\begin{question}For reach of the remaining four non-trivial play conventions, investigate the sum table analogous to Table~\ref{tbl:addition}.\end{question}

Even when restricting ourselves to normal play, much is still unknown. As noted at the end of Subsection~\ref{ssec:3imp} (just before \ref{ssec:3nim}), we can build many equal impartial games by replacing one absorbing subposition with another. Are there other equal games? For instance, perhaps $3\cdot *=6\cdot *$ in the three-player case.

We might hope to find a recursive test for equality similar to the two-player mis\`{e}re Theorem~V.3.6 from \cite{cgt}. However, no simple translation of that result can handle the three-player case under normal play. A key part of that argument is Lemma~V.3.3, which uses $T\cong\left\{ G_{1}^{-},G_{2}^{-},\ldots,G_{k}^{-},U\right\}$  and then considers $G+T$ and $H+T$. In that case, options such as $G_{i}+T$ and $G+G_{i}^{-}$ have an option of the form $G_{i}+G_{i}^{-}$ with $\mathbf N\notin o\left(G_{i}+G_{i}^{-}\right)$. We can therefore conclude that $\mathbf{O}\notin o(G_{i}+T)$. But without a property such as ``$\mathbf{O}\in o\left(G_{i}+G_{i}^{-}\right)$ for all $G_{i}$'' or ``$\mathbf P\in o\left(G_{i}+G_{i}^{-}\right)$ for some $G_{i}$'', we can't conclude anything further about options of $G+T$ such as $G_{i}+T$, no matter what we know about $G+U$.

\begin{question}For $N>2$, is there a non-isomorphic pair of equal impartial games that do not have an absorbing subposition?\end{question}

Even without more ability to test for equality, it would be nice to complete Proposition~\ref{prop:trebling} about the outcomes of $3\cdot G$. While there were no surprises under normal play for doubling, Claim~9 from Section~4 of \cite{propp} suggests that there could be another obstruction to the outcome of $3\cdot G$ aside from the obstructions that apply to all sums and Proposition~\ref{prop:tripleno} about move mirroring.

\begin{question}Under normal play, is there a game $G$ with three-player outcome $\n$ such that $\mathbf P\in o(3\cdot G)$?\end{question}

Restricting ourselves to the particular game of $N$-player \textsc{Nim} for $N>2$, it is not certain how similar things must be to the three-player case. 

\begin{question}Are all \nim positions $N$-periodic?\end{question}
This need not hold \emph{outside} of normal play, even for $N=3$. For example, in the three-player convention in which Other is the unique winner of $0$, $o(2\cdot*3)=\o$, but $o(3\cdot*+2\cdot*3)=\q$.

In the proof of Corollary~\ref{cor:nimabsorb}, we show that certain very large \nim positions are always absorbing. The author suspects this can be improved considerably.
\begin{question}If $N>2$, do $N\cdot*2$ and $2\cdot*N$ have the absorbing property in the Nim Quotient?\end{question}

If so, then $2(N-1)\cdot*2$ and $N\cdot*N$ are absorbing (for all impartial games) by the \hyperref[thm:nblackholegen]{Absorbing Game Construction} (Theorem~\ref{thm:nblackholegen}).

The above is far from an exhaustive list of avenues for future research. Loopy games, nondisjunctive compounds, other case studies such as Rhombination from \cite{doles}, etc. are all wide open.

\section*{Acknowledgments}The author is grateful for the helpful suggestions from Richard Nowakowski and Richard Biggs. He also wishes to thank Stephen Goodloe and Gregory Puleo for their feedback and support. Finally, Yuki Irie and Nathan Fox were sources of inspiring questions.

\section*{Appendix}\label{sec:appendix}

In this appendix, we follow the compact notation for impartial games used in V.2 of \cite{cgt} for analyzing two-player mis\`{e}re play. 

\newpage

Specifically, 

\begin{itemize}
	\item $*GHJ$ denotes the game $\left\{*G,*H,*J\right\}$. For example, note that $*21$ denotes $\left\{*2,*\right\}$, rather than a nim-heap of size twenty-one.
	\item $*G_{\#}$ denotes the game $\left\{*G\right\}$.
	\item $*G_{H}$ denotes $*G+*H$.
\end{itemize}

Some examples of this notation from \cite{cgt} are as follows.
\begin{itemize}
	\item $*2_{\#}320\cong\left\{\left\{*2\right\},*3,*2,0\right\}$
	\item $*2_{\#\#2}\cong\left\{\left\{*2\right\}\right\}+*2$
	\item $*2_{2\#\#}\cong\left\{\left\{*2+*2\right\}\right\}$
\end{itemize}

The following tables are all for impartial games with three players.

\setlength{\tabcolsep}{4pt}
\begin{table}[h!]
\centering
\resizebox{\linewidth}{!}{
\begin{tabular}{rccccccc}
  &  $\q$  &  $\n$  &  $\o$  &  $\p$ & $\no$ & $\op$ & $\pn$\\
$\q$ & $*2_{\#}2$ & $\times$ & $\times$ & $\times$ & $\times$ & $\times$ & $\times$ \\
 $\n$  &  $*3$ &  $*(1_{\#}1)20$  &  $*2$ &  $\times$ & $*(1_{\#}1)(20)0$ & $\times$ & $\times$\\
 $\o$ & $*2_2$ & $\times$ & $*2_{\#}$ & $\times$ & $\times$ & $\times$ & $\times$ \\
 $\p$ & $*2_21_{\#}$ & $\times$ & $\times$ & $\times$ & $\times$ & $\times$ & $\times$ \\
 $\no$  &  $*2_{1}21$  &  $*21$ & $*2_{1}1$  & $*(2_{1}0)_{\#}$ & $*2_{1}$ & $*(1_{\#}1)1$ & $*1_{\#}$\\
 $\op$  &  $*(2_{1})_{\#}$  &  $\times$ &  $*(2_{1}1)_{\#}$ & $*2_{1}1_{\#}$ & $\times$ & $0$ & $\times$\\
 $\pn$  &  $*(2_{1})_{\#\#}$  &  $*2_{1}0$  &  $*(1_{\#}2_{1})_{\#}$ &  $\times$  & $*$ & $\times$ & $\times$
\end{tabular}
}
\caption{Doubling examples --- The row is $o(G)$ and the column is $o(G+G)$.}
\label{tbl:doublesums}
\end{table}
\setlength{\tabcolsep}{6pt}

\begin{table}[h!]
\centering
\resizebox{\linewidth}{!}{
\begin{tabular}{rcccc}
 & $\q$ & $\o$ & $\p$ & $\op$ \\
$\q$ & $*2_23$  & $\times$ & $\times$  & $\times$ \\
$\n$ & $*2$  & $*((1_{\#}1)1)20$  & $\text{?}$ & $\text{?}$ \\
$\o$ & $*2_{\#}$  & $*1_{\#}2$  & $\times$ & $\times$ \\
$\p$ & $*2_{\#}1_{\#}$  & $\times$  & $\times$ & $\times$ \\
$\no$ & $*21$  & $*(2_10)_{\#}$ & $*(21)_{\#\#\#}$ & $*1_{\#}$ \\
$\op$ & $*2_11_{\#}$ & $*(2_10)_{\#\#}$ & $*(21)_{\#}$ & $0$ \\
$\pn$ & $*2_{\#}1_{\#}0$ & $*2_10$  & $*1_{\#}0$  & $*$ 
\end{tabular}
}
\caption{Trebling Examples --- The row is $o(G)$ and the column is $o(3\cdot G)$.}
\label{tbl:treblesums}
\end{table}

\newpage
\setlength{\tabcolsep}{3.3pt}
\begin{table}[!ht]
\centering
\begin{tabular}{l@{\hspace{1.2\tabcolsep}}l@{\hspace{1\tabcolsep}}l@{\hspace{1.6\tabcolsep}}l@{\hspace{1.2\tabcolsep}}l@{\hspace{1\tabcolsep}}l}
$\q+\q=\q$ & $*2_{\#}2$ & $*2_{\#}2$&$\o+\pn=\n$ & $*2_{2}$ & $*$\\
$\q+\n=\q$ & $*2_{\#}2$ & $*2$&$\o+\pn=\p$ & $*2_{\#}$ & $*$\\
$\q+\n=\n$ & $*2_{\#\#\#2}1_{\#}$ & $*2$&$\o+\pn=\pn$ & $*1_{\#}2$ & $*$\\
$\q+\o=\q$ & $*2_{\#}2$ & $*2_{\#}$&$\p+\p=\q$ & $*2_{\#\#}$ & $*2_{\#\#}$\\
$\q+\p=\q$ & $*2_{\#}2$ & $*2_{\#\#}$&$\p+\no=\q$ & $*2_{\#\#}$ & $*1_{\#}1$\\
$\q+\no=\q$ & $*2_{2}2$ & $*1_{\#}1$&$\p+\no=\n$ & $*2_{2}1_{\#}$ & $*(1_{\#}0)_{\#}$\\
$\q+\no=\n$ & $*2_{2}2$ & $*1_{\#}$&$\p+\no=\o$ & $*2_{\#\#}$ & $*1_{\#}$\\
$\q+\no=\o$ & $*2_{\#}2$ & $*1_{\#}1$&$\p+\no=\no$ & $*2_{2}1_{\#}$ & $*1_{\#}$\\
$\q+\no=\no$ & $*2_{\#}2$ & $*1_{\#}$&$\p+\op=\q$ & $*2_{\#\#}$ & $*2_{1\#}$\\
$\q+\op=\q$ & $*2_{\#}2$ & $0$&$\p+\op=\p$ & $*2_{\#\#}$ & $0$\\
$\q+\pn=\q$ & $*2_{2}2$ & $*$&$\p+\pn=\q$ & $*2_{2\#}$ & $*2_{1\#\#}$\\
$\q+\pn=\n$ & $*2_{\#}2$ & $*$&$\p+\pn=\n$ & $*2_{\#\#}$ & $*$\\
$\n+\n=\q$ & $*2$ & $*20$&$\no+\no=\q$ & $*31$ & $*(1_{\#}0)_{\#}$\\
$\n+\n=\n$ & $*2$ & $*1_{\#}20$&$\no+\no=\n$ & $*1_{\#}$ & $*1_{\#}1$\\
$\n+\n=\o$ & $*2$ & $*2$&$\no+\no=\o$ & $*2_{1}1$ & $*2_{1}1$\\
$\n+\n=\no$ & $*2$ & $*1_{\#}10$&$\no+\no=\p$ & $*1_{\#}$ & $*(1_{\#}0)_{\#}$\\
$\n+\o=\q$ & $*2$ & $*2_{2}$&$\no+\no=\no$ & $*1_{\#}1$ & $*1_{\#}1$\\
$\n+\o=\n$ & $*2$ & $*2_{\#}$&$\no+\no=\op$ & $*(1_{\#}1)1$ & $*(1_{\#}1)1$\\
$\n+\p=\q$ & $*2_{\#\#\#}$ & $*2_{\#\#}$&$\no+\no=\pn$ & $*1_{\#}$ & $*1_{\#}$\\
$\n+\p=\n$ & $*2$ & $*2_{\#\#}$&$\no+\op=\q$ & $*1_{\#}1$ & $*2_{1\#}$\\
$\n+\no=\q$ & $*2_{\#2}$ & $*1_{\#}$&$\no+\op=\n$ & $*1_{\#}$ & $*(1_{\#}0)_{\#\#}$\\
$\n+\no=\n$ & $*1_{\#}10$ & $*1_{\#}$&$\no+\op=\o$ & $*1_{\#}$ & $*2_{1\#}$\\
$\n+\no=\o$ & $*2$ & $*2_{1}1$&$\no+\op=\no$ & $*1_{\#}$ & $0$\\
$\n+\no=\p$ & $*2_{\#\#\#}$ & $*1_{\#}$&$\no+\pn=\q$ & $*2_{1}1$ & $*1_{\#}0$\\
$\n+\no=\no$ & $*20$ & $*1_{\#}$&$\no+\pn=\n$ & $*(1_{\#}0)_{\#}$ & $*1_{\#}0$\\
$\n+\no=\op$ & $*2$ & $*(1_{\#}1)1$&$\no+\pn=\o$ & $*(1_{\#}0)_{\#}$ & $*$\\
$\n+\no=\pn$ & $*2$ & $*1_{\#}$&$\no+\pn=\p$ & $*1_{\#}$ & $*2_{1\#\#}$\\
$\n+\op=\q$ & $*2_{\#\#\#}$ & $*2_{1\#}$&$\no+\pn=\no$ & $*2_{1}1$ & $*$\\
$\n+\op=\n$ & $*2$ & $0$&$\no+\pn=\op$ & $*1_{\#}$ & $*$\\
$\n+\pn=\q$ & $*2_{\#\#2}$ & $*$&$\no+\pn=\pn$ & $*1_{\#}1$ & $*$\\
$\n+\pn=\n$ & $*1_{\#}10$ & $*$&$\op+\op=\q$ & $*2_{1\#}$ & $*2_{1\#}$\\
$\n+\pn=\o$ & $*2_{\#\#\#}$ & $*$&$\op+\op=\o$ & $*1_{\#\#}$ & $*(1_{\#}0)_{\#\#}$\\
$\n+\pn=\no$ & $*2$ & $*$&$\op+\op=\p$ & $*1_{\#\#}$ & $*2_{1\#}$\\
$\o+\o=\q$ & $*2_{\#}$ & $*2_{2}$&$\op+\op=\op$ & $0$ & $0$\\
$\o+\o=\o$ & $*2_{\#}$ & $*2_{\#}$&$\op+\pn=\q$ & $*2_{1\#}$ & $*2_{1\#\#}$\\
$\o+\p=\q$ & $*2_{\#}$ & $*2_{\#\#}$&$\op+\pn=\n$ & $*2_{1\#}$ & $*$\\
$\o+\no=\q$ & $*2_{2}$ & $*2_{1}$&$\op+\pn=\p$ & $*(1_{\#}0)_{\#\#}$ & $*1_{\#\#\#}$\\
$\o+\no=\n$ & $*2_{\#}$ & $*1_{\#}$&$\op+\pn=\pn$ & $0$ & $*$\\
$\o+\no=\o$ & $*2_{2}$ & $*1_{\#}1$&$\pn+\pn=\q$ & $*2_{1\#\#}$ & $*2_{1\#\#}$\\
$\o+\no=\no$ & $*1_{\#}2$ & $*1_{\#}$&$\pn+\pn=\n$ & $*$ & $*1_{\#}0$\\
$\o+\op=\q$ & $*1_{\#}2$ & $*2_{1\#}$&$\pn+\pn=\o$ & $*1_{\#\#\#}$ & $*2_{1\#\#}$\\
$\o+\op=\o$ & $*2_{\#}$ & $0$&$\pn+\pn=\no$ & $*$ & $*$\\
$\o+\pn=\q$ & $*2_{\#}$ & $*1_{\#}0$\\
\end{tabular}
\caption{Examples of pairs of impartial games witnessing all outcomes of sums}
\label{tbl:allsums}
\vspace{-2.1pt}
\end{table}
\setlength{\tabcolsep}{6pt}

\clearpage 

\end{document}